\theoremstyle{plain}
\newtheorem{thm}{\protect\theoremname}[section]
\theoremstyle{remark}
\newtheorem{rem}[thm]{\protect\remarkname}
\theoremstyle{plain}
\newtheorem{prop}[thm]{\protect\propositionname}
\theoremstyle{definition}
\newtheorem{defn}[thm]{\protect\definitionname}
\theoremstyle{definition}
\newtheorem{example}[thm]{\protect\examplename}
\theoremstyle{plain}
\newtheorem{lem}[thm]{\protect\lemmaname}
\theoremstyle{plain}
\newtheorem{cor}[thm]{\protect\corollaryname}
\pgfplotsset{compat=1.9}
\g@addto@macro\@floatboxreset{\centering}
\newcommand{\X}{\mathcal{X}}
\newcommand{\Y}{\mathcal{Y}}
\newcommand{\Z}{\mathcal{Z}}
\newcommand{\T}{\mathcal{T}}
\newcommand{\F}{\mathcal{F}}
\newcommand{\U}{\mathcal{U}}
\newcommand{\W}{\mathcal{W}}
\newcommand{\D}{\mathcal{D}}
\newcommand{\Hcal}{\mathcal{H}}
\newcommand{\Le}{\mathsf{L}} 
\newcommand{\R}{\mathsf{R}} 
\newcommand{\Et}{\mathsf{E}} 
\newcommand{\Pres}{\mathrm{Pres}}
\newcommand{\Gen}{\mathrm{Gen}}
\newcommand{\Csf}{\mathsf{C}} 
\newcommand{\Hsf}{\mathsf{H}}
\providecommand{\definitionname}{Definition}
\providecommand{\examplename}{Example}
\providecommand{\lemmaname}{Lemma}
\providecommand{\propositionname}{Proposition}
\providecommand{\remarkname}{Remark}
\providecommand{\corollaryname}{Corollary}
\providecommand{\theoremname}{Theorem}
\providecommand{\corollaryname}{Corollary}
\providecommand{\definitionname}{Definition}
\providecommand{\examplename}{Example}
\providecommand{\lemmaname}{Lemma}
\providecommand{\propositionname}{Proposition}
\providecommand{\remarkname}{Remark}
\providecommand{\theoremname}{Theorem}
\begin{document}
\title{Tilting objects in the extended heart of a $t$-structure}
\author{Alejandro Argud{\'i}n-Monroy, Octavio Mendoza, Carlos E. Parra \thanks{The first named author was supported by a postdoctoral fellowship
EPM(1) 2024 from SECIHTI. The first and second named authors were
supported by the Project PAPIIT-IN100124 Universidad Nacional Aut{\'o}noma
de M{\'e}xico. The third named author was supported by ANID+FONDECYT/REGULAR+1240253.}}

\maketitle
\newcommandx\Mod[1][usedefault, addprefix=\global, 1=R]{\operatorname{Mod}\left(#1\right)}%
\newcommandx\modd[1][usedefault, addprefix=\global, 1=R]{\operatorname{mod}\left(#1\right)}%
\newcommandx\Ker[1][usedefault, addprefix=\global, 1=M]{\operatorname{Ker}#1}%
\newcommandx\Ann[1][usedefault, addprefix=\global, 1=M]{\operatorname{Ann}#1}%
\newcommandx\im[1][usedefault, addprefix=\global, 1=M]{\operatorname{Im}\left(#1\right)}%
\newcommandx\coim[1][usedefault, addprefix=\global, 1=M]{\operatorname{CoIm}\left(#1\right)}%
\newcommandx\Cok[1][usedefault, addprefix=\global, 1=M]{\operatorname{Coker}\left(#1\right)}%
\newcommandx\pdr[2][usedefault, addprefix=\global, 1=M, 2=\mathcalA]{\mathrm{pd}{}_{#2}\left(#1\right)}%
\newcommandx\idr[2][usedefault, addprefix=\global, 1=M, 2=\mathcalB]{\mathrm{id}{}_{#2}\left(#1\right)}%
\newcommandx\smd[1][usedefault, addprefix=\global, 1=\mathcalM]{\operatorname{smd}\left(#1\right)}%
\newcommandx\suc[5][usedefault, addprefix=\global, 1=N, 2=M, 3=K, 4=, 5=]{#1\overset{#4}{\rightarrow}#2\overset{#5}{\rightarrow}#3}%
\newcommandx\Ext[4][usedefault, addprefix=\global, 1=M, 2=, 3=, 4=X]{\mathbb{E}{}_{#3}^{#2}\left(#1,#4\right)}%
\newcommandx\p[2][usedefault, addprefix=\global, 1=\mathcal{A}, 2=\mathcal{B}]{\left(#1,#2\right)}%
\newcommandx\Hom[3][usedefault, addprefix=\global, 1=\mathcal{A}, 2=M, 3=N]{\mathrm{Hom}{}_{#1}(#2,#3)}%
\renewcommandx\Hom[3][usedefault, addprefix=\global, 1=\mathcal{D}, 2=M, 3=N]{\mathrm{Hom}{}_{#1}(#2,#3)}%
\newcommandx\End[2][usedefault, addprefix=\global, 1=R, 2=M]{\mathrm{End}{}_{#1}(#2)}%
\newcommandx\proj[1][usedefault, addprefix=\global, 1=R]{\operatorname{proj}\left(#1\right)}%
\newcommandx\Inj[1][usedefault, addprefix=\global, 1=R]{\operatorname{Inj}\left(#1\right)}%
\newcommandx\inj[1][usedefault, addprefix=\global, 1=R]{\operatorname{inj}\left(#1\right)}%
\newcommandx\Proj[1][usedefault, addprefix=\global, 1=R]{\operatorname{Proj}\left(#1\right)}%
\newcommandx\Homk[4][usedefault, addprefix=\global, 1=\mathcalK(R), 2=\sigma, 3=\omega, 4=1]{\mathrm{Hom}{}_{#1}(#2,#3[#4])}%
\global\long\def\Add{\operatorname{Add}}%
\global\long\def\Gen{\operatorname{Gen}}%
\global\long\def\Pres{\operatorname{Pres}}%
\newcommandx\add[1][usedefault, addprefix=\global, 1=\mathcal{M}]{\operatorname{add}\left(#1\right)}%
\newcommandx\colim[1][usedefault, addprefix=\global, 1=M]{\mathsf{colim}_{\Sigma}\left(#1\right)}%
\global\long\def\Fun{\operatorname{Fun}}%
\global\long\def\A{\mathcal{A}}%
\global\long\def\B{\mathcal{B}}%
\global\long\def\col{\mathsf{colim}}%
\global\long\def\limite{\mathsf{lim}}%
\global\long\def\stors{\mathsf{stors}}%
\global\long\def\Extu{\mathsf{Ext.u.}}%
\global\long\def\C{\mathcal{C}}%
\global\long\def\s{\mathfrak{s}}%
\global\long\def\E{\mathbb{E}}%
\global\long\def\t{\mathbf{t}}%
\global\long\def\u{\mathbf{u}}%
\global\long\def\v{\mathbf{v}}%
\global\long\def\pd{\mathsf{pd}}%
\global\long\def\id{\mathsf{id}}%

\begin{abstract}

Building on the recent work of Adachi, Enomoto and Tsukamoto on a
generalization of the Happel-Reiten-Smal{\o} tilting process, we
study extended tilting objects in extriangulated categories with negative
first extension. These objects coincide with the 1-tilting objects
in abelian categories as in the work of Parra, Saor{\'i}n and Virili.
We will be particularly interested in the case where the extriangulated
category in question is the heart $\mathcal{H}_{[\t_{1},\t_{2}]}$
of an interval of $t$-structures $[\t_{1},\t_{2}]$. Our main results
consist of a characterization of the extended tilting objects of a
heart $\mathcal{H}_{[\t_{1},\t_{2}]}$ for the case when $\text{\ensuremath{\t}}_{2}\leq\Sigma^{-1}\t_{1}$,
and another one for the case when  
$\Sigma^{-2}\t_{1}\leq\t_{2}$. In
the first one, we give conditions for these objects to coincide with
the quasi-tilting objects of the abelian category $\mathcal{H}_{[\t_{1},\Sigma^{-1}\t_{1}]}$.
 In the second one,  it is proved that the heart $\mathcal{H}_{[\t_{1},\t_{2}]}$ admits an extended tilting object
only if it is an extended heart (i.e. $\t_2 =\Sigma ^{-2} \t _1$). 
Furthermore, we will characterize the extended tilting objects of an extended heart 
$\mathcal{H}_{[\t_{1},\t_{2}]}$  as projective generators in the abelian category
  $\mathcal{H}_{[\t_{1},\Sigma\t_{2}]}$ whenever  $\mathcal{H}_{[\t_{1},\Sigma\t_{2}]}$ is a cogenerating class of the extended heart.
  

\end{abstract}

\section{Introduction}

One of the fundamental tools to study a triangulated category is the
notion of $t$-structure \cite{BBD}. Among their properties, we can
mention that any abelian category can be realized as the heart $\mathcal{H}_{\mathbf{x}}:=\mathcal{X}\cap\mathcal{Y}$
of a $t$-structure $\mathbf{x}=(\mathcal{X},\mathcal{Y})$ in a triangulated
category. The Happel-Reiten-Smal{\o} tilting (\emph{right HRS-tilt})
process shows us how, starting from a $t$-structure $\mathbf{x}=(\mathcal{X},\mathcal{Y})$,
the torsion pairs of the heart $\mathcal{H}_{\mathbf{x}}$ parameterize
a certain family of $t$-structures. Namely, they parameterize the
$t$-structures $\mathbf{x}'=(\mathcal{X}',\mathcal{Y}')$ such that
$\Sigma\mathcal{X}\subseteq\mathcal{X}'\subseteq\mathcal{X}$ \cite{HRS}.
Moreover, within the parameterized $t$-structures, we can highlight
those whose heart is derived equivalent to $\mathcal{H}_{\mathbf{x}}$.
For example, the ones corresponding to the so-called \emph{tilting
torsion pairs} satisfy this property. It is of particular interest
when such torsion pair can be chosen so that its heart has a projective
generator. In which case, one has that the torsion class is of the
form $\Gen(V)$, where $V$ is what is known as a \emph{quasi-tilting
object} \cite[Proposition 3.8]{PS3}.

Recently, Adachi, Enomoto and Tsukamoto presented in \cite{AET} a
generalization of the HRS-tilt process in the context of extriangulated
categories with negative first extension, which can be referred as
the \emph{AET-tilt process}. To state the AET-tilt process in the
context of a triangulated category $\D,$ the following notions are
presented. Given two $t$-structures $\t_{1}=(\mathcal{X}_{1},\mathcal{Y}_{1})$
and $\t_{2}=(\mathcal{X}_{2},\mathcal{Y}_{2})$ in $\D$ such that
$\t_{1}\leq\t_{2}$ (that is $\mathcal{X}_{1}\subseteq\mathcal{X}_{2}$),
define the interval $[\t_{1},\t_{2}]$ which is the class of all the
$t$-structures $\t'=(\mathcal{X}',\mathcal{Y}')$ such that $\mathcal{X}_{1}\subseteq\mathcal{X}'\subseteq\mathcal{X}_{2}.$
The class $\mathcal{H}_{[\t_{1},\t_{2}]}:=\Sigma^{-1}\mathcal{Y}_{1}\cap\mathcal{X}_{2}$
is known as the heart of the interval $[\t_{1},\t_{2}].$ The AET-tilt
process tells us that there exists a bijection between the $t$-structures
in $[\t_{1},\t_{2}]$ and the $s$-torsion pairs in the extriangulated
category $\mathcal{H}_{[\t_{1},\t_{2}]}$.

The aim of this paper is to study the structure of the heart $\mathcal{H}_{[\t_{1},\t_{2}]}$
as well as its tilting objects. To this end, 
we will be considering three different kinds of hearts $\mathcal{H}_{[\t_{1},\t_{2}]}$. 
The first kind (\emph{standard hearts})
consists of hearts $\mathcal{H}_{[\t_{1},\t_{2}]}$ where $\t_{1}=\Sigma\t_{2}$
(that is $\mathcal{H}_{[\t_{1},\t_{2}]}=\Hcal_{\t_{2}}$ which is
an abelian category). The second kind (\emph{restricted hearts}) consists
of hearts $\mathcal{H}_{[\t_{1},\t_{2}]}$ where $\Sigma\t_{2}<\t_{1}<\t_{2}$.
Finally, the third kind (\emph{large hearts}) consists of hearts $\mathcal{H}_{[\t_{1},\t_{2}]}$
where $\t_{1}<\Sigma\t_{2}<\t_{2}$. 
 A particular case of this last kind is when $\t_2 = \Sigma ^{-2}\t_1$. In this situation, we say that $\mathcal{H}_{[\t_{1},\t_{2}]}$ is an \emph{extended heart}.

Let us look with more precision at the content of the paper. In Section
2, we will look at some basic notions from the theory of extriangulated
and triangulated categories. Specifically, it is recalled the notion
of $s$-torsion pair in an extriangulated category (with negative
first extension) and its relationship with $t$-structures (in case
the extriangulated category in question is a triangulated one). Finally,
it is stated the theorem of Adachi, Enomoto and Tsukamoto on the AET-tilt
process.

The third section contains a deeper study on the structure of the
heart $\mathcal{H}_{[\t_{1},\t_{2}]}$ for an interval $[\t_{1},\t_{2}]$
of $s$-torsion pairs in an extriangulated category. On the one hand,
we will seek to study the $s$-torsion pairs of $\mathcal{H}_{[\t_{1},\t_{2}]}$
as well as its closure properties. Specifically, we will see in Propositions
\ref{lem:cerraduras stors} and \ref{lemaA} conditions for a $s$-torsion
pair to be closed under extensions, (co)cones, direct summands, $s$-subobjects
and $s$-quotients. On the other hand, we introduce the notion of
\emph{normal interval} (see Definition \ref{nst-int}). We will see
that, under certain conditions (see Theorem \ref{semi-ab-triang}),
if the interval $[\t_{1},\t_{2}]$ is normal, then $\mathcal{H}_{[\t_{1},\t_{2}]}$
is an exact and a semi-abelian category. If in addition, see Theorem
\ref{triang-n-smab}, the ambient extriangulated category is triangulated,
we have that the heart $\mathcal{H}_{[\t_{1},\t_{2}]}$ is a quasi-abelian
category. It is worth mentioning that similar results to the above
ones have been proved (with different tools) by Rump, Schneiders,
Tattar and Fiorot \cite{R1,S,T,F}. By using the above, see Theorem
\ref{param-stp-qac}, we will make use of the AET-tilt process to
parameterize the torsion pairs in a quasi-abelian category.

In Section 4, we will apply the results obtained in Section 3 to study
the $t$-structures of a triangulated category together with the  extended hearts induced by 
 $t$-structures. In particular, we will review the right HRS-tilt
and the left HRS-tilt processes (see Theorems \ref{ext-tilt-HRS-p}
and \ref{ext-tilt-HRS-n}).

Lastly, in Section 5, we introduce the notion of \emph{extended tilting
}object in an extriangulated category with negative first extension.
This notion coincides with the definition of 1-tilting object in abelian
categories introduced recently in \cite{PSV2} (see Lemma \ref{lem:tiltexact}).
We will see that the definition of extended tilting object in this
more general context shares some properties with the abelian case.
Specifically, we will show that an extended tilting object is $\mathbb{E}$-universal
and that it also has projective dimension at most one under certain
conditions (which include the exact case and others), see Propositions
\ref{pdT1} and \ref{et-univ}. Having done so, we will seek to characterize
extended tilting objects in restricted hearts and large hearts. Namely,
we will show that an object is extended tilting in a restricted heart
$\mathcal{H}_{[\t_{1},\t_{2}]}$ if and only if it is quasi-tilting
in the standard heart $\mathcal{H}_{[\t_{1},\Sigma^{-1}\t_{1}]}$
(see Theorem \ref{thm:restrict tilt}). In contrast, given an interval
$[\t_{1},\t_{2}]$ with $\Sigma^{-2}\t_{1}\leq\t_{2}$, an object
$V$ will be extended tilting in $\mathcal{H}_{[\t_{1},\t_{2}]}$
if and only if $\mathcal{H}_{[\t_{1},\Sigma\t_{2}]}$ is cogenerating
in $\mathcal{H}_{[\t_{1},\t_{2}]}$ and $V$ is a projective generator
in $\mathcal{H}_{[\t_{1},\Sigma\t_{2}]}$. Moreover, we will see that
in this case $\t_{2}=\Sigma^{-2}\t_{1}$ (see Theorem \ref{teo:aplication-main-1}).
Using the above, Corollary \ref{cor:main} gives a recipe of how to
obtain extended tilting objects in the non-abelian and non-triangulated
context. Furthermore, Examples \ref{ejemplo} and \ref{exa:ult} tell
us that the family of tilting objects obtained in this way is not
empty.

\section{Preliminaries}

Exact categories and triangulated categories are valuable tools of
contemporary mathematics. Their main quality lies in the fact that
they are suitable contexts for homological algebra. Recently, H. Nakaoka
and Y. Palu presented in \cite{NP} the notion of extriangulated category:
a concept that encompasses triangulated categories and exact categories.
We will omit the precise definition of extriangulated category, see
\cite[Def. 2.12]{NP} for details, but we include below some essential
notions for the convenience of the reader. To get an intuitive picture,
the inexperienced reader may understand extriangulated categories
as subcategories of triangulated categories which are closed under
extensions. \
 An \textbf{extriangulated category} consists of a triple $(\mathcal{D},\mathbb{E},\mathfrak{s})$,
where $\mathcal{D}$ is an additive category, $\mathbb{E}:\mathcal{D}^{op}\times\mathcal{\mathcal{D}}\rightarrow\operatorname{Ab}$
is an additive bifunctor and $\mathfrak{s}$ is a correspondence that
associates an equivalence class $[A\overset{x}{\rightarrow}X\overset{y}{\rightarrow}B]$
to each element $\delta$ of $\mathbb{E}(B,A)$ satisfying a series
of axioms as in \cite[Def. 2.12]{NP}. To fix ideas, the reader can
think of $\mathcal{D}$ as an abelian category and $\mathbb{E}=\mathbf{Ext}_{\mathcal{\mathcal{D}}}^{1}(-,-)$,
or of $\mathcal{D}$ as a triangulated category with $\mathbb{E}=\mathbf{Hom}_{\mathcal{D}}(-,-[1])$.
Moreover, whenever we refer to an abelian category or a triangulated
category as an extriangulated category, it will be by means of such
a functor $\mathbb{E}$. \
 Let $(\mathcal{D},\mathbb{E},\mathfrak{s})$ be an extriangulated
category. By following \cite{NP}, we have the following notions.
For $A,B\in\mathcal{D},$ an element $\delta\in\mathbb{E}(B,A)$ is
called an \textbf{$\mathbb{E}$-extension}, and the zero element $0\in\mathbb{E}(B,A)$
is called the \textbf{split $\mathbb{E}$-extension}. If $\delta\in\mathbb{E}(B,A)$
and $\s(\delta)=[\suc[A][C][B][x][y]]$, we say that $\suc[A][C][B][x][y]$
\textbf{realizes} $\delta$ or that it is an \textbf{$\mathfrak{s}$-conflation},
also known as an $\mathbb{E}$-triangle. In such case, we will use
the notation $\delta:\:\suc[A][C][B][x][y]$ or $A\xrightarrow{x}C\xrightarrow{y}B\xrightarrow{\delta}$.
For every $\delta\in\mathbb{E}(B,A),$ $a\in\Hom[\mathcal{D}][A][A']$
and $b\in\Hom[\mathcal{D}][B'][B],$ define $a\cdot\delta:=\mathbb{E}(B,a)(\delta)$
and $\delta\cdot b:=\mathbb{E}(b,A)(\delta)$. Observe that, each
$\delta\in\mathbb{E}(B,A)$ induces the morphisms $\delta\cdot-:\Hom[\mathcal{D}][W][B]\rightarrow\mathbb{E}(W,A)$
and $-\cdot\delta:\Hom[\mathcal{\mathcal{D}}][A][W]\rightarrow\mathbb{E}(B,W),$
for every $W\in\mathcal{D}.$ Let $\delta\in\mathbb{E}(B,A)$ and
$\delta'\in\mathbb{E}(B',A')$. A \textbf{morphism of extensions}
$\delta\rightarrow\delta'$ is a pair $(a,b)\in\Hom[\mathcal{D}][A][A']\times\Hom[\mathcal{D}][B][B']$
such that $a\cdot\delta=\delta'\cdot b.$ According to the definition
of extriangulated category, we have that, for any morphism of $\mathbb{E}$-extensions
$(a,b):\delta\rightarrow\delta'$, with $\s(\delta)=[\suc[A][C][B][x][y]]$
and $\s(\delta')=[\suc[A'][C'][B][x'][y']']$, there exists a morphism
$c:C\rightarrow C'$ such that $c\circ x=x'\circ a$ and $y'\circ c=b\circ y$.
In this case we say that $(a,c,b)$ \textbf{realizes} the morphism
$(a,b):\delta\rightarrow\delta'$. \
 Let $(\mathcal{D},\mathbb{E},\mathfrak{s})$ be an extriangulated
category. Given two classes of objects $\mathcal{X},\mathcal{Y}\subseteq\mathcal{D}$,
we define the class $\mathcal{X}\star\mathcal{Y}$ as the subcategory
of objects $C\in\mathcal{D}$ that admit an $\mathfrak{s}$-conflation
$\suc[X][C][Y]$ with $X\in\mathcal{X}$ and $Y\in\mathcal{Y}$. We
say that a class of objects $\mathcal{X}\subseteq\mathcal{D}$ is
\textbf{closed under extensions} if $\mathcal{X}\star\mathcal{X}\subseteq\mathcal{X}.$
A class $\Z\subseteq\D$ is \textbf{closed under cones} if for any
$\mathfrak{s}$-conflation $\suc[A][B][C]$ with $A,B\in\Z,$ we have
that $C\in\Z.$ Dually, $\Z$ is \textbf{closed under cocones} if
for any $\mathfrak{s}$-conflation $\suc[A][B][C]$ with $B,C\in\Z,$
we have that $A\in\Z.$ 
\begin{rem}
Let $(\mathcal{C},\mathbb{E},\mathfrak{s})$ be an extriangulated
category and $\mathcal{X},\mathcal{Y},\mathcal{Z}\subseteq\mathcal{\mathcal{D}}$
be classes of objects. 
\begin{enumerate}
\item It is a known fact that $\mathcal{X}\star(\mathcal{Y}\star\mathcal{Z})=(\mathcal{X}\star\mathcal{Y})\star\mathcal{Z}$
(see for example \cite[p.454]{AET}). 
\item If $\mathcal{X}$ is closed under extensions, then $\mathcal{X}$
becomes an extriangulated category by restricting $\mathbb{E}$ and
$\mathfrak{s}$ on $\X,$ see \cite[Rk. 2.18]{NP}. 
\item For the sake of simplicity, we write $\C$ instead of $(\mathcal{C},\mathbb{E},\mathfrak{s})$
for referring to the extriangulated category $(\mathcal{C},\mathbb{E},\mathfrak{s}).$ 
\end{enumerate}
\end{rem}

The following long exact sequences that come from an $\mathfrak{s}$-conflation
will be very useful throughout the paper. 
\begin{prop}
\label{suc-1-ex} \cite[Cor. 3.12]{NP} For any $\mathfrak{s}$-conflation
$\delta:\:\suc[A][B][C][u][v]$ in an extriangulated category $\D$
and any $W\in\mathcal{D}$, the sequences 
\begin{alignat*}{1}
(W,A)\overset{(W,u)}{\rightarrow}(W,B)\overset{(W,v)}{\rightarrow}(W,C)\overset{\delta\cdot-}{\rightarrow}\mathbb{E}(W,A)\overset{\mathbb{E}(W,u)}{\rightarrow}\mathbb{E}(W,B)\overset{\mathbb{E}(W,v)}{\rightarrow}\mathbb{E}(W,C),\\
{}\\
(C,W)\overset{(v,W)}{\rightarrow}(B,W)\overset{(u,W)}{\rightarrow}(A,W)\overset{-\cdot\delta}{\rightarrow}\mathbb{E}(C,W)\overset{\mathbb{E}(v,W)}{\rightarrow}\mathbb{E}(B,W)\overset{\mathbb{E}(u,W)}{\rightarrow}\mathbb{E}(A,W)
\end{alignat*}
are exact, where $(W,-):=\mathcal{\mathcal{D}}(W,-)$ and $(-,W):=\mathcal{\mathcal{D}}(-,W)$. 
\end{prop}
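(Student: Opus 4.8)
The "final statement" is Proposition~\ref{suc-1-ex}, namely the existence of the two long exact sequences attached to an $\mathfrak{s}$-conflation $\delta\colon A\xrightarrow{u}B\xrightarrow{v}C\xrightarrow{\delta}$ in an extriangulated category $\mathcal{D}$. This is cited from \cite[Cor.~3.12]{NP}, so the task is to reconstruct that argument from the axioms. By duality (passing to $\mathcal{D}^{op}$, which is again extriangulated with the opposite $\mathbb{E}$), it suffices to establish one of the two sequences, say the contravariant one
$$
(C,W)\xrightarrow{(v,W)}(B,W)\xrightarrow{(u,W)}(A,W)\xrightarrow{-\cdot\delta}\mathbb{E}(C,W)\xrightarrow{\mathbb{E}(v,W)}\mathbb{E}(B,W)\xrightarrow{\mathbb{E}(u,W)}\mathbb{E}(A,W).
$$
I would prove exactness at each of the five nodes separately.

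**The three "Hom/Ext-interface" nodes.**

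Exactness at $(A,W)$ and at $\mathbb{E}(C,W)$ is where the extriangulated axioms do the real work, so I expect this to be the main obstacle. For exactness at $(A,W)$: the composite $(u,W)\circ(v,W)=(vu,W)$ vanishes because $vu$ factors through the conflation and hence is zero; conversely, given $f\colon A\to W$ with $f\cdot\delta=0$ (equivalently, by the very meaning of $-\cdot\delta$, with $f\circ$-something... more precisely $fu$ should extend), I would use the axiom (ET3)$^{op}$/(ET4)$^{op}$-style statement that lets one build a morphism of conflations: push $\delta$ out along $f$ to get $f\cdot\delta\in\mathbb{E}(C,W)$; since this is split, the pushout conflation $W\to Z\to C$ splits, giving a retraction, and combining with the morphism of conflations realizing $(f,1_C)\colon\delta\to f\cdot\delta$ produces the desired factorization of $f$ through $u$. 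Dually, exactness at $\mathbb{E}(C,W)$ uses that an extension $\rho\in\mathbb{E}(C,W)$ with $\rho\cdot v=0$ (i.e.\ $\mathbb{E}(v,W)(\rho)=0$... wait, that node is the image of $-\cdot\delta$) is of the form $g\cdot\delta$: realize $\rho$ as a conflation $W\to Y\to C$, pull back along $v$ to get a split conflation, and chase the diagram to extract $g\colon A\to W$ with $g\cdot\delta=\rho$. Exactness at $\mathbb{E}(B,W)$ is the image of $\mathbb{E}(v,W)$; here I would realize a class $\eta\in\mathbb{E}(B,W)$ with $\eta\cdot u=0$ as a conflation, pull back along $v$, and use that the pullback of the original $\delta$-conflation along the comparison map is again an $\mathfrak{s}$-conflation (closure under the relevant base change, built into the axioms), concluding $\eta\in\operatorname{im}\mathbb{E}(v,W)$.

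**The two "purely Hom" nodes, and assembly.**

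Exactness at $(B,W)$ is the easiest: $(v,W)$ is injective because $v$ is an epimorphism in the relevant sense — if $gv=0$ then $g$ kills the image of $B$, and since $C$ is the cokernel-type object of the conflation, $g=0$; more carefully, $(v,W)$ injective follows from the fact that $A\xrightarrow{u}B\xrightarrow{v}C$ has $v$ "weakly cokernel" of $u$ is not quite enough, so instead I would note directly from the long-exact-sequence axiom stub in \cite{NP} (or argue: if $gv=0$, apply the covariant half already proven, or use that $v$ is a deflation which is an epimorphism whenever... )—in fact the clean route is: $g\circ v = 0 \Rightarrow g$ factors through the cocone of $v$, which is $A$ via $u$; but we also need $g\circ u$ to vanish... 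I will instead simply invoke that in an $\mathfrak s$-conflation, $u$ is a monomorphism and $v$ an epimorphism in $\mathcal D$ (a standard consequence of the axioms), giving injectivity of $(v,W)$ and injectivity of $\mathbb{E}(u,W)$... no—rather, exactness at $(B,W)$ says $\ker(u,W)=\operatorname{im}(v,W)$: one inclusion is $(u,W)(v,W)=0$, the other is the universal property making $v$ a weak cokernel of $u$. Finally, I assemble the six-term sequence, then obtain the covariant sequence for free by dualizing, and observe that the two sequences splice: the connecting maps $\delta\cdot-$ and $-\cdot\delta$ are exactly the components of the bifunctoriality of $\mathbb{E}$, so nothing further is needed. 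The single genuine difficulty throughout is the diagram-chase at the $(A,W)$ and $\mathbb{E}(C,W)$ nodes, where one must correctly invoke the morphism-of-conflations axiom and the fact that split extensions have split realizations; everything else is formal.
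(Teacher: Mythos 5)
The paper does not prove this proposition at all --- it is quoted verbatim from \cite[Cor.~3.12]{NP} --- so the comparison is between your sketch and the Nakaoka--Palu argument. Your overall architecture (dualize to one sequence, then verify exactness node by node by realizing extensions and using the morphism-of-extensions axiom (ET3)) is indeed the shape of their proof, and your treatment of the nodes $(A,W)$ and $\mathbb{E}(C,W)$ is essentially correct once written out: at $(A,W)$, realizing the split extension $f\cdot\delta=0$ by $W\to W\amalg C\to C$ and realizing the morphism $(f,1_C)\colon\delta\to f\cdot\delta$ produces $c\colon B\to W\amalg C$ with $cu=\iota_{W}f$, and projecting to $W$ gives the factorization; at $\mathbb{E}(C,W)$ the splitting of $\rho\cdot v$ yields $n\colon B\to Y$ with $yn=v$, then $nu$ factors through the inflation $W\to Y$, and the resulting morphism of conflations $(g,n,1_C)$ gives $g\cdot\delta=\rho$ because any morphism of realizations induces a morphism of extensions. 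Two caveats: the parenthetical claim that $u$ is a monomorphism and $v$ an epimorphism in $\mathcal{D}$ is false (take $\mathcal{D}$ triangulated), although you retract it; and the weak-cokernel property you invoke at the node $(B,W)$ is, in this paper, \emph{deduced from} the present proposition, so to avoid circularity you must establish it directly from (ET3)$^{\mathrm{op}}$ rather than cite it.

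The genuine gap is at the node $\mathbb{E}(B,W)$. An element $\eta\in\mathbb{E}(B,W)$ cannot be ``pulled back along $v\colon B\to C$'', and no base change of the conflation realizing $\delta$ will manufacture the required preimage in $\mathbb{E}(C,W)$: this is exactly the one node where the octahedral axiom (ET4)$^{\mathrm{op}}$ is indispensable (its triangulated shadow is the rotation of triangles, which has no (ET3)-level substitute). The correct argument realizes $\eta$ by $W\to Z\xrightarrow{z}B$, applies (ET4)$^{\mathrm{op}}$ to the composable deflations $z$ and $v$ to obtain a $3\times 3$ diagram containing a conflation $G\to Z\to C$ and a conflation $W\to G\to A$ realizing $\eta\cdot u$; the hypothesis $\eta\cdot u=0$ splits the latter, and pushing the former's extension class forward along the resulting retraction $G\to W$ produces $\rho\in\mathbb{E}(C,W)$ with $\rho\cdot v=\eta$. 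Without invoking (ET4)$^{\mathrm{op}}$ (or an equivalent) your sketch cannot close this node, so as written the proof is incomplete there.
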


Let $\C$ be an additive category and $\suc[A][B][C][u][v]$ be morphisms
in $\C.$ We recall that $u$ is a \textbf{weak kernel} of $v$ if
$v\circ u=0$ and any morphism $a:W\rightarrow B$ in $\C$ such that
$v\circ a=0$ factors through $u.$ Dually, $v$ is a \textbf{weak
cokernel} of $u$ if $v\circ u=0$ and any morphism $b:B\rightarrow W$
in $\C$ such that $b\circ u=0$ factors through $v.$ 
\begin{rem}
Let $\D$ be an extriangulated category and $\delta:\:\suc[A][B][C][u][v]$
be any $\mathfrak{s}$-conflation. Then, as a consequence of Proposition
\ref{suc-1-ex}, we have that $u$ is a weak kernel of $v$ and $v$
is a weak cokernel of $u.$ 
\end{rem}

\subsection{\label{subsec:Extriangulated-categories-with-neg}Extriangulated
categories with negative first extensions and $s$-torsion pairs}

Let $\D=(\mathcal{D},\mathbb{E},\mathfrak{s})$ be an extriangulated
category and $\delta:\:\suc[A][B][C][u][v]$ be any $\mathfrak{s}$-conflation.
By Proposition \ref{suc-1-ex}, we get that any $W\in\mathcal{D}$
gives the exact sequences 
\begin{alignat*}{1}
(W,A)\overset{(W,u)}{\rightarrow}(W,B)\overset{(W,v)}{\rightarrow}(W,C)\overset{\delta\cdot-}{\rightarrow}\mathbb{E}(W,A)\overset{\mathbb{E}(W,u)}{\rightarrow}\mathbb{E}(W,B)\overset{\mathbb{E}(W,v)}{\rightarrow}\mathbb{E}(W,C),\\
(C,W)\overset{(v,W)}{\rightarrow}(B,W)\overset{(u,W)}{\rightarrow}(A,W)\overset{-\cdot\delta}{\rightarrow}\mathbb{E}(C,W)\overset{\mathbb{E}(v,W)}{\rightarrow}\mathbb{E}(B,W)\overset{\mathbb{E}(u,W)}{\rightarrow}\mathbb{E}(A,W).
\end{alignat*}
It is well-known that the above exact sequences can be extended to
the left for many extriangulated categories. For example, in the case
of abelian categories, the sequence is extended with zeros; and for
triangulated categories, it is extended with the functor $\mathcal{D}(W,-[-1])$.
When an extriangulated category satisfies this, it is said to have
negative first extension. In the following we recall such notion from
\cite{AET}, the reader is referred to \cite{G} for additional information. 
\begin{defn}
\cite[Def. 2.3]{AET} Let $\D=(\mathcal{D},\mathbb{E},\mathfrak{s})$
be an extriangulated category. We say that $\D$ has \textbf{negative
first extension} if there is an additive bifunctor $\mathbb{E}^{-1}:\mathcal{\mathcal{D}}^{op}\times\mathcal{D}\rightarrow\operatorname{Ab}$,
and for each $\delta\in\mathbb{E}(C,A)$ there exist two natural transformations
$\delta^{-1}:\mathbb{E}^{-1}(-,C)\rightarrow\mathcal{D}(-,A)$ and
$\delta_{-1}:\mathbb{E}^{-1}(A,-)\rightarrow\mathcal{D}(C,-)$ such
that, for any $\mathfrak{s}$-conflation $\delta:\:\suc[A][B][C][u][v]$
and each $W\in\mathcal{D}$, the sequences 
\begin{alignat*}{1}
\mathbb{E}^{-1}(W,A)\overset{\mathbb{E}^{-1}(W,u)}{\rightarrow}\mathbb{E}^{-1}(W,B)\overset{\mathbb{E}^{-1}(W,v)}{\rightarrow}\mathbb{E}^{-1}(W,C)\overset{\delta^{-1}}{\rightarrow}(W,A)\overset{(W,u)}{\rightarrow}(W,B),\\
\mathbb{E}^{-1}(C,W)\overset{\mathbb{E}^{-1}(v,W)}{\rightarrow}\mathbb{E}^{-1}(B,W)\overset{\mathbb{E}^{-1}(u,W)}{\rightarrow}\mathbb{E}^{-1}(A,W)\overset{\delta_{-1}}{\rightarrow}(C,W)\overset{(v,W)}{\rightarrow}(B,W)
\end{alignat*}
are exact. In this case, we say that $\D=(\D,\mathbb{E},\mathfrak{s},\mathbb{E}^{-1})$
is an extriangulated category with a negative first extension. 
\end{defn}

It can be seen that, by restricting the structure of an extriangulated
category with negative first extension to a subcategory closed under
extensions, we obtain a new extriangulated category with negative
first extension, see \cite[Ex. 2.4(iii)]{AET}. A particular case,
in which we will be interested, is the one of subcategories of triangulated
categories which are closed under extensions.

The structure of extriangulated categories allows us to develop concepts
parallel to that of torsion pair in abelian categories. In this section
we will present the notion of $s$-torsion for extriangulated categories
with negative first extension and recall some basic properties.

We start with the following definition of torsion pair in extriangulated
categories that comes naturally from the notion of torsion pair in
abelian categories. 
\begin{defn}
\cite[Def. 3.1]{HHZ} Let $\mathcal{D}$ be an extriangulated category.
A \textbf{torsion pair} in $\mathcal{D}$ is a pair $\mathbf{x}=(\mathcal{X},\mathcal{Y})$
of full subcategories of $\D$ which are closed under isomorphisms
in $\D$ and such that $\mathcal{D}(\mathcal{X},\mathcal{Y})=0$ and
$\mathcal{D}=\mathcal{X}\star\mathcal{Y}.$ 
\end{defn}

In case the extriangulated category has negative first extensions,
we have the notion of $s$-torsion pair. By following \cite{AET},
we recall that the term ``$s$'' in $s$-torsion stands for ``shift-closed''
by means of \cite[Lem. 3.3]{AET}. 
\begin{defn}
\cite[Def. 3.1]{AET} \label{def-neg-ext} Let $\mathcal{D}$ be an
extriangulated category with negative first extension. An \textbf{$s$-torsion
pair} in $\mathcal{\mathcal{D}}$ is a torsion pair $\mathbf{x}=(\mathcal{X},\mathcal{Y})$
such that $\mathbb{E}^{-1}(\mathcal{X},\mathcal{Y})=0$. 
\end{defn}

Let $\D$ be an extriangulated category with negative first extension.
For a class $\X\subseteq\D$ and $i\in\{-1,0,1\},$ we define the
right $i$-th orthogonal complement of $\X$ by $\X^{\perp_{i}}:=\{D\in\D\;:\;\mathbb{E}^{i}(\X,D)=0\},$
where $\mathbb{E}^{0}(-,-):=\Hom[\D][-][-].$ Dually, we have $^{\perp_{i}}\X$
which is the left $i$-th orthogonal complement of $\X.$ In case
the class $\X$ has only one element, i.e. $\X=\{X\},$ we set $X^{\perp_{i}}:=\X^{\perp_{i}}$
and $^{\perp_{i}}X:={}^{\perp_{i}}\X.$ As a consequence of the above
definitions, we get the following remark. 
\begin{rem}
\label{ceys} For an extriangulated category $\D$ with negative first
extension, a class $\X\subseteq\D$ and $i\in\{-1,0,1\},$ the classes
$\X^{\perp_{i}}$ and $^{\perp_{i}}\X$ are closed under extensions
and direct summands. 
\end{rem}

\begin{prop}
\cite[Prop. 3.2]{AET}\label{prop:clases ort en stors} Let $\p[\mathcal{T}][\mathcal{F}]$
be an $s$-torsion pair in $\D.$ Then, $\mathcal{T}^{\perp_{0}}=\mathcal{F}$
and $^{\perp_{0}}\mathcal{F}=\mathcal{T}$. 
\end{prop}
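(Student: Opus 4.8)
The plan is to establish $\mathcal{T}^{\perp_{0}}=\mathcal{F}$ by a double inclusion, and then to deduce $^{\perp_{0}}\mathcal{F}=\mathcal{T}$ by the dual argument, carried out in the opposite category $\mathcal{D}^{\mathrm{op}}$ (which is again extriangulated with a negative first extension, and in which $(\mathcal{F},\mathcal{T})$ is an $s$-torsion pair). The inclusion $\mathcal{F}\subseteq\mathcal{T}^{\perp_{0}}$ is immediate from the definition of a torsion pair, since $\mathbb{E}^{0}(\mathcal{T},\mathcal{F})=\mathcal{D}(\mathcal{T},\mathcal{F})=0$.

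For the reverse inclusion, I would take $D\in\mathcal{T}^{\perp_{0}}$ and, using $\mathcal{D}=\mathcal{T}\star\mathcal{F}$, fix an $\mathfrak{s}$-conflation $\delta\colon T\xrightarrow{u}D\xrightarrow{v}F$ with $T\in\mathcal{T}$ and $F\in\mathcal{F}$. The first step is to show $T\cong 0$: applying the first exact sequence attached to a negative first extension to $\delta$ with $W=T$ yields exactness of
\[
\mathbb{E}^{-1}(T,F)\xrightarrow{\delta^{-1}}\mathcal{D}(T,T)\xrightarrow{(T,u)}\mathcal{D}(T,D).
\]
Here $\mathbb{E}^{-1}(T,F)=0$ by the defining condition $\mathbb{E}^{-1}(\mathcal{T},\mathcal{F})=0$ of an $s$-torsion pair, while $\mathcal{D}(T,D)=0$ since $D\in\mathcal{T}^{\perp_{0}}$; hence the middle term $\mathcal{D}(T,T)$ is squeezed to $0$, so $\mathrm{id}_{T}=0$ and $T\cong 0$. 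The second step is to conclude that $v$ is an isomorphism: feeding $\delta$ and an arbitrary $W\in\mathcal{D}$ into the covariant exact sequence of Proposition \ref{suc-1-ex} and using $\mathcal{D}(W,T)=0=\mathbb{E}(W,T)$ (as $T\cong 0$), one obtains a short exact sequence $0\to\mathcal{D}(W,D)\xrightarrow{(W,v)}\mathcal{D}(W,F)\to 0$, so that $\mathcal{D}(-,v)$ is a natural isomorphism and $v$ is an isomorphism by Yoneda. Since $\mathcal{F}$ is closed under isomorphisms, $D\cong F\in\mathcal{F}$, giving $\mathcal{T}^{\perp_{0}}=\mathcal{F}$.

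The only point needing care is invoking the negative-first-extension sequences with the correct variance, so that the term forced to vanish is precisely $\mathbb{E}^{-1}(T,F)$ with $T$ in the torsion class and $F$ in the torsion-free class; everything else is a routine diagram chase. For the dual equality $^{\perp_{0}}\mathcal{F}=\mathcal{T}$, I would start from $T'\in{}^{\perp_{0}}\mathcal{F}$ and an $\mathfrak{s}$-conflation $T\xrightarrow{u}T'\xrightarrow{v}F$ with $T\in\mathcal{T}$, $F\in\mathcal{F}$; the second exact sequence attached to the negative first extension, evaluated at $W=F$, together with $\mathbb{E}^{-1}(\mathcal{T},\mathcal{F})=0$ and $\mathcal{D}(T',F)=0$, forces $F\cong 0$, and then the contravariant exact sequence of Proposition \ref{suc-1-ex} forces $u$ to be an isomorphism, whence $T'\cong T\in\mathcal{T}$.
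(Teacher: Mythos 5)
Your argument is correct. Note that the paper itself gives no proof of this proposition --- it is quoted verbatim from \cite[Prop. 3.2]{AET} --- so there is nothing to compare against; your write-up simply reconstructs the standard argument. All the key steps check out: the vanishing of $\mathbb{E}^{-1}(T,F)$ and of $\mathcal{D}(T,D)$ squeeze $\mathcal{D}(T,T)$ to zero in the negative-first-extension sequence, forcing $T\cong 0$; then $\mathbb{E}(W,0)=0$ (additivity of $\mathbb{E}$) makes $\mathcal{D}(W,v)$ bijective for every $W$, so $v$ is an isomorphism by Yoneda and $D\in\mathcal{F}$ by closure under isomorphisms; the dual equality follows by the mirror-image computation you indicate.
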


\begin{prop}
\label{lem:cerraduras stors} For an $s$-torsion pair $(\mathcal{T},\mathcal{F})$
in $\D,$ the following statements hold true. 
\begin{enumerate}
\item $\T$ is closed under cones, extensions and direct summands. 
\item $\F$ is closed under cocones, extensions and direct summands. 
\item Let $\suc[X][Y][Z]$ be an $\mathfrak{s}$-conflation. Then: 
\begin{enumerate}
\item $X\in{}^{\bot_{-1}}\mathcal{F}$ if $Y,Z\in\mathcal{T}$; 
\item $Z\in\mathcal{T}^{\bot_{-1}}$ if $X,Y\in\mathcal{F}$. 
\end{enumerate}
\item For any $s$-torsion pair $(\X,\Y)$ in $\D,$ $\X\subseteq\T$ $\Leftrightarrow$
$\F\subseteq\Y.$ 
\end{enumerate}
\end{prop}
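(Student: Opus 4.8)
The plan is to derive every assertion from the two long exact sequences of Proposition~\ref{suc-1-ex}, each extended one step further to the left by the negative first extension (Definition~\ref{def-neg-ext}), together with the orthogonality description of the two classes of an $s$-torsion pair, namely $\T={}^{\perp_{0}}\F$ and $\F=\T^{\perp_{0}}$ from Proposition~\ref{prop:clases ort en stors}. The only vanishing inputs available are $\D(\T,\F)=0$ (Hom-orthogonality of a torsion pair) and $\mathbb{E}^{-1}(\T,\F)=0$ (the extra $s$-condition of Definition~\ref{def-neg-ext}), and essentially the whole proof consists of recording, in each situation, which of these two facts kills which term of the appropriate exact sequence.

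For (1) and (2): since $\T={}^{\perp_{0}}\F$ and $\F=\T^{\perp_{0}}$, closure of $\T$ and $\F$ under extensions and direct summands is immediate from Remark~\ref{ceys}. For closure of $\T$ under cones, take an $\s$-conflation $X\to Y\to Z$ with $X,Y\in\T$; to show $Z\in{}^{\perp_{0}}\F=\T$, fix $F\in\F$ and consider the fragment $\mathbb{E}^{-1}(X,F)\to\D(Z,F)\to\D(Y,F)$ of the negative sequence that is contravariant in the first variable. The left-hand group is $0$ because $X\in\T$, $F\in\F$ and $(\T,\F)$ is an $s$-torsion pair, and the right-hand group is $0$ because $Y\in\T$; exactness at $\D(Z,F)$ then gives $\D(Z,F)=0$. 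Closure of $\F$ under cocones is dual: for $X\to Y\to Z$ with $Y,Z\in\F$ and $T\in\T$, the fragment $\mathbb{E}^{-1}(T,Z)\to\D(T,X)\to\D(T,Y)$ of the covariant negative sequence has both outer terms zero, so $\D(T,X)=0$ and $X\in\T^{\perp_{0}}=\F$.

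For (3): given $X\to Y\to Z$ with $Y,Z\in\T$, to prove $X\in{}^{\perp_{-1}}\F$ I fix $F\in\F$ and use the fragment $\mathbb{E}^{-1}(Y,F)\to\mathbb{E}^{-1}(X,F)\to\D(Z,F)$ of the contravariant negative sequence; the left term vanishes by the $s$-condition since $Y\in\T$, and the right term vanishes by Hom-orthogonality since $Z\in\T$ and $F\in\F$, whence $\mathbb{E}^{-1}(X,F)=0$. Part (b) is the dual statement: for $X,Y\in\F$ and $T\in\T$, the fragment $\mathbb{E}^{-1}(T,Y)\to\mathbb{E}^{-1}(T,Z)\to\D(T,X)$ of the covariant negative sequence again has both outer terms zero, so $\mathbb{E}^{-1}(T,Z)=0$ and $Z\in\T^{\perp_{-1}}$. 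Finally, (4) is purely formal: by Proposition~\ref{prop:clases ort en stors} we have $\F=\T^{\perp_{0}}$, $\Y=\X^{\perp_{0}}$, $\T={}^{\perp_{0}}\F$ and $\X={}^{\perp_{0}}\Y$, and both $(-)^{\perp_{0}}$ and ${}^{\perp_{0}}(-)$ reverse inclusions, so $\X\subseteq\T$ yields $\F=\T^{\perp_{0}}\subseteq\X^{\perp_{0}}=\Y$, and $\F\subseteq\Y$ yields $\X={}^{\perp_{0}}\Y\subseteq{}^{\perp_{0}}\F=\T$.

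I do not anticipate a genuine obstacle here; the statement is essentially a dictionary exercise. The one point that needs care is not to stop at Proposition~\ref{suc-1-ex} itself but to use its leftward extension through $\mathbb{E}^{-1}$, so that the Hom- or $\mathbb{E}^{-1}$-group one wants to annihilate appears wedged between two groups already known to vanish; and one must consistently track the variance (which of the two negative sequences to invoke) and which hypothesis --- $\D(\T,\F)=0$ or $\mathbb{E}^{-1}(\T,\F)=0$ --- is being used at each step. A routine preliminary check is that $\mathbb{E}^{-1}$ is an additive bifunctor into $\operatorname{Ab}$, so that all the displayed sequences are exact sequences of abelian groups and the three-term exactness arguments above are legitimate.
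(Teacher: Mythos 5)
Your proof is correct and is exactly the argument the paper intends: the paper's own proof simply cites the long exact sequences of Definition \ref{def-neg-ext}, Remark \ref{ceys} and Proposition \ref{prop:clases ort en stors}, and your write-up fills in precisely those details with the right choice of variance and vanishing term at each step.
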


\begin{proof}
The items (a), (b) and (c) follow from the long exact sequences appearing
in Definition \ref{def-neg-ext}, Remark \ref{ceys} and Proposition
\ref{prop:clases ort en stors}. Finally, (d) can be obtained from
Proposition \ref{prop:clases ort en stors}. 
\end{proof}
\begin{rem}
\label{adj-stor} Let $\mathbf{t}=(\mathcal{T},\mathcal{F})$ be an
$s$-torsion pair in $\D.$ It follows that, for all $C\in\mathcal{\mathcal{D}},$
there is an $\mathfrak{s}$-conflation $\delta_{C}:\,\suc[T_{C}][C][F_{C}][u][v]$
with $T_{C}\in\mathcal{T}$ and $F_{C}\in\mathcal{F}$. An important
property of $s$-torsion pairs is that the $\mathfrak{s}$-conflation
$\delta_{C}$ is unique up to isomorphisms of $\mathfrak{s}$-conflations.
Thus, we have a pair of functors: the torsion functor $\mathbf{t}:\mathcal{\mathcal{D}}\rightarrow\mathcal{T}$,
$C\mapsto T_{C}$, and the torsion-free functor $\left(\mathbf{1:t}\right):\mathcal{\mathcal{D}}\rightarrow\mathcal{F}$,
$C\mapsto F_{C}$, which are, respectively, the right and left adjoint
functors of the inclusion functors $\mathcal{T}\rightarrow\mathcal{\mathcal{D}}$
and $\mathcal{F}\rightarrow\mathcal{\mathcal{D}}.$ The reader is
referred to \cite[Prop. 3.7]{AET} for more details. 
\end{rem}

\subsection{$t$-structures in triangulated categories}

A particular case of an $s$-torsion pair in an extriangulated category
with negative first extension is the notion of $t$-structure in a
triangulated category. Since this will be an object of interest in
the paper, we include below the definition and some remarks.

Recall that a triangulated category consists of a triple $(\mathcal{\mathcal{D}},\Sigma,\triangle),$
where $\mathcal{\mathcal{D}}$ is an additive category, $\Sigma:\mathcal{\mathcal{D}}\rightarrow\mathcal{\mathcal{D}}$
is an automorphism, and $\triangle$ is a class of sequences of morphisms
(called triangles) of the form $\suc\rightarrow\Sigma N$ satisfying
a series of axioms. The reader is referred to \cite{N} for a precise
definition of triangulated category. The notion of $t$-structure
was first introduced in \cite{BBD}. Here we present an equivalent
definition, see \cite[Rk. 4.2]{PSV3} for details. 
\begin{defn}
Let $(\mathcal{D},\Sigma,\triangle)$ be a triangulated category.
A pair $(\mathcal{U},\mathcal{W})$ of full subcategories of $\mathcal{D}$
is called a \textbf{$t$-structure} in $\mathcal{D}$ if: $\U$ and
$\W$ are closed under isomorphisms, $\Sigma\U\subseteq\U$ and $(\U,\Sigma^{-1}\W)$
is a torsion pair in $\D.$ 
\end{defn}

\begin{rem}
\label{st-ts-tricat}\label{rem:s-torsion-implies-t-struc} Let $(\mathcal{D},\Sigma,\triangle)$
be a triangulated category and $(\mathcal{U},\mathcal{W})$ a pair
of full subcategories of $\mathcal{D}.$ 
\begin{enumerate}
\item \cite[Lem. 3.3]{AET} The pair $(\mathcal{U},\mathcal{W})$ is a $t$-structure
in $\D$ if and only if $(\mathcal{U},\Sigma^{-1}\mathcal{W})$ is
an $s$-torsion pair in $\mathcal{D}.$ 
\item \cite{BBD} If $\mathbf{u}:=(\mathcal{U},\mathcal{W})$ is a $t$-structure
in $\mathcal{D}$, then $\Hcal=\mathcal{H}_{\mathbf{u}}:=\mathcal{U}\cap\mathcal{W}$
is an abelian category and it is known as the \textbf{heart} of $\mathbf{u}$. 
\item Let $t:=(\T,\F)$ be an $s$-torsion pair in $\D.$ Then $\Sigma\,t:=(\Sigma\T,\Sigma\F)$
and $\Sigma^{-1}\,t:=(\Sigma^{-1}\T,\Sigma^{-1}\F)$ are $s$-torsion
pairs in $\D$ and $\Sigma\T\subseteq\T\subseteq\Sigma^{-1}\T.$ 
\end{enumerate}
\end{rem}

\subsection{Intervals of $s$-torsion pairs }

Let $\D$ be an extriangulated category with negative first extension.
Consider two $s$-torsion pairs $\mathbf{t}=(\mathcal{T},\mathcal{F})$
and $\mathbf{t}'=(\mathcal{T}',\mathcal{F}')$ in $\mathcal{\mathcal{D}}.$
By following \cite{AET}, $\mathbf{t}\leq\mathbf{t}'$ if $\mathcal{T}\subseteq\mathcal{T}'$.
By using this relation, we can consider the poset of $s$-torsion
pairs in $\mathcal{\mathcal{D}}$ that is denoted by $\stors\,\D.$
Moreover, given $\mathbf{r}:=(\mathcal{R},\mathcal{M})$ and $\mathbf{s}:=(\mathcal{S},\mathcal{N})$
in $\stors\,\D$ such that $\mathbf{r}\leq\mathbf{s}$, we have the
interval $[\mathbf{r},\mathbf{s}]:=\{\mathbf{t}\in\stors\,\D\;:\;\mathbf{r}\leq\mathbf{t}\leq\mathbf{s}\}$
and define the \textbf{heart} of $[\mathbf{r},\mathbf{s}]$ as the
class $\mathcal{H}_{[\mathbf{r},\mathbf{s}]}:=\mathcal{M}\cap\mathcal{S}.$
Observe that $\mathcal{H}_{[\mathbf{r},\mathbf{s}]}$ is closed under
extensions, and thus $\mathcal{H}_{[\mathbf{r},\mathbf{s}]}$ inherits
an extriangulated structure with negative first extension from the
extriangulated category $\D.$ 
\begin{example}
Let $\mathcal{\mathcal{D}}$ be a triangulated category and $\mathbf{t}:=(\T,\F)$
be an $s$-torsion pair in $\D.$ Then, by Remark \ref{st-ts-tricat}(c),
we have that $\Sigma\,\mathbf{t}:=(\Sigma\T,\Sigma\F)$ and $\Sigma^{-1}\,\mathbf{t}:=(\Sigma^{-1}\T,\Sigma^{-1}\F)$
are $s$-torsion pairs in $\D$ and $\Sigma\,\mathbf{t}\leq\mathbf{t}\leq\Sigma^{-1}\,\mathbf{t}.$
Now, consider the $t$-structures which correspond, respectively,
to $\Sigma\,\mathbf{t}$ and $\Sigma^{-1}\,\mathbf{t};$ namely: $\mathbf{t}_{*}:=(\Sigma\T,\Sigma^{2}\F)$
and $\mathbf{t}^{*}:=(\Sigma^{-1}\T,\F).$ Therefore, it follows that
$\mathcal{H}_{[\Sigma\,\mathbf{t},\mathbf{t}]}=\Sigma\,\F\cap\T=\Sigma^{-1}\,\mathcal{H}_{\mathbf{t}_{*}}$
and $\mathcal{H}_{[\mathbf{t},\Sigma^{-1}\,\mathbf{t}]}=\F\cap\Sigma^{-1}\,\T=\mathcal{H}_{\mathbf{t}^{*}}$
are abelian categories. 
\end{example}

\begin{example}
\label{exa:quasi-abelian} \cite[Sect. 1.2]{S}, \cite[Thm. 2.15]{F},
\cite[Sect. 5]{T}. Let $\mathcal{E}$ be a \textbf{quasi-abelian}
category. That is, $\mathcal{E}$ is an additive category such that:
every morphism admits a kernel and a cokernel; and every push-out
(pull-back) of a kernel (cokernel) is a kernel (cokernel). We can
associate to $\mathcal{E}$ a triangulated category $\mathcal{D}$.
Moreover, $\mathcal{D}$ is equipped with a pair of $s$-torsion pairs
$\t_{r},\t_{\ell}\in\stors\,\mathcal{D}$ such that $\Sigma\t_{r}\leq\t_{\ell}\leq\t_{r}$
and $\mathcal{H}_{[\Sigma\t_{r},\t_{\ell}]}\cong\mathcal{E}.$ The
category $\mathcal{R}_{\mathcal{E}}:=\mathcal{H}_{[\Sigma\t_{r},\t_{r}]}$
is known as the \textbf{right associated abelian category of} $\mathcal{E}$,
and the category $\mathcal{L}_{\mathcal{E}}:=\mathcal{H}_{[\Sigma\t_{\ell},\t_{\ell}]}$
is known as the \textbf{left associated abelian category of} $\mathcal{E}$. 
\end{example}

\begin{rem}
\cite[Lem. 3.11]{AET}\label{rem:cor} Let $\mathbf{r}:=(\mathcal{R},\mathcal{M}),\mathbf{s}:=(\mathcal{S},\mathcal{N})\in\stors\,\mathcal{D}$
be such that $\mathbf{r}\leq\mathbf{s}$. Then $\mathcal{S}=\mathcal{R}\star\mathcal{H}_{[\mathbf{r},\mathbf{s}]}$
and $\mathcal{M}=\mathcal{H}_{[\mathbf{r},\mathbf{s}]}\star\mathcal{N}$. 
\end{rem}

A classical result from \cite{HRS} shows how to parameterize a certain
family of $t$-structures by means of the torsion pairs of the heart
of a given $t$-structure. The following theorem can be understood
as a generalization of this result. It also extends similar results
for quasi-abelian subcategories of abelian categories \cite{T}. 
\begin{thm}
\cite[Thm. 3.9]{AET}\label{thm:proceso aet} Let $\mathcal{D}$ be
an extriangulated category with negative first extension, and let
$\mathbf{t}_{1}=(\mathcal{T}_{1},\mathcal{F}_{1})$ and $\mathbf{t}_{2}=(\mathcal{T}_{2},\mathcal{F}_{2})$
in $\stors\,\mathcal{\mathcal{D}}$ be such that $\mathbf{t}_{1}\leq\mathbf{t}_{2}$.
Then, there exist an isomorphism of posets 
\[
\Phi:[\mathbf{t}_{1},\mathbf{t}_{2}]\rightarrow\stors\,\mathcal{H}_{[\mathbf{t}_{1},\mathbf{t}_{2}]},\;(\mathcal{T},\mathcal{F})\mapsto(\mathcal{T}\cap\mathcal{F}_{1},\mathcal{T}_{2}\cap\mathcal{F})
\]
with inverse 
\[
\Psi:\stors\,\mathcal{H}_{[\mathbf{t}_{1},\mathbf{t}_{2}]}\rightarrow[\mathbf{t}_{1},\mathbf{t}_{2}],\;(\mathcal{X},\mathcal{Y})\mapsto(\mathcal{T}_{1}\star\mathcal{X},\mathcal{Y}\star\mathcal{F}_{2}).
\]
\end{thm}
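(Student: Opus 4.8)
The plan is to verify four things: that $\Phi$ takes values in $\stors\,\mathcal{H}$ (write $\mathcal{H}:=\mathcal{H}_{[\mathbf{t}_{1},\mathbf{t}_{2}]}=\mathcal{F}_{1}\cap\mathcal{T}_{2}$), that $\Psi$ takes values in $[\mathbf{t}_{1},\mathbf{t}_{2}]$, that both are order-preserving, and that $\Psi\circ\Phi$ and $\Phi\circ\Psi$ are identities; since an order-preserving bijection with order-preserving inverse is an isomorphism of posets, this finishes the proof. As recalled above, $\mathcal{H}$ is closed under extensions and hence carries a negative first extension whose $\mathbb{E}$ and $\mathbb{E}^{-1}$ are the restrictions of those of $\mathcal{D}$, so all orthogonality computations inside $\mathcal{H}$ may be done in $\mathcal{D}$. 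The tools used repeatedly are: the closure of $s$-torsion (resp.\ torsion-free) classes under cones and extensions (resp.\ cocones and extensions), Proposition \ref{lem:cerraduras stors}; the associativity of $\star$ and Remark \ref{rem:cor}, giving $\mathcal{T}_{2}=\mathcal{T}_{1}\star\mathcal{H}$ and $\mathcal{F}_{1}=\mathcal{H}\star\mathcal{F}_{2}$; the uniqueness of the $s$-torsion decomposition, Remark \ref{adj-stor}; and the exact sequences of Proposition \ref{suc-1-ex} and of Definition \ref{def-neg-ext}.

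For $\Phi$: let $(\mathcal{T},\mathcal{F})\in[\mathbf{t}_{1},\mathbf{t}_{2}]$, so $\mathcal{T}_{1}\subseteq\mathcal{T}\subseteq\mathcal{T}_{2}$ and $\mathcal{F}_{2}\subseteq\mathcal{F}\subseteq\mathcal{F}_{1}$. Then $\mathcal{T}\cap\mathcal{F}_{1}\subseteq\mathcal{H}$ and $\mathcal{T}_{2}\cap\mathcal{F}\subseteq\mathcal{H}$, and the two orthogonality conditions for $(\mathcal{T}\cap\mathcal{F}_{1},\mathcal{T}_{2}\cap\mathcal{F})$ follow at once from $\mathcal{D}(\mathcal{T},\mathcal{F})=0=\mathbb{E}^{-1}(\mathcal{T},\mathcal{F})$. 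For the $\star$-decomposition, take $H\in\mathcal{H}$ and an $\mathfrak{s}$-conflation $T_{H}\to H\to F_{H}$ with $T_{H}\in\mathcal{T}$, $F_{H}\in\mathcal{F}$ (Remark \ref{adj-stor}); as $F_{H}\in\mathcal{F}\subseteq\mathcal{F}_{1}$ and $H\in\mathcal{F}_{1}$, closure of $\mathcal{F}_{1}$ under cocones gives $T_{H}\in\mathcal{F}_{1}$, and as $T_{H}\in\mathcal{T}\subseteq\mathcal{T}_{2}$ and $H\in\mathcal{T}_{2}$, closure of $\mathcal{T}_{2}$ under cones gives $F_{H}\in\mathcal{T}_{2}$; hence $T_{H}\in\mathcal{T}\cap\mathcal{F}_{1}$, $F_{H}\in\mathcal{T}_{2}\cap\mathcal{F}$, and the conflation lies entirely in $\mathcal{H}$. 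So $\Phi(\mathcal{T},\mathcal{F})\in\stors\,\mathcal{H}$, and $\Phi$ is clearly order-preserving.

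The main step is the well-definedness of $\Psi$. Let $(\mathcal{X},\mathcal{Y})\in\stors\,\mathcal{H}$. From $\mathcal{T}_{1}\subseteq\mathcal{T}_{2}$, $\mathcal{X}\subseteq\mathcal{H}\subseteq\mathcal{T}_{2}$, $\mathcal{Y}\subseteq\mathcal{H}\subseteq\mathcal{F}_{1}$, $\mathcal{F}_{2}\subseteq\mathcal{F}_{1}$ and extension-closure of $\mathcal{T}_{2}$ and $\mathcal{F}_{1}$ we get $\mathcal{T}_{1}\subseteq\mathcal{T}_{1}\star\mathcal{X}\subseteq\mathcal{T}_{2}$ and $\mathcal{F}_{2}\subseteq\mathcal{Y}\star\mathcal{F}_{2}\subseteq\mathcal{F}_{1}$, so once $(\mathcal{T}_{1}\star\mathcal{X},\mathcal{Y}\star\mathcal{F}_{2})$ is shown to be an $s$-torsion pair it automatically lies in $[\mathbf{t}_{1},\mathbf{t}_{2}]$. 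Associativity of $\star$ together with $\mathcal{X}\star\mathcal{Y}=\mathcal{H}$ and Remark \ref{rem:cor} gives $(\mathcal{T}_{1}\star\mathcal{X})\star(\mathcal{Y}\star\mathcal{F}_{2})=\mathcal{T}_{1}\star\mathcal{H}\star\mathcal{F}_{2}=\mathcal{T}_{2}\star\mathcal{F}_{2}=\mathcal{D}$. For the first orthogonality condition, take $A\in\mathcal{T}_{1}\star\mathcal{X}$ and $B\in\mathcal{Y}\star\mathcal{F}_{2}$ with $\mathfrak{s}$-conflations $T_{1}\xrightarrow{a}A\xrightarrow{b}X$ ($T_{1}\in\mathcal{T}_{1}$, $X\in\mathcal{X}$) and $Y\xrightarrow{c}B\xrightarrow{d}F_{2}$ ($Y\in\mathcal{Y}$, $F_{2}\in\mathcal{F}_{2}$), and let $f\colon A\to B$; since $A\in\mathcal{T}_{2}$ and $F_{2}\in\mathcal{F}_{2}$ we have $df=0$, so $f$ factors through the weak kernel $c$ as $f=ch$; since $T_{1}\in\mathcal{T}_{1}$ and $Y\in\mathcal{F}_{1}$ we have $ha=0$, so $h$ factors through the weak cokernel $b$ as $h=kb$ with $k\colon X\to Y$; and $\mathcal{D}(\mathcal{X},\mathcal{Y})=0$ forces $k=0$, hence $f=0$. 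The same bookkeeping run through the exact sequences of Definition \ref{def-neg-ext} gives $\mathbb{E}^{-1}(A,B)=0$: applying $\mathbb{E}^{-1}(A,-)$ to $Y\to B\to F_{2}$ reduces it to $\mathbb{E}^{-1}(A,Y)$ (using $A\in\mathcal{T}_{2}$, $F_{2}\in\mathcal{F}_{2}$), and then applying $\mathbb{E}^{-1}(-,Y)$ to $T_{1}\to A\to X$ reduces that to $\mathbb{E}^{-1}(X,Y)$ (using $T_{1}\in\mathcal{T}_{1}$, $Y\in\mathcal{F}_{1}$), which is $0$ since $(\mathcal{X},\mathcal{Y})$ is an $s$-torsion pair in $\mathcal{H}$. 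Thus $\Psi(\mathcal{X},\mathcal{Y})\in[\mathbf{t}_{1},\mathbf{t}_{2}]$, and $\Psi$ is plainly order-preserving. I expect this coordination of the three orthogonality relations — those of $\mathbf{t}_{1}$, of $\mathbf{t}_{2}$, and of $(\mathcal{X},\mathcal{Y})$ in $\mathcal{H}$ — through the two defining conflations to be the crux of the argument.

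It remains to compute the composites. For $\Psi\circ\Phi=\mathrm{id}$, given $(\mathcal{T},\mathcal{F})\in[\mathbf{t}_{1},\mathbf{t}_{2}]$ it suffices to show $\mathcal{T}_{1}\star(\mathcal{T}\cap\mathcal{F}_{1})=\mathcal{T}$ and $(\mathcal{T}_{2}\cap\mathcal{F})\star\mathcal{F}_{2}=\mathcal{F}$; for the first, ``$\subseteq$'' is extension-closure of $\mathcal{T}$, and ``$\supseteq$'' follows by taking, for $A\in\mathcal{T}$, its canonical $\mathbf{t}_{1}$-decomposition $T_{1}\to A\to F_{1}$ and noting $F_{1}\in\mathcal{T}$ by closure of $\mathcal{T}$ under cones, so $F_{1}\in\mathcal{T}\cap\mathcal{F}_{1}$; the second statement is dual, using closure of $\mathcal{F}$ under cocones. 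For $\Phi\circ\Psi=\mathrm{id}$, writing $(\mathcal{T},\mathcal{F}):=\Psi(\mathcal{X},\mathcal{Y})$ it suffices to show $\mathcal{T}\cap\mathcal{F}_{1}=\mathcal{X}$ and $\mathcal{T}_{2}\cap\mathcal{F}=\mathcal{Y}$; ``$\supseteq$'' is immediate, and for ``$\subseteq$'' take $A\in(\mathcal{T}_{1}\star\mathcal{X})\cap\mathcal{F}_{1}$ with an $\mathfrak{s}$-conflation $T_{1}\to A\to X$, $T_{1}\in\mathcal{T}_{1}$, $X\in\mathcal{X}\subseteq\mathcal{F}_{1}$: this is a conflation with torsion first term and torsion-free last term for $\mathbf{t}_{1}$, hence by uniqueness of the $s$-torsion decomposition (Remark \ref{adj-stor}) it is isomorphic to the canonical decomposition of $A$, whose torsion term is $0$ since $A\in\mathcal{F}_{1}$; therefore $A\cong X\in\mathcal{X}$. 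The other equality is dual. This exhibits $\Phi$ and $\Psi$ as mutually inverse order-preserving maps, i.e.\ the claimed isomorphism of posets.
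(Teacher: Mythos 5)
Your argument is correct and complete. Note, however, that the paper does not prove this statement at all: it is imported verbatim as \cite[Thm.\ 3.9]{AET}, so there is no internal proof to compare against. What you have written is essentially a reconstruction of the Adachi--Enomoto--Tsukamoto argument, and all four steps check out: the cone/cocone closure from Proposition \ref{lem:cerraduras stors} correctly places the canonical $\mathbf{t}$-decomposition of an object of $\mathcal{H}_{[\mathbf{t}_{1},\mathbf{t}_{2}]}$ inside the heart; the two-stage reduction of $\mathcal{D}(A,B)$ and $\mathbb{E}^{-1}(A,B)$ through the defining conflations of $A\in\mathcal{T}_{1}\star\mathcal{X}$ and $B\in\mathcal{Y}\star\mathcal{F}_{2}$ is exactly the right use of the exact sequences of Proposition \ref{suc-1-ex} and Definition \ref{def-neg-ext}; and the uniqueness of the $s$-torsion decomposition (Remark \ref{adj-stor}) is the correct tool for $\Phi\circ\Psi=\mathrm{id}$. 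One small logical remark: you invoke Remark \ref{rem:cor} (i.e.\ $\mathcal{T}_{2}=\mathcal{T}_{1}\star\mathcal{H}_{[\mathbf{t}_{1},\mathbf{t}_{2}]}$ and $\mathcal{F}_{1}=\mathcal{H}_{[\mathbf{t}_{1},\mathbf{t}_{2}]}\star\mathcal{F}_{2}$) to establish $(\mathcal{T}_{1}\star\mathcal{X})\star(\mathcal{Y}\star\mathcal{F}_{2})=\mathcal{D}$; in the source \cite{AET} that lemma appears after the theorem, so if you want a fully self-contained proof you should note that these two identities follow from the same cone/cocone argument you already use for $\Psi\circ\Phi=\mathrm{id}$ applied to $\mathbf{t}_{2}$ and $\mathbf{t}_{1}$ themselves, which removes any appearance of circularity.
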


\section{Extensions of hearts and their $s$-torsion pairs}
\begin{lem}
\label{lem:extensions of hearts} Let $\mathcal{D}$ be an extriangulated
category with negative first extension, and let $\mathbf{t}_{i}=(\mathcal{T}_{i},\mathcal{F}_{i})$
in $\stors\,\mathcal{D}$ for $i=1,2,3$ be such that $\mathbf{t}_{1}\leq\mathbf{t}_{2}\leq\mathbf{t}_{3}.$
Then 
\[
\mathcal{H}_{[\mathbf{t}_{1},\mathbf{t}_{3}]}=\mathcal{H}_{[\mathbf{t}_{1},\mathbf{t}_{2}]}\star\mathcal{H}_{[\mathbf{t}_{2},\mathbf{t}_{3}]}.
\]
Moreover, for $C\in\Hcal_{[\t_{1},\t_{3}]}$ and the $\mathfrak{s}$-conflation
$\suc[T_{2}][C][F_{2}]$ given by $\mathbf{t_{2}},$ we have that
$F_{2}\in\mathcal{H}_{[\mathbf{t}_{2},\mathbf{t}_{3}]}$ and $T_{2}\in\mathcal{H}_{[\mathbf{t}_{1},\mathbf{t}_{2}]}.$ 
\end{lem}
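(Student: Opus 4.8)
The plan is to prove both inclusions defining $\star$, using the characterization of hearts as intersections of torsion(-free) classes together with Remark \ref{rem:cor} applied to the sub-interval $[\t_1,\t_2]$ inside the torsion class $\mathcal S$ and to $[\t_2,\t_3]$ inside the torsion-free class $\mathcal M$. Concretely, recall $\mathcal H_{[\t_1,\t_3]}=\mathcal M\cap\mathcal S$ (here using the notation $\mathbf t_i=(\mathcal T_i,\mathcal F_i)$, so the heart of $[\t_1,\t_3]$ is $\mathcal F_1\cap\mathcal S_3$, i.e. $\mathcal F_1\cap\mathcal T_3$ in the notation of the lemma). I would first establish the "Moreover" part, since the inclusion $\mathcal H_{[\t_1,\t_3]}\subseteq\mathcal H_{[\t_1,\t_2]}\star\mathcal H_{[\t_2,\t_3]}$ is an immediate consequence of it: take $C\in\mathcal H_{[\t_1,\t_3]}=\mathcal F_1\cap\mathcal T_3$ and the canonical $\mathfrak s$-conflation $T_2\xrightarrow{} C\xrightarrow{} F_2$ coming from the $s$-torsion pair $\mathbf t_2$, with $T_2\in\mathcal T_2$, $F_2\in\mathcal F_2$; then I must show $T_2\in\mathcal F_1\cap\mathcal T_2=\mathcal H_{[\t_1,\t_2]}$ and $F_2\in\mathcal F_2\cap\mathcal T_3=\mathcal H_{[\t_2,\t_3]}$.

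The two memberships to check are: $T_2\in\mathcal F_1$ and $F_2\in\mathcal T_3$. For $T_2\in\mathcal F_1$: since $C\in\mathcal F_1$ and $\mathcal F_1$ is closed under cocones by Proposition \ref{lem:cerraduras stors}(b), and the conflation exhibits $T_2$ as the cocone of $C\to F_2$, I need $F_2\in\mathcal F_1$ as well; but $F_2\in\mathcal F_2\subseteq\mathcal F_1$ because $\mathbf t_1\leq\mathbf t_2$ is equivalent to $\mathcal F_2\subseteq\mathcal F_1$ by Proposition \ref{lem:cerraduras stors}(d). Hence $T_2\in\mathcal F_1$. Dually, for $F_2\in\mathcal T_3$: since $C\in\mathcal T_3$ and $\mathcal T_3$ is closed under cones by Proposition \ref{lem:cerraduras stors}(a), applied to the conflation $T_2\to C\to F_2$, I need $T_2\in\mathcal T_3$; and $T_2\in\mathcal T_2\subseteq\mathcal T_3$ since $\mathbf t_2\leq\mathbf t_3$. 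Hence $F_2\in\mathcal T_3$. This proves the "Moreover" part and, taking the conflation $T_2\to C\to F_2$ with $T_2\in\mathcal H_{[\t_1,\t_2]}$ and $F_2\in\mathcal H_{[\t_2,\t_3]}$, also gives $\mathcal H_{[\t_1,\t_3]}\subseteq\mathcal H_{[\t_1,\t_2]}\star\mathcal H_{[\t_2,\t_3]}$.

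For the reverse inclusion $\mathcal H_{[\t_1,\t_2]}\star\mathcal H_{[\t_2,\t_3]}\subseteq\mathcal H_{[\t_1,\t_3]}=\mathcal F_1\cap\mathcal T_3$, I take an $\mathfrak s$-conflation $A\to C\to B$ with $A\in\mathcal H_{[\t_1,\t_2]}=\mathcal F_1\cap\mathcal T_2$ and $B\in\mathcal H_{[\t_2,\t_3]}=\mathcal F_2\cap\mathcal T_3$, and show $C\in\mathcal F_1$ and $C\in\mathcal T_3$. Since $A\in\mathcal F_1$ and $B\in\mathcal F_2\subseteq\mathcal F_1$, and $\mathcal F_1$ is closed under extensions (Proposition \ref{lem:cerraduras stors}(b) or Remark \ref{ceys} via Proposition \ref{prop:clases ort en stors}), we get $C\in\mathcal F_1$. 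Since $B\in\mathcal T_3$ and $A\in\mathcal T_2\subseteq\mathcal T_3$, and $\mathcal T_3$ is closed under extensions (Proposition \ref{lem:cerraduras stors}(a)), we get $C\in\mathcal T_3$. Thus $C\in\mathcal F_1\cap\mathcal T_3=\mathcal H_{[\t_1,\t_3]}$, completing the proof.

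I do not anticipate a serious obstacle: the whole argument is a bookkeeping exercise in the closure properties of torsion and torsion-free classes recorded in Proposition \ref{lem:cerraduras stors} together with the monotonicity translations $\mathbf t_i\leq\mathbf t_j\Leftrightarrow\mathcal T_i\subseteq\mathcal T_j\Leftrightarrow\mathcal F_j\subseteq\mathcal F_i$. The one point to be careful about is the consistent identification of $\mathcal H_{[\mathbf r,\mathbf s]}=\mathcal M\cap\mathcal S$ with an intersection of the form $\mathcal F\cap\mathcal T$ in the notation $\mathbf t_i=(\mathcal T_i,\mathcal F_i)$, and making sure the canonical conflation from $\mathbf t_2$ is the one realizing the $s$-torsion decomposition (Remark \ref{adj-stor}), so that it is unique and its outer terms land in $\mathcal T_2$ and $\mathcal F_2$ as claimed.
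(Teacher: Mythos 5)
Your proposal is correct and follows essentially the same route as the paper: both directions rest on the closure of torsion classes under cones and extensions, of torsion-free classes under cocones and extensions, and the monotonicity $\mathcal T_i\subseteq\mathcal T_j\Leftrightarrow\mathcal F_j\subseteq\mathcal F_i$, applied to the canonical $\mathfrak s$-conflation coming from $\mathbf t_2$. The only cosmetic difference is that you verify membership in $\mathcal F_1$ and $\mathcal T_3$ separately where the paper invokes closure under extensions of $\mathcal H_{[\mathbf t_1,\mathbf t_3]}$ as a whole.
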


\begin{proof}
Since $\mathcal{H}_{[\mathbf{t}_{2},\mathbf{t}_{3}]}=\mathcal{T}_{3}\cap\mathcal{F}_{2}\subseteq\mathcal{T}_{3}\cap\mathcal{F}_{1}=\mathcal{H}_{[\mathbf{t}_{1},\mathbf{t}_{3}]},$
$\mathcal{H}_{[\mathbf{t}_{1},\mathbf{t}_{2}]}=\mathcal{T}_{2}\cap\mathcal{F}_{1}\subseteq\mathcal{T}_{3}\cap\mathcal{F}_{1}=\mathcal{H}_{[\mathbf{t}_{1},\mathbf{t}_{3}]}$
and $\mathcal{H}_{[\mathbf{t}_{1},\mathbf{t}_{3}]}$ is closed under
extensions, it follows that $\mathcal{H}_{[\mathbf{t}_{1},\mathbf{t}_{2}]}\star\mathcal{H}_{[\mathbf{t}_{2},\mathbf{t}_{3}]}\subseteq\mathcal{H}_{[\mathbf{t}_{1},\mathbf{t}_{3}]}.$
\
 Let $C\in\mathcal{H}_{[\mathbf{t}_{1},\mathbf{t}_{3}]}.$ Consider
the $\mathfrak{s}$-conflation $\suc[T_{2}][C][F_{2}]$ with $T_{2}\in\mathcal{T}_{2}$
and $F_{2}\in\mathcal{F}_{2}$ given by $\mathbf{t_{2}}$. By Proposition
\ref{lem:cerraduras stors}(a), we obtain that $F_{2}\in\mathcal{T}_{3}$
since $T_{2}\in\mathcal{T}_{2}\subseteq\mathcal{T}_{3}$ and $C\in\mathcal{H}_{[\mathbf{t}_{1},\mathbf{t}_{3}]}\subseteq\mathcal{T}_{3}$.
Similarly, it can be shown that $T_{2}\in\mathcal{F}_{1}.$ Therefore
$F_{2}\in\mathcal{H}_{[\mathbf{t}_{2},\mathbf{t}_{3}]}$ and $T_{2}\in\mathcal{H}_{[\mathbf{t}_{1},\mathbf{t}_{2}]};$
proving that $\mathcal{H}_{[\mathbf{t}_{1},\mathbf{t}_{3}]}\subseteq\mathcal{H}_{[\mathbf{t}_{1},\mathbf{t}_{2}]}\star\mathcal{H}_{[\mathbf{t}_{2},\mathbf{t}_{3}]}.$ 
\end{proof}
\begin{lem}
\label{lem:stors de extensiones de corazones} For an extriangulated
category $\D$ with negative first extension, and $\mathbf{t}_{i}=(\mathcal{T}_{i},\mathcal{F}_{i})$
in $\stors\,\mathcal{D}$ for $i=1,2,3$ such that $\mathbf{t}_{1}\leq\mathbf{t}_{2}\leq\mathbf{t}_{3}$,
the following statements hold true. 
\begin{enumerate}
\item If $(\mathcal{T},\mathcal{F})$ is an $s$-torsion pair in $\mathcal{H}_{[\mathbf{t}_{1},\mathbf{t}_{2}]}$,
then $(\mathcal{T},\mathcal{F}\star\mathcal{H}_{[\mathbf{t}_{2},\mathbf{t}_{3}]})$
is an $s$-torsion pair in $\mathcal{H}_{[\mathbf{t}_{1},\mathbf{t}_{3}]}$. 
\item If $(\mathcal{T},\mathcal{F})$ is an $s$-torsion pair in $\mathcal{H}_{[\mathbf{t}_{2},\mathbf{t}_{3}]}$,
then $(\mathcal{H}_{[\mathbf{t}_{1},\mathbf{t}_{2}]}\star\mathcal{T},\mathcal{F})$
is an $s$-torsion pair in $\mathcal{H}_{[\mathbf{t}_{1},\mathbf{t}_{3}]}$. 
\item $(\mathcal{H}_{[\t_{1},\t_{2}]},\mathcal{H}_{[\t_{2},\t_{3}]})$ is
an $s$-torsion pair in $\mathcal{H}_{[\t_{1},\t_{3}]}$.
\item If $(\mathcal{T},\mathcal{F})$ is an $s$-torsion pair in $\mathcal{H}_{[\mathbf{t}_{1},\mathbf{t}_{3}]}$
with $\mathcal{T}\subseteq\mathcal{H}_{[\t_{1},\t_{2}]}$, then $\mathcal{F}':=\mathcal{F}\cap\mathcal{H}_{[\t_{1},\t_{2}]}$
satisfies that $(\mathcal{T},\mathcal{F}')\in\stors\,\mathcal{H}_{[\t_{1},\t_{2}]}$
and $\mathcal{F}=\mathcal{F}'\star\mathcal{H}_{[\t_{2},\t_{3}]}$.
\item If $(\mathcal{T},\mathcal{F})$ is an $s$-torsion pair in $\mathcal{H}_{[\mathbf{t}_{1},\mathbf{t}_{3}]}$
with $\mathcal{F}\subseteq\mathcal{H}_{[\t_{2},\t_{3}]}$, then $\mathcal{T}':=\mathcal{T}\cap\mathcal{H}_{[\t_{2},\t_{3}]}$
satisfies that $(\mathcal{T}',\mathcal{F})\in\stors\,\mathcal{H}_{[\t_{2},\t_{3}]}$
and $\mathcal{T}=\mathcal{H}_{[\t_{1},\t_{2}]}\star\mathcal{T}'$.
\item For every $(\mathcal{T},\mathcal{F})\in\stors\,\mathcal{H}_{[\t_{1},\t_{2}]}$
there is $\t\in[\t_{1},\t_{2}]$ such that $\mathcal{T}=\mathcal{H}_{[\t_{1},\t]}$
and $\mathcal{F}=\mathcal{H}_{[\t,\t_{2}]}$. 
\end{enumerate}
\end{lem}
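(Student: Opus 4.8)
The plan is to take $(\mathcal{T},\mathcal{F})\in\stors\,\mathcal{H}_{[\t_{1},\t_{2}]}$ and produce the $s$-torsion pair in $\D$ that corresponds to it under the AET-tilt process of Theorem \ref{thm:proceso aet}, then read off the two hearts. Concretely, apply the isomorphism $\Psi:\stors\,\mathcal{H}_{[\t_{1},\t_{2}]}\rightarrow[\t_{1},\t_{2}]$ of Theorem \ref{thm:proceso aet} to obtain $\t:=\Psi(\mathcal{T},\mathcal{F})=(\mathcal{T}_{1}\star\mathcal{T},\mathcal{F}\star\mathcal{F}_{2})\in[\t_{1},\t_{2}]$. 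So $\t=(\mathcal{G},\mathcal{H})$ with $\mathcal{G}=\mathcal{T}_{1}\star\mathcal{T}$ and $\mathcal{H}=\mathcal{F}\star\mathcal{F}_{2}$, and by construction $\t_{1}\leq\t\leq\t_{2}$.

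Next I would compute the two hearts $\mathcal{H}_{[\t_{1},\t]}$ and $\mathcal{H}_{[\t,\t_{2}]}$ and check they equal $\mathcal{T}$ and $\mathcal{F}$ respectively. For the first: $\mathcal{H}_{[\t_{1},\t]}=\mathcal{H}\cap\mathcal{G}$ — wait, using the convention from the excerpt, $\mathcal{H}_{[\mathbf{r},\mathbf{s}]}=\mathcal{M}\cap\mathcal{S}$ where $\mathbf{r}=(\mathcal{R},\mathcal{M})$ and $\mathbf{s}=(\mathcal{S},\mathcal{N})$; so $\mathcal{H}_{[\t_{1},\t]}=\mathcal{F}_{1}\cap(\mathcal{T}_{1}\star\mathcal{T})$ and $\mathcal{H}_{[\t,\t_{2}]}=(\mathcal{F}\star\mathcal{F}_{2})\cap\mathcal{T}_{2}$. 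Rather than attack these intersections directly, the cleaner route is to invoke Lemma \ref{lem:stors de extensiones de corazones}(c) with the triple $\t_{1}\leq\t\leq\t_{2}$: it gives that $(\mathcal{H}_{[\t_{1},\t]},\mathcal{H}_{[\t,\t_{2}]})\in\stors\,\mathcal{H}_{[\t_{1},\t_{2}]}$. Now apply $\Phi$ (the inverse of $\Psi$) to $\t$: by Theorem \ref{thm:proceso aet}, $\Phi(\t)=\bigl((\mathcal{T}_{1}\star\mathcal{T})\cap\mathcal{F}_{1},\ \mathcal{T}_{2}\cap(\mathcal{F}\star\mathcal{F}_{2})\bigr)=(\mathcal{H}_{[\t_{1},\t]},\mathcal{H}_{[\t,\t_{2}]})$, which matches the formula for the two hearts. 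Since $\Phi\circ\Psi=\mathrm{id}$, we get $\Phi(\t)=(\mathcal{T},\mathcal{F})$, hence $\mathcal{H}_{[\t_{1},\t]}=\mathcal{T}$ and $\mathcal{H}_{[\t,\t_{2}]}=\mathcal{F}$, as required.

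The one point that needs care — and the main (minor) obstacle — is matching the heart-formula $\Phi(\t)$ with the expression $(\mathcal{H}_{[\t_{1},\t]},\mathcal{H}_{[\t,\t_{2}]})$: that is, verifying that $(\mathcal{T}_{1}\star\mathcal{T})\cap\mathcal{F}_{1}$ is literally the class $\mathcal{H}_{[\t_{1},\t]}=\mathcal{M}\cap\mathcal{G}$ where $\mathcal{G}=\mathcal{T}_{1}\star\mathcal{T}$ and $\mathcal{M}=\mathcal{F}_{1}$ — which is immediate from the definition of heart of an interval — together with the symmetric statement for $\mathcal{H}_{[\t,\t_{2}]}$. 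So in fact no real intersection computation is needed: everything follows formally from the bijectivity of $\Phi$ and $\Psi$ in Theorem \ref{thm:proceso aet} once one observes that the components of $\Phi(\t)$ are, by definition, exactly the hearts $\mathcal{H}_{[\t_{1},\t]}$ and $\mathcal{H}_{[\t,\t_{2}]}$. I would write the proof in that order: define $\t$ via $\Psi$, note $\t\in[\t_{1},\t_{2}]$, unwind $\Phi(\t)$ to identify its components as the two hearts, and conclude by $\Phi\Psi=\mathrm{id}$.
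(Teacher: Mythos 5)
Your argument for item (f) is correct and is essentially the paper's own approach: the paper disposes of (f) with the single remark that it is a consequence of Theorem \ref{thm:proceso aet}, and your write-up just supplies the (purely definitional) identification of the components of $\Phi(\t)$ with $\mathcal{H}_{[\t_{1},\t]}$ and $\mathcal{H}_{[\t,\t_{2}]}$, plus $\Phi\Psi=\mathrm{id}$. Note only that your proposal covers item (f) alone, not the remaining items of the lemma.
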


\begin{proof}
We only prove (a) and (d) since: (b) follows by dual arguments as
(a), (c) is a particular case of (a), (e) follows by dual arguments
as (d), and (f) is a consequence of Theorem \ref{thm:proceso aet}.

(a) Let $(\mathcal{T},\mathcal{F})$ be an $s$-torsion pair in $\mathcal{H}_{[\mathbf{t}_{1},\mathbf{t}_{2}]}.$
Then $\mathcal{H}_{[\t_{1},\t_{2}]}=\mathcal{T}\star\mathcal{F}$
and thus, by Lemma \ref{lem:extensions of hearts}, we have that 
\[
\mathcal{H}_{[\t_{1},\t_{3}]}=\mathcal{H}_{[\t_{1},\t_{2}]}\star\mathcal{H}_{[\t_{2},\t_{3}]}=\left(\mathcal{T}\star\mathcal{F}\right)\star\mathcal{H}_{[\t_{2},\t_{3}]}=\mathcal{T}\star\left(\mathcal{F}\star\mathcal{H}_{[\t_{2},\t_{3}]}\right).
\]
It remains to prove that $\mathcal{D}(\mathcal{T},\mathcal{F}\star\mathcal{H}_{[\t_{2},\t_{3}]})=0$
and that $\mathbb{E}^{-1}(\mathcal{T},\mathcal{F}\star\mathcal{H}_{[\t_{2},\t_{3}]})=0$.
Indeed, since $\mathcal{F}\subseteq\mathcal{T}^{\bot_{0}}\cap\mathcal{T}^{\bot_{-1}},$
$\mathcal{H}_{[\t_{2},\t_{3}]}\subseteq\mathcal{F}_{2}\subseteq\mathcal{T}_{2}^{\bot_{0}}\cap\mathcal{T}_{2}^{\bot_{-1}}\subseteq\mathcal{T}^{\bot_{0}}\cap\mathcal{T}^{\bot_{-1}}$
and $\mathcal{T}^{\bot_{0}}$ and $\mathcal{T}^{\bot_{-1}}$ are closed
under extensions, we have that $\mathcal{D}(\mathcal{T},\mathcal{F}\star\mathcal{H}_{[\t_{2},\t_{3}]})=0$
and $\mathbb{E}^{-1}(\mathcal{T},\mathcal{F}\star\mathcal{H}_{[\t_{2},\t_{3}]})=0$.

(d) Let $\t=(\mathcal{T},\mathcal{F})\in\stors\,\mathcal{H}_{[\mathbf{t}_{1},\mathbf{t}_{3}]}$
with $\mathcal{T}\subseteq\mathcal{H}_{[\t_{1},\t_{2}]}$. Consider
the $s$-torsion pair $\mathbf{c}=(\mathcal{H}_{[\t_{1},\t_{2}]},\mathcal{H}_{[\t_{2},\t_{3}]})\in\stors\,\mathcal{H}_{[\mathbf{t}_{1},\mathbf{t}_{3}]}$
showed in (c) and the following maps given by using Theorem \ref{thm:proceso aet}:
\[
\stors\,\mathcal{H}_{[\t_{1},\t_{3}]}\overset{\Psi_{13}}{\rightarrow}[\t_{1},\t_{3}]\hookleftarrow[\t_{1},\t_{2}]\overset{\Phi_{12}}{\rightarrow}\stors\,\mathcal{H}_{[\t_{1},\t_{2}]}.
\]
Since $\t\leq\mathbf{c}$ and $\mathbf{c}=\Phi_{13}(\t_{2})$, we
have that $\Psi_{13}(\t)\in[\t_{1},\t_{2}]$. Therefore, $\Phi_{12}\Psi_{13}(\t)\in\stors\,\mathcal{H}_{[\t_{1},\t_{2}]}$.
Observe that 
\[
\Phi_{12}\Psi_{13}(\t)=((\mathcal{T}_{1}\star\mathcal{T})\cap\mathcal{F}_{1},\mathcal{T}_{2}\cap(\mathcal{F}\star\mathcal{F}_{3}))=(\mathcal{T},\mathcal{F}\cap\mathcal{H}_{[\t_{1},\t_{2}]}).
\]
Indeed, by Proposition \ref{prop:clases ort en stors}, it is enough
to show that $(\mathcal{T}_{1}\star\mathcal{T})\cap\mathcal{F}_{1}=\mathcal{T}$.
For this, note that $(\mathcal{T}_{1}\star\mathcal{T})\cap\mathcal{F}_{1}\subseteq\mathcal{T}$
follows straightforward and the opposite contention follows from the
fact that $\mathcal{T}\subseteq\mathcal{H}_{[\t_{1},\t_{2}]}=\mathcal{T}_{2}\cap\mathcal{F}_{1}$. 
\end{proof}
As an application of Lemma \ref{lem:stors de extensiones de corazones},
we can get the following example in a direct way. This was already
known, from \cite[Sect. 1.2]{S}, \cite[Them. 2.15]{F}, \cite[Sect. 5]{T}. 
\begin{example}
\label{exa:categorias abelianas asociadas a quasiabeliana} Let $\mathcal{E}$
be a quasi-abelian category (see Example \ref{exa:quasi-abelian}).
We have mentioned that there is a triangulated category $\mathcal{D}$
equipped with a pair of $s$-torsion pairs $\t_{r},\t_{\ell}\in\stors\,\mathcal{D}$
such that $\Sigma\t_{r}\leq\t_{\ell}\leq\t_{r}$ and $\mathcal{H}_{[\Sigma\t_{r},\t_{\ell}]}\cong\mathcal{E}$.
It follows from Lemma \ref{lem:stors de extensiones de corazones}
(c) that $(\mathcal{H}_{[\Sigma\t_{r},\t_{\ell}]},\mathcal{H}_{[\t_{\ell},\t_{r}]})$
is a torsion pair in the abelian category $\mathcal{R}_{\mathcal{E}}:=\mathcal{H}_{[\Sigma\t_{r},\t_{r}]}$,
and that $(\mathcal{H}_{[\Sigma\t_{\ell},\Sigma\t_{r}]},\mathcal{H}_{[\Sigma\t_{r},\t_{\ell}]})$
is a torsion pair in the abelian category $\mathcal{L}_{\mathcal{E}}:=\mathcal{H}_{[\Sigma\t_{\ell},\t_{\ell}]}$.
Therefore, $\mathcal{E}$ can be realized as a torsion class in the
abelian category $\mathcal{R}_{\mathcal{E}}$, or as torsion-free
class in the abelian category $\mathcal{L}_{\mathcal{E}}.$ 
\end{example}

From the previous lemmas, we obtain the following result. 
\begin{cor}
\label{cor-extcorn} Let $\D$ be an extriangulated category with
negative first extension, and let $\mathbf{t}_{i}=(\mathcal{T}_{i},\mathcal{F}_{i})$
in $\stors\,\D,$ for $i=1,\cdots,n$, be such that $\mathbf{t}_{1}\leq\cdots\leq\mathbf{t}_{n}$.
Then, the following statements hold true. 
\begin{enumerate}
\item $\mathcal{H}_{[\mathbf{t}_{1},\mathbf{t}_{n}]}=\mathcal{H}_{[\mathbf{t}_{1},\mathbf{t}_{2}]}\star\mathcal{H}_{[\mathbf{t}_{2},\mathbf{t}_{3}]}\star\cdots\star\mathcal{H}_{[\mathbf{t}_{n-1},\mathbf{t}_{n}]}$. 
\item If $(\mathcal{T},\mathcal{F})$ is an $s$-torsion pair in $\mathcal{H}_{[\t_{k},\t_{k+1}]}$
for some $1\leq k<n$, then $(\mathcal{H}_{[\t_{1},\t_{k}]}\star\mathcal{T},\mathcal{F}\star\mathcal{H}_{[\t_{k+1},\t_{n}]})$
is an $s$-torsion pair in $\mathcal{H}_{[\t_{1},\t_{n}]}$. 
\end{enumerate}
\end{cor}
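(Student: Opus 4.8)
The plan is to obtain both items by a short induction on $n$, feeding on Lemma \ref{lem:extensions of hearts}, Lemma \ref{lem:stors de extensiones de corazones}, and the associativity of $\star$ (Remark at the start of Section 2), which is what makes the iterated $\star$-product in (a) well defined independently of how one parenthesizes it.

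For (a), I would take $n=2$ as the (tautological) base case and note that $n=3$ is exactly Lemma \ref{lem:extensions of hearts}. For the inductive step, assuming the statement for chains of length $n-1$, I would apply Lemma \ref{lem:extensions of hearts} to the subchain $\mathbf{t}_{1}\leq\mathbf{t}_{n-1}\leq\mathbf{t}_{n}$ to get $\mathcal{H}_{[\mathbf{t}_{1},\mathbf{t}_{n}]}=\mathcal{H}_{[\mathbf{t}_{1},\mathbf{t}_{n-1}]}\star\mathcal{H}_{[\mathbf{t}_{n-1},\mathbf{t}_{n}]}$, and then substitute the inductive decomposition of $\mathcal{H}_{[\mathbf{t}_{1},\mathbf{t}_{n-1}]}$ into the first factor, using associativity to drop the parentheses. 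This finishes (a).

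For (b), no further induction is actually needed: it is a two-step reduction to the three-term situation already handled in Lemma \ref{lem:stors de extensiones de corazones}. Starting with an $s$-torsion pair $(\mathcal{T},\mathcal{F})$ in $\mathcal{H}_{[\mathbf{t}_{k},\mathbf{t}_{k+1}]}$, I would first apply Lemma \ref{lem:stors de extensiones de corazones}(a) to the chain $\mathbf{t}_{k}\leq\mathbf{t}_{k+1}\leq\mathbf{t}_{n}$, concluding that $(\mathcal{T},\mathcal{F}\star\mathcal{H}_{[\mathbf{t}_{k+1},\mathbf{t}_{n}]})$ is an $s$-torsion pair in $\mathcal{H}_{[\mathbf{t}_{k},\mathbf{t}_{n}]}$; then I would apply Lemma \ref{lem:stors de extensiones de corazones}(b) to the chain $\mathbf{t}_{1}\leq\mathbf{t}_{k}\leq\mathbf{t}_{n}$, using that $s$-torsion pair as input, to obtain that $(\mathcal{H}_{[\mathbf{t}_{1},\mathbf{t}_{k}]}\star\mathcal{T},\mathcal{F}\star\mathcal{H}_{[\mathbf{t}_{k+1},\mathbf{t}_{n}]})$ is an $s$-torsion pair in $\mathcal{H}_{[\mathbf{t}_{1},\mathbf{t}_{n}]}$, which is precisely the claim. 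The degenerate endpoints $k=1$ and $k=n-1$ require no separate treatment, since $\mathcal{H}_{[\mathbf{t}_{j},\mathbf{t}_{j}]}=0$ and $-\star 0\cong-\cong 0\star-$, so the displayed formula still reads correctly.

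I do not expect a genuine obstacle here; the only point that needs attention is bookkeeping in (b): one must check that Lemma \ref{lem:stors de extensiones de corazones}(a) is being applied with the middle heart being the one that Lemma \ref{lem:extensions of hearts} decomposes as $\mathcal{H}_{[\mathbf{t}_{k},\mathbf{t}_{n}]}=\mathcal{H}_{[\mathbf{t}_{k},\mathbf{t}_{k+1}]}\star\mathcal{H}_{[\mathbf{t}_{k+1},\mathbf{t}_{n}]}$, and that the second application uses the triple $(\mathbf{t}_{1},\mathbf{t}_{k},\mathbf{t}_{n})$ and not some other grouping. Once the indices are lined up, everything is a direct substitution.
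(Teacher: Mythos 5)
Your proposal is correct and follows exactly the route the paper intends: the paper gives no written proof beyond ``from the previous lemmas,'' and your induction on $n$ via Lemma \ref{lem:extensions of hearts} for (a), together with the two successive applications of Lemma \ref{lem:stors de extensiones de corazones}(a) (to $\t_{k}\leq\t_{k+1}\leq\t_{n}$) and then (b) (to $\t_{1}\leq\t_{k}\leq\t_{n}$) for (b), is precisely that argument with the indices made explicit. The observation that the endpoint cases $k=1$ and $k=n-1$ degenerate harmlessly because $\mathcal{H}_{[\t_{j},\t_{j}]}=0$ is a correct and welcome bit of bookkeeping.
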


\begin{defn}
\label{nst-int} Let $\mathcal{D}$ be an extriangulated category
with negative first extension, and let $\mathbf{t}_{i}=(\mathcal{T}_{i},\mathcal{F}_{i})$
in $\stors\,\mathcal{D}$ for $i=1,2$ be such that $\mathbf{t}_{1}\leq\mathbf{t}_{2}.$
We say that $[\t_{1},\t_{2}]$ is a \textbf{normal interval} if $\mathbb{E}^{-1}(\mathcal{T}_{2},\mathcal{F}_{1})=0$. 
\end{defn}

\begin{example}
\label{exa:normal} Let $\mathcal{D}$ be an extriangulated category
with negative first extension. 
\begin{enumerate}
\item If $\mathbb{E}^{-1}=0$ (e.g. in the case $\mathcal{D}$ is exact),
then any interval in $\stors\,\mathcal{D}$ is normal. 
\item Let $\mathcal{D}$ be a triangulated category and $[\t_{1},\t_{2}]$
be an interval in $\stors\,\D.$ Then $\mathbb{E}^{-1}(\mathcal{T}_{2},\mathcal{F}_{1})=\mathcal{D}(\mathcal{T}_{2},\Sigma^{-1}\mathcal{F}_{1})$.
Therefore, if we fix $\t_{1},$ the interval $[\t_{1},\t_{2}]$ is
normal if, and only if, $\t_{2}$ satisfies that $\Sigma{}^{-1}\mathcal{F}_{1}\subseteq\mathcal{F}_{2}$,
or equivalently that $\Sigma\mathcal{T}_{2}\subseteq\mathcal{T}_{1}$.
Note that these are precisely the $s$-torsion pairs ($t$-structures)
parameterized by the HRS-tilting process \cite[Prop. 2.1]{W}. 
\end{enumerate}
\end{example}

\begin{lem}
\label{nimpex} Let $\mathcal{D}$ be an extriangulated category with
negative first extension, and let $[\mathbf{t}_{1},\mathbf{t}_{2}]$
be an interval in $\stors\,\mathcal{D}$ with $\mathbf{t}_{i}=(\mathcal{T}_{i},\mathcal{F}_{i}),$
for $i=1,2.$ Then, $[\t_{1},\t_{2}]$ is normal if and only if $\mathbb{E}^{-1}(\mathcal{H}_{[\t_{1},\t_{2}]},\mathcal{H}_{[\t_{1},\t_{2}]})=0$.
In particular, $\mathcal{H}_{[\t_{1},\t_{2}]}$ is an exact category
if $[\t_{1},\t_{2}]$ is normal. 
\end{lem}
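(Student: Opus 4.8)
The plan is to unpack the heart $\mathcal{H}_{[\t_1,\t_2]} = \mathcal{F}_1 \cap \mathcal{T}_2$ and use the hypothesis $\mathbb{E}^{-1}(\mathcal{T}_2,\mathcal{F}_1)=0$ directly. Indeed, if $A, B \in \mathcal{H}_{[\t_1,\t_2]}$, then $A \in \mathcal{T}_2$ and $B \in \mathcal{F}_1$, so $\mathbb{E}^{-1}(A,B)$ is a subgroup (in fact a direct summand, but we only need containment) of $\mathbb{E}^{-1}(\mathcal{T}_2,\mathcal{F}_1)=0$. Hence $\mathbb{E}^{-1}(\mathcal{H}_{[\t_1,\t_2]},\mathcal{H}_{[\t_1,\t_2]})=0$. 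This is the entire content of the first assertion, and it is immediate.

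For the second assertion, I would invoke the fact (recorded right after Remark \ref{rem:s-torsion-implies-t-struc}, and more precisely the statement in the preliminaries about restricting the extriangulated-with-negative-first-extension structure to a subcategory closed under extensions, namely \cite[Ex.~2.4(iii)]{AET}) that $\mathcal{H}_{[\t_1,\t_2]}$ inherits from $\D$ the structure of an extriangulated category with negative first extension $\mathbb{E}^{-1}$. An extriangulated category in which the negative first extension bifunctor vanishes identically is the same thing as an exact category: this is part of the basic dictionary in \cite{AET} (see \cite[Ex.~2.4]{AET} and the surrounding discussion, or \cite{NP}), since the defining long exact sequences of Definition \ref{def-neg-ext} then terminate on the left with zeros, which is exactly the condition that the $\mathfrak{s}$-conflations form an exact structure in the sense of Quillen. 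Since we have just shown $\mathbb{E}^{-1}$ vanishes on $\mathcal{H}_{[\t_1,\t_2]}$, the inherited structure is exact.

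There is essentially no obstacle here; the lemma is a bookkeeping statement. The only point requiring a little care is making sure the reader accepts that "extriangulated with $\mathbb{E}^{-1}=0$" is literally "exact" — I would handle this by citing \cite[Ex.~2.4]{AET} (and possibly \cite{NP}) rather than reproving it, since the paper has already adopted the convention, in Example \ref{exa:normal}(a), that exact categories are precisely those with $\mathbb{E}^{-1}=0$. So the write-up is: restrict the negative-first-extension structure to $\mathcal{H}_{[\t_1,\t_2]}$; observe $\mathbb{E}^{-1}(\mathcal{H}_{[\t_1,\t_2]},\mathcal{H}_{[\t_1,\t_2]}) \subseteq \mathbb{E}^{-1}(\mathcal{T}_2,\mathcal{F}_1)=0$ using $\mathcal{H}_{[\t_1,\t_2]} \subseteq \mathcal{T}_2$ and $\mathcal{H}_{[\t_1,\t_2]} \subseteq \mathcal{F}_1$; conclude that $\mathcal{H}_{[\t_1,\t_2]}$ is exact.
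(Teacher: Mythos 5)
Your argument is correct and essentially identical to the paper's: the first assertion follows from $\mathcal{H}_{[\t_1,\t_2]}\subseteq\mathcal{T}_2$ and $\mathcal{H}_{[\t_1,\t_2]}\subseteq\mathcal{F}_1$ together with the normality hypothesis, and the second from the fact that an extriangulated category with vanishing negative first extension is exact (the paper cites \cite[Prop.\ 2.6]{AET} for this last step, where you point to \cite[Ex.\ 2.4]{AET}, but the content is the same).
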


\begin{proof}
$(\Rightarrow)$ It follows from the fact that $\mathbb{E}^{-1}(\T_{2},\F_{1})=0$
and $\mathcal{H}_{[\t_{1},\t_{2}]}=\F_{1}\cap\T_{2}.$

$(\Leftarrow)$ Observe that $\mathcal{T}_{2}=\mathcal{T}_{1}\star\mathcal{H}_{[\t_{1},\t_{2}]}$
and $\mathcal{F}_{1}=\mathcal{H}_{[\t_{1},\t_{2}]}\star\mathcal{F}_{2}$
(see Theorem \ref{thm:proceso aet}). Hence, since $\mathbb{E}^{-1}(\mathcal{H}_{[\t_{1},\t_{2}]},\mathcal{F}_{2})=0$,
$\mathbb{E}^{-1}(\mathcal{T}_{1},\mathcal{H}_{[\t_{1},\t_{2}]})=0$,
$\mathbb{E}^{-1}(\mathcal{H}_{[\t_{1},\t_{2}]},\mathcal{H}_{[\t_{1},\t_{2}]})=0$
and $\mathbb{E}^{-1}(\mathcal{T}_{1},\mathcal{F}_{1})=0$, we have
that 
\[
\mathbb{E}^{-1}(\mathcal{T}_{2},\mathcal{F}_{1})=\mathbb{E}^{-1}(\mathcal{T}_{1}\star\mathcal{H}_{[\t_{1},\t_{2}]},\mathcal{H}_{[\t_{1},\t_{2}]}\star\mathcal{F}_{2})=0.
\]
Lastly, by \cite[Prop. 2.6]{AET}, we conclude that $\mathcal{H}_{[\t_{1},\t_{2}]}$
is an exact category. 
\end{proof}
\begin{lem}
\label{n3int} For an extriangulated category $\D$ with negative
first extension and $\mathbf{t}_{i}=(\mathcal{T}_{i},\mathcal{F}_{i})$
in $\stors\,\mathcal{D}$ for $i=1,2,3$ such that $\mathbf{t}_{1}\leq\mathbf{t}_{2}\leq\mathbf{t}_{3},$
the following statements hold true. 
\begin{enumerate}
\item If $[\t_{1},\t_{3}]$ is normal, then $[\t_{1},\t_{2}]$ and $[\t_{2},\t_{3}]$
are normal. 
\item If $[\t_{1},\t_{2}]$ and $[\t_{2},\t_{3}]$ are normal and $\mathbb{E}^{-1}(\mathcal{H}_{[\t_{2},\t_{3}]},\mathcal{H}_{[\t_{1},\t_{2}]})=0$,
then $[\t_{1},\t_{3}]$ is normal. 
\end{enumerate}
\end{lem}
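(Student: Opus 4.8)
The plan is to handle the two claims separately. Write $\t_{i}=(\mathcal{T}_{i},\mathcal{F}_{i})$ throughout; from $\t_{1}\le\t_{2}\le\t_{3}$ we have $\mathcal{T}_{1}\subseteq\mathcal{T}_{2}\subseteq\mathcal{T}_{3}$ and $\mathcal{F}_{3}\subseteq\mathcal{F}_{2}\subseteq\mathcal{F}_{1}$. Statement (a) is then immediate: since $\mathbb{E}^{-1}$ is a bifunctor, the hypothesis $\mathbb{E}^{-1}(\mathcal{T}_{3},\mathcal{F}_{1})=0$ restricts to $\mathbb{E}^{-1}(\mathcal{T}_{2},\mathcal{F}_{1})=0$ (shrinking the first argument along $\mathcal{T}_{2}\subseteq\mathcal{T}_{3}$) and to $\mathbb{E}^{-1}(\mathcal{T}_{3},\mathcal{F}_{2})=0$ (shrinking the second argument along $\mathcal{F}_{2}\subseteq\mathcal{F}_{1}$), which are exactly the normality of $[\t_{1},\t_{2}]$ and of $[\t_{2},\t_{3}]$.

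For (b), fix $X\in\mathcal{T}_{3}$ and $Y\in\mathcal{F}_{1}$; the goal is $\mathbb{E}^{-1}(X,Y)=0$. First I would replace $X$ and $Y$ by objects of the two hearts using the $s$-torsion pair $\t_{2}$. Let $\delta\colon T\to X\to F$ be the $\mathfrak{s}$-conflation of $X$ given by $\t_{2}$ (Remark \ref{adj-stor}), with $T\in\mathcal{T}_{2}$ and $F\in\mathcal{F}_{2}$. Since $T\in\mathcal{T}_{2}\subseteq\mathcal{T}_{3}$, $X\in\mathcal{T}_{3}$, and $\mathcal{T}_{3}$ is closed under cones by Proposition \ref{lem:cerraduras stors}(a), we get $F\in\mathcal{T}_{3}\cap\mathcal{F}_{2}=\mathcal{H}_{[\t_{2},\t_{3}]}$. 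Dually, let $\eta\colon T'\to Y\to F'$ be the $\mathfrak{s}$-conflation of $Y$ given by $\t_{2}$, with $T'\in\mathcal{T}_{2}$ and $F'\in\mathcal{F}_{2}$; since $F'\in\mathcal{F}_{2}\subseteq\mathcal{F}_{1}$, $Y\in\mathcal{F}_{1}$, and $\mathcal{F}_{1}$ is closed under cocones by Proposition \ref{lem:cerraduras stors}(b), we get $T'\in\mathcal{T}_{2}\cap\mathcal{F}_{1}=\mathcal{H}_{[\t_{1},\t_{2}]}$.

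Next I would feed these two conflations into the long exact sequences for $\mathbb{E}^{-1}$ of Definition \ref{def-neg-ext}. Applying $\mathbb{E}^{-1}(-,Y)$ to $\delta$ yields an exact piece $\mathbb{E}^{-1}(F,Y)\to\mathbb{E}^{-1}(X,Y)\to\mathbb{E}^{-1}(T,Y)$, and $\mathbb{E}^{-1}(T,Y)=0$ because $T\in\mathcal{T}_{2}$, $Y\in\mathcal{F}_{1}$ and $[\t_{1},\t_{2}]$ is normal; hence $\mathbb{E}^{-1}(F,Y)$ surjects onto $\mathbb{E}^{-1}(X,Y)$, and it suffices to show $\mathbb{E}^{-1}(F,Y)=0$. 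Applying $\mathbb{E}^{-1}(F,-)$ to $\eta$ yields $\mathbb{E}^{-1}(F,T')\to\mathbb{E}^{-1}(F,Y)\to\mathbb{E}^{-1}(F,F')$, where $\mathbb{E}^{-1}(F,F')=0$ since $F\in\mathcal{T}_{3}$, $F'\in\mathcal{F}_{2}$ and $[\t_{2},\t_{3}]$ is normal, and $\mathbb{E}^{-1}(F,T')=0$ since $F\in\mathcal{H}_{[\t_{2},\t_{3}]}$, $T'\in\mathcal{H}_{[\t_{1},\t_{2}]}$ and $\mathbb{E}^{-1}(\mathcal{H}_{[\t_{2},\t_{3}]},\mathcal{H}_{[\t_{1},\t_{2}]})=0$ by hypothesis. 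Therefore $\mathbb{E}^{-1}(F,Y)=0$, so $\mathbb{E}^{-1}(X,Y)=0$, proving that $[\t_{1},\t_{3}]$ is normal.

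I do not anticipate a real obstacle here; the only care needed is to keep track of which argument of $\mathbb{E}^{-1}$ each long exact sequence is co- or contravariant in, and to observe that the trailing $\mathrm{Hom}$-terms in those sequences are harmless because we only use surjectivity onto the relevant $\mathbb{E}^{-1}$-term. Conceptually, decomposing $X$ and $Y$ along $\t_{2}$ confines the only possibly nonzero negative first extension to a pairing between the two hearts $\mathcal{H}_{[\t_{2},\t_{3}]}$ and $\mathcal{H}_{[\t_{1},\t_{2}]}$, which the additional hypothesis annihilates.
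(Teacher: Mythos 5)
Your proof is correct, but it takes a more economical route than the paper's. Part (a) is handled identically (restriction of the bifunctor along the inclusions $\mathcal{T}_{2}\subseteq\mathcal{T}_{3}$ and $\mathcal{F}_{2}\subseteq\mathcal{F}_{1}$). For part (b), the paper instead invokes Remark \ref{rem:cor} to write $\mathcal{T}_{3}=\mathcal{T}_{1}\star\mathcal{H}_{[\t_{1},\t_{3}]}$ and $\mathcal{F}_{1}=\mathcal{H}_{[\t_{1},\t_{3}]}\star\mathcal{F}_{3}$, reduces to four vanishing conditions, and then further splits $\mathcal{H}_{[\t_{1},\t_{3}]}=\mathcal{H}_{[\t_{1},\t_{2}]}\star\mathcal{H}_{[\t_{2},\t_{3}]}$ into four more, two of which require Lemma \ref{nimpex} (exactness of the subhearts). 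You instead decompose a single pair of objects $X\in\mathcal{T}_{3}$, $Y\in\mathcal{F}_{1}$ directly along $\t_{2}$, which confines everything to three vanishing statements: one from normality of $[\t_{1},\t_{2}]$, one from normality of $[\t_{2},\t_{3}]$, and one from the cross-term hypothesis $\mathbb{E}^{-1}(\mathcal{H}_{[\t_{2},\t_{3}]},\mathcal{H}_{[\t_{1},\t_{2}]})=0$. Your identification of the $\t_{2}$-torsion and torsion-free parts of $X$ and $Y$ as lying in the respective subhearts (via closure of $\mathcal{T}_{3}$ under cones and of $\mathcal{F}_{1}$ under cocones) is exactly the content of Lemma \ref{lem:extensions of hearts}, so nothing outside the paper's toolkit is used; what your version buys is a shorter case analysis and no appeal to Lemma \ref{nimpex}, at the cost of working object-by-object rather than with the $\star$-decompositions of the classes. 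Both arguments are sound, and your handling of the variance and of exactness at the middle terms of the $\mathbb{E}^{-1}$-sequences is accurate.
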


\begin{proof}
The proof of (a) is straightforward since $\mathcal{F}_{2}\subseteq\mathcal{F}_{1}$
and $\mathcal{T}_{2}\subseteq\mathcal{T}_{3}.$ \
 Assume that $[\t_{1},\t_{2}]$ and $[\t_{2},\t_{3}]$ are normal
and $\mathbb{E}^{-1}(\mathcal{H}_{[\t_{2},\t_{3}]},\mathcal{H}_{[\t_{1},\t_{2}]})=0$.
In order to prove (b), it is enough to show that $\mathbb{E}^{-1}(\mathcal{H}_{[\t_{1},\t_{3}]},\mathcal{H}_{[\t_{1},\t_{3}]})=0$
by Lemma \ref{nimpex}. For this, recall that $\mathcal{H}_{[\t_{1},\t_{3}]}=\mathcal{H}_{[\t_{1},\t_{2}]}\star\mathcal{H}_{[\t_{2},\t_{3}]}$
by Lemma \ref{lem:extensions of hearts}. And thus, since $\mathbb{E}^{-1}(\mathcal{H}_{[\t_{2},\t_{3}]},\mathcal{H}_{[\t_{1},\t_{2}]})=0$,
$\mathbb{E}^{-1}(\mathcal{H}_{[\t_{1},\t_{2}]},\mathcal{H}_{[\t_{1},\t_{2}]})=0$,
$\mathbb{E}^{-1}(\mathcal{H}_{[\t_{2},\t_{2}]},\mathcal{H}_{[\t_{1},\t_{2}]})=0$
by hypothesis and Lemma \ref{nimpex}, and $\mathbb{E}^{-1}(\mathcal{H}_{[\t_{1},\t_{2}]},\mathcal{H}_{[\t_{2},\t_{3}]})\subseteq\mathbb{E}^{-1}(\mathcal{T}_{2},\mathcal{F}_{2})=0$,
we have that $\mathbb{E}^{-1}(\mathcal{H}_{[\t_{1},\t_{3}]},\mathcal{H}_{[\t_{1},\t_{3}]})=0$. 
\end{proof}
\begin{prop}
\label{lemaA} Let $\mathcal{D}$ be an extriangulated category with
negative first extension and $\t_{1},\t_{2},\t_{3}\in\stors\,\mathcal{D}$
such that $\t_{1}\leq\t_{2}\leq\t_{3}$. Then, for an $\s$-conflation
$\suc[X][Y][Z]$ in $\mathcal{H}_{[\t_{1},\t_{3}]},$ the following
statements hold true, 
\begin{enumerate}
\item If $[\t_{2},\t_{3}]$ is normal and $Y\in\mathcal{H}_{[\t_{1},\t_{2}]}$,
then $Z\in\mathcal{H}_{[\t_{1},\t_{2}]}$. 
\item If $[\t_{1},\t_{2}]$ is normal and $Y\in\mathcal{H}_{[\t_{2},\t_{3}]}$,
then $X\in\mathcal{H}_{[\t_{2},\t_{3}]}$. 
\end{enumerate}
\end{prop}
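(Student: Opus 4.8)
The plan is to prove only part (a), since part (b) will follow by a dual argument (exchanging the roles of torsion and torsion-free classes, cones and cocones). So assume $[\t_2,\t_3]$ is normal and $Y\in\mathcal{H}_{[\t_1,\t_2]}$; we must show $Z\in\mathcal{H}_{[\t_1,\t_2]}=\mathcal{T}_2\cap\mathcal{F}_1$. Write $\t_i=(\mathcal{T}_i,\mathcal{F}_i)$. Since $Z\in\mathcal{H}_{[\t_1,\t_3]}\subseteq\mathcal{F}_1$ already, the only thing to establish is that $Z\in\mathcal{T}_2$.

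The key observation is that $(\mathcal{H}_{[\t_1,\t_2]},\mathcal{H}_{[\t_2,\t_3]})$ is an $s$-torsion pair in $\mathcal{H}_{[\t_1,\t_3]}$ by Lemma~\ref{lem:stors de extensiones de corazones}(c). First I would pass to the $\mathfrak{s}$-conflation $T\xrightarrow{} Z\xrightarrow{} F$ in $\mathcal{H}_{[\t_1,\t_3]}$ given by this $s$-torsion pair, with $T\in\mathcal{H}_{[\t_1,\t_2]}$ and $F\in\mathcal{H}_{[\t_2,\t_3]}$; it suffices to show $F=0$, since then $Z\cong T\in\mathcal{H}_{[\t_1,\t_2]}$. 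Alternatively, and perhaps more cleanly, I would argue directly: applying $\mathbb{E}(-,\,\cdot\,)$ to the conflation $X\to Y\to Z$ against an object $F_3'\in\mathcal{H}_{[\t_2,\t_3]}$ (which lies in $\mathcal{F}_2$), the long exact sequences from Proposition~\ref{suc-1-ex} and Definition~\ref{def-neg-ext} give
\[
\mathbb{E}^{-1}(Y,F_3')\to\mathbb{E}^{-1}(X,F_3')\to\mathbb{E}(Z,F_3')\to\mathbb{E}(Y,F_3').
\]
Now $Y\in\mathcal{H}_{[\t_1,\t_2]}\subseteq\mathcal{T}_2$, so $\mathbb{E}(Y,F_3')=0$ because $F_3'\in\mathcal{F}_2=\mathcal{T}_2^{\perp_0}$... wait, one needs the right orthogonality here; actually $\mathbb{E}(\mathcal{T}_2,\mathcal{F}_2)=0$ need not hold in general. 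Let me instead use the $s$-torsion pair structure properly: since $(\mathcal{H}_{[\t_1,\t_2]},\mathcal{H}_{[\t_2,\t_3]})$ is an $s$-torsion pair in $\mathcal{H}_{[\t_1,\t_3]}$, we have $\mathbb{E}^i_{\mathcal{H}_{[\t_1,\t_3]}}(\mathcal{H}_{[\t_1,\t_2]},\mathcal{H}_{[\t_2,\t_3]})=0$ for $i=0,-1$, and the restricted extension bifunctor on $\mathcal{H}_{[\t_1,\t_3]}$ agrees with the ambient one. The sequence above then forces $\mathbb{E}(Z,F)=0$ for the torsion-free part $F\in\mathcal{H}_{[\t_2,\t_3]}$ of $Z$, but we also have the defining conflation $T\to Z\to F$ with $T\in\mathcal{H}_{[\t_1,\t_2]}$, whence $\mathbb{E}(Z,F)\twoheadrightarrow\mathbb{E}(T,F)=0$ already — I would instead look at $\mathrm{Hom}$. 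Applying $\mathrm{Hom}_{\mathcal{H}_{[\t_1,\t_3]}}(-,F)$ to $X\to Y\to Z$ and using the negative extension sequence, together with $\mathbb{E}^{-1}(Y,F)=0$ and $\mathbb{E}^{-1}(X,F)=0$ (both hold since $X,Y\in\mathcal{H}_{[\t_1,\t_3]}$, $F\in\mathcal{H}_{[\t_2,\t_3]}$, and $[\t_2,\t_3]$ normal gives $\mathbb{E}^{-1}(\mathcal{T}_3,\mathcal{F}_2)=0\supseteq\mathbb{E}^{-1}(\mathcal{H}_{[\t_1,\t_3]},\mathcal{H}_{[\t_2,\t_3]})$), the conflation $T\to Z\to F$ splits the identity of $F$ off, i.e. the projection $Z\to F$ has a section, but $\mathrm{Hom}(Z,F)\cong\mathrm{Hom}(?, \dots)$ — the section $F\to Z$ composes with $Z\to F$ to $1_F$, while $F\to Z$ factors...

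The hard part, then, is organizing the orthogonality bookkeeping so that it closes. The cleanest route: use the defining conflation $T\xrightarrow{x} Z\xrightarrow{p} F$ with $T\in\mathcal{H}_{[\t_1,\t_2]}$, $F\in\mathcal{H}_{[\t_2,\t_3]}$; apply $\mathrm{Hom}_{\mathcal{H}_{[\t_1,\t_3]}}(-,F)$ to the given conflation $X\xrightarrow{f} Y\xrightarrow{g} Z$. Since $\mathbb{E}^{-1}(X,F)=0$ by normality of $[\t_2,\t_3]$, the sequence $\mathrm{Hom}(Z,F)\xrightarrow{g^*}\mathrm{Hom}(Y,F)$ is injective; but $Y\in\mathcal{H}_{[\t_1,\t_2]}$ and $F\in\mathcal{H}_{[\t_2,\t_3]}$ so $\mathrm{Hom}(Y,F)=0$, forcing $\mathrm{Hom}(Z,F)=0$. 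In particular the projection $p\colon Z\to F$ is zero, so the conflation $T\to Z\to F$ realizes the split extension; hence $F$ is a direct summand of $Z$. Finally $Z\in\mathcal{H}_{[\t_1,\t_3]}\subseteq\mathcal{F}_1$ while $F\in\mathcal{H}_{[\t_2,\t_3]}\subseteq\mathcal{T}_2$; but also $F\in\mathcal{H}_{[\t_1,\t_3]}$, and by Lemma~\ref{lem:cerraduras stors} together with $\mathrm{Hom}(Z,F)=0$ applied once more (now $F$ a summand of $Z$ means $1_F$ factors through $Z$, so $1_F\in\mathrm{Hom}(Z,F)\circ\mathrm{Hom}(F,Z)=0$), we get $F=0$. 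Therefore $x\colon T\to Z$ is an isomorphism and $Z\cong T\in\mathcal{H}_{[\t_1,\t_2]}$, as desired. The only subtlety to check carefully is that the restricted negative-first-extension structure on $\mathcal{H}_{[\t_1,\t_3]}$ inherited from $\mathcal{D}$ is the one for which Lemma~\ref{lem:stors de extensiones de corazones}(c) and the vanishing $\mathrm{Hom}(\mathcal{H}_{[\t_1,\t_2]},\mathcal{H}_{[\t_2,\t_3]})=0$ hold, which is exactly Remark~2 following Proposition~\ref{suc-1-ex} and the remark preceding Example on intervals.
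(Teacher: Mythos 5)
Your core computation is exactly the paper's: apply $\mathrm{Hom}(-,W)$ with $W\in\mathcal{H}_{[\t_{2},\t_{3}]}$ to the conflation $X\to Y\to Z$, use normality of $[\t_{2},\t_{3}]$ to get $\mathbb{E}^{-1}(X,W)=0$ (since $X\in\mathcal{T}_{3}$, $W\in\mathcal{F}_{2}$) and the torsion-pair orthogonality to get $\mathrm{Hom}(Y,W)=0$, and conclude $\mathrm{Hom}(Z,W)=0$. That part is correct. The paper then finishes in one line: since your argument applies to \emph{every} $W\in\mathcal{H}_{[\t_{2},\t_{3}]}$, you have $Z\in{}^{\perp_{0}}\mathcal{H}_{[\t_{2},\t_{3}]}$, and Proposition \ref{prop:clases ort en stors} applied to the $s$-torsion pair $(\mathcal{H}_{[\t_{1},\t_{2}]},\mathcal{H}_{[\t_{2},\t_{3}]})$ in $\mathcal{H}_{[\t_{1},\t_{3}]}$ gives $Z\in\mathcal{H}_{[\t_{1},\t_{2}]}$ immediately. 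You do not need the canonical decomposition of $Z$ at all.

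The way you actually close the argument contains a genuine error. From $\mathrm{Hom}(Z,F)=0$ you get $p=0$ in the canonical conflation $T\xrightarrow{x}Z\xrightarrow{p}F$, but it does \emph{not} follow that this conflation ``realizes the split extension'' or that $F$ is a direct summand of $Z$. The correct consequence of $p=0$ goes the other way: by exactness of $(W,T)\xrightarrow{(W,x)}(W,Z)\xrightarrow{(W,p)}(W,F)$ with $(W,p)=0$, the map $(W,x)$ is surjective for every $W$; taking $W=Z$ produces $s:Z\to T$ with $xs=1_{Z}$, so $Z$ is a direct summand of $T$ (in the triangulated case one sees $T\cong\Sigma^{-1}F\amalg Z$, exactly as in the proof of Lemma \ref{lem:key-1-1} — note $F$ is a summand of $T$ shifted, not of $Z$). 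With that corrected, you can still finish: $Z$ is a summand of $T\in\mathcal{H}_{[\t_{1},\t_{2}]}$ and torsion classes of $s$-torsion pairs are closed under direct summands (Proposition \ref{lem:cerraduras stors}(a)), so $Z\in\mathcal{H}_{[\t_{1},\t_{2}]}$. Either repair (this one, or the direct appeal to Proposition \ref{prop:clases ort en stors}) makes your proof work; as written, the step ``$p=0$ $\Rightarrow$ split $\Rightarrow$ $F$ is a summand of $Z$ $\Rightarrow$ $F=0$'' is unjustified.
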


\begin{proof}
We only prove (a) since the proof of (b) is similar. \
 Consider an $\s$-conflation $\suc[X][Y][Z]$ in $\C$ with $Y\in\mathcal{H}_{[\t_{1},\t_{2}]}$.
Note that $X\in{}^{\bot_{-1}}\mathcal{H}_{[\t_{2},\t_{3}]}$ since
$X\in\mathcal{H}_{[\t_{1},\t_{3}]}=\mathcal{T}_{3}\cap\mathcal{F}_{1}\subseteq\mathcal{T}_{3}$,
$\mathcal{H}_{[\t_{2},\t_{3}]}\subseteq\mathcal{F}_{2}$, and $[\t_{2},\t_{3}]$
is normal. Then, it follows from the previous $\s$-conflation that
$Z\in{}^{\bot_{0}}\mathcal{H}_{[\t_{2},\t_{3}]}$. Thus, by applying
Proposition \ref{prop:clases ort en stors} to the $s$-torsion pair
$(\mathcal{H}_{[\t_{1},\t_{2}]},\mathcal{H}_{[\t_{2},\t_{3}]})$ in
$\mathcal{H}_{[\t_{1},\t_{3}]}$ (see Lemma \ref{lem:stors de extensiones de corazones}
(c)), we have that $Z\in\mathcal{H}_{[\t_{1},\t_{2}]}.$ 
\end{proof}
\begin{defn}
Let $\mathcal{D}$ be an extriangulated category. We say that an $\mathfrak{s}$-conflation
$\suc[][][][a][b]$ in $\mathcal{D}$ is a \textbf{short exact sequence}
if: $a$ is the kernel of $b$ in $\mathcal{D}$, and $b$ is the
cokernel of $a$ in $\mathcal{D}$. 
\end{defn}

\begin{prop}
\label{prop:normal vs exactitud} Let $\mathcal{D}$ be an extriangulated
category with negative first extension, and let $\mathbf{t}_{i}=(\mathcal{T}_{i},\mathcal{F}_{i})$
in $\stors\,\mathcal{D}$ for $i=1,2,3$ be such that $\mathbf{t}_{1}\leq\mathbf{t}_{2}\leq\mathbf{t}_{3}.$
Then, for any $C\in\mathcal{H}_{[\t_{1},\t_{3}]},$ the following
statements hold true for the $\mathfrak{s}$-conflation $\eta:\:T_{2}\xrightarrow{f_{C}}C\xrightarrow{g_{C}}F_{2}$
in $\D$ given by $\t_{2}.$ 
\begin{enumerate}
\item $T_{2},F_{2}\in\mathcal{H}_{[\t_{1},\t_{3}]}$, and thus $\eta$ is
an $\mathfrak{s}$-conflation in $\mathcal{H}_{[\t_{1},\t_{3}]}$. 
\item If $[\t_{2},\t_{3}]$ is normal, then $f_{C}$ is a monomorphism in
$\mathcal{H}_{[\t_{1},\t_{3}]}$ and $f_{C}$ is the kernel of $g_{C}$
in $\mathcal{H}_{[\t_{1},\t_{3}]}$. 
\item If $[\t_{1},\t_{2}]$ is normal, then $g_{C}$ is an epimorphism in
$\mathcal{H}_{[\t_{1},\t_{3}]}$ and $g_{C}$ is the cokernel of $f_{C}$
in $\mathcal{H}_{[\t_{1},\t_{3}]}$. 
\item If $[\t_{1},\t_{2}]$ and $[\t_{2},\t_{3}]$ are normal, then $\eta$
is a short exact sequence in $\mathcal{H}_{[\t_{1},\t_{3}]}$. 
\end{enumerate}
\end{prop}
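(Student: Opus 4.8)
### Proof proposal for Proposition~\ref{prop:normal vs exactitud}

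The plan is to prove the four items in order, each one building on the previous ones, and to lean heavily on the $s$-torsion pair $(\mathcal{H}_{[\t_{1},\t_{2}]},\mathcal{H}_{[\t_{2},\t_{3}]})$ of $\mathcal{H}_{[\t_{1},\t_{3}]}$ furnished by Lemma~\ref{lem:stors de extensiones de corazones}(c), together with the closure properties recorded in Proposition~\ref{lem:cerraduras stors} and the orthogonality of Proposition~\ref{prop:clases ort en stors}. For item (a), I would invoke Lemma~\ref{lem:extensions of hearts} directly: given $C\in\mathcal{H}_{[\t_{1},\t_{3}]}$, the $\mathfrak{s}$-conflation $\eta:\:T_{2}\xrightarrow{f_{C}}C\xrightarrow{g_{C}}F_{2}$ coming from $\t_{2}$ has $T_{2}\in\mathcal{H}_{[\t_{1},\t_{2}]}\subseteq\mathcal{H}_{[\t_{1},\t_{3}]}$ and $F_{2}\in\mathcal{H}_{[\t_{2},\t_{3}]}\subseteq\mathcal{H}_{[\t_{1},\t_{3}]}$; since $\mathcal{H}_{[\t_{1},\t_{3}]}$ is closed under extensions, $\eta$ is an $\mathfrak{s}$-conflation in $\mathcal{H}_{[\t_{1},\t_{3}]}$. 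This also makes the statements in (b)--(d) meaningful.

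For item (b), assume $[\t_{2},\t_{3}]$ is normal. I want to show $f_{C}$ is the kernel of $g_{C}$ in $\mathcal{H}_{[\t_{1},\t_{3}]}$. We already know $f_{C}$ is a weak kernel of $g_{C}$ (as $\eta$ is an $\mathfrak{s}$-conflation, by the remark following Proposition~\ref{suc-1-ex}), so it suffices to show $f_{C}$ is monic in $\mathcal{H}_{[\t_{1},\t_{3}]}$. Suppose $a:W\to T_{2}$ with $W\in\mathcal{H}_{[\t_{1},\t_{3}]}$ satisfies $f_{C}a=0$. By the long exact sequence of Proposition~\ref{suc-1-ex} applied to $\eta$, the morphism $a$ factors through $\mathbb{E}^{-1}(W,F_{2})\to\mathcal{D}(W,T_{2})$, i.e.\ $a$ lies in the image of the connecting map $\delta_{-1}$-type morphism; but $W\in\mathcal{H}_{[\t_{1},\t_{3}]}\subseteq\mathcal{T}_{3}$ and $F_{2}\in\mathcal{H}_{[\t_{2},\t_{3}]}\subseteq\mathcal{F}_{2}$, so normality of $[\t_{2},\t_{3}]$ gives $\mathbb{E}^{-1}(W,F_{2})=\mathbb{E}^{-1}(\mathcal{T}_{3},\mathcal{F}_{2})=0$, whence $a=0$. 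So $f_{C}$ is monic, and a weak kernel which is monic is a kernel. Item (c) is dual: assuming $[\t_{1},\t_{2}]$ normal, one shows $g_{C}$ is epic in $\mathcal{H}_{[\t_{1},\t_{3}]}$ using $\mathbb{E}^{-1}(\mathcal{T}_{2},\mathcal{F}_{1})=0$ and the second long exact sequence of Proposition~\ref{suc-1-ex}, and a weak cokernel which is epic is a cokernel. Item (d) is then immediate: combining (b) and (c), $f_{C}$ is the kernel of $g_{C}$ and $g_{C}$ is the cokernel of $f_{C}$, which is exactly the definition of a short exact sequence.

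The main obstacle I anticipate is getting the bookkeeping in (b) exactly right: I need the precise form of the negative-first-extension long exact sequence from Definition~\ref{def-neg-ext} (the one ending $\mathbb{E}^{-1}(W,F_{2})\xrightarrow{\eta^{-1}}\mathcal{D}(W,T_{2})\xrightarrow{(W,f_{C})}\mathcal{D}(W,C)$) to conclude that $\ker\big((W,f_{C})\big)$ is the image of $\eta^{-1}$, and then to identify that image as zero via normality. An alternative, possibly cleaner, route for (b) is to bypass the explicit connecting map: since $X:=T_{2}\in\mathcal{H}_{[\t_{1},\t_{2}]}={}^{\bot_{0}}\mathcal{H}_{[\t_{2},\t_{3}]}\cap{}^{\bot_{-1}}\mathcal{H}_{[\t_{2},\t_{3}]}$ (using Proposition~\ref{prop:clases ort en stors} on the $s$-torsion pair of Lemma~\ref{lem:stors de extensiones de corazones}(c)), one argues as in the proof of Proposition~\ref{lemaA}(a) that $f_{C}$ is monic. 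Indeed, Proposition~\ref{lemaA} already packages precisely this kind of argument, and I would cite it where convenient rather than reprove it. The dual remarks apply to (c).
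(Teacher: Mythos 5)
Your proposal is correct and follows essentially the same route as the paper: (a) from Lemma \ref{lem:extensions of hearts}, (b) by showing $f_{C}$ is monic via the vanishing of $\mathbb{E}^{-1}(W,F_{2})$ forced by normality of $[\t_{2},\t_{3}]$ and then upgrading the weak kernel to a kernel, with (c) dual and (d) immediate. The only quibble is notational: the connecting map you use in (b) is the covariant one $\eta^{-1}:\mathbb{E}^{-1}(W,F_{2})\to\mathcal{D}(W,T_{2})$ (written $\delta^{-1}$ in Definition \ref{def-neg-ext}), not $\delta_{-1}$.
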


\begin{proof}
Notice that (a) follows from Lemma \ref{lem:extensions of hearts},
(c) is dual to (b) and (d) follows from (b) and (c). Thus, it is enough
to prove (b). \
 (b) Let us show that $f_{C}$ is monomorphism if $[\t_{2},\t_{3}]$
is normal. Consider a morphism $h:W\rightarrow T_{2}$ with $W\in\mathcal{H}_{[\t_{1},\t_{3}]}$
and such that $f_{C}\circ h=0.$ By applying the functor $\D(W,-)$
to $\eta,$ we get the exact sequence 
\[
\mathbb{E}^{-1}(W,F_{2})\xrightarrow{\eta^{-1}}\mathcal{D}(W,T_{2})\rightarrow\mathcal{D}(W,C).
\]
Note that $h\in\Ker[(\mathcal{D}(W,f_{C}))]=\im[\mathcal{\eta}^{-1}]$.
But $\mathbb{E}^{-1}(W,F_{2})=0$ since $W\in\mathcal{T}_{3}$, $F_{2}\in\mathcal{F}_{2}$
and $[\t_{2},\t_{3}]$ is normal. Therefore, $h=0$ and thus $f_{C}$
is a monomorphism. Finally, $f_{C}$ is the kernel for $g_{C}$ in
$\mathcal{H}_{[\t_{1},\t_{3}]}$ since it is a monomorphism and a
weak kernel for $g_{C}.$ 
\end{proof}
\begin{prop}
\label{ex-ker-coker} For an extriangulated category $\D$ with negative
first extension, $\mathbf{t}_{i}=(\mathcal{T}_{i},\mathcal{F}_{i})$
in $\stors\,\mathcal{D}$ for $i=1,2$ such that $[\t_{1},\t_{2}]$
is a normal interval in $\stors\,\D$ and $X,Y\in\mathcal{H}_{[\t_{1},\t_{2}]},$
the following statements hold true. 
\begin{enumerate}
\item Let $\chi:\:\suc[X][Y][Z][f][c]$ be an $\mathfrak{s}$-conflation
in $\mathcal{D}.$ Then $f$ has cokernel in $\mathcal{H}_{[\t_{1},\t_{2}]}$
and there are morphisms $X\xrightarrow{r}E\xrightarrow{h}Y$ in $\mathcal{H}_{[\t_{1},\t_{2}]}$
such that $f=h\circ r,$ $r$ is an epimorphism in $\mathcal{H}_{[\t_{1},\t_{2}]}$
and $h=\Ker[{(\Cok[f])}]=:\im[f]$ in $\mathcal{H}_{[\t_{1},\t_{2}]}.$ 
\item Let $\kappa:\:\suc[K][X][Y][k][f]$ be an $\mathfrak{s}$-conflation
in $\mathcal{D}.$ Then $f$ has kernel in $\mathcal{H}_{[\t_{1},\t_{2}]}$
and there are morphisms $X\xrightarrow{r}E\xrightarrow{h}Y$ in $\mathcal{H}_{[\t_{1},\t_{2}]}$
such that $f=h\circ r,$ $h$ is a monomorphism in $\mathcal{H}_{[\t_{1},\t_{2}]}$
and $r=\Cok[{\Ker[(f)]}]=:\coim[f]$ in $\mathcal{H}_{[\t_{1},\t_{2}]}.$ 
\end{enumerate}
\end{prop}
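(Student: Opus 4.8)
The plan is to deduce both statements from Proposition \ref{prop:normal vs exactitud} by introducing a third $s$-torsion pair that "splits off" the relevant part of the conflation. For item (a), I would set $\t_1' := \t_1$, $\t_2' := \t_1$ and enlarge the picture: the key observation is that the conflation $\chi$ lives in $\mathcal{H}_{[\t_1,\t_2]}$, and I want to locate $X$ inside a smaller heart and $Z$ inside a complementary one. Concretely, I would apply Corollary \ref{cor-extcorn}(a) / Lemma \ref{lem:extensions of hearts} with a chain $\t_1 \le \t_1 \le \t_2 \le \t_2$ — but that is degenerate, so instead I would work with the $s$-torsion pair decomposition of $Y$ itself. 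The cleaner route: since $[\t_1,\t_2]$ is normal, $\mathcal{H}_{[\t_1,\t_2]}$ is exact (Lemma \ref{nimpex}); so I should show directly that the weak cokernel $c\colon Y\to Z$ of $f$ (which exists since $\chi$ is an $\s$-conflation) can be \emph{factored} through an object of $\mathcal{H}_{[\t_1,\t_2]}$, and that this factored map \emph{is} the cokernel.

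First I would take $Z$ (which a priori lies in $\mathcal{H}_{[\t_1,\t_3]}$ for whatever $\t_3$ contains it — but actually in the ambient $\D$) and run the $s$-torsion decomposition of $Z$ with respect to a well-chosen $s$-torsion pair. The natural candidate is $\Sigma^{-1}\t_1$-type reasoning: applying Remark \ref{adj-stor} to $\t_1 = (\T_1,\F_1)$ gives a conflation $T_1^Z \to Z \to F_1^Z$; since $X, Y \in \F_1$ and $\F_1$ is closed under cocones, $T_1^Z \in \mathbb{E}^{-1}$-orthogonal... this is getting complicated. The better structural approach, which I would actually pursue, is: pick $\t_3 \ge \t_2$ large enough that $Z \in \mathcal{H}_{[\t_1,\t_3]}$ (e.g. $\t_3 = \Sigma^{-1}\t_1$ if $\D$ is triangulated, or in general use that $Z \in \X \star \Y$ for some $s$-torsion pair and absorb it), then apply Proposition \ref{prop:normal vs exactitud}(a)–(b) to the decomposition of $Z$ by $\t_2$: but $Z$ need not lie in $\mathcal{T}_2$. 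Let me instead reconsider: the honest statement is that $X,Y \in \mathcal{H}_{[\t_1,\t_2]} = \F_1 \cap \T_2$, so $Z \in \T_2$ (closed under cones) and $Z \in {}^{\bot_{-1}}\F_1$? No — from $X,Y \in \F_1$ and Proposition \ref{lem:cerraduras stors}(b), $F_1^Z$-part... Actually $X, Y \in \F_1$ only gives that $Z$'s torsion part $T_1^Z$ satisfies $T_1^Z \in \mathcal{T}_1 \cap {}^{\bot_0}(\text{stuff})$ — in general $Z \notin \F_1$. So $Z \in \mathcal{H}_{[\t_1,\t_2']}\star\mathcal{H}_{[\t_2',\t_2]}$... no, $Z$ is just in $\T_2$.

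So here is the plan I would commit to. Write the $\t_1$-decomposition $\delta_Z\colon T \xrightarrow{a} Z \xrightarrow{b} F$ with $T \in \T_1 \subseteq \F_1$-orthogonal and $F \in \F_1$; by Proposition \ref{lem:cerraduras stors}(a), $F \in \T_2$ (cone of $Z, T \in \T_2$... need $T \in \T_2$, true since $\T_1 \subseteq \T_2$), so $F \in \F_1 \cap \T_2 = \mathcal{H}_{[\t_1,\t_2]}$, and $T \in \T_1 \cap \T_2 = \T_1$, so actually $T \in \mathcal{H}_{[\t_1,\t_1]} \star \cdots$ — here $T$ should be recognized as lying in the relevant heart via $\mathcal{H}_{[\t_0,\t_1]}$ for $\t_0$ the $s$-torsion pair $\Sigma\t_1$-analogue; but in a bare extriangulated category without shift this device isn't available, so instead I note $T \in {}^{\bot_{-1}}\mathcal{H}_{[\t_1,\t_2]}$ by Proposition \ref{lem:cerraduras stors}(3a) applied to $T \to Z \to F$ with... $Z \notin \T_1$. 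Hmm. Given the delicacy, I would in fact prove (a) as follows: apply Proposition \ref{prop:normal vs exactitud}(c) with the triple $\t_1 \le \t_1 \le \t_2$ (degenerate, $\mathcal{H}_{[\t_1,\t_1]}=0$) — trivial — and instead realize $Z$ via the \emph{cokernel construction already available}: $c\colon Y \to Z$ is a weak cokernel of $f$ in $\D$, hence also in $\mathcal{H}_{[\t_1,\t_3]}$ for $\t_3$ with $Z \in \mathcal{H}_{[\t_1,\t_3]}$; then $\im[f] := \Ker(\Cok f)$ is computed by taking the $\t_2$-torsion decomposition of $Z$ as in Proposition \ref{prop:normal vs exactitud} and using that $[\t_1,\t_2]$ normal makes $g_C$ the cokernel.

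\textbf{The main obstacle.} The real difficulty — and the step I expect to take the most care — is constructing the intermediate object $E$ and the epi–mono (resp. mono–epi) factorization $X \xrightarrow{r} E \xrightarrow{h} Y$ \emph{inside} $\mathcal{H}_{[\t_1,\t_2]}$, and verifying that $h$ (resp. $r$) has the claimed universal property (kernel of cokernel, resp. cokernel of kernel). Once $\Cok f$ exists in $\mathcal{H}_{[\t_1,\t_2]}$ — which I would get by forming in $\D$ the $\s$-conflation realizing the pushout of $\chi$ along itself, or more simply by taking the $\t_2$-decomposition of the $\D$-weak-cokernel object $Z$ and invoking Proposition \ref{prop:normal vs exactitud}(c) to see the $\F_2$-part is a genuine cokernel in the heart — the object $E := \Ker(\Cok f)$ is defined, $h\colon E \to Y$ is the inclusion, and the induced $r\colon X \to E$ exists because $f$ kills $\Cok f$, i.e. $\Cok(f)\circ f = 0$, so $f$ factors through $\Ker(\Cok f) = E$. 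That $r$ is epi follows from the universal property of the cokernel (anything killed by $r$ is killed by $f$... one runs the standard diagram chase, using that $\mathcal{H}_{[\t_1,\t_2]}$ is exact by Lemma \ref{nimpex} so that these kernels/cokernels behave well — in particular that in an exact category a map equal to a composite (monomorphism)$\circ$(regular epi) has image the monomorphism). For (b), I would dualize: $f\colon X \to Y$ has a \emph{kernel} in $\mathcal{H}_{[\t_1,\t_2]}$ by the dual of the cokernel argument (take the $\t_1'$-decomposition / use the dual half of Proposition \ref{prop:normal vs exactitud}), set $r := \Coim f = \Cok(\Ker f)$, let $h$ be the induced map from the image of $r$; that $h$ is mono uses again the exact structure and the universal property of $\Ker f$. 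The subtlety throughout is that $\mathcal{H}_{[\t_1,\t_2]}$ is only semi-abelian (or quasi-abelian), not abelian, so $\Coim \to \Im$ need not be iso; the statement is carefully worded to only claim the two partial factorizations, and the proof must not accidentally assume more. I would therefore lean on Lemma \ref{nimpex} (exact structure) plus Proposition \ref{prop:normal vs exactitud}(b),(c) as the two workhorses, and keep the diagram chases entirely inside the exact category $\mathcal{H}_{[\t_1,\t_2]}$.
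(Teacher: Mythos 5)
There is a genuine gap, concentrated in exactly the step you flag as the main obstacle. First, the decomposition you finally commit to is the wrong one: since $X,Y\in\mathcal{T}_2$ and $\mathcal{T}_2$ is closed under cones, the object $Z$ already lies in $\mathcal{T}_2$, so its $\t_2$-decomposition is trivial ($Z\to Z\to 0$) and its $\mathcal{F}_2$-part is $0$, not the cokernel of $f$. The decomposition that works is the $\t_1$-decomposition $T_1\to Z\xrightarrow{c_1}F_1$, which you do write down midway through and then abandon: there $T_1\in\mathcal{T}_1\subseteq\mathcal{T}_2$ forces $F_1\in\mathcal{F}_1\cap\mathcal{T}_2=\mathcal{H}_{[\t_1,\t_2]}$, and the composite $c_1c\colon Y\to F_1$ is the candidate cokernel. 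Second, you cannot simply \emph{define} $E:=\Ker(\Cok f)$: at this stage $\mathcal{H}_{[\t_1,\t_2]}$ is only known to be an exact category (Lemma \ref{nimpex}), with no guarantee that kernels exist — their existence is part of what this proposition supplies to Theorem \ref{semi-ab-triang}. The paper instead \emph{constructs} $E$ by applying the octahedral-type axiom \cite[Rk. 2.22]{NP} to $\chi$ and $\eta_1\colon T_1\to Z\to F_1$, obtaining $\mathfrak{s}$-conflations $X\xrightarrow{r}E\to T_1$ and $E\xrightarrow{h}Y\xrightarrow{c_2}F_1$; the latter has all three terms in $\mathcal{H}_{[\t_1,\t_2]}$, hence is a short exact sequence of the exact structure, which yields $h=\Ker(c_2)$ with no further work.

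Third, and most seriously, the epimorphy of $r$ is not a formal consequence of the universal property of the cokernel, nor of image factorizations in exact categories — invoking a ``(mono)$\circ$(regular epi)'' factorization presupposes what you are trying to prove, and the category is not yet known to be semi-abelian (that is Theorem \ref{semi-ab-triang}, which cites this proposition). The actual reason $r$ is epic is torsion-theoretic: the conflation $X\xrightarrow{r}E\to T_1$ gives, for $W\in\mathcal{H}_{[\t_1,\t_2]}$, an exact sequence $\D(T_1,W)\to\D(E,W)\xrightarrow{\D(r,W)}\D(X,W)$, and $\D(T_1,W)=0$ because $T_1\in\mathcal{T}_1$ and $W\in\mathcal{F}_1$. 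Without explicitly exhibiting the ``cofiber'' of $r$ as an object of $\mathcal{T}_1$, this step does not go through; once it does, the identification $c_2=\Cok(f)$ follows exactly as you indicate (anything killing $f$ kills $h$ because $\D(r,W)$ is injective, then factors through $c_2=\Cok(h)$). A further minor point: your intermittent reliance on embedding $Z$ into a larger heart $\mathcal{H}_{[\t_1,\t_3]}$ is unavailable in a bare extriangulated category without shift, which is the generality of the statement; the paper's argument never needs it.
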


\begin{proof}
We only prove (a) since the proof of (b) is dual. \
 We have the $\mathfrak{s}$-conflation $\chi:\:\suc[X][Y][Z][f][c]$
in $\mathcal{D}.$ Since $\T_{2}$ is closed under cones (see Proposition
\ref{lem:cerraduras stors}) and $X,Y\in\mathcal{T}_{2},$ we have
that $Z\in\mathcal{T}_{2}.$ Consider the $\mathfrak{s}$-conflation
$\eta_{1}:\:\suc[T_{1}][Z][F_{1}][][c_{1}]$ induced by $\t_{1}.$
Then $F_{1}\in\mathcal{F}_{1}\cap\mathcal{T}_{2}=\mathcal{H}_{[\t_{1},\t_{2}]}$
since $T_{1}\in\T_{1}\subseteq\T_{2}.$ Moreover, by \cite[Rk. 2.22]{NP},
we have the following commutative diagram in $\D$ 
\[
\xymatrix{X\ar[r]^{r}\ar@{=}[d] & E\ar[r]\ar[d]^{h} & T_{1}\ar[d]\\
X\ar[r]^{f} & Y\ar[r]^{c}\ar[d]^{c_{2}} & Z\ar[d]^{c_{1}}\\
 & F_{1}\ar@{=}[r] & F_{1}
}
\]
where the rows and columns are $\mathfrak{s}$-conflations in $\D.$
By Proposition \ref{lem:cerraduras stors} (a,b), we get that the
$\mathfrak{s}$-conflation $\eta_{2}:\,E\xrightarrow{h}Y\xrightarrow{c_{2}}F_{1}$
lies in $\mathcal{H}_{[\t_{1},\t_{2}]}.$ Therefore (see Lemma \ref{nimpex})
$\eta_{2}$ is a short exact sequence in the exact category $\mathcal{H}_{[\t_{1},\t_{2}]},$
and thus $h=\Ker[(c_{2})]$ and $c_{2}=\Cok[h].$ \
 We assert that $r:X\to E$ is an epimorphism in $\mathcal{H}_{[\t_{1},\t_{2}]}$
and $c_{2}:Y\to F_{1}$ is the cokernel of $f:X\to Y$ in $\mathcal{H}_{[\t_{1},\t_{2}]}.$
Indeed, consider $W\in\mathcal{H}_{[\t_{1},\t_{2}]}.$ Then, by applying
the functor $\D(-,W)$ to the above diagram, we get the following
exact and commutative diagram 
\[
\xymatrix{\D(Z,W)\ar[r]^{\D(c,W)}\ar[d]_{\D(h_{1},W)} & \D(Y,W)\ar[r]^{\D(f,W)}\ar[d]^{\D(h,W)} & \D(X,W)\ar@{=}[d]\\
\D(T_{1},W)\ar[r] & \D(E,W)\ar[r]^{\D(r,W)} & \D(X,W)
}
.
\]
Since $W\in\F_{1},$ we get that $\D(T_{1},W)=0$ and thus $\D(r,W):\D(E,W)\to\D(X,W)$
in a monomorphism. In particular, we conclude that $r:X\to E$ is
an epimorphism in $\mathcal{H}_{[\t_{1},\t_{2}]}.$ Let us show now
that $c_{2}=\Cok[f]$ in $\mathcal{H}_{[\t_{1},\t_{2}]}.$ Indeed,
let $\lambda:Y\to W$ be such that $\lambda\circ f=0.$ Then, by the
above commutative diagram and since $\D(r,W):\D(E,W)\to\D(X,W)$ in
a monomorphism, we conclude that $\lambda\circ h=0.$ Now, using that
$c_{2}=\Cok[h],$ we get that there is a unique factorization of $\lambda:Y\to W$
through $c_{2}:Y\to F_{1},$ proving that $c_{2}=\Cok[f]$ in $\mathcal{H}_{[\t_{1},\t_{2}]}.$ 
\end{proof}
We recall that an additive category $\C$ is \textbf{semi-abelian}
if it has kernels, cokernels and the canonical morphism $\coim[f]\to\im[f]$
is monic and epic. 
\begin{thm}
\label{semi-ab-triang} Let $\D$ be an extriangulated category with
negative extension and $[\t_{1},\t_{2}]$ be a normal interval in
$\stors\,\D$ such that every morphism $f:X\rightarrow Y$ in $\mathcal{H}_{[\t_{1},\t_{2}]}$
admits the following $\s$-conflations in $\mathcal{D}$: 
\[
\chi:\:\suc[X][Y][Z][f][c]\text{ and }\kappa:\:\suc[K][X][Y][k][f].
\]
Then, $\mathcal{H}_{[\t_{1},\t_{2}]}$ is an exact and a semi-abelian
category. 
\end{thm}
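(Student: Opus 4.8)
The plan is to assemble the statement from the pieces already developed. First I would invoke Lemma \ref{nimpex}: since $[\t_1,\t_2]$ is a normal interval, $\mathbb{E}^{-1}(\mathcal{H}_{[\t_1,\t_2]},\mathcal{H}_{[\t_1,\t_2]})=0$, so $\mathcal{H}_{[\t_1,\t_2]}$ is an exact category. That settles the ``exact'' half and, more importantly, gives us the short-exact-sequence machinery used in Proposition \ref{ex-ker-coker}. Next, the hypothesis on two-sided $\s$-conflations for every morphism $f:X\to Y$ is precisely what lets us apply both parts of Proposition \ref{ex-ker-coker}: part (a) applied to $\chi$ produces a cokernel of $f$ in $\mathcal{H}_{[\t_1,\t_2]}$ together with a factorization $f=h r$ with $r$ epic and $h=\operatorname{Ker}(\Cok f)=\im f$; part (b) applied to $\kappa$ produces a kernel of $f$ together with a factorization $f=h'r'$ with $h'$ monic and $r'=\Cok(\Ker f)=\coim f$. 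So $\mathcal{H}_{[\t_1,\t_2]}$ has all kernels and cokernels.

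The remaining point is that the canonical comparison morphism $\coim f\to\im f$ is both monic and epic. Here I would line up the two factorizations obtained above. From part (b) we have $X\xrightarrow{r'}\coim f\xrightarrow{h'}Y$ with $h'$ monic; from part (a) we have $X\xrightarrow{r}\im f\xrightarrow{h}Y$ with $r$ epic and $h$ monic (it is a kernel). Since $h'$ is monic and $h r=f=h' r'$, the universal property of $\coim f=\Cok(\Ker f)$ — noting $r$ kills $\Ker f$ because $h r=f$ does and $h$ is monic — yields a unique $\varphi:\coim f\to\im f$ with $h\varphi=h'$ and $\varphi r'=r$. One checks $\varphi$ is the canonical morphism. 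Then $\varphi$ is epic because $\varphi r'=r$ is epic, and $\varphi$ is monic because $h\varphi=h'$ is monic. Hence the canonical morphism $\coim f\to\im f$ is monic and epic, which is exactly the semi-abelian condition.

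I expect the main obstacle to be bookkeeping rather than mathematics: one must verify that the $h$, $r$, $h'$, $r'$ coming out of the two separate applications of Proposition \ref{ex-ker-coker} really do fit into the canonical image/coimage diagram of a single morphism $f$, i.e. that the ``$\im f$'' from part (a) and the ``$\coim f$'' from part (b) are the genuine categorical image and coimage, and that the induced $\varphi$ is the standard comparison map and not merely some map between them. This is routine from the universal properties of (co)kernels in any category with kernels and cokernels, but it is the step where care is needed. Everything else is a direct citation of Lemma \ref{nimpex} and Proposition \ref{ex-ker-coker}.
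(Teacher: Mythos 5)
Your proposal is correct and follows essentially the same route as the paper: Lemma \ref{nimpex} for exactness, Proposition \ref{ex-ker-coker} for kernels and cokernels, and then the comparison map $\coim f\to\im f$ obtained by factoring the epic $r$ through $\Cok(\Ker f)$ (using that $h$ is monic to see $r$ kills the kernel), with epicness read off from $\varphi r'=r$ and monicness from $h\varphi=h'$. The paper's argument is the same diagram chase with different letters, so no further comment is needed.
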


\begin{proof}
By Lemma \ref{nimpex}, we know that the extriangulated category $\mathcal{H}_{[\t_{1},\t_{2}]}$
is an exact category. By Proposition \ref{ex-ker-coker}, we get that
$\mathcal{H}_{[\t_{1},\t_{2}]}$ has kernels and cokernels. Moreover,
for $f:X\to Y$ in $\mathcal{H}_{[\t_{1},\t_{2}]},$ we have the following
commutative diagram in $\mathcal{H}_{[\t_{1},\t_{2}]}$ 
\[
\xymatrix{ &  & E_{1}\ar[dr]^{h_{1}}\\
\Ker[(f)]\ar[r]^{\quad k} & X\ar[ru]^{r_{1}}\ar[rd]_{r_{2}}\ar[rr]^{f} &  & Y\ar[r]^{c\quad\quad} & \Cok[f]\\
 &  & E_{2}\ar[ru]_{h_{2}}
}
\]
where $E_{1}\xrightarrow{h_{1}}Y\xrightarrow{c}\Cok[f]$ and $\Ker[(f)]\xrightarrow{k}X\xrightarrow{r_{2}}E_{2}$
are short exact sequences in $\mathcal{H}_{[\t_{1},\t_{2}]},$ $r_{1}$
is epic, and $h_{2}$ is monic. Therefore, since $h_{1}$ is monic
and $h_{1}\circ(r_{1}\circ k)=f\circ k=0,$ we have that $r_{1}\circ k=0$
and thus there is $\lambda:E_{2}\to E_{1}$ such that $\lambda\circ r_{2}=r_{1}.$
In particular $\lambda$ is epic since $r_{1}$ is epic. On the other
hand, from the equality $h_{1}\circ\lambda\circ r_{2}=h_{1}\circ r_{1}=f=h_{2}\circ r_{2},$
we get that $h_{1}\circ\lambda=h_{2}$ since $r_{2}$ is epic. But
the equality $h_{1}\circ\lambda=h_{2}$ implies that $\lambda$ is
monic since $h_{2}$ is monic. 
\end{proof}
In general, an exact and semi-abelian category does not have to be
quasi-abelian and neither abelian. In \cite[Ex. 1]{R2} an example
of this is given. However, in case the extriangulated category $\D$
is triangulated, the theorem above can be strengthened as follows. 
\begin{thm}
\label{triang-n-smab} Let $\D$ be a triangulated category and $[\t_{1},\t_{2}]$
be a normal interval in $\stors\,\D.$ Then $\mathcal{H}_{[\t_{1},\t_{2}]}$
is a quasi-abelian category. 
\end{thm}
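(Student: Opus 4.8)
The plan is to exhibit the heart $\mathcal{H}:=\mathcal{H}_{[\t_{1},\t_{2}]}$ as a torsion class of an abelian category, and then to appeal to the classical fact that every torsion class of an abelian category is quasi-abelian. The first step is to extract the ambient abelian category from the normality hypothesis. By Example \ref{exa:normal}(b), saying that $[\t_{1},\t_{2}]$ is normal is the same as saying $\Sigma\mathcal{T}_{2}\subseteq\mathcal{T}_{1}$, i.e.\ $\t_{2}\leq\Sigma^{-1}\t_{1}$; together with $\t_{1}\leq\Sigma^{-1}\t_{1}$ (Remark \ref{st-ts-tricat}(c)) this produces a chain $\t_{1}\leq\t_{2}\leq\Sigma^{-1}\t_{1}$ in $\stors\,\D$. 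I put $\mathcal{A}:=\mathcal{H}_{[\t_{1},\Sigma^{-1}\t_{1}]}=\mathcal{F}_{1}\cap\Sigma^{-1}\mathcal{T}_{1}$, which is the heart of the $t$-structure $(\Sigma^{-1}\mathcal{T}_{1},\mathcal{F}_{1})$ corresponding to the $s$-torsion pair $\Sigma^{-1}\t_{1}$ (Remark \ref{st-ts-tricat}(a)), hence abelian by Remark \ref{st-ts-tricat}(b); and clearly $\mathcal{H}\subseteq\mathcal{A}$. Moreover $[\t_{1},\Sigma^{-1}\t_{1}]$ is itself a normal interval (Example \ref{exa:normal}(b) again, as $\Sigma\Sigma^{-1}\mathcal{T}_{1}=\mathcal{T}_{1}\subseteq\mathcal{T}_{1}$), so Lemma \ref{nimpex} gives $\mathbb{E}^{-1}(\mathcal{A},\mathcal{A})=0$; and the extriangulated structure $\mathcal{A}$ inherits from $\D$ is its canonical abelian one (cf.\ \cite{BBD}), so its $\s$-conflations are exactly its short exact sequences.

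The second step is to recognise $\mathcal{H}$ as a torsion class of $\mathcal{A}$. Applying Lemma \ref{lem:stors de extensiones de corazones}(c) to the chain $\t_{1}\leq\t_{2}\leq\Sigma^{-1}\t_{1}$ yields that $(\mathcal{H},\mathcal{H}_{[\t_{2},\Sigma^{-1}\t_{1}]})$ is an $s$-torsion pair in $\mathcal{A}$; since $\mathbb{E}^{-1}$ vanishes on $\mathcal{A}$ and its $\s$-conflations are its short exact sequences, this is nothing but an ordinary torsion pair of the abelian category $\mathcal{A}$, with torsion class $\mathcal{H}$. Now I would invoke the classical fact (see \cite{R1}, \cite{S}, \cite{T}, \cite{F}) that a torsion class $\mathcal{T}$ of an abelian category $\mathcal{A}$, with its intrinsic (co)limits, is quasi-abelian: cokernels in $\mathcal{T}$ are computed in $\mathcal{A}$ (a torsion class being closed under quotients), kernels in $\mathcal{T}$ are obtained by applying the torsion radical to the kernels of $\mathcal{A}$, and a short diagram chase shows that pushouts of kernels are kernels and pullbacks of cokernels are cokernels. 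Applied with $\mathcal{T}=\mathcal{H}$, this gives that $\mathcal{H}_{[\t_{1},\t_{2}]}$ is quasi-abelian.

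All the new input sits in the reduction---Example \ref{exa:normal}(b), Lemma \ref{lem:stors de extensiones de corazones}(c), and the identification of $\mathcal{A}$---so the only possibly delicate point is the last, classical step about stability of kernels under pushout (and of cokernels under pullback) in a torsion class. Alternatively, one could first observe via Theorem \ref{semi-ab-triang} that $\mathcal{H}$ is exact and semi-abelian, the required $\s$-conflations $\chi\colon X\to Y\to Z$ and $\kappa\colon\Sigma^{-1}Z\to X\to Y$ being supplied by the mapping-cone triangle of any morphism $f\colon X\to Y$ of $\mathcal{H}$, and then strengthen \emph{semi-abelian} to \emph{quasi-abelian} through the torsion-class description above; either way, once $\mathcal{H}$ is placed as a torsion class of $\mathcal{A}$, nothing beyond the classical fact remains to be done.
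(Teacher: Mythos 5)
Your proposal is correct and follows essentially the same route as the paper: both use Example \ref{exa:normal}(b) to turn normality into a three-term chain of $s$-torsion pairs, apply Lemma \ref{lem:stors de extensiones de corazones}(c) to realize $\mathcal{H}_{[\t_{1},\t_{2}]}$ as one half of a torsion pair in an abelian heart, and then invoke Rump's classical result. The only difference is a choice of dual: you exhibit $\mathcal{H}_{[\t_{1},\t_{2}]}$ as a \emph{torsion} class in $\mathcal{H}_{[\t_{1},\Sigma^{-1}\t_{1}]}$ via the chain $\t_{1}\leq\t_{2}\leq\Sigma^{-1}\t_{1}$, whereas the paper exhibits it as a \emph{torsion-free} class in $\mathcal{H}_{[\Sigma\t_{2},\t_{2}]}$ via $\Sigma\t_{2}\leq\t_{1}\leq\t_{2}$ and cites \cite[Corollary, p.193]{R1} for torsion-free classes.
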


\begin{proof}
Since $[\t_{1},\t_{2}]$ is normal, we have by Example \ref{exa:normal}(b)
that $\Sigma\t_{2}\leq\t_{1}\leq\t_{2}.$ Hence, by Lemma \ref{lem:stors de extensiones de corazones}(c),
it follows that $(\mathcal{H}_{[\Sigma\t_{2},\t_{1}]},\mathcal{H}_{[\t_{1},\t_{2}]})$
is a torsion pair in the abelian category $\mathcal{H}_{[\Sigma\t_{2},\t_{2}]}.$
Now, by \cite[Corollary, p.193]{R1}, we have that a torsion-free
class in an abelian category is quasi-abelian. Therefore, $\mathcal{H}_{[\t_{1},\t_{2}]}$
is quasi-abelian. 
\end{proof}

\subsection{Parameterizing subintervals }

Let $[\t_{1},\t_{2}]$ be an interval in $\stors\,\D.$ In Theorem
\ref{thm:proceso aet}, it is shown how the elements of $[\t_{1},\t_{2}]$
are parameterized with the $s$-torsion pairs in $\mathcal{H}_{[\t_{1},\t_{2}]}$.
The objective of this section will be to parameterize the objects
of a subinterval of $[\t_{1},\t_{2}]$ by means of the elements of
$s$-torsion pairs in $\mathcal{H}_{[\t_{1},\t_{2}]}$. 
\begin{lem}
\label{lem:new stors} Let $\D$ be an extriangulated category with
negative first extension and let $\t_{i}=(\T_{i},\F_{i})$ in $\stors\,\D,$
for $i=1,2,3$ be such that $\t_{1}\leq\t_{2}\leq\t_{3}.$ Then, for
$(\mathcal{X},\mathcal{Y})$ in $\stors\,\mathcal{H}_{[\t_{1},\t_{3}]},$
the following statements hold true. 
\begin{enumerate}
\item $\t_{2}(\mathcal{Y})\subseteq\mathcal{H}_{[\t_{1},\t_{2}]}$ and $(1:\t_{2})(\mathcal{Y})\subseteq\mathcal{H}_{[\t_{2},\t_{3}]}$. 
\item $\t_{2}(\mathcal{\mathcal{X}})\subseteq\mathcal{H}_{[\t_{1},\t_{2}]}$
and $(1:\t_{2})(\mathcal{\mathcal{X}})\subseteq\mathcal{H}_{[\t_{2},\t_{3}]}$. 
\item If $[\t_{2},\t_{3}]$ is normal, then $\t_{2}(\mathcal{Y})\subseteq\mathcal{Y}$
and $(1:\t_{2})(\mathcal{Y})\subseteq\mathcal{X}^{\bot_{-1}}$. 
\item If $[\t_{1},\t_{2}]$ is normal, then $\left(1:\t_{2}\right)(\mathcal{X})\subseteq\mathcal{X}$
and $\t_{2}(\mathcal{X})\subseteq{}{}^{\bot_{-1}}\mathcal{Y}$. 
\item the pair $\t=(\mathcal{T}_{1}\star\mathcal{X},\mathcal{Y}\star\mathcal{F}_{3})$
is an $s$-torsion pair in $\mathcal{D}$ such that $\t_{1}\leq\t\leq\t_{3}$. 
\end{enumerate}
\end{lem}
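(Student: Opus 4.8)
The plan is to prove the five items in order, with (c) and (d) built on top of (a) and (b), and (e) read off directly from the AET-tilt process. Throughout I will use that $\mathcal{H}_{[\t_{1},\t_{3}]}=\F_{1}\cap\T_{3}$ is closed under extensions in $\D$, so that $\mathbb{E}$, $\mathbb{E}^{-1}$ and $\D(-,-)$ computed there are those of $\D$, and that $(\X,\Y)\in\stors\,\mathcal{H}_{[\t_{1},\t_{3}]}$ gives $\X,\Y\subseteq\mathcal{H}_{[\t_{1},\t_{3}]}$, $\D(\X,\Y)=0$, $\mathbb{E}^{-1}(\X,\Y)=0$, and (by Proposition \ref{prop:clases ort en stors}) $\X^{\perp_{0}}=\Y$ and ${}^{\perp_{0}}\Y=\X$ inside $\mathcal{H}_{[\t_{1},\t_{3}]}$. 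I also record the trivial inclusions $\mathcal{H}_{[\t_{1},\t_{2}]}=\F_{1}\cap\T_{2}\subseteq\mathcal{H}_{[\t_{1},\t_{3}]}$ and $\mathcal{H}_{[\t_{2},\t_{3}]}=\F_{2}\cap\T_{3}\subseteq\mathcal{H}_{[\t_{1},\t_{3}]}$.

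For (a) and (b) I will observe that both are instances of a single statement: for every $C\in\mathcal{H}_{[\t_{1},\t_{3}]}$, in the $\mathfrak{s}$-conflation $\t_{2}(C)\to C\to(1:\t_{2})(C)$ given by $\t_{2}$ one has $\t_{2}(C)\in\mathcal{H}_{[\t_{1},\t_{2}]}$ and $(1:\t_{2})(C)\in\mathcal{H}_{[\t_{2},\t_{3}]}$. This is precisely the ``moreover'' part of Lemma \ref{lem:extensions of hearts}; the one-line argument is that $\t_{2}(C)\in\T_{2}$, and since $\F_{2}\subseteq\F_{1}$, $C\in\F_{1}$ and $\F_{1}$ is closed under cocones (Proposition \ref{lem:cerraduras stors}(b)), also $\t_{2}(C)\in\F_{1}$; dually $(1:\t_{2})(C)\in\F_{2}$ and, since $\T_{2}\subseteq\T_{3}$ is closed under cones, $(1:\t_{2})(C)\in\T_{3}$. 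Specializing $C$ to objects of $\Y$ yields (a), and to objects of $\X$ yields (b).

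For (c) I will assume $[\t_{2},\t_{3}]$ normal, i.e. $\mathbb{E}^{-1}(\T_{3},\F_{2})=0$. Then $(1:\t_{2})(\Y)\subseteq\X^{\perp_{-1}}$ is immediate from (a): $(1:\t_{2})(\Y)\subseteq\mathcal{H}_{[\t_{2},\t_{3}]}\subseteq\F_{2}$ while $\X\subseteq\mathcal{H}_{[\t_{1},\t_{3}]}\subseteq\T_{3}$. For $\t_{2}(\Y)\subseteq\Y$, I will take $Y\in\Y$, $X'\in\X$, apply $\D(X',-)$ to $\t_{2}(Y)\to Y\to(1:\t_{2})(Y)$, and use the exact piece
\[
\mathbb{E}^{-1}(X',(1:\t_{2})(Y))\longrightarrow\D(X',\t_{2}(Y))\longrightarrow\D(X',Y)
\]
of Definition \ref{def-neg-ext}: the right term is $0$ because $\D(\X,\Y)=0$, the left term is $0$ by the line just proved, hence $\D(X',\t_{2}(Y))=0$; since also $\t_{2}(Y)\in\mathcal{H}_{[\t_{1},\t_{2}]}\subseteq\mathcal{H}_{[\t_{1},\t_{3}]}$ by (a), this gives $\t_{2}(Y)\in\X^{\perp_{0}}=\Y$. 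Item (d) is the formal dual: assuming $[\t_{1},\t_{2}]$ normal, i.e. $\mathbb{E}^{-1}(\T_{2},\F_{1})=0$, one gets $\t_{2}(\X)\subseteq{}^{\perp_{-1}}\Y$ because $\t_{2}(\X)\subseteq\mathcal{H}_{[\t_{1},\t_{2}]}\subseteq\T_{2}$ by (b) and $\Y\subseteq\F_{1}$; and $(1:\t_{2})(\X)\subseteq\X$ follows by applying $\D(-,Y')$ with $Y'\in\Y$ to $\t_{2}(X)\to X\to(1:\t_{2})(X)$ and reading off the exact piece $\mathbb{E}^{-1}(\t_{2}(X),Y')\to\D((1:\t_{2})(X),Y')\to\D(X,Y')$, whose outer terms vanish, so $\D((1:\t_{2})(X),\Y)=0$ and, since $(1:\t_{2})(X)\in\mathcal{H}_{[\t_{2},\t_{3}]}\subseteq\mathcal{H}_{[\t_{1},\t_{3}]}$, we conclude $(1:\t_{2})(X)\in{}^{\perp_{0}}\Y=\X$.

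Finally, (e) requires no work: $(\T_{1}\star\X,\Y\star\F_{3})$ is exactly the image $\Psi(\X,\Y)$ of $(\X,\Y)$ under the poset isomorphism of Theorem \ref{thm:proceso aet} applied to the interval $[\t_{1},\t_{3}]$, hence it lies in $[\t_{1},\t_{3}]$ and in particular is an $s$-torsion pair $\t$ in $\D$ with $\t_{1}\leq\t\leq\t_{3}$. I do not anticipate a genuine obstacle here; the only delicate point is the bookkeeping in (c) and (d) — matching each appeal to orthogonality with the correct normality hypothesis, and using (a)/(b) to guarantee that $\t_{2}(C)$ and $(1:\t_{2})(C)$ re-enter $\mathcal{H}_{[\t_{1},\t_{3}]}$ so that Proposition \ref{prop:clases ort en stors} is applicable there.
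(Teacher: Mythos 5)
Your proof is correct and follows essentially the same route as the paper's: items (a) and (b) are exactly the ``moreover'' clause of Lemma \ref{lem:extensions of hearts}, items (c) and (d) rest on the same $\mathbb{E}^{-1}$-vanishing from normality fed into the long exact sequences of Definition \ref{def-neg-ext} together with Proposition \ref{prop:clases ort en stors}, and (e) is read off from Theorem \ref{thm:proceso aet}. The only (harmless) difference is that in (c) and (d) you establish the $\perp_{-1}$-containment first, directly from the normality hypothesis, whereas the paper proves the torsion-functor containment first and then deduces the $\perp_{-1}$-statement via Proposition \ref{lem:cerraduras stors}(c); both arguments use the identical vanishing.
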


\begin{proof}
Items (a) and (b) follow from Lemma \ref{lem:extensions of hearts}.
\
 (c) Let $C\in\mathcal{Y}$. On the one hand, for any $X\in\mathcal{X}$,
we have the exact sequence 
\[
\mathbb{E}^{-1}(X,F_{2})\rightarrow\mathcal{D}(X,T_{2})\rightarrow\mathcal{D}(X,C).
\]
Here, observe that: $\mathcal{D}(X,C)=0$ since $C\in\mathcal{Y}$;
and $\mathbb{E}^{-1}(X,F_{2})=0$ since $X\in\mathcal{T}\subseteq\mathcal{T}_{3}$,
$F_{2}\in\mathcal{F}_{2}$ and $[\t_{2},\t_{3}]$ is normal. And thus,
$\t_{2}(C)=T_{2}\in\mathcal{Y}$ by Proposition \ref{prop:clases ort en stors}.
\\
 On the other hand, note that $F_{2}\in\mathcal{X}^{\bot_{-1}}$ by
Proposition \ref{lem:cerraduras stors} since $F_{2}\in\mathcal{H}_{[\t_{2},\t_{3}]}\subseteq\mathcal{H}_{[\t_{1},\t_{3}]}$
(see (a) and Lemma \ref{lem:extensions of hearts}), $C\in\mathcal{Y}$
and $T_{2}\in\mathcal{Y}$. \
 (d) It follows with similar arguments as (c). \
 (e) It follows from Theorem \ref{thm:proceso aet}. 
\end{proof}
\begin{cor}
\label{cor:new stors vs interval} Let $\D$ be an extriangulated
category with negative first extension and let $\t_{i}=(\T_{i},\F_{i})$
in $\stors\,\D,$ for $i=1,2,3$ be such that $\t_{1}\leq\t_{2}\leq\t_{3}.$
Then, for $(\mathcal{X},\mathcal{Y})$ in $\stors\,\mathcal{H}_{[\t_{1},\t_{3}]}$
and $\t=(\mathcal{T},\mathcal{F}):=(\mathcal{T}_{1}\star\mathcal{X},\mathcal{Y}\star\mathcal{F}_{3})\in[\t_{1},\t_{3}],$
see Lemma \ref{lem:new stors} (e), the following statements hold
true. 
\begin{enumerate}
\item $\t\in[\t_{1},\t_{2}]$ $\Leftrightarrow$ $\mathcal{X}\subseteq\mathcal{H}_{[\t_{1},\t_{2}]}$
$\Leftrightarrow$ $\mathcal{H}_{[\t_{2},\t_{3}]}\subseteq\mathcal{Y}.$ 
\item $\t\in[\t_{2},\t_{3}]$ $\Leftrightarrow$ $\mathcal{H}_{[\t_{1},\t_{2}]}\subseteq\mathcal{X}$
$\Leftrightarrow$ $\mathcal{Y}\subseteq\mathcal{H}_{[\t_{2},\t_{3}]}.$ 
\end{enumerate}
\end{cor}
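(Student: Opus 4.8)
The plan is to reduce everything to the poset isomorphism $\Phi$ of Theorem \ref{thm:proceso aet} together with the orthogonality property of $s$-torsion pairs recorded in Proposition \ref{lem:cerraduras stors}(d), applied \emph{inside} the extriangulated category $\mathcal{H}_{[\t_1,\t_3]}$. Note that $\mathcal{H}_{[\t_1,\t_3]}$ is closed under extensions in $\D$, hence carries an induced negative first extension, so Proposition \ref{lem:cerraduras stors} does apply to the $s$-torsion pairs of $\mathcal{H}_{[\t_1,\t_3]}$.

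First I would fix notation: let $\Phi_{13}\colon[\t_1,\t_3]\to\stors\,\mathcal{H}_{[\t_1,\t_3]}$ and $\Psi_{13}$ be the mutually inverse poset isomorphisms of Theorem \ref{thm:proceso aet}. Since by construction $\t=(\mathcal{T}_1\star\mathcal{X},\mathcal{Y}\star\mathcal{F}_3)=\Psi_{13}(\mathcal{X},\mathcal{Y})$, we get $\Phi_{13}(\t)=(\mathcal{X},\mathcal{Y})$; and directly from the formula for $\Phi$ we get $\Phi_{13}(\t_2)=(\mathcal{T}_2\cap\mathcal{F}_1,\ \mathcal{T}_3\cap\mathcal{F}_2)=(\mathcal{H}_{[\t_1,\t_2]},\mathcal{H}_{[\t_2,\t_3]})$, which is an $s$-torsion pair of $\mathcal{H}_{[\t_1,\t_3]}$ (cf.\ Lemma \ref{lem:stors de extensiones de corazones}(c)). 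Also, by Lemma \ref{lem:new stors}(e) we already know $\t_1\leq\t\leq\t_3$, so $\t\in[\t_1,\t_2]\Leftrightarrow\t\leq\t_2$ and $\t\in[\t_2,\t_3]\Leftrightarrow\t_2\leq\t$.

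For (a): since $\Phi_{13}$ is an isomorphism of posets and the order on $s$-torsion pairs compares torsion classes, we have $\t\leq\t_2\Leftrightarrow\Phi_{13}(\t)\leq\Phi_{13}(\t_2)\Leftrightarrow\mathcal{X}\subseteq\mathcal{H}_{[\t_1,\t_2]}$, which gives the first equivalence. For the second, apply Proposition \ref{lem:cerraduras stors}(d) in $\mathcal{H}_{[\t_1,\t_3]}$ to the reference $s$-torsion pair $(\mathcal{H}_{[\t_1,\t_2]},\mathcal{H}_{[\t_2,\t_3]})$ and the $s$-torsion pair $(\mathcal{X},\mathcal{Y})$: this yields $\mathcal{X}\subseteq\mathcal{H}_{[\t_1,\t_2]}\Leftrightarrow\mathcal{H}_{[\t_2,\t_3]}\subseteq\mathcal{Y}$. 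Part (b) is obtained dually: $\t_2\leq\t\Leftrightarrow\mathcal{H}_{[\t_1,\t_2]}\subseteq\mathcal{X}$ by the poset isomorphism, and Proposition \ref{lem:cerraduras stors}(d) applied now with reference pair $(\mathcal{X},\mathcal{Y})$ and the pair $(\mathcal{H}_{[\t_1,\t_2]},\mathcal{H}_{[\t_2,\t_3]})$ gives $\mathcal{H}_{[\t_1,\t_2]}\subseteq\mathcal{X}\Leftrightarrow\mathcal{Y}\subseteq\mathcal{H}_{[\t_2,\t_3]}$.

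I do not expect a genuine obstacle; the only points needing care are the bookkeeping showing $\Phi_{13}(\t_2)=(\mathcal{H}_{[\t_1,\t_2]},\mathcal{H}_{[\t_2,\t_3]})$ straight from the formula in Theorem \ref{thm:proceso aet}, and keeping the direction of the order on $\stors$ straight so that the displayed containments land on the torsion-class side. As a sanity check, part (a) also admits a direct argument: $\t\in[\t_1,\t_2]\Leftrightarrow\mathcal{T}_1\star\mathcal{X}\subseteq\mathcal{T}_2\Leftrightarrow\mathcal{X}\subseteq\mathcal{T}_2$ (because $\mathcal{T}_2$ is closed under extensions and contains $\mathcal{T}_1$) $\Leftrightarrow\mathcal{X}\subseteq\mathcal{T}_2\cap\mathcal{F}_1=\mathcal{H}_{[\t_1,\t_2]}$ (because $\mathcal{X}\subseteq\mathcal{H}_{[\t_1,\t_3]}\subseteq\mathcal{F}_1$); but the analogous direct argument for (b) would require comparing $\mathcal{T}_2=\mathcal{T}_1\star\mathcal{H}_{[\t_1,\t_2]}$ with $\mathcal{T}_1\star\mathcal{X}$ and is less transparent, so routing both parts through $\Phi_{13}$ is the cleanest presentation.
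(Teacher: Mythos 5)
Your proposal is correct. The second equivalence in each item (relating $\mathcal{X}\subseteq\mathcal{H}_{[\t_1,\t_2]}$ to $\mathcal{H}_{[\t_2,\t_3]}\subseteq\mathcal{Y}$, and dually) is handled exactly as in the paper: Lemma \ref{lem:stors de extensiones de corazones}(c) provides the reference pair $(\mathcal{H}_{[\t_1,\t_2]},\mathcal{H}_{[\t_2,\t_3]})$ and Proposition \ref{lem:cerraduras stors}(d) does the rest. Where you diverge is the first equivalence: you route it through the poset isomorphism $\Phi_{13}$ of Theorem \ref{thm:proceso aet}, using $\Phi_{13}(\t)=(\mathcal{X},\mathcal{Y})$ and $\Phi_{13}(\t_2)=(\mathcal{H}_{[\t_1,\t_2]},\mathcal{H}_{[\t_2,\t_3]})$, so that $\t\leq\t_2$ transports to $\mathcal{X}\subseteq\mathcal{H}_{[\t_1,\t_2]}$ immediately; the paper instead argues directly with the $\star$-product ($\mathcal{T}_1\star\mathcal{X}\subseteq\mathcal{T}_2\Leftrightarrow\mathcal{X}\subseteq\mathcal{T}_2\Leftrightarrow\mathcal{X}\subseteq\mathcal{T}_2\cap\mathcal{F}_1$, using $\mathcal{X}\subseteq\mathcal{T}_1\star\mathcal{X}$ and closure of $\mathcal{T}_2$ under extensions), and for (b) it runs the analogous direct argument on the torsion-free side ($\mathcal{Y}\star\mathcal{F}_3\subseteq\mathcal{F}_2\Leftrightarrow\mathcal{Y}\subseteq\mathcal{F}_2\cap\mathcal{T}_3$), which is just as transparent as (a) and avoids the asymmetry you worried about. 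Your route buys uniformity between (a) and (b) and fewer element-level manipulations at the cost of invoking the full AET bijection; the paper's is more elementary and self-contained. Both are complete proofs.
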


\begin{proof}
(a) By Lemma \ref{lem:stors de extensiones de corazones} (c), we
have that $(\mathcal{H}_{[\t_{1},\t_{2}]},\mathcal{H}_{[\t_{2},\t_{3}]})\in\stors\,\mathcal{H}_{[\t_{1},\t_{3}]}$
and since $(\mathcal{X},\mathcal{Y})$ in $\stors\,\mathcal{H}_{[\t_{1},\t_{3}]},$
we get by Proposition \ref{lem:cerraduras stors} (d) that $\mathcal{X}\subseteq\mathcal{H}_{[\t_{1},\t_{2}]}$
$\Leftrightarrow$ $\mathcal{H}_{[\t_{2},\t_{3}]}\subseteq\mathcal{Y}.$
Let now $\mathcal{T}_{1}\subseteq\mathcal{T}_{1}\star\mathcal{X}\subseteq\mathcal{T}_{2}$.
Hence $\mathcal{X}\subseteq\mathcal{T}_{2}$. Moreover $\mathcal{X}\subseteq\mathcal{H}_{[\t_{1},\t_{3}]}=\mathcal{F}_{1}\cap\mathcal{T}_{3}\subseteq\mathcal{F}_{1}$
and thus $\mathcal{X}\subseteq\mathcal{F}_{1}\cap\mathcal{T}_{2}=\mathcal{H}_{[\t_{1},\t_{2}]}$.\\
 Finally, assume that $\mathcal{X}\subseteq\mathcal{F}_{1}\cap\mathcal{T}_{2}$.
Hence $\mathcal{T}_{1}\star\mathcal{X}\subseteq\mathcal{T}_{2}$,
and so $\t\in[\t_{1},\t_{2}]$. \
 Lastly, (b) follows with dual arguments. 
\end{proof}
The following result is a generalization of Theorem \ref{thm:proceso aet}
and it is very useful to parameterize torsion pairs in quasi-abelian
categories, see Theorem \ref{param-stp-qac}. In order to do that,
we start with the following notion. 
\begin{defn}
Let $\D$ be an extriangulated category with negative first extension
and $\mathbf{x}\leq\mathbf{y}\leq\mathbf{z}$ in $\stors\,\D.$ We
consider the classes of $s$-torsion pairs: 
\begin{enumerate}
\item $\stors\,\mathcal{H}_{[\mathbf{x},\mathbf{y},\mathbf{z}]}^{+}:=\{(\T,\F)\in\stors\,\mathcal{H}_{[\mathbf{x},\mathbf{z}]}\;:\;\T\subseteq\mathcal{H}_{[\mathbf{x},\mathbf{y}]}\}.$ 
\item $\stors\,\mathcal{H}_{[\mathbf{x},\mathbf{y},\mathbf{z}]}^{-}:=\{(\T,\F)\in\stors\,\mathcal{H}_{[\mathbf{x},\mathbf{z}]}\;:\;\mathcal{H}_{[\mathbf{x},\mathbf{y}]}\subseteq\T\}.$ 
\end{enumerate}
\end{defn}

\begin{thm}
\label{thm:parametrizando subintervalos} For an extriangulated category
$\D$ with negative first extension and $\t_{i}=(\mathcal{T}_{i},\mathcal{F}_{i})$
in $\stors\,\mathcal{D}$ for $i=1,2,3,4$ such that $\t_{1}\leq\t_{2}\leq\t_{3}\leq\t_{4},$
the following statements hold. 
\begin{enumerate}
\item The map $\Phi_{r}:[\t_{2},\t_{3}]\rightarrow\stors\,\mathcal{H}_{[\t_{2},\t_{3},\t_{4}]}^{+},\;(\mathcal{T},\mathcal{F})\mapsto(\mathcal{T}\cap\mathcal{F}_{2},\mathcal{T}_{4}\cap\mathcal{F}),$
is an isomorphism of posets with inverse $(\mathcal{X},\mathcal{Y})\mapsto(\mathcal{T}_{2}\star\mathcal{X},\mathcal{Y}\star\mathcal{F}_{4})$. 
\item The map $\Phi_{\ell}:[\t_{2},\t_{3}]\rightarrow\stors\,\mathcal{H}_{[\t_{1},\t_{2},\t_{3}]}^{-},\;(\mathcal{T},\mathcal{F})\mapsto(\mathcal{T}\cap\mathcal{F}_{1},\mathcal{T}_{3}\cap\mathcal{F}),$
is an isomorphism of posets with inverse $(\mathcal{X},\mathcal{Y})\mapsto(\mathcal{T}_{1}\star\mathcal{X},\mathcal{Y}\star\mathcal{F}_{3})$. 
\item The map $\varepsilon:\stors\,\mathcal{H}_{[\t_{2},\t_{3},\t_{4}]}^{+}\to\stors\,\mathcal{H}_{[\t_{1},\t_{2},\t_{3}]}^{-}$
given by 
\begin{center}
$(\X,\Y)\mapsto((\T_{2}*\X)\cap\F_{1},(\Y*\F_{4})\cap\T_{3}),$ 
\par\end{center}

is an isomorphism of posets with inverse 
\begin{center}
$(\T,\F)\mapsto((\T_{1}*\T)\cap\F_{2},(\F*\F_{3})\cap\T_{4}).$ 
\par\end{center}

\end{enumerate}
\end{thm}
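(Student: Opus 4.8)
The plan is to derive all three statements from the AET-tilt correspondence (Theorem \ref{thm:proceso aet}) by restricting it to suitable subintervals, using Corollary \ref{cor:new stors vs interval} to identify the image of each restriction.

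For (a), I would first apply Theorem \ref{thm:proceso aet} to the interval $[\t_2,\t_4]$, obtaining the poset isomorphism $\Phi_{24}\colon[\t_2,\t_4]\to\stors\,\mathcal{H}_{[\t_2,\t_4]}$, $(\mathcal{T},\mathcal{F})\mapsto(\mathcal{T}\cap\mathcal{F}_2,\mathcal{T}_4\cap\mathcal{F})$, with inverse $\Psi_{24}\colon(\mathcal{X},\mathcal{Y})\mapsto(\mathcal{T}_2\star\mathcal{X},\mathcal{Y}\star\mathcal{F}_4)$. The formulas for $\Phi_{24}$ and $\Psi_{24}$ restricted to the subinterval $[\t_2,\t_3]$ are exactly those of $\Phi_r$ and its stated inverse, so it remains to check that $\Phi_{24}$ carries $[\t_2,\t_3]$ bijectively onto $\stors\,\mathcal{H}^{+}_{[\t_2,\t_3,\t_4]}$. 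This is precisely Corollary \ref{cor:new stors vs interval}(a) read with $(\t_1,\t_2,\t_3)$ replaced by $(\t_2,\t_3,\t_4)$: for $\t\in[\t_2,\t_4]$ and $(\mathcal{X},\mathcal{Y})=\Phi_{24}(\t)$, one has $\t\in[\t_2,\t_3]$ if and only if $\mathcal{X}\subseteq\mathcal{H}_{[\t_2,\t_3]}$, that is, if and only if $(\mathcal{X},\mathcal{Y})\in\stors\,\mathcal{H}^{+}_{[\t_2,\t_3,\t_4]}$. Since the restriction of a poset isomorphism to a subposet, onto its image, is again a poset isomorphism, this yields (a).

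Part (b) is the same argument applied to the interval $[\t_1,\t_3]$: Theorem \ref{thm:proceso aet} gives $\Phi_{13}\colon[\t_1,\t_3]\to\stors\,\mathcal{H}_{[\t_1,\t_3]}$, $(\mathcal{T},\mathcal{F})\mapsto(\mathcal{T}\cap\mathcal{F}_1,\mathcal{T}_3\cap\mathcal{F})$, with inverse $(\mathcal{X},\mathcal{Y})\mapsto(\mathcal{T}_1\star\mathcal{X},\mathcal{Y}\star\mathcal{F}_3)$, whose restriction to $[\t_2,\t_3]$ has exactly the formulas of $\Phi_\ell$; and Corollary \ref{cor:new stors vs interval}(b), used verbatim, shows that $\t\in[\t_2,\t_3]$ if and only if $\mathcal{H}_{[\t_1,\t_2]}\subseteq\mathcal{X}$, that is, if and only if $\Phi_{13}(\t)\in\stors\,\mathcal{H}^{-}_{[\t_1,\t_2,\t_3]}$. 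Hence $\Phi_\ell$ is a poset isomorphism onto $\stors\,\mathcal{H}^{-}_{[\t_1,\t_2,\t_3]}$.

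Finally, for (c) I would set $\varepsilon:=\Phi_\ell\circ\Phi_r^{-1}$. Since $\Phi_r$ and $\Phi_\ell$ are poset isomorphisms with the common domain $[\t_2,\t_3]$, $\varepsilon$ is automatically a poset isomorphism $\stors\,\mathcal{H}^{+}_{[\t_2,\t_3,\t_4]}\to\stors\,\mathcal{H}^{-}_{[\t_1,\t_2,\t_3]}$ with inverse $\Phi_r\circ\Phi_\ell^{-1}$, and the only remaining task is to match the explicit formulas: $\Phi_r^{-1}(\mathcal{X},\mathcal{Y})=(\mathcal{T}_2\star\mathcal{X},\mathcal{Y}\star\mathcal{F}_4)$, and applying $\Phi_\ell$ to this pair produces $((\mathcal{T}_2\star\mathcal{X})\cap\mathcal{F}_1,\mathcal{T}_3\cap(\mathcal{Y}\star\mathcal{F}_4))$, which is the displayed formula for $\varepsilon$; the formula for the inverse is obtained in the same way from $\Phi_r\circ\Phi_\ell^{-1}$. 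The one point that needs care throughout is the index bookkeeping when invoking Corollary \ref{cor:new stors vs interval} --- checking that its hypotheses are correctly matched to the chain $\t_1\le\t_2\le\t_3\le\t_4$ --- together with the verification that the restricted maps literally coincide with $\Phi_r$ and $\Phi_\ell$; the rest is formal.
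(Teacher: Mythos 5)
Your proposal is correct and follows essentially the same route as the paper: the authors also restrict the AET correspondence of Theorem \ref{thm:proceso aet} on $[\t_{2},\t_{4}]$ and identify the image of $[\t_{2},\t_{3}]$ via Corollary \ref{cor:new stors vs interval}, proving only (a) explicitly and obtaining (b) by symmetry and (c) as the composite $\Phi_{\ell}\circ\Phi_{r}^{-1}$. Your version merely spells out the index bookkeeping and the formula verification for (c) that the paper leaves implicit.
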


\begin{proof}
We prove only (a) since (b) is similar and (c) follows from (a) and
(b). \
 From Theorem \ref{thm:proceso aet}, we have the isomorphism of posets
\[
\Phi:[\t_{2},\t_{4}]\rightarrow\stors\,\mathcal{H}_{[\t_{2},\t_{4}]},\quad(\mathcal{T},\mathcal{F})\mapsto(\mathcal{T}\cap\mathcal{F}_{2},\mathcal{T}_{4}\cap\mathcal{F}),
\]
with inverse 
\[
\Psi:\stors\,\mathcal{H}_{[\t_{2},\t_{4}]}\to[\t_{2},\t_{4}],\quad(\mathcal{X},\mathcal{Y})\mapsto(\mathcal{T}_{2}\star\mathcal{X},\mathcal{Y}\star\mathcal{F}_{4}).
\]
Since $[\t_{2},\t_{3}]\subseteq[\t_{2},\t_{4}],$ by Corollary \ref{cor:new stors vs interval},
we have that 
\[
\Psi^{-1}[\t_{2},\t_{3}]=\left\{ (\mathcal{X},\mathcal{Y})\in\stors\,\mathcal{H}_{[\t_{2},\t_{4}]}\,|\:\mathcal{X}\subseteq\mathcal{H}_{[\t_{2},\t_{3}]}\right\} =\stors\,\mathcal{H}_{[\t_{2},\t_{3},\t_{4}]}^{+}.
\]
Therefore, by restricting $\Phi$ and $\Psi$, we get the desired
isomorphism. 
\end{proof}
As an application, we have the following result parameterizing the
torsion pairs of a quasi-abelian category. We recall that in an exact
category $\D,$ see \cite[Prop. 2.6]{AET}, the class $\stors\,\D$
is just the class of all the torsion pairs in $\D.$ 
\begin{thm}
\label{param-stp-qac} Let $\mathcal{E}$ be a quasi-abelian category,
and let $\mathcal{L}_{\mathcal{E}}$ and $\mathcal{R}_{\mathcal{E}}$
be the associated abelian categories of $\mathcal{E}$ (see Example
\ref{exa:categorias abelianas asociadas a quasiabeliana}). Then,
the following posets are isomorphic. 
\begin{enumerate}
\item $\stors\,\mathcal{E}.$ 
\item $\left\{ (\mathcal{X},\mathcal{Y})\in\stors\,\mathcal{L}_{\mathcal{E}}\,|\:\mathcal{Y}\subseteq\mathcal{E}\right\} .$ 
\item $\left\{ (\mathcal{X},\mathcal{Y})\in\stors\,\mathcal{R}_{\mathcal{E}}\,|\:\mathcal{X}\subseteq\mathcal{E}\right\} .$ 
\end{enumerate}
\end{thm}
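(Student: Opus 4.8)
The plan is to realize each of the three posets as an incarnation of the interval $[\Sigma\t_r,\t_\ell]$ under the parametrization machinery of Theorems~\ref{thm:proceso aet} and~\ref{thm:parametrizando subintervalos}, and then to splice the resulting isomorphisms together. First I would use Example~\ref{exa:quasi-abelian} (together with Example~\ref{exa:categorias abelianas asociadas a quasiabeliana}) to fix a triangulated category $\mathcal{D}$ and $s$-torsion pairs $\t_\ell\leq\t_r$ in $\stors\,\mathcal{D}$ with $\Sigma\t_r\leq\t_\ell$, for which, as extriangulated categories with negative first extension,
\[
\mathcal{E}\cong\mathcal{H}_{[\Sigma\t_r,\t_\ell]},\qquad \mathcal{R}_{\mathcal{E}}=\mathcal{H}_{[\Sigma\t_r,\t_r]},\qquad \mathcal{L}_{\mathcal{E}}=\mathcal{H}_{[\Sigma\t_\ell,\t_\ell]}.
\]
Since $\Sigma$ is an autoequivalence it is order preserving on $\stors\,\mathcal{D}$, so $\t_\ell\leq\t_r$ gives $\Sigma\t_\ell\leq\Sigma\t_r$; combined with the inequalities above this produces a chain of four $s$-torsion pairs $\t_1:=\Sigma\t_\ell\leq\t_2:=\Sigma\t_r\leq\t_3:=\t_\ell\leq\t_4:=\t_r$. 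With this labelling, $\mathcal{E}\cong\mathcal{H}_{[\t_2,\t_3]}$, $\mathcal{R}_{\mathcal{E}}=\mathcal{H}_{[\t_2,\t_4]}$ and $\mathcal{L}_{\mathcal{E}}=\mathcal{H}_{[\t_1,\t_3]}$; in particular $\stors\,\mathcal{E}\cong\stors\,\mathcal{H}_{[\t_2,\t_3]}$.

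Applying Theorem~\ref{thm:proceso aet} to $\t_2\leq\t_3$ gives a poset isomorphism $\stors\,\mathcal{H}_{[\t_2,\t_3]}\cong[\t_2,\t_3]$. Applying Theorem~\ref{thm:parametrizando subintervalos}(a) to the chain $\t_1\leq\t_2\leq\t_3\leq\t_4$ gives $[\t_2,\t_3]\cong\stors\,\mathcal{H}_{[\t_2,\t_3,\t_4]}^{+}$, and part~(b) of the same theorem gives $[\t_2,\t_3]\cong\stors\,\mathcal{H}_{[\t_1,\t_2,\t_3]}^{-}$. Composing these, $\stors\,\mathcal{E}\cong\stors\,\mathcal{H}_{[\t_2,\t_3,\t_4]}^{+}$ and $\stors\,\mathcal{E}\cong\stors\,\mathcal{H}_{[\t_1,\t_2,\t_3]}^{-}$, so it only remains to recognize these two posets as the ones in items~(c) and~(b).

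The identification with item~(c) is immediate from the definition: $\stors\,\mathcal{H}_{[\t_2,\t_3,\t_4]}^{+}=\{(\X,\Y)\in\stors\,\mathcal{H}_{[\t_2,\t_4]}:\X\subseteq\mathcal{H}_{[\t_2,\t_3]}\}$, which under $\mathcal{H}_{[\t_2,\t_4]}=\mathcal{R}_{\mathcal{E}}$ and $\mathcal{H}_{[\t_2,\t_3]}=\mathcal{E}$ is exactly $\{(\X,\Y)\in\stors\,\mathcal{R}_{\mathcal{E}}:\X\subseteq\mathcal{E}\}$. I expect the only genuine obstacle to be item~(b), where $\stors\,\mathcal{H}_{[\t_1,\t_2,\t_3]}^{-}=\{(\X,\Y)\in\stors\,\mathcal{L}_{\mathcal{E}}:\mathcal{H}_{[\t_1,\t_2]}\subseteq\X\}$ is a priori phrased through the torsion class, while item~(b) is phrased through the torsion-free class. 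To reconcile these I would invoke Lemma~\ref{lem:stors de extensiones de corazones}(c) applied to $\t_1\leq\t_2\leq\t_3$, which shows that $(\mathcal{H}_{[\t_1,\t_2]},\mathcal{H}_{[\t_2,\t_3]})=(\mathcal{H}_{[\Sigma\t_\ell,\Sigma\t_r]},\mathcal{E})$ is an $s$-torsion pair in $\mathcal{H}_{[\t_1,\t_3]}=\mathcal{L}_{\mathcal{E}}$; then Proposition~\ref{lem:cerraduras stors}(d), applied inside the extriangulated category $\mathcal{L}_{\mathcal{E}}$ to this pair and to an arbitrary $(\X,\Y)\in\stors\,\mathcal{L}_{\mathcal{E}}$, yields $\mathcal{H}_{[\t_1,\t_2]}\subseteq\X\Leftrightarrow\Y\subseteq\mathcal{H}_{[\t_2,\t_3]}=\mathcal{E}$. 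Hence $\stors\,\mathcal{H}_{[\t_1,\t_2,\t_3]}^{-}=\{(\X,\Y)\in\stors\,\mathcal{L}_{\mathcal{E}}:\Y\subseteq\mathcal{E}\}$, which is item~(b), completing the chain of poset isomorphisms (b)$\cong$(a)$\cong$(c).
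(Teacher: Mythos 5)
Your proof is correct and follows essentially the same route as the paper: the same choice $\t_1:=\Sigma\t_\ell\leq\t_2:=\Sigma\t_r\leq\t_3:=\t_\ell\leq\t_4:=\t_r$, combined with Theorem~\ref{thm:proceso aet} and Theorem~\ref{thm:parametrizando subintervalos}. Your extra step reconciling $\stors\,\mathcal{H}_{[\t_1,\t_2,\t_3]}^{-}$ with the torsion-free-class description in item~(b), via Lemma~\ref{lem:stors de extensiones de corazones}(c) and Proposition~\ref{lem:cerraduras stors}(d), is a detail the paper leaves implicit (it is also recorded in Corollary~\ref{cor:new stors vs interval}), and it is carried out correctly.
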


\begin{proof}
Let $\mathcal{D}$ be the triangulated category associated to $\mathcal{E}$.
Recall that there are $\t_{r},\t_{\ell}\in\stors\,\mathcal{D}$ such
that $\Sigma\t_{r}\leq\t_{\ell}\leq\t_{r}$ and $\mathcal{E}\cong\mathcal{H}_{[\Sigma\t_{r},\t_{\ell}]}$
(see Example \ref{exa:quasi-abelian}). Then, the sought isomorphisms
are given by Theorem \ref{thm:parametrizando subintervalos} (under
the setting: $\t_{1}:=\Sigma\t_{\ell}$, $\t_{2}:=\Sigma\t_{r}$,
$\t_{3}:=\t_{\ell}$ and $\t_{4}:=\t_{r}$), together with the isomorphism
$[\t_{2},\t_{3}]\cong\stors\,\mathcal{H}_{[\t_{2},\t_{3}]}$ given
by Theorem \ref{thm:proceso aet}. 
\end{proof}

\section{The extended heart of a $t$-structure}


The objective of this section is to study a special kind of extensions
of hearts in triangulated categories. The advantage that triangulated
categories have over the rest of the extriangulated categories with
negative first extensions is that the functor $\mathbb{E}^{-1}$ coincides
with $\mathcal{D}(-,\Sigma^{-1}-)$. So we have that $\mathcal{X}^{\bot_{-1}}=(\Sigma\mathcal{X})^{\bot_{0}}$
and $^{\bot_{0}}(\Sigma^{-1}\mathcal{X})={}^{\bot_{-1}}\mathcal{X}$.
This fact is very useful when studying $t$-structures. We remind
the reader that a pair of classes $(\mathcal{U},\mathcal{W})$ is
a $t$-structure if, and only if, $(\mathcal{U},\Sigma^{-1}\mathcal{W})$
is an $s$-torsion pair, or equivalently $(\Sigma\mathcal{U},\mathcal{W})$
is an $s$-torsion pair. 
\begin{defn}
\label{cor-extendido} Let $\mathcal{D}$ be a triangulated category
and $\u=(\mathcal{U},\mathcal{W})$ be a $t$-structure in $\D.$
For $\mathbf{u}_{1}:=(\Sigma\mathcal{U},\mathcal{W}),$ $\mathbf{u}_{2}:=(\mathcal{U},\Sigma^{-1}\mathcal{W})$
and $\u_{3}:=(\Sigma^{-1}\mathcal{U},\Sigma^{-2}\mathcal{W}),$ we
have $\u_{1}\leq\u_{2}\leq\u_{3}$ in $\stors\,\D.$ Notice that $\Hcal:=\mathcal{H}_{[\u_{1},\u_{2}]}=\W\cap\U$
is the heart $\Hcal_{\u}$ of $\u.$ The class $\C=\C_{\u}:=\Hcal_{[\u_{1},\u_{3}]}=\W\cap\Sigma^{-1}\U$
is \textbf{the extended heart of $\u$}. Recall that, for any $C\in\mathcal{D}$,
we have the $\mathfrak{s}$-conflation $\tau_{\U}^{\leq}(C)\stackrel{f_{C}}{\to}C\stackrel{g_{C}}{\to}\tau_{\U}^{>}(C)$
given by $\u_{2}$. Thus, the torsion pair $\u_{2}$ induces the torsion
functor $\tau_{\U}^{\leq}:\mathcal{D}\rightarrow\mathcal{U}$ and
the torsion-free functor $\tau_{\U}^{>}:\mathcal{D}\rightarrow\Sigma^{-1}\mathcal{W}.$ 
\end{defn}

We recall that the heart $\Hcal=\Hcal_{\u}$ is an abelian category
(see \cite{BBD}). On the other hand, since the extended heart $\C=\C_{\u}$
is closed under extensions in the triangulated category $\D,$ we
have that $\C$ is an extriangulated category with negative first
extension. In what follows, we show some basic properties of the extended
heart $\C.$ 
\begin{prop}
\label{basic-p-C} For a triangulated category $\D,$ a $t$-structure
$\u=(\mathcal{U},\mathcal{W})$ in $\D,$ the heart $\Hcal=\Hcal_{\u}$
and the extended heart $\C=\C_{\u},$ the following statements hold
true. 
\begin{enumerate}
\item $\mathcal{C}=\Hcal\star\Sigma^{-1}\Hcal,$ $\tau_{\U}^{\leq}(\C)\subseteq\mathcal{H}$
and $\tau_{\U}^{>}(\C)\subseteq\Sigma^{-1}\mathcal{H}.$ 
\item If $\mathbf{t}=(\T,\F)$ is a torsion pair in $\Hcal,$ then $(\T,\F\star\Sigma^{-1}\Hcal)$,
$(\mathcal{H}\star\Sigma^{-1}\mathcal{T},\Sigma^{-1}\mathcal{F})$
and $(\Hcal,\Sigma^{-1}\Hcal)$ are $s$-torsion pairs in $\C$. 
\item For any $C\in\C,$ the $\mathfrak{s}$-conflation $\tau_{\U}^{\leq}(C)\stackrel{f_{C}}{\to}C\stackrel{g_{C}}{\to}\tau_{\U}^{>}(C)$
given by $\u_{2}$ (see Definition \ref{cor-extendido}) is a short
exact sequence in $\C.$ 
\item If $\Hcal\neq0,$ then the extriangulated category $\C$ is not abelian. 
\end{enumerate}
\end{prop}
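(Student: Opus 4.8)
The plan is to prove the contrapositive-flavoured statement directly: assuming $\Hcal\neq 0$, exhibit a morphism in $\C$ that admits a kernel but is not strict, so that $\C$ fails to be abelian. The natural candidate comes from the three-step picture $\u_1\leq\u_2\leq\u_3$ together with the fact, recorded in Proposition \ref{basic-p-C}(a), that $\C=\Hcal\star\Sigma^{-1}\Hcal$ and that the $s$-torsion pair is $(\Hcal,\Sigma^{-1}\Hcal)$ by Proposition \ref{basic-p-C}(b). Pick any nonzero $H\in\Hcal$. In the triangulated category $\D$ we have the $\mathfrak{s}$-conflation (rotated triangle) $H\to 0\to \Sigma^{-1}H\xrightarrow{} $ realising the canonical extension, i.e. $H\xrightarrow{0} \Sigma^{-1}H[1]$-type data; more usefully, consider the object $\Sigma^{-1}H\in\Sigma^{-1}\Hcal\subseteq\C$ and the zero object together with the conflation $H\rightarrow 0\rightarrow\Sigma^{-1}H$, which is a genuine $\mathfrak{s}$-conflation in $\C$ since $H,\Sigma^{-1}H\in\C$ and $\C$ is closed under extensions.

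First I would make this precise: the triangle $H\to 0\to\Sigma^{-1}H\to\Sigma H$ in $\D$ shows that $0\in H\star\Sigma^{-1}H$, hence gives an $\mathfrak{s}$-conflation $H\xrightarrow{\iota} 0\xrightarrow{\pi}\Sigma^{-1}H$ in $\C$. By Proposition \ref{basic-p-C}(c) (or by the same reasoning that proves it, since $H=\tau_{\U}^{\leq}(0)$ is not quite right — rather one checks directly using $\mathbb{E}^{-1}$-vanishing $\Hom[\D][H][\Sigma^{-2}H]=0$ and $\Hom[\D][H][\Sigma^{-1}H]$ controlling things) this conflation is a short exact sequence in $\C$: $\iota$ is the kernel of $\pi$ and $\pi$ is the cokernel of $\iota$. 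Now examine the zero morphism $0_{H}\colon H\to H$, or better the morphism $\pi\colon 0\to\Sigma^{-1}H$. Its kernel in $\C$ is $\iota\colon H\to 0$, so $\ker(\pi)=H\neq 0$; on the other hand $\pi$ factors as $0\to 0\to\Sigma^{-1}H$, and one computes that $\operatorname{coim}(\pi)=\pi$ itself (the cokernel of $\ker\pi=(H\to 0)$ is $0\to\Sigma^{-1}H$), while $\operatorname{im}(\pi)$ — the kernel of $\operatorname{coker}(\pi)$ — is strictly smaller, because $\pi$ is not an epimorphism in $\C$: indeed $\Hom[\C][\Sigma^{-1}H][H']$-computations show the cokernel of $\pi$ is nonzero.

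The cleanest route, which I would actually write up, is: take the conflation $H\to 0\to\Sigma^{-1}H$ and observe that in an abelian category every monomorphism is a kernel and every epimorphism is a cokernel, so $\iota\colon H\to 0$ being the kernel of $\pi$ forces $\iota$ to be a monomorphism, hence $H\hookrightarrow 0$, hence $H=0$, contradicting $\Hcal\neq 0$. Wait — $\iota$ \emph{is} a monomorphism in $\C$ already (it is a kernel), so that particular argument only says $H\to 0$ is monic, which does not force $H=0$ unless the category is such that a subobject of $0$ is $0$; in a semi-abelian category $0$ is still a zero object and the only subobject of the zero object is the zero object, so $H\hookrightarrow 0$ monic does give $H=0$. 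So the argument does work: $\C$ is quasi-abelian by Theorem \ref{triang-n-smab} (the interval $[\u_1,\u_3]$ is normal since $\Sigma\u_3=(\U,\Sigma^{-1}\W)\leq\u_1$? — one must double-check normality, but $[\u_1,\u_3]$ being an ``extended'' interval is exactly the non-normal case, so instead invoke Proposition \ref{ex-ker-coker}/\ref{basic-p-C}(c) for existence of the kernel), and in particular has a zero object whose only subobject is itself.

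The key steps in order: (1) produce the $\mathfrak{s}$-conflation $H\xrightarrow{\iota}0\xrightarrow{\pi}\Sigma^{-1}H$ in $\C$ for $0\neq H\in\Hcal$, using that it comes from a rotated triangle in $\D$ and that $\C$ is extension-closed; (2) show this is a short exact sequence in $\C$, so $\iota=\ker(\pi)$ — this uses that $\iota$ is a weak kernel of $\pi$ (automatic) plus that it is monic, which follows from $\mathbb{E}^{-1}(\C,\Sigma^{-1}H)$-type vanishing exactly as in the proof of Proposition \ref{prop:normal vs exactitud}(b) applied to the relevant normal subinterval $[\u_1,\u_2]$; (3) if $\C$ were abelian, then $\pi$, having kernel the monomorphism into $0$, would be... — here the sharp contradiction: in an abelian category the morphism $\iota\colon H\to 0$ is mono and epi (it is a kernel, and $0\to\Sigma^{-1}H$ is its cokernel making $\iota$ the kernel of an epi; dually one argues), hence an isomorphism $H\cong 0$. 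The main obstacle is getting step (2) and (3) precisely right: one must be careful about which subinterval is normal (the ``standard'' part $[\u_1,\u_2]$ is normal, giving that $g_C$-type maps are cokernels, while normality of the whole $[\u_1,\u_3]$ fails, which is precisely why abelianness fails), so the cleanest contradiction is to note that abelian $\Rightarrow$ every monic-epic is iso, apply it to the canonical map $\operatorname{coim}\to\operatorname{im}$ for $\pi$, and show this map is not iso because $\operatorname{coker}(\pi)\cong\ ?$ is nonzero — concretely $\operatorname{coker}(\pi)$ in $\C$ is computed via the torsion pair $(\Hcal,\Sigma^{-1}\Hcal)$ and equals $\Sigma^{-1}$ of something nonzero, contradicting $\operatorname{coim}(\pi)=\pi$ being epic in the abelian case.
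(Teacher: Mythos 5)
You only address item (d) (items (a)--(c) are cited, not proved, which matches the paper, so I focus on (d)). The idea of deriving the contradiction from an $\mathfrak{s}$-conflation passing through the zero object is exactly the paper's, but the execution has two genuine errors. First, $H\to 0\to\Sigma^{-1}H$ is not an $\mathfrak{s}$-conflation: the cone of $H\to 0$ in $\D$ is $\Sigma H$, so the correct rotation of the identity triangle is $\Sigma^{-1}H\to 0\to H\xrightarrow{\mathrm{id}}H$, and the conflation to use is $\eta\colon\Sigma^{-1}H\to 0\to H$ (this is the one the paper takes). Second, and more seriously, your step (2) --- that this conflation is \emph{unconditionally} a short exact sequence in $\C$, with the inflation a monomorphism --- is false, and provably so: if $\Sigma^{-1}H\to 0$ were a monomorphism then $\Mor_{\C}(W,\Sigma^{-1}H)=0$ for every $W\in\C$ (two parallel maps into $\Sigma^{-1}H$ always agree after composing with a map to $0$), forcing $\Sigma^{-1}H=0$; likewise, if $0\to H$ were the cokernel of $\Sigma^{-1}H\to 0$ then $H$ would be the zero object. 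The $\mathbb{E}^{-1}$-vanishing you invoke does not hold: the obstruction group for monicity is $\mathbb{E}^{-1}(W,H)=\Mor_{\D}(W,\Sigma^{-1}H)$ with $W$ ranging over all of $\C\supseteq\Sigma^{-1}\Hcal$, and $W=\Sigma^{-1}H$ already gives the identity. Normality of the subinterval $[\u_{1},\u_{2}]$ is of no help because $\Sigma^{-1}H$ lies in $\Hcal_{[\u_{2},\u_{3}]}=\Sigma^{-1}\Hcal$; the failure of this vanishing is precisely the non-normality of $[\u_{1},\u_{3}]$ and is the very reason $\C$ fails to be abelian.

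The repair is to keep the entire argument conditional, as the paper does: \emph{suppose} $\C$ is abelian. Then every $\mathfrak{s}$-conflation in $\C$ is a short exact sequence, so in $\eta\colon\Sigma^{-1}H\to 0\to H$ the map $0\to H$ is the cokernel of $\Sigma^{-1}H\to 0$, whence $H\cong 0$, contradicting $H\neq 0$. (Equivalently, $\Sigma^{-1}H\to 0$ would be a kernel, hence monic, forcing $\Sigma^{-1}H=0$; your worry about whether a subobject of $0$ must be $0$ is unfounded, since in any additive category a monomorphism $X\to 0$ gives $\Mor_{\C}(W,X)=0$ for all $W$ and hence $X=0$.) Nothing about $\eta$ needs to be verified unconditionally; the short-exactness is extracted from, not fed into, the hypothesis that $\C$ is abelian.
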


\begin{proof}
Observe that $\Hcal_{[\u_{2},\u_{3}]}=\Sigma^{-1}\Hcal,$ $\Hcal_{[\u_{1},\u_{2}]}=\Hcal$
and $\C=\Hcal_{[\u_{1},\u_{3}]}.$ Then: (a) follows from Lemma \ref{lem:extensions of hearts},
(b) from Lemma \ref{lem:stors de extensiones de corazones} and (c)
from Proposition \ref{prop:normal vs exactitud}. \
 (d) Let $0\neq H\in\Hcal.$ Consider the canonical triangle $\Sigma^{-1}H\to0\to H\to H$
in the triangulated category $\D.$ Then, by (a) we get the $\mathfrak{s}$-conflation
$\eta:\;\Sigma^{-1}H\to0\to H$ in $\C.$ Suppose that $\C$ is abelian.
Then $\eta$ is a short exact sequence in $\C$ and thus $H=0,$ which
is a contradiction proving that $\C$ is not abelian. 
\end{proof}
\begin{defn}
\label{DefTu} For a triangulated category $\D,$ a $t$-structure
$\u=(\mathcal{U},\mathcal{W})$ in $\D,$ and the extended heart $\C=\C_{\u}$,
we introduce the following classes of $t$-structures in $\D.$ 
\begin{enumerate}
\item $\mathbb{T}_{\u}$ is the class of all the $t$-structures $(\U',\W')$
in $\D$ such that $\Sigma\,\U\subseteq\U'\subseteq\Sigma^{-1}\U.$ 
\item $\mathbb{T}_{\u}^{+}:=\{(\U',\W')\in\mathbb{T}_{u}\;:\:\U'\subseteq\U\}.$ 
\item $\mathbb{T}_{\u}^{-}:=\{(\U',\W')\in\mathbb{T}_{u}\;:\:\U\subseteq\U'\}.$ 
\end{enumerate}
\end{defn}

\begin{lem}
\label{new-t-structure} For a triangulated category $\D,$ a $t$-structure
$\u=(\mathcal{U},\mathcal{W})$ in $\D,$ the extended heart $\C=\C_{\u}$
and an $s$-torsion pair $(\X,\Y)$ in $\C,$ the following statements
hold true. 
\begin{enumerate}
\item $\tau_{\U}^{\leq}(\Y)\subseteq\Y,$ $\tau_{\U}^{>}(\Y)\subseteq\Sigma\,\Y,$
$\tau_{\U}^{>}(\X)\subseteq\X$ and $\tau_{\U}^{\leq}(\X)\subseteq\Sigma^{-1}\X.$ 
\item $(\Sigma\U\star\X,\Sigma\Y\star\Sigma^{-1}\W)\in\mathbb{T}_{\u}.$ 
\end{enumerate}
\end{lem}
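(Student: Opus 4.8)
The plan is to deduce the statement from Lemma~\ref{lem:new stors} together with the dictionary between $s$-torsion pairs and $t$-structures, applied to the three $s$-torsion pairs $\u_{1}=(\Sigma\U,\W)\leq\u_{2}=(\U,\Sigma^{-1}\W)\leq\u_{3}=(\Sigma^{-1}\U,\Sigma^{-2}\W)$ of Definition~\ref{cor-extendido}, for which $\Hcal_{[\u_{1},\u_{2}]}=\Hcal$, $\Hcal_{[\u_{2},\u_{3}]}=\Sigma^{-1}\Hcal$, $\Hcal_{[\u_{1},\u_{3}]}=\C$, and for which the torsion and torsion-free functors of $\u_{2}$ are precisely $\tau_{\U}^{\leq}$ and $\tau_{\U}^{>}$. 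The first thing I would record is that both subintervals $[\u_{1},\u_{2}]$ and $[\u_{2},\u_{3}]$ are \emph{normal}: since $\D$ is triangulated, $\mathbb{E}^{-1}=\D(-,\Sigma^{-1}-)$, so by Example~\ref{exa:normal}(b) normality of $[\u_{1},\u_{2}]$ amounts to $\Sigma\U\subseteq\Sigma\U$ and normality of $[\u_{2},\u_{3}]$ amounts to $\Sigma\Sigma^{-1}\U\subseteq\U$, both of which hold trivially. This makes parts (c) and (d) of Lemma~\ref{lem:new stors} available for the given $(\X,\Y)\in\stors\,\C$ with $\t_{i}=\u_{i}$.

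For part (a), Lemma~\ref{lem:new stors}(c) (using that $[\u_{2},\u_{3}]$ is normal) gives directly $\tau_{\U}^{\leq}(\Y)=\u_{2}(\Y)\subseteq\Y$, and Lemma~\ref{lem:new stors}(d) (using that $[\u_{1},\u_{2}]$ is normal) gives $\tau_{\U}^{>}(\X)=(1:\u_{2})(\X)\subseteq\X$. For the two remaining inclusions I would combine the ``heart'' containments $\tau_{\U}^{>}(\Y)\subseteq\Sigma^{-1}\Hcal$ and $\tau_{\U}^{\leq}(\X)\subseteq\Hcal$ coming from Lemma~\ref{lem:new stors}(a),(b) with the orthogonality outputs $\tau_{\U}^{>}(\Y)\subseteq\X^{\bot_{-1}}$ and $\tau_{\U}^{\leq}(\X)\subseteq{}^{\bot_{-1}}\Y$ from (c),(d); after rewriting $\X^{\bot_{-1}}=(\Sigma\X)^{\bot_{0}}$ and ${}^{\bot_{-1}}\Y={}^{\bot_{0}}(\Sigma^{-1}\Y)$ via $\mathbb{E}^{-1}=\D(-,\Sigma^{-1}-)$ and applying Proposition~\ref{prop:clases ort en stors} to the $s$-torsion pair $(\X,\Y)$ in $\C$, one locates these images inside $\Sigma\Y$ and $\Sigma^{-1}\X$ respectively. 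The point requiring care here is to check first that the objects involved actually lie in $\C$ (resp.\ in $\Sigma^{-1}\C$), using $\C=\Hcal\star\Sigma^{-1}\Hcal$, so that the torsion-pair orthogonality in $\C$ is genuinely applicable.

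For part (b), I would apply Lemma~\ref{lem:new stors}(e) with $\t_{1}=\u_{1}$, $\t_{2}=\u_{2}$, $\t_{3}=\u_{3}$ to $(\X,\Y)\in\stors\,\Hcal_{[\u_{1},\u_{3}]}$: this yields the $s$-torsion pair $\t:=(\Sigma\U\star\X,\,\Y\star\Sigma^{-2}\W)$ in $\D$ with $\u_{1}\leq\t\leq\u_{3}$. Since $\D$ is triangulated, Remark~\ref{rem:s-torsion-implies-t-struc}(a) says that $(\Sigma\U\star\X,\,\Sigma(\Y\star\Sigma^{-2}\W))$ is a $t$-structure in $\D$; as $\Sigma$ is an exact autoequivalence it distributes over $\star$, so $\Sigma(\Y\star\Sigma^{-2}\W)=\Sigma\Y\star\Sigma\Sigma^{-2}\W=\Sigma\Y\star\Sigma^{-1}\W$, and this $t$-structure is exactly $(\Sigma\U\star\X,\,\Sigma\Y\star\Sigma^{-1}\W)$. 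Finally, the relations $\u_{1}\leq\t\leq\u_{3}$ say precisely that $\Sigma\U\subseteq\Sigma\U\star\X\subseteq\Sigma^{-1}\U$, whence this $t$-structure lies in $\mathbb{T}_{\u}$ by Definition~\ref{DefTu}. The main obstacle throughout is bookkeeping: keeping the $\Sigma^{-1}$-shift in the second coordinate straight when passing between $s$-torsion pairs and $t$-structures in part (b), and, in part (a), making sure the images of $\tau_{\U}^{\leq}$ and $\tau_{\U}^{>}$ stay inside the appropriate (shifted) copies of the extended heart so that Proposition~\ref{prop:clases ort en stors} applies.
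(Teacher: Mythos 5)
Your overall strategy is exactly the paper's: the published proof of this lemma is a one-line reduction to Lemma~\ref{lem:new stors} after observing that the intervals $[\u_{1},\u_{2}]$ and $[\u_{2},\u_{3}]$ are normal. Your verification of normality via Example~\ref{exa:normal}(b), your derivation of $\tau_{\U}^{\leq}(\Y)\subseteq\Y$ and $\tau_{\U}^{>}(\X)\subseteq\X$ from parts (c) and (d), and your treatment of part (b) via part (e) together with the shift dictionary between $s$-torsion pairs and $t$-structures are all correct and faithful to the intended argument.

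The gap is in the two remaining inclusions of (a), precisely at the step you yourself flag as ``requiring care''. From Lemma~\ref{lem:new stors} you obtain $\tau_{\U}^{>}(\Y)\subseteq\Sigma^{-1}\Hcal\cap\X^{\bot_{-1}}$, and to convert $\X^{\bot_{-1}}=\Sigma(\X^{\bot_{0}})$ into $\Sigma\Y$ via Proposition~\ref{prop:clases ort en stors} you need these objects to lie in $\Sigma\C$, because $\Y$ equals $\X^{\bot_{0}}$ only after intersecting with $\C$. But $\Sigma^{-1}\Hcal\cap\Sigma\C=0$: indeed $\Sigma^{-1}\Hcal\subseteq\Sigma^{-1}\W$ and $\Sigma\C=\Sigma\W\cap\U\subseteq\U$, while $\U\cap\Sigma^{-1}\W=0$ since $(\U,\Sigma^{-1}\W)$ is a torsion pair. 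So the membership you need fails; moreover the input $\tau_{\U}^{>}(\Y)\subseteq\X^{\bot_{-1}}$ is actually vacuous here, since $\mathbb{E}^{-1}(\X,\Sigma^{-1}\Hcal)=\D(\X,\Sigma^{-2}\Hcal)=0$ always (use $\X\subseteq\Sigma^{-1}\U$ and the $s$-torsion pair $\u_{3}$). Concretely, for the $s$-torsion pair $(\X,\Y)=(\Hcal,\Sigma^{-1}\Hcal)$ of Proposition~\ref{basic-p-C}(b) one has $\tau_{\U}^{>}(\Y)=\Y=\Sigma^{-1}\Hcal$ while $\Sigma\Y=\Hcal$, and $\Sigma^{-1}\Hcal\cap\Hcal=0$; so $\tau_{\U}^{>}(\Y)\subseteq\Sigma\Y$ cannot hold unless $\Hcal=0$, and dually for $\tau_{\U}^{\leq}(\X)\subseteq\Sigma^{-1}\X$. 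What Lemma~\ref{lem:new stors} genuinely yields, and what you can honestly record, is $\tau_{\U}^{>}(\Y)\subseteq\Sigma^{-1}\Hcal\cap\X^{\bot_{-1}}$ and $\tau_{\U}^{\leq}(\X)\subseteq\Hcal\cap{}^{\bot_{-1}}\Y$, with the orthogonals computed in $\D$; the silent replacement of $\X^{\bot_{0}}$ (in $\D$) by $\Y=\X^{\bot_{0}}\cap\C$ is where the argument breaks. The paper's one-line proof glosses over the same identification, so the problem lies in the statement of these two inclusions rather than in your reading of it; but as a proof of the statement as written, this step does not go through. Part (b), which is the only part used later in the paper, is unaffected.
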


\begin{proof}
It follows from Lemma \ref{lem:new stors} since the intervals $[\u_{1},\u_{2}]$
and $[\u_{2},\u_{3}],$ considered in Definition \ref{cor-extendido},
are normal. 
\end{proof}
\begin{defn}
\label{setting-new-t-structures} Let $\u=(\U,\W)$ be a $t$-structure
in the triangulated category $\D,$ and let $\C=\C_{\u}$ be the extended
heart of $\u$. For an $s$-torsion pair $\mathbf{x}=(\X,\Y)$ in
$\C,$ we define the \textbf{extended tilt} $\u_{\mathbf{x}}=\Et_{\u}({\mathbf{x}})$
of $\u$ with respect to $\mathbf{x}$ as follows $\u_{\mathbf{x}}:=(\mathcal{U}_{\mathbf{x}},\mathcal{W}_{\mathbf{x}}):=(\Sigma\U\star\X,\Sigma\Y\star\Sigma^{-1}\W).$
Thus, by Lemma \ref{new-t-structure}(b), we get the map $\Et_{\u}:\stors\,\C\to\mathbb{T}_{\u},\;(\X,\Y)\mapsto\Et_{u}(\X,\Y)$
which is a morphism of posets. 
\end{defn}

\begin{cor}
\label{cor. X subseteq H} Let $\u=(\U,\W)$ be a $t$-structure in
the triangulated category $\D,$ and let $\C=\C_{\u}$ and $\Hcal=\Hcal_{\u}$
be, respectively, the extended heart and the heart of $\u.$ Then,
for an $s$-torsion pair $\mathbf{x}=(\X,\Y)$ in $\C,$ the extended
tilt $\u_{\mathbf{x}}:=(\mathcal{U}_{\mathbf{x}},\mathcal{W}_{\mathbf{x}}),$
satisfies the following conditions. 
\begin{enumerate}
\item $\Sigma\mathcal{U}\subseteq\mathcal{U}_{\mathbf{x}}\subseteq\mathcal{U}$
$\Leftrightarrow$ $\mathcal{X}\subseteq\mathcal{H}$ $\Leftrightarrow$
$\Sigma^{-1}\Hcal\subseteq\Y.$ 
\item $\mathcal{U}\subseteq\mathcal{U}_{\mathbf{x}}\subseteq\Sigma^{-1}\mathcal{U}$
$\Leftrightarrow$ $\mathcal{H}\subseteq\mathcal{X}$ $\Leftrightarrow$
$\Y\subseteq\Sigma^{-1}\Hcal.$ 
\end{enumerate}
\end{cor}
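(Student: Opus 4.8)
The plan is to derive this corollary directly from Corollary \ref{cor:new stors vs interval} by specializing the three $s$-torsion pairs $\t_1\leq\t_2\leq\t_3$ to the triple $\u_1\leq\u_2\leq\u_3$ of Definition \ref{cor-extendido}. Recall from that definition that $\u_1=(\Sigma\U,\W)$, $\u_2=(\U,\Sigma^{-1}\W)$ and $\u_3=(\Sigma^{-1}\U,\Sigma^{-2}\W)$, so that $\Hcal_{[\u_1,\u_2]}=\Hcal$, $\Hcal_{[\u_2,\u_3]}=\Sigma^{-1}\Hcal$ and $\Hcal_{[\u_1,\u_3]}=\C$. Under this identification, the $s$-torsion pair $\t=(\T_1\star\X,\Y\star\F_3)$ appearing in Lemma \ref{lem:new stors}(e) becomes exactly the extended tilt $\u_{\mathbf x}=(\Sigma\U\star\X,\Sigma\Y\star\Sigma^{-1}\W)$ of Definition \ref{setting-new-t-structures}; here one uses that the torsion-free class of $\u_3$ is $\Sigma^{-1}\W_{\u_2}=\Sigma^{-1}(\Sigma^{-1}\W)$\ldots wait, more carefully, $\F_3$ for $\u_3=(\Sigma^{-1}\U,\Sigma^{-2}\W)$ viewed as an $s$-torsion pair is $\Sigma^{-2}\W$, but the $s$-torsion pair attached to the $t$-structure has torsion-free part $\Sigma\cdot\Sigma^{-2}\W=\Sigma^{-1}\W$, which matches the second coordinate of $\u_{\mathbf x}$ after noting $\Y\star\F_3$ must be read inside the conventions of Definition \ref{setting-new-t-structures}. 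This bookkeeping—matching the $s$-torsion-pair conventions of Section 3 with the $t$-structure conventions of Section 4—is the one genuinely fiddly point, and I would state it explicitly as the first step.

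Granting that identification, part (a) is immediate: Corollary \ref{cor:new stors vs interval}(a) with $(\t_1,\t_2,\t_3)=(\u_1,\u_2,\u_3)$ says $\u_{\mathbf x}\in[\u_1,\u_2]$ $\Leftrightarrow$ $\X\subseteq\Hcal_{[\u_1,\u_2]}=\Hcal$ $\Leftrightarrow$ $\Hcal_{[\u_2,\u_3]}=\Sigma^{-1}\Hcal\subseteq\Y$. It then remains only to translate ``$\u_{\mathbf x}\in[\u_1,\u_2]$'' into the stated inclusion $\Sigma\U\subseteq\U_{\mathbf x}\subseteq\U$: by definition of the interval $[\u_1,\u_2]$ in $\stors\,\D$, membership means $\Sigma\U=\T_1\subseteq\U_{\mathbf x}\subseteq\T_2=\U$, which is precisely the first condition. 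Part (b) is the mirror image: Corollary \ref{cor:new stors vs interval}(b) gives $\u_{\mathbf x}\in[\u_2,\u_3]$ $\Leftrightarrow$ $\Hcal_{[\u_1,\u_2]}=\Hcal\subseteq\X$ $\Leftrightarrow$ $\Y\subseteq\Hcal_{[\u_2,\u_3]}=\Sigma^{-1}\Hcal$, and membership in $[\u_2,\u_3]$ unwinds to $\U=\T_2\subseteq\U_{\mathbf x}\subseteq\T_3=\Sigma^{-1}\U$.

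I expect essentially no serious obstacle here; the corollary is a direct specialization. The only care required is (i) verifying that $\u_{\mathbf x}$ as defined in \ref{setting-new-t-structures} is literally the $s$-torsion pair $(\T_1\star\X,\Y\star\F_3)$ produced by Lemma \ref{lem:new stors}(e) for the triple $(\u_1,\u_2,\u_3)$—which follows from the normality of the intervals $[\u_1,\u_2]$ and $[\u_2,\u_3]$ noted in the proof of Lemma \ref{new-t-structure}—and (ii) keeping straight the distinction between a $t$-structure $(\U',\W')$ and its associated $s$-torsion pair $(\U',\Sigma^{-1}\W')$, so that the inequalities among $t$-structures in $\mathbb{T}_{\u}$ and among $s$-torsion pairs in $[\u_1,\u_3]$ agree. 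Once these conventions are pinned down, the proof is two lines invoking Corollary \ref{cor:new stors vs interval}.
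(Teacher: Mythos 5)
Your proposal is correct and follows exactly the paper's route: the paper's entire proof is the one-line reduction to Corollary \ref{cor:new stors vs interval} applied to the triple $\u_{1}\leq\u_{2}\leq\u_{3}$, with the same identifications $\Hcal_{[\u_{1},\u_{2}]}=\Hcal$, $\Hcal_{[\u_{2},\u_{3}]}=\Sigma^{-1}\Hcal$, $\Hcal_{[\u_{1},\u_{3}]}=\C$ and the same matching of the $t$-structure $\u_{\mathbf{x}}$ with the $s$-torsion pair $(\T_{1}\star\X,\Y\star\F_{3})$ of Lemma \ref{lem:new stors}(e). The bookkeeping you single out (that $\Sigma^{-1}(\Sigma\Y\star\Sigma^{-1}\W)=\Y\star\Sigma^{-2}\W=\Y\star\F_{3}$) is indeed the only point needing care, and you resolve it correctly.
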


\begin{proof}
It follows from Corollary \ref{cor:new stors vs interval}. 
\end{proof}
In the following result, we show that the extended tilt $\Et_{\u}$
gives an isomorphism of posets between the $s$-torsion pairs in the
extended heart $\C=\C_{\u}$ and some special class of $t$-structures.
Moreover, such a bijection is well restricted on certain subposets
described in the items (a) and (b) below. 
\begin{cor}
\label{corresp-general} Let $\u=(\mathcal{U},\mathcal{W})$ be a
t-structure in the triangulated category $\mathcal{D}$ with heart
$\mathcal{H}=\Hcal_{\u}$ and extended heart $\mathcal{C}=\C_{\u},$
and let $\mathbb{T}_{\u}^{\pm}$ be the classes described in Definition
\ref{DefTu}. Consider the following classes 
\begin{enumerate}
\item $\stors\,\C^{+}:=\{(\X,\Y)\in\stors\,\C\;:\;\X\subseteq\Hcal\};$ 
\item $\stors\,\C^{-}:=\{(\X,\Y)\in\stors\,\C\;:\;\Hcal\subseteq\X\},$ 
\end{enumerate}
Then, the extended tilt $\Et_{\u}:\stors\,\C\to\mathbb{T}_{\u}$ is
an isomorphism of posets whose inverse is given by $(\U',\W')\mapsto(\U'\cap\W,\Sigma^{-1}(\W'\cap\U)).$
Moreover, $\Et_{\u}(\stors\,\C^{+})=\mathbb{T}_{\u}^{+}$ and $\Et_{\u}(\stors\,\C^{-})=\mathbb{T}_{\u}^{-}.$ 
\end{cor}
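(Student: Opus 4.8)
The plan is to deduce everything from the already-established parametrization results in Section~3, specialized to the interval $[\u_1,\u_3]$ with intermediate term $\u_2$, where all the relevant subintervals are normal (see the remark in Definition~\ref{cor-extendido} and Lemma~\ref{new-t-structure}). First I would recall that, under the identifications $\mathcal{H}_{[\u_1,\u_2]}=\Hcal$, $\mathcal{H}_{[\u_2,\u_3]}=\Sigma^{-1}\Hcal$, $\mathcal{H}_{[\u_1,\u_3]}=\C$, Theorem~\ref{thm:proceso aet} gives an isomorphism of posets $\Phi\colon[\u_1,\u_3]\to\stors\,\C$ with inverse $\Psi\colon(\X,\Y)\mapsto(\Sigma\U\star\X,\Y\star\Sigma^{-2}\W)$. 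The only point to check here is that this $\Psi$ agrees with $\Et_\u$; since the torsion-free part of $\u_3$ is $\Sigma^{-2}\W$ and $\Sigma\Y\star\Sigma^{-1}\W=\Y\star\Sigma^{-2}\W$ fails in general, I would instead observe that $\Et_\u(\X,\Y)$ is by definition $(\Sigma\U\star\X,\,\Sigma\Y\star\Sigma^{-1}\W)$, and that $\Sigma\Y\star\Sigma^{-1}\W=\Sigma(\Y\star\Sigma^{-2}\W)$, so $\Et_\u(\X,\Y)=\Sigma\cdot$(something); more carefully, $\Psi$ produces the $s$-torsion pair $(\Sigma\U\star\X,\Y\star\Sigma^{-2}\W)$ whose associated $t$-structure (shifting the torsion-free class by $\Sigma$) is exactly $(\Sigma\U\star\X,\Sigma(\Y\star\Sigma^{-2}\W))=(\Sigma\U\star\X,\Sigma\Y\star\Sigma^{-1}\W)=\u_\x$. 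Thus $\Et_\u$ is, up to this harmless reindexing between $s$-torsion pairs and $t$-structures, the map $\Psi$ of Theorem~\ref{thm:proceso aet}.

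Next I would identify the image of $\Et_\u$ with $\mathbb{T}_\u$. By Remark~\ref{rem:s-torsion-implies-t-struc}(a), a pair $(\U',\W')$ with $\Sigma\U'\subseteq\U'$ lies in $\mathbb{T}_\u$, i.e.\ satisfies $\Sigma\U\subseteq\U'\subseteq\Sigma^{-1}\U$, precisely when the corresponding $s$-torsion pair lies in the interval $[\u_1,\u_3]$ (recall $\u_1=(\Sigma\U,\W)$ has torsion class $\Sigma\U$ and $\u_3=(\Sigma^{-1}\U,\Sigma^{-2}\W)$ has torsion class $\Sigma^{-1}\U$). Hence $\Et_\u\colon\stors\,\C\to\mathbb{T}_\u$ is a bijection with the stated inverse: the inverse of $\Phi$ in Theorem~\ref{thm:proceso aet} sends $(\U',\Sigma^{-1}\W')\mapsto(\U'\cap\W,\,\Sigma^{-1}\W'\cap\U)$ in $s$-torsion-pair language, which translated to $t$-structure language is $(\U',\W')\mapsto(\U'\cap\W,\,\Sigma^{-1}(\W'\cap\U))$, exactly the formula claimed. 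That it is an isomorphism of posets (not merely a bijection) is part of Theorem~\ref{thm:proceso aet}.

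Finally, for the restricted statements $\Et_\u(\stors\,\C^{+})=\mathbb{T}_\u^{+}$ and $\Et_\u(\stors\,\C^{-})=\mathbb{T}_\u^{-}$, I would invoke Corollary~\ref{cor. X subseteq H}: for $\x=(\X,\Y)\in\stors\,\C$ with extended tilt $\u_\x=(\mathcal{U}_\x,\mathcal{W}_\x)$, part~(a) says $\Sigma\U\subseteq\mathcal{U}_\x\subseteq\U$ iff $\X\subseteq\Hcal$, i.e.\ $\u_\x\in\mathbb{T}_\u^{+}$ iff $\x\in\stors\,\C^{+}$; and part~(b) says $\U\subseteq\mathcal{U}_\x\subseteq\Sigma^{-1}\U$ iff $\Hcal\subseteq\X$, i.e.\ $\u_\x\in\mathbb{T}_\u^{-}$ iff $\x\in\stors\,\C^{-}$. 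Combined with the bijectivity of $\Et_\u$ just established, these equivalences give the two equalities of subclasses immediately. I do not anticipate a serious obstacle here; the only delicate bookkeeping is the repeated translation between $s$-torsion pairs in $\D$ and $t$-structures (the $\Sigma^{-1}$ shift on torsion-free classes), and making sure the formula $\Y\star\Sigma^{-2}\W$ versus $\Sigma\Y\star\Sigma^{-1}\W$ is handled consistently so that the stated inverse formula $(\U',\W')\mapsto(\U'\cap\W,\Sigma^{-1}(\W'\cap\U))$ comes out exactly right.
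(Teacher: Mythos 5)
Your proposal is correct and follows essentially the same route as the paper, which simply cites Theorems \ref{thm:proceso aet} and \ref{thm:parametrizando subintervalos}; your use of Corollary \ref{cor. X subseteq H} for the restricted statements is just the already-specialized form of the subinterval parametrization, and your bookkeeping of the $\Sigma$-shift between $s$-torsion pairs and $t$-structures (in particular $\Sigma\Y\star\Sigma^{-1}\W=\Sigma(\Y\star\Sigma^{-2}\W)$ and the resulting inverse formula $(\U',\W')\mapsto(\U'\cap\W,\Sigma^{-1}(\W'\cap\U))$) is accurate.
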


\begin{proof}
It follows from Theorems \ref{thm:proceso aet} and \ref{thm:parametrizando subintervalos}. 
\end{proof}
Now, we want to relate the extended tilt $\Et_{\u}$ with the right
and left Happel-Reiten-Smal{\o} tilt in the heart of a $t$-structure
$\u.$ 
\begin{defn}
\cite{HRS,W} Let $\u=(\mathcal{U},\mathcal{W})$ be a $t$-structure
in the triangulated category $\mathcal{D},$ with heart $\mathcal{H}=\Hcal_{\u}=\U\cap\W.$ 
\begin{enumerate}
\item The \textbf{right HRS-tilt} of $\mathbf{t}\in\stors\,\Hcal,$ with
respect to $\u,$ is $\R_{\u}(\mathbf{t}),$ where 
\[
\R_{\u}:\stors\,\Hcal\to\mathbb{T}_{\u}^{+},\;\mathbf{t}=(\mathcal{T},\mathcal{F})\mapsto(\Sigma\mathcal{U}\star\mathcal{T},\Sigma\mathcal{F}\star\mathcal{W}).
\]
\item The \textbf{left HRS-tilt} of $\mathbf{t}\in\stors\,\Hcal,$ with
respect to $\u,$ is $\Le_{\u}(\mathbf{t}),$ where 
\[
\Le_{\u}:\stors\,\Hcal\to\mathbb{T}_{\u}^{-},\;\mathbf{t}=(\mathcal{T},\mathcal{F})\mapsto(\mathcal{U}\star\Sigma^{-1}\mathcal{T},\mathcal{F}\star\Sigma^{-1}\mathcal{W}).
\]
Notice that $\Sigma^{-1}:\mathbb{T}_{\u}^{+}\to\mathbb{T}_{\u}^{-}$
is a bijection and $\Sigma^{-1}\R_{\u}=\Le_{\u}.$ 
\end{enumerate}
\end{defn}

Now, for a given $t$-structure $\u,$ we describe the relationship
between the extended tilt $\Et_{u}$ and the right HRS-tilt $\R_{u}.$ 
\begin{thm}
\label{ext-tilt-HRS-p} Let $\u=(\mathcal{U},\mathcal{W})$ be a $t$-structure
in $\mathcal{D}$ with heart $\Hcal$ and extended heart $\mathcal{C},$
and let $\Et_{\u}^{+}:\stors\,\C^{+}\to\mathbb{T}_{\u}^{+}$ be the
isomorphism of posets given by the restriction of the extended tilt
$\Et_{\u}$ on $\stors\,\C^{+}$ (see Corollary \ref{corresp-general}).
Then, the following statements hold true. 
\begin{enumerate}
\item The map $\mu^{+}:\stors\,\Hcal\to\stors\,\C^{+},\,(\T,\F)\mapsto(\T,\F*\Sigma^{-1}\Hcal),$
is an isomorphism of posets whose inverse is given by $\lambda^{+}:\stors\,\C^{+}\to\stors\,\Hcal,\,(\X,\Y)\mapsto(\X,\Y\cap\Hcal).$
Furthermore $\Y=(\Y\cap\Hcal)*\Sigma^{-1}\Hcal$ and $\F=(\F*\Sigma^{-1}\Hcal)\cap\Hcal.$ 
\item $\Et_{\u}^{+}\circ\mu^{+}=\R_{u}.$ 
\end{enumerate}
\end{thm}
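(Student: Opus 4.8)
The plan is to verify the two claims of Theorem \ref{ext-tilt-HRS-p} by unwinding the definitions and appealing to the structural results already established in Sections 3 and 4. For part (a), the statement is essentially an instance of Theorem \ref{thm:parametrizando subintervalos}(b) applied to the chain $\u_1 \leq \u_2 \leq \u_3$ of Definition \ref{cor-extendido}, together with the identifications $\mathcal{H}_{[\u_1,\u_2]}=\Hcal$ and $\mathcal{H}_{[\u_2,\u_3]}=\Sigma^{-1}\Hcal$. First I would recall that $\stors\,\C^{+}=\stors\,\mathcal{H}^{+}_{[\u_1,\u_2,\u_3]}$ by Definition \ref{DefTu} and the identification $\Hcal=\mathcal{H}_{[\u_1,\u_2]}$; then the maps $\mu^{+}$ and $\lambda^{+}$ are exactly the isomorphisms $\Phi_{\ell}$ (and its inverse) from Theorem \ref{thm:parametrizando subintervalos}(b) specialised so that the middle interval degenerates (i.e. replacing the interval $[\t_2,\t_3]$ in that theorem by the single point $\u_2$, which is the same as using Theorem \ref{thm:proceso aet} to identify $\stors\,\Hcal$ with $[\u_1,\u_2]$ and then composing). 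Concretely, $\Psi_{12}\colon\stors\,\Hcal\to[\u_1,\u_2]$ sends $(\T,\F)$ to $(\Sigma\U\star\T,\,\Sigma\F\star\W)$, and then the inclusion into $[\u_1,\u_3]$ followed by $\Phi_{13}$ produces precisely $(\T,\F\star\Sigma^{-1}\Hcal)$; the two equalities $\Y=(\Y\cap\Hcal)\star\Sigma^{-1}\Hcal$ and $\F=(\F\star\Sigma^{-1}\Hcal)\cap\Hcal$ are the ``Moreover'' clauses of Lemma \ref{lem:stors de extensiones de corazones}(d) and the bijectivity of $\mu^{+},\lambda^{+}$.

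For part (b), the plan is to compute $\Et_{\u}^{+}\circ\mu^{+}$ directly on an object $\mathbf{t}=(\T,\F)\in\stors\,\Hcal$ and compare with $\R_{\u}(\mathbf{t})$. By definition of $\mu^{+}$ we have $\mu^{+}(\mathbf{t})=(\T,\F\star\Sigma^{-1}\Hcal)\in\stors\,\C$, and by Definition \ref{setting-new-t-structures} the extended tilt sends an $s$-torsion pair $(\X,\Y)$ in $\C$ to $(\Sigma\U\star\X,\,\Sigma\Y\star\Sigma^{-1}\W)$. Substituting $\X=\T$ and $\Y=\F\star\Sigma^{-1}\Hcal$ gives the torsion class $\Sigma\U\star\T$, which already matches the torsion class of $\R_{\u}(\mathbf{t})=(\Sigma\U\star\T,\,\Sigma\F\star\W)$; since a $t$-structure is determined by its torsion class (Proposition \ref{prop:clases ort en stors}, or Proposition \ref{lem:cerraduras stors}(d)), this suffices, but I would also check the torsion-free part for completeness. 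The torsion-free class is $\Sigma(\F\star\Sigma^{-1}\Hcal)\star\Sigma^{-1}\W=\Sigma\F\star\Hcal\star\Sigma^{-1}\W$; here I would use the associativity of $\star$ (Remark following Proposition \ref{suc-1-ex}, item (1)) together with the identity $\W=\Hcal\star\Sigma^{-1}\W$ coming from Remark \ref{rem:cor} applied to $\u_1\leq\u_2$ (which gives $\W=\mathcal{H}_{[\u_1,\u_2]}\star\Sigma^{-1}\W=\Hcal\star\Sigma^{-1}\W$), so that $\Sigma\F\star\Hcal\star\Sigma^{-1}\W=\Sigma\F\star\W$, exactly the torsion-free class of $\R_{\u}(\mathbf{t})$.

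The main obstacle, such as it is, lies in being careful about the degeneration of Theorem \ref{thm:parametrizando subintervalos} to the situation where one of the four torsion pairs coincides with an adjacent one: there the ``interval'' $[\t_2,\t_3]$ becomes a single point and one must confirm that the displayed isomorphisms still make sense and still compose correctly with the bijection $[\u_1,\u_2]\cong\stors\,\Hcal$ of Theorem \ref{thm:proceso aet}. An alternative, perhaps cleaner, route that avoids this degeneracy entirely is to prove part (a) from scratch using Lemma \ref{lem:stors de extensiones de corazones}(a) (which directly gives that $(\T,\F\star\mathcal{H}_{[\u_2,\u_3]})=(\T,\F\star\Sigma^{-1}\Hcal)$ is an $s$-torsion pair in $\C$) together with Corollary \ref{cor:new stors vs interval}(a) (which guarantees $\X=\T\subseteq\Hcal$, i.e. the pair lands in $\stors\,\C^{+}$) and Lemma \ref{lem:stors de extensiones de corazones}(d) (which gives that $\lambda^{+}$ is a well-defined inverse, via $\F=\mathcal{F}\cap\mathcal{H}_{[\u_1,\u_2]}$ and $\Y=\F'\star\mathcal{H}_{[\u_2,\u_3]}$). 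With part (a) in hand, part (b) is the short computation sketched above, and nothing beyond associativity of $\star$ and Remark \ref{rem:cor} is needed.
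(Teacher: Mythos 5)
Your proposal is correct, and your ``alternative route'' for part (a) is essentially the paper's own proof: the paper establishes that $\mu^{+}$ is well defined via Proposition \ref{basic-p-C}(b) (itself an instance of Lemma \ref{lem:stors de extensiones de corazones}), that $\lambda^{+}$ is well defined via Lemma \ref{lemaA}(a), and the two displayed equalities via Proposition \ref{prop:clases ort en stors}; part (b) is the identical substitution-and-associativity computation, where you in fact supply the justification $\W=\Hcal\star\Sigma^{-1}\W$ (Remark \ref{rem:cor}) that the paper leaves implicit. Your first route through Theorem \ref{thm:parametrizando subintervalos} would also work, but note two small frictions you partly anticipate: $\stors\,\C^{+}$ corresponds to the ``$+$'' class of part (a) of that theorem (not $\Phi_{\ell}$ of part (b)), and the element of $[\u_1,\u_2]$ produced by $\Psi_{12}$ is the $s$-torsion pair $(\Sigma\U\star\T,\F\star\Sigma^{-1}\W)$ rather than the $t$-structure $(\Sigma\U\star\T,\Sigma\F\star\W)$ you wrote; since you fall back on the direct argument anyway, neither affects the validity of the proof.
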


\begin{proof}
(a) Notice that the map $\mu^{+}$ is well defined (see Proposition
\ref{basic-p-C} (b)). Let us prove that $\lambda^{+}$ is well defined.
Indeed, for $(\X,\Y)\in\stors\,\C^{+},$ we need to show that $\Hcal=\X*(\Y\cap\Hcal).$
However, this equality can be obtained from Proposition \ref{lemaA}(a).
Finally, the equalities in item (a) can be obtained from Proposition
\ref{prop:clases ort en stors}. \
 (b) Let $(\T,\F)\in\stors\,\Hcal.$ Then $\Et_{\u}^{+}\,\mu^{+}(\T,\F)=(\Sigma\mathcal{U}\star\mathcal{T},(\Sigma\F*\Hcal)*\Sigma^{-1}\W);$
and thus $\Et_{\u}^{+}\,\mu^{+}(\T,\F)=\R_{u}(\T,\F)$ since $(\Sigma\F*\Hcal)*\Sigma^{-1}\W=\Sigma\F*\W.$ 
\end{proof}
Now, for a given $t$-structure $\u,$ we describe the relationship
between the extended tilt $\Et_{u}$ and the left HRS-tilt $\Le_{u}.$ 
\begin{thm}
\label{ext-tilt-HRS-n} Let $\u=(\mathcal{U},\mathcal{W})$ be a $t$-structure
in $\mathcal{D}$ with heart $\Hcal$ and extended heart $\mathcal{C},$
and let $\Et_{\u}^{-}:\stors\,\C^{-}\to\mathbb{T}_{\u}^{-}$ be the
isomorphism of posets given by the restriction of the extended tilt
$\Et_{\u}$ on $\stors\,\C^{-}$ (see Corollary \ref{corresp-general}).
Then, the following statements hold true. 
\begin{enumerate}
\item The map $\mu^{-}:\stors\,\Sigma^{-1}\Hcal\to\stors\,\C^{-},\,(\T,\F)\mapsto(\Hcal*\T,\F),$
is an isomorphism of posets whose inverse is given by 
\[
\lambda^{-}:\stors\,\C^{-}\to\stors\,\Sigma^{-1}\Hcal,\,(\X,\Y)\mapsto(\X\cap\Sigma^{-1}\Hcal,\Y).
\]
Furthermore, $\X=\Hcal*(\X\cap\Sigma^{-1}\Hcal),$ $\T=(\Hcal*\T)\cap\Sigma^{-1}\Hcal$
and $\stors\,\C^{-}=\{(\X,\Y)\in\stors\,\C\;:\;\Y\subseteq\Sigma^{-1}\Hcal\}.$ 
\item We have the following commutative diagram of posets 
\[
\xymatrix{\stors\,\C^{-}\ar[r]^{\Et_{\u}^{-}} & \mathbb{T}_{\u}^{-}\\
\stors\,\Sigma^{-1}\Hcal\ar[u]^{\mu^{-}} & \stors\,\Hcal\ar[l]^{\quad\Sigma^{-1}}\ar[u]_{\Le_{\u}},
}
\]
where each arrow is a poset isomorphism. That is, $\Et_{\u}^{-}\circ\mu^{-}\circ\Sigma^{-1}=\,\Le_{u}.$ 
\end{enumerate}
\end{thm}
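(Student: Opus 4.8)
The plan is to prove Theorem~\ref{ext-tilt-HRS-n} by mirroring the structure of the proof of Theorem~\ref{ext-tilt-HRS-p}, replacing the use of Lemma~\ref{lemaA}(a) with its dual Lemma~\ref{lemaA}(b) and using Proposition~\ref{basic-p-C}(b) to guarantee that the maps are well defined. Concretely, for item (a), the map $\mu^{-}$ sends $(\T,\F)\in\stors\,\Sigma^{-1}\Hcal$ to $(\Hcal*\T,\F)$, and this lands in $\stors\,\C^{-}$ by Proposition~\ref{basic-p-C}(b) (applied to the torsion pair $(\Sigma\,\T',\Sigma\,\F')$ in $\Hcal$ corresponding to $(\T,\F)$ in $\Sigma^{-1}\Hcal$, noting $\Sigma^{-1}\Hcal_{[\u_1,\u_2]}=\Hcal_{[\u_2,\u_3]}$), and clearly $\Hcal\subseteq\Hcal*\T=\X$, so the pair belongs to $\stors\,\C^{-}$.

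First I would check that $\lambda^{-}$ is well defined, i.e.\ that for $(\X,\Y)\in\stors\,\C^{-}$ the pair $(\X\cap\Sigma^{-1}\Hcal,\Y)$ is an $s$-torsion pair in $\Sigma^{-1}\Hcal$. The essential point is the decomposition $\Sigma^{-1}\Hcal=(\X\cap\Sigma^{-1}\Hcal)*\Y$, which follows from Lemma~\ref{lemaA}(b) applied to the triple $\u_1\le\u_2\le\u_3$: given $C\in\Sigma^{-1}\Hcal=\Hcal_{[\u_2,\u_3]}$, take the $s$-conflation $\suc[X'][C][Y']$ afforded by $(\X,\Y)$ in $\C=\Hcal_{[\u_1,\u_3]}$; since $\Hcal\subseteq\X$ and the interval $[\u_1,\u_2]$ is normal, Lemma~\ref{lemaA}(b) (or directly Lemma~\ref{lem:new stors}(d)) forces $X'\in\X\cap\Sigma^{-1}\Hcal$, and $Y'\in\Y$ automatically. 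The orthogonality $\D(\X\cap\Sigma^{-1}\Hcal,\Y)=0$ and $\E^{-1}(\X\cap\Sigma^{-1}\Hcal,\Y)=0$ are inherited from $(\X,\Y)\in\stors\,\C$. That $\mu^{-}$ and $\lambda^{-}$ are mutually inverse poset isomorphisms then follows: $\lambda^{-}\mu^{-}=\mathrm{id}$ uses $(\Hcal*\T)\cap\Sigma^{-1}\Hcal=\T$ (the inclusion $\supseteq$ is trivial, and $\subseteq$ follows from $\Sigma^{-1}\Hcal=\T*\F$ together with $\D(\Hcal,\F)=0$ in $\C$, so that any $C\in(\Hcal*\T)\cap\Sigma^{-1}\Hcal$ has zero $\F$-part), and $\mu^{-}\lambda^{-}=\mathrm{id}$ uses $\X=\Hcal*(\X\cap\Sigma^{-1}\Hcal)$, which is exactly the decomposition obtained above applied to the canonical $\u_2$-conflation of an object of $\X$ together with Proposition~\ref{basic-p-C}(a). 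The monotonicity in both directions is clear from the formulas. The last equality $\stors\,\C^{-}=\{(\X,\Y)\in\stors\,\C\;:\;\Y\subseteq\Sigma^{-1}\Hcal\}$ is Corollary~\ref{cor. X subseteq H}(b) (the equivalence $\Hcal\subseteq\X\Leftrightarrow\Y\subseteq\Sigma^{-1}\Hcal$).

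For item (b), I would simply compute. Given $(\T,\F)\in\stors\,\Hcal$, we have $\Sigma^{-1}(\T,\F)=(\Sigma^{-1}\T,\Sigma^{-1}\F)\in\stors\,\Sigma^{-1}\Hcal$, then $\mu^{-}(\Sigma^{-1}\T,\Sigma^{-1}\F)=(\Hcal*\Sigma^{-1}\T,\Sigma^{-1}\F)\in\stors\,\C^{-}$, and finally, by Definition~\ref{setting-new-t-structures},
\[
\Et_{\u}^{-}(\Hcal*\Sigma^{-1}\T,\Sigma^{-1}\F)=(\Sigma\U*(\Hcal*\Sigma^{-1}\T),\,\Sigma\Sigma^{-1}\F*\Sigma^{-1}\W)=(\Sigma\U*\Hcal*\Sigma^{-1}\T,\,\F*\Sigma^{-1}\W).
\]
Now $\Sigma\U*\Hcal=\Sigma\U*(\W\cap\U)=\U$ because $\u_1=(\Sigma\U,\W)$ and $\u_2=(\U,\Sigma^{-1}\W)$ give $\U=\Sigma\U\star\Hcal_{[\u_1,\u_2]}$ by Remark~\ref{rem:cor}. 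Hence the first component is $\U*\Sigma^{-1}\T$, and the expression becomes $(\U\star\Sigma^{-1}\T,\F\star\Sigma^{-1}\W)=\Le_{\u}(\T,\F)$, which is the desired commutativity $\Et_{\u}^{-}\circ\mu^{-}\circ\Sigma^{-1}=\Le_{\u}$; that every arrow is an isomorphism then follows from Corollary~\ref{corresp-general} (for $\Et_{\u}^{-}$), part~(a) (for $\mu^{-}$), and the obvious bijectivity of $\Sigma^{-1}$. I expect the main obstacle to be the bookkeeping in item~(a)---specifically verifying the two $\star$-identities $(\Hcal*\T)\cap\Sigma^{-1}\Hcal=\T$ and $\X=\Hcal*(\X\cap\Sigma^{-1}\Hcal)$ cleanly---but these are handled uniformly by Lemma~\ref{lemaA}(b) and Proposition~\ref{prop:clases ort en stors}, exactly as the analogous identities in Theorem~\ref{ext-tilt-HRS-p}(a) were handled by Lemma~\ref{lemaA}(a).
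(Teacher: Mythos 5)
Your proposal is correct and follows essentially the same route as the paper's own proof: the characterization of $\stors\,\C^{-}$ via Corollary \ref{cor. X subseteq H}(b), well-definedness of $\mu^{-}$ via Lemma \ref{lem:stors de extensiones de corazones}(b) (of which Proposition \ref{basic-p-C}(b) is the packaged form), well-definedness of $\lambda^{-}$ via Lemma \ref{lemaA}(b), the $\star$-identities via Proposition \ref{prop:clases ort en stors}, and the computation $\Sigma\U\star\Hcal=\U$ for item (b). The extra detail you supply for the two $\star$-identities is consistent with, and merely expands, what the paper leaves implicit.
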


\begin{proof}
(a) From Corollary \ref{cor. X subseteq H} (b), we get that 
\[
\stors\,\C^{-}=\{(\X,\Y)\in\stors\,\C\;:\;\Y\subseteq\Sigma^{-1}\Hcal\}.
\]
The above equality will be a strong point to prove the item (a). Indeed,
from Lemma \ref{lem:stors de extensiones de corazones} (b), we have
that the map $\mu^{-}$ is well defined. Let us prove that $\lambda^{-}$
is also well defined. Indeed, for $(\X,\Y)\in\stors\,\C^{-},$ we
need to show that $\Sigma^{-1}\Hcal=(\X\cap\Sigma^{-1}\Hcal)*\Y.$
However, this equality can be obtained from Proposition \ref{lemaA}(b).
Finally, the equalities in item (a) can be obtained from Proposition
\ref{prop:clases ort en stors}. \
 (b) Let $(\T,\F)\in\stors\,\Hcal.$ Then 
\[
\Et_{\u}^{-}\circ\mu^{-}\circ\Sigma^{-1}(\T,\F)=(\Sigma\mathcal{U}\star(\Hcal*\Sigma^{-1}\mathcal{T}),\F*\Sigma^{-1}\W);
\]
and thus $\Et_{\u}^{-}\circ\mu^{-}\circ\Sigma^{-1}(\T,\F)=\Le_{u}(\T,\F)$
since $\Sigma\mathcal{U}\star(\Hcal*\Sigma^{-1}\mathcal{T})=\mathcal{U}\star\Sigma^{-1}\mathcal{T}.$ 
\end{proof}
\bibliographystyle{plain}

\section{Extended tilting objects}

Let us begin by introducing the following notation inspired in the
abelian case, see \cite{PS1,AB}. 
\begin{defn}
Let $\D$ be an extriangulated category. We say that $\mathcal{S}\subseteq\D$
is a \textbf{cogenerating class} in $\D$ if, for each $X\in\D$,
there is an $\E$-triangle $\suc[X][S][D]$ with $S\in\mathcal{S}$. 
\end{defn}

\begin{defn}
Let $\D$ be an extriangulated category and $V\in\D$. 
\begin{enumerate}
\item We say that $X\in\D$ is \textbf{$V$-generated} if either $X=0$
or there exists an $\E$-triangle $\suc[A][\coprod_{\alpha}V][X][][f]$
for some set $\alpha$ and $f\neq0.$ The class of all the $V$-generated
objects in $\D$ is denoted by $\Gen_{\D}(V)$ or $\Gen(V).$ If $\Gen(V)=\D$
we say that $V$ is a \textbf{generator} in $\D.$ 
\item We say that $X\in\D$ is \textbf{$V$-presented} if either $X=0$
or there exists an $\E$-triangle $\suc[A][\coprod_{\alpha}V][X][][f]$
for some set $\alpha$ such that $f\neq0$ and $A$ is $V$-generated.
The class of all the $V$-presented objects in $\D$ is denoted by
$\Pres_{\D}(V)$ or $\Pres(V).$ 
\item $\Add(V)$ is the class of all the objects in $\D$ which are direct
summands of coproducts of the form $\coprod_{\alpha}V$ where $\alpha$
is a set. 
\end{enumerate}
\end{defn}

In the previous setting, we could have that $\Pres(V)=\Gen(V)=\D$
even if $\D$ does not have arbitrary coproducts. For example, consider
$\D=\modd[\Lambda]$ the category of finitely generated left $\Lambda$-modules
for some Artin algebra $\Lambda.$ In this case $\text{Pres}(\Lambda)=\text{Gen}(\Lambda)=\modd[\Lambda]$
but $\modd[\Lambda]$ does not have arbitrary coproducts.

The following definition is inspired from \cite[Def. 6.1]{PSV2}. 
\begin{defn}
\label{ext-tilting} Let $\D$ be an extriangulated category with
negative first extension. We say that $V\in\D$ is an \textbf{extended
tilting object} if the following statements hold true. 
\begin{itemize}
\item[(T1)] $\mathbf{t}(V):=(\text{Gen}(V),V^{\bot_{0}})\in\stors\,\D.$ 
\item[(T2)] $\Gen(V)=\text{Pres}(V)$ and $\text{Gen}(V)\subseteq V^{\perp_{1}}$. 
\item[(T3)] $\text{Gen}(V)$ is a cogenerating class in $\D$. 
\end{itemize}
In such case, we say that $\mathbf{t}(V)$ is the \textbf{extended
tilting $s$-torsion pair} associated to $V.$ 
\end{defn}

\begin{lem}
\label{lem:tiltexact}Let $\mathcal{D}$ be an exact category. If
$V\in\mathcal{D}$ is an extended tilting object, then $\Gen(V)=V^{\bot_{1}}$. 
\end{lem}

\begin{proof}
It is enough to show that $V^{\bot_{1}}\subseteq\Gen(V)$. Let $X\in V^{\bot_{1}}$.
By (T3), there is an $\mathbb{E}$-triangle $\suc[X][T][X']$ in $\mathcal{D}$
with $T\in\Gen(V)$. Moreover, by (T1), there is an $\mathbb{E}$-triangle
$\suc[T'][X'][F]$ with $T'\in\Gen(V)$ and $F\in V^{\bot_{0}}$.
Then, by \cite[De. 2.12 (ET4*)]{NP}, we have the following commutative
diagram in $\mathcal{D}$, where rows and columns are $\mathbb{E}$-triangles.
\[
\xymatrix{X\ar[r]\ar@{=}[d] & E\ar[r]\ar[d]^{y} & T'\ar[d]\\
X\ar[r] & T\ar[r]\ar[d]^{x} & X'\ar[d]\\
\, & F\ar@{=}[r] & F,
}
\]
Here, observe that $x=0$ since $T\in\Gen(V)$, $F\in V^{\bot_{0}}$
and $(\Gen(V),V^{\bot_{0}})$ is a torsion pair. Then, by the dual
of \cite[Ex. 2.6]{Buhler}, $y$ is an isomorphism. And thus, $E\in\Gen(V)$.
Now, since $T'\in\Gen(V)=\Pres(V)$ (see (T2)), it follows from \cite[Prop. 3.15]{NP}
that we have the following commutative diagram, where the rows and
columns are $\mathbb{E}$-triangles and $T''\in\Gen(V)$.
\[
\xymatrix{ & T''\ar@{=}[r]\ar[d] & T''\ar[d]\\
X\ar[r]\ar@{=}[d] & E'\ar[d]\ar[r] & \coprod_{\alpha}V\ar[d]\\
X\ar[r] & E\ar[r] & T'
}
\]
Note that $E'\in\Gen(V)\star\Gen(V)=\Gen(V)$ and that the $\mathbb{E}$-triangle
$\suc[X][E'][\coprod_{\alpha}V]$ splits since $X\in V^{\bot_{1}}$.
Therefore, $X\in\Gen(V)$. 
\end{proof}
Let us show, in the following examples, that the definition of extended
tilting object agrees with the usual notion of tilting object in those
settings. 
\begin{example}
Let $\mathcal{D}$ be an extriangulated category with negative first
extension. 
\begin{enumerate}
\item Let $\mathcal{D}$ be abelian. Then, it follows from Lemma \ref{lem:tiltexact}
that $V\in\D$ is a tilting object as in \cite[Def. 6.1]{PSV2} if,
and only if, $V$ is extended tilting. 
\item Let $\mathcal{D}=\mathrm{Mod(R)}$ be the category of left $R$-modules,
for a ring $R.$ Then, by (a), \cite[Cor. 2.6 and 6.9]{PSV2} and
\cite[Thm. 3.11]{B}, we have that: $V\in\D$ is extended tilting
if, and only if, $V$ is $1$-tilting in the sense of Angeleri H{\"u}gel-Coelho
(see \cite{AC}). 
\item Let $\mathcal{D}$ be an extriangulated category such that every $D\in\mathcal{D}$
admits an $\s$-conflation $\suc[D][0][D']$ (e.g. a triangulated
category). Then, $0$ is an extended tilting object. Indeed, note
that $\Gen(0)=\{0\}$ and that $0^{\bot_{0}}=\mathcal{D}$. Therefore,
$(\Gen(0),0^{\bot_{0}})\in\stors\mathcal{D}$, and thus (T1) holds.
Moreover, (T2) also holds since $\Pres(0)=\{0\}$ and $\Gen(0)\subseteq0^{\bot_{1}}=\mathcal{D}$.
Lastly, (T3) is satisfied by assumption.
\item Let $\D$ be a triangulated category. Then, $V\in\D$ is extended
tilting if, and only if, $V$ is a zero object in $\D.$ Indeed, let
$V\in\D$ be extended tilting in $\D$. On the one hand, since $V\in\Gen(V)\subseteq V^{\bot_{1}}$,
we have that $\Sigma V\in V^{\bot_{0}}$. On the other hand, since
$(\Gen(V),V^{\bot_{0}})\in\stors(\mathcal{D})$, we have that $(\Sigma^{-1}\Gen(V),V^{\bot_{0}})$
is a $t$-structure. In particular, this means that $V^{\bot_{0}}$
is closed under negative shifts. And thus, $V=\Sigma^{-1}\Sigma V\in V^{\bot_{0}}$.
Therefore, $V=0$. 
\item Let $\mathcal{D}$ be with enough projectives and injectives (i.e.
every $D\in\mathcal{D}$ admits $\s$-conflations $\suc[X][I][X']$
and $\suc[X''][P][X]$ such that $P\in{}^{\bot_{1}}\mathcal{D}$ and
$I\in\mathcal{D}^{\bot_{1}}$). In \cite{ZZ}, Zhu and Zhuang defined
the notion of tilting class of projective dimension $\leq n$. In
\cite[Thm. 1]{ZZ} we can find conditions under which it is fulfilled
that: if $V$ is extended tilting, then $\Add(V)$ is a tilting class
of projective dimension $\leq1$. Namely, if $\Gen(V)=V^{\bot_{1}}$,
then $\Add(V)$ is tilting class of projective dimension $\leq1$
in the sense of \cite{ZZ} (see Corollary \ref{cor:apptilt} below). 
\end{enumerate}
\end{example}

\subsection{Projective dimension of extended tilting objects}

We will show in this section that the projective dimension of an extended
tilting object is $\leq1$. For this, we will need to introduce the
higher $\E$-extension groups of an extriangulated category $\D.$

As a precedent, let us recall that one can define higher $\E$-extension
groups in exact categories by splicing short exact sequences \cite[Chap. 6]{FS}.
A similar thing has been done for extriangulated categories in \cite[Sect. 3.2]{G}.
For the reader's convenience, and in order to have a language in the
forthcoming proofs of the paper, we recall some ideas from \cite[Sect. 3.2]{G}.

Let $\mathcal{D}$ be a small extriangulated category. For $A,B\in\mathcal{D}$,
we define recursively $\E^{n}(A,B)$ as follows. Set $\E^{1}(A,B):=\E(A,B)$.
Also recall that, for $\alpha:A'\rightarrow A$, $\beta:B\rightarrow B'$
and $\delta\in\E(A,B)$, $\delta\cdot\alpha:=\E(\alpha,B)(\delta)$
and $\beta\cdot\delta:=\E(A,\beta)(\delta)$. \\
 For $C\in\mathcal{D}$, denote by $(\delta\:{}_{C}\:\eta)$ the ordered
pair $(\delta,\eta)\in\E(C,B)\times\E(A,C)$. Let $S^{2}(A,B):=\left(\coprod_{C\in\mathcal{D}}\E(C,B)\times\E(A,C)\right)/\sim$,
where $(\delta\:{}_{X}\:\eta)\sim(\delta'\:{}_{Y}\:\eta')$ if: 
\begin{itemize}
\item there is a morphism $f:X\rightarrow Y$ such that $\delta'\cdot f=\delta$
and $f\cdot\eta=\eta'$, or 
\item there is a morphism $f:Y\rightarrow X$ such that $\delta\cdot f=\delta'$
and $f\cdot\eta'=\eta$. 
\end{itemize}
Note that $\sim$ is reflexive and symmetric but not necessarily transitive.
Hence, we consider the equivalence relation $\approx$ generated by
$\sim$. Lastly, define $\E^{2}(A,B):=\left(\coprod_{C\in\mathcal{D}}\E(C,B)\times\E(A,C)\right)/\approx$.
It can be proved that $\E^{2}(A,B)$ is an abelian group with the
Baer sum: 
\[
\overline{(\delta\:{}_{X}\:\eta)}\:+\:\overline{(\delta'\:{}_{Y}\:\eta')}:=\overline{(\delta\cdot\pi_{1}+\delta'\cdot\pi_{2})\:_{X\amalg Y}\:(\mu_{1}\cdot\eta+\mu_{2}\cdot\eta')},
\]
where $\pi_{1}:X\amalg Y\rightarrow X$ and $\pi_{2}:X\amalg Y\rightarrow Y$
are the canonical projections and $\mu_{1}:X\rightarrow X\amalg Y$
and $\mu_{2}:Y\rightarrow X\amalg Y$ are the canonical inclusions.
Observe that the following equalities hold: 
\[
\overline{((\eta+\eta')\:_{X}\:\delta)}\:=\:\overline{((\eta\cdot\pi_{1}+\eta'\cdot\pi_{2})\:_{X\amalg X}\:(\mu_{1}\cdot\delta+\mu_{2}\cdot\delta))}\:=\:\overline{(\eta\:_{X}\:\delta)}\:+\:\overline{(\eta'\:_{X}\:\delta)}
\]
where the first equality is induced by the morphism $\mu_{1}+\mu_{2}:X\rightarrow X\amalg X$;
\[
\overline{(\eta\:_{X}\:(\delta+\delta'))}\:=\:\overline{((\eta\cdot\pi_{1}+\eta\cdot\pi_{2})\:_{X\amalg X}\:(\mu_{1}\cdot\delta+\mu_{2}\cdot\delta'))}\:=\:\overline{(\eta\:_{X}\:\delta)}\:+\:\overline{(\eta\:_{X}\:\delta')}
\]
where the first equality is given by $\pi_{1}+\pi_{2}:X\amalg X\rightarrow X$;
\[
\overline{(0\:_{X}\:\delta)}\:=\:\overline{((0\cdot\pi_{1}+0\cdot\pi_{2})\:_{X\amalg X}\:(\mu_{1}\cdot\delta+\mu_{2}\cdot\delta))}\:=\:\overline{(0\:_{X}\:\delta)}\:+\:\overline{(0\:_{X}\:\delta)}
\]
where the first equality follows from the morphism $\mu_{1}+\mu_{2}:X\rightarrow X\amalg X$;
and similarly that 
\[
\overline{(\eta\:_{X}\:0)}\:=\:\overline{((\eta\cdot\pi_{1}+\eta\cdot\pi_{2})\:_{X\amalg X}\:(\mu_{1}\cdot0+\mu_{2}\cdot0))}\:=\:\overline{(\eta\:_{X}\:0)}\:+\:\overline{(\eta\:_{X}\:0)}.
\]
By using this, one can prove that: 
\[
\overline{(\eta\:_{X}\:0)}=0=\overline{(0\:_{X}\:\delta)}\text{ and }\overline{(-\eta\:_{X}\:\delta)}=-\overline{(\eta\:_{X}\:\delta)}=\overline{(\eta\:_{X}\:-\delta)}.
\]
Lastly, for $\alpha:A'\rightarrow A$, $\beta:B\rightarrow B'$ and
$\epsilon=\overline{(\delta\:{}_{X}\:\eta)}\in\E^{2}(A,B)$, define
$\epsilon\cdot\alpha=\E^{2}(\alpha,B)(\epsilon):=\overline{(\delta\:{}_{X}\:(\eta\cdot\alpha))}$
and $\beta\cdot\epsilon:=\E^{2}(A,\beta)(\epsilon):=\overline{((\beta\cdot\delta)\:{}_{X}\:\eta)}$.

Let $n>1$. For $C\in\mathcal{D}$, denote by $(\delta\:{}_{C}\:\eta)$
an ordered pair $(\delta,\eta)\in\E(C,B)\times\E^{n}(A,C)$. Let $S^{n+1}(A,B):=\left(\coprod_{C\in\mathcal{D}}\E(C,B)\times\E^{n}(A,C)\right)/\sim$,
where $(\delta\:{}_{X}\:\eta)\sim(\delta'\:{}_{Y}\:\eta')$ if 
\begin{itemize}
\item there is a morphism $f:X\rightarrow Y$ such that $\delta'\cdot f=\delta$
and $f\cdot\eta=\eta'$, or 
\item there is a morphism $f:Y\rightarrow X$ such that $\delta\cdot f=\delta'$
and $f\cdot\eta'=\eta$. 
\end{itemize}
Consider the equivalence relation $\approx$ generated by $\sim$.
Define $\E^{n+1}(A,B):=\left(\coprod_{C\in\mathcal{D}}\E(C,B)\times\E^{n}(A,C)\right)/\approx$.
It can be proved that $\E^{n+1}(A,B)$ is an abelian group with the
Baer sum: 
\[
\overline{(\delta\:{}_{X}\:\eta)}\:+\:\overline{(\delta'\:{}_{Y}\:\eta')}:=\overline{(\delta\cdot\pi_{1}+\delta'\cdot\pi_{2})\:_{X\amalg Y}\:(\mu_{1}\cdot\eta+\mu_{2}\cdot\eta')},
\]
where $\pi_{1}:X\amalg Y\rightarrow X$ and $\pi_{2}:X\amalg Y\rightarrow Y$
are the canonical projections and $\mu_{1}:X\rightarrow X\amalg Y$
and $\mu_{2}:Y\rightarrow X\amalg Y$ are the canonical inclusions.
As before, we have that: 
\begin{equation}
\overline{(\eta\:_{X}\:0)}=0=\overline{(0\:_{X}\:\delta)}\text{ and }\overline{(-\eta\:_{X}\:\delta)}=-\overline{(\eta\:_{X}\:\delta)}=\overline{(\eta\:_{X}\:-\delta)}.\label{eq:high}
\end{equation}
Lastly, for $\alpha:A'\rightarrow A$, $\beta:B\rightarrow B'$ and
$\epsilon=\overline{(\delta\:{}_{X}\:\eta)}\in\E^{n}(A,B)$, define
$\epsilon\cdot\alpha=\E^{n+1}(\alpha,B)(\epsilon):=\overline{(\delta\:{}_{X}\:(\eta\cdot\alpha))}$
and $\beta\cdot\epsilon:=\E^{n+1}(A,\beta)(\epsilon):=\overline{((\beta\cdot\delta)\:{}_{X}\:\eta)}$. 
\begin{defn}
Let $\mathcal{D}$ be a small extriangulated category and $V\in\mathcal{D}$.
Define the \textbf{projective dimension} of $V$ as $\pd(V)=\min\{k\in\mathbb{N}\,|\:\mathbb{E}^{k+1}(V,-)=0\}$,
if there is $n>0$ such that $\mathbb{E}^{n}(V,-)=0$; or as $\pd(V)=\infty$,
if $\mathbb{E}^{n}(V,-)\neq0$ for all $n>0$. For a class of objects
$\mathcal{V}\subseteq\mathcal{D}$, we say that $\pd(\mathcal{V})\leq n$
if $\pd(X)\leq n$ for all $X\in\mathcal{V}$. 
\end{defn}

\begin{rem}
\label{rem:conesorto}Let $\mathcal{D}$ be a small extriangulated
category and $V\in\mathcal{D}$. 
\begin{enumerate}
\item One can prove with routine arguments that $V^{\bot_{>0}}:=\{X\in\mathcal{D}\,|\:\E^{k}(V,X)=0\:\forall k>0\}$
is closed under cones and extensions (see \cite[Thm. 3.5]{G}).
\item If $\mathbb{E}^{n}(V,-)=0$, then $\mathbb{E}^{k}(V,-)=0$ for all
$k\geq n$ (see (\ref{eq:high})). 
\item $\pd(V)=\min\{m\in\mathbb{N}\,|\:\mathbb{E}^{k}(V,-)=0\,\forall k>m\}$.
\end{enumerate}
\end{rem}

\begin{lem}
\label{lemitapdT}Let $\mathcal{D}$ be a small extriangulated category
with negative first extension and let $V$ be an object of $\mathcal{D}$
such that $V^{\bot_{1}}=V^{\bot_{>0}}$. If $V^{\bot_{1}}$ is a cogenerating
class in $\mathcal{D}$ and there is $(\mathcal{T},\mathcal{F})\in\stors(\mathcal{D})$
with $\mathcal{T}\subseteq V^{\bot_{1}}$ and $\mathcal{F}\subseteq V^{\bot_{0}}$,
then $\pd(V)\leq1$. 
\end{lem}

\begin{proof}
Let us prove that $\mathbb{E}^{2}(V,D)=0$ for any $D\in\mathcal{D}$.
By assumption, there is an $\s$-conflation $\suc[D][E][D']$ with
$E\in V^{\bot_{>0}}$. Moreover, since $(\mathcal{T},\mathcal{F})\in\stors(\mathcal{D})$,
there is an $\s$-conflation $\suc[T][D'][F]$ with $T\in V^{\bot_{1}}$
and $F\in V^{\bot_{0}}$. Then, by \cite[De. 2.12 (ET4*)]{NP}, we
have the following commutative diagram in $\mathcal{D}$, where rows
and columns are $\mathbb{E}$-triangles. 
\[
\xymatrix{D\ar[r]\ar@{=}[d] & M\ar[r]\ar[d]^{y} & T\ar[d]\\
D\ar[r] & E\ar[r]\ar[d]^{x} & D'\ar[d]\\
\, & F\ar@{=}[r] & F
}
\]
Observe that the central column in the diagram gives us the exact
sequence 
\[
0=\Hom[\mathcal{D}][V][F]\rightarrow\mathbb{E}(V,M)\rightarrow\mathbb{E}(V,E)=0.
\]
Hence, $M\in V^{\bot_{1}}=V^{\bot_{>0}}$. Now, by \cite[Thm. 3.5]{G},
the top row gives us an exact sequence 
\[
0=\mathbb{E}(V,T)\rightarrow\mathbb{E}^{2}(V,D)\rightarrow\mathbb{E}^{2}(V,M)=0.
\]
Therefore, $\mathbb{E}^{2}(V,D)=0$. 
\end{proof}
The following result is inspired from \cite[Cor. 2.6]{PSV2}. 

\begin{lem}
\label{lem:lemitapd}Let $\mathcal{D}$ be a small extriangulated
category with negative first extension and $V\in\mathcal{D}$. If
$V$ is extended tilting, then $\Gen(V)\subseteq V^{\bot_{>0}}$. 
\end{lem}

\begin{proof}
Let us show that $\mathcal{T}:=\Gen(V)\subseteq V^{\bot_{>0}}.$ We
proceed by induction on $n$ to prove that $\E^{n}(V,\mathcal{T})=0$.
The case $n=1$ is trivial. Let $n>1$ and assume that $\E^{n-1}(V,\mathcal{T})=0$.
For $X\in\mathcal{T}$ and $\epsilon\in\E^{n}(V,X)$, we have that
$\epsilon=\overline{(\eta\:_{C}\:\delta)}$ with $\eta\in\E(C,X)$
and $\delta\in\E^{n-1}(V,C)$. Let $\eta$ be realized by $\suc[X][E][C]$.
Since $\mathcal{T}=\Gen(V)$ is cogenerating by (T3), there is an
$\s$-conflation $\suc[E][T][S]$ with $T\in\mathcal{T}$. Then, by
\cite[Def. 2.12 (ET4)]{NP}, there is a morphism $f:C\rightarrow C'$
and an $\s$-conflation $\eta':\:\suc[X][T][C']$ such that $\eta'\cdot f=\eta$
as can be seen in the following diagram 
\[
\xymatrix{X\ar[r]\ar@{=}[d] & E\ar[r]\ar[d] & C\ar[d]^{f}\\
X\ar[r] & T\ar[r]\ar[d] & C'\ar[d]\\
 & S\ar@{=}[r] & S.
}
\]
Observe that $C'\in V^{\bot_{1}}$ since $X,T\in\mathcal{T}$, $(\mathcal{T},\mathcal{F}):=(\Gen(V),V^{\bot_{0}})$
is an $s$-torsion pair and $\mathcal{T}\subseteq V^{\bot_{1}}$ by
(T1) and (T2) (see Proposition \ref{lem:cerraduras stors} (a)). Hence
$f\cdot\delta\in\E^{n-1}(V,C')=0$ by induction hypothesis. Therefore
$\epsilon=\overline{(\eta\:_{C}\:\delta)}=\overline{((\eta'\cdot f)\:_{C}\:\delta)}=\overline{(\eta'\:_{C}\:(f\cdot\delta))}=\overline{(\eta'\:_{C}\:0)}=0.$
\end{proof}
\begin{lem}
\label{lem:lemita2}Let $\mathcal{D}$ be a extriangulated category
with negative first extension and $V\in\mathcal{D}$ be an extended
tilting object in $\mathcal{D}$. If $\eta:\:\suc[F][T][C][][a]$
is an $\s$-conflation in $\mathcal{D}$ with $F\in V^{\bot_{0}}\cap V^{\bot_{1}}$
and $T\in\Gen(V)-\{0\}$, then $C\in\Gen(V)$. 
\end{lem}

\begin{proof}
Since $T\neq0$, there is an $\s$-conflation $\eta':\:\suc[T'][\coprod_{\alpha}V][T][][b]$
with $b\neq0$ and $T'\in\mathcal{T}:=\Gen(V)$ by (T2). Thus, by
\cite[Def. 2.12 (ET4*)]{NP}, there is a an $\s$-conflation $\suc[\eta_{0}:\:M][\coprod_{\alpha}V][C]$
and a commutative diagram as the one below satisfying the standard
compatibilities (see \cite[Rem. 2.22]{NP}) 
\[
\xymatrix{T'\ar[r]\ar@{=}[d] & M\ar[r]\ar[d] & F\ar[d]\\
T'\ar[r] & \coprod_{\alpha}V\ar[r]^{b}\ar[d] & T\ar[d]^{a}\\
 & C\ar@{=}[r] & C.
}
\]
 Here, observe that $\Hom[\mathcal{D}][V][a]$ and $\Hom[\mathcal{D}][V][b]$
are epimorphisms because of the following exact sequences induced
by $\eta$ and $\eta'$: 
\begin{alignat*}{1}
\Hom[\mathcal{D}][V][T] & \xrightarrow{\Hom[\mathcal{D}][V][a]}\Hom[\mathcal{D}][V][C]\rightarrow\mathbb{E}(V,F)=0\\
\Hom[\mathcal{D}][V][\coprod_{\alpha}V] & \xrightarrow{\Hom[\mathcal{D}][V][b]}\Hom[\mathcal{D}][V][T]\rightarrow\mathbb{E}(V,T')=0,
\end{alignat*}
where $\mathbb{E}(V,T')=0$ since $T'\in\mathcal{T}\subseteq V^{\bot_{1}}$
by (T2). Lastly, note that, in case of having $a\circ b=0$, we get
that $\Hom[\mathcal{D}][V][C]=0$ because 
\[
0=\Hom[\mathcal{D}][V][a\circ b]:\Hom[\mathcal{D}][V][\coprod_{\alpha}V]\rightarrow\Hom[\mathcal{D}][V][C]
\]
 is an epimorphism. In other words, $C\in\mathcal{F}:=V^{\bot_{0}}$.
And thus, $T\in\mathcal{T}\cap\mathcal{F}=0$ (because of the $\s$-conflation
$\eta$), which is a contradiction since $b\neq0$. Therefore, $a\circ b\neq0$,
and hence the $\s$-conflation $\suc[\eta_{0}:\:M][\coprod_{\alpha}V][C]$
gives us that $C\in\mathcal{T}$.
\end{proof}
\begin{prop}
\label{pdT1} Let $\mathcal{D}$ be a small extriangulated category
with negative first extension, $V\in\mathcal{D}$ be an extended tilting
object in $\mathcal{D}$, and $(\mathcal{T},\mathcal{F}):=(\Gen(V),V^{\bot_{0}})$.
Then, $\pd(V)\leq1$ if any of the following conditions hold.
\begin{enumerate}
\item Every $F\in\mathcal{F}\cap V^{\bot_{1}}$ admits an $\s$-conflation
$\suc[F][T][C']$ with $T\in\mathcal{T}-\{0\}$. 
\item Every $\s$-conflation $\suc[F][0][C']$ with $F\in\mathcal{F}\cap V^{\bot_{1}}$
satisfies that $C'\in\mathcal{F}\cap V^{\bot_{1}}$. 
\item $\mathcal{T}=V^{\bot_{1}}$ (e.g. when $\mathcal{D}$ is exact). 
\end{enumerate}
\end{prop}

\begin{proof}
Observe that it is enough to prove that $V^{\bot_{1}}\subseteq V^{\bot_{>0}}$
by Lemma \ref{lemitapdT} together with (T1), (T2) and (T3). For this,
we will use the fact that $\mathcal{T}\subseteq V^{\bot_{>0}}$ (see
Lemma \ref{lem:lemitapd}). For $X\in V^{\bot_{1}}$, consider the
$\s$-conflation $\suc[T][X][F]$ given by the $s$-torsion pair $(\mathcal{T},\mathcal{F})$.
Note that $X\in V^{\bot_{>0}}$ if and only if $F\in V^{\bot_{>0}}$
(see Remark \ref{rem:conesorto}). Hence, it is enough to show that
$\E^{n}(V,F)=0$ for all $n>0$. The case $n=1$ is trivial since
$X\in V^{\bot_{1}}$ and $T\in V^{\bot_{>0}}$. For $n>1$, we will
need that condition (a), (b) or (c) holds true. 

Assume that condition (a) holds. That is, there is an $\s$-conflation
$\eta':\:\suc[F][T_{0}][C_{1}][][a]$ with $T_{0}\in\mathcal{T}-\{0\}$.
Hence, $C_{1}\in\mathcal{T}$ by Lemma \ref{lem:lemita2}. And thus,
we have that $F\in V^{\bot_{>0}}$ since $\mathcal{T}\subseteq V^{\bot_{>0}}$
and $V^{\bot_{>0}}$ is closed under cones. 

Assume that condition (b) holds. If there is an $\s$-conflation $\eta':\:\suc[F][T_{0}][C_{1}][][a]$
with $T_{0}\in\mathcal{T}-\{0\}$, we can proceed as before to prove
that $F\in V^{\bot_{>0}}$. If that is not the case, then there is
an $\s$-conflation $\eta_{0}:\:\suc[F][0][C_{1}]$ by (T3). Moreover,
we can consider a sequence of $\s$-conflations $\{\eta_{i}:\:\suc[C_{i}][T_{i}][C_{i+1}]\}_{i=0}^{\infty}$
with $T_{i}\in\mathcal{T}$ for all $i\geq0$, $T_{0}=0$, and $C_{0}=F$.
We consider the following cases. 

\textbf{Case 1}: $T_{i}=0$ for all $i\geq0$. Note that, by assumption,
we have that $C_{i}\in\mathcal{F}\cap V^{\bot_{1}}$ for all $i>0$.
One can prove for the $\s$-conflation $\eta_{i}:\:\suc[C_{i}][0][C_{i+1}]$
that $\mathbb{E}^{n+1}(V,C_{i})=0$ if $\mathbb{E}^{n}(V,C_{i})=0$
and $\mathbb{E}^{n}(V,C_{i+1})=0$. Hence, using this fact recursively,
we can conclude that $F\in V^{\bot_{>0}}$. 

\textbf{Case 2}: there is $N\geq1$ such that $T_{N}\neq0$ and $T_{i}=0$
for all $0\leq i<N$. Note that the $\s$-conflation $\eta_{N}:\:\suc[C_{N}][T_{N}][C_{N+1}]$
implies that $C_{N+1}\in\mathcal{T}\subseteq V^{\bot_{>0}}$ by Lemma
\ref{lem:lemita2}. Hence, proceeding recursively as in Case 1, we
can conclude that $F\in V^{\bot_{>0}}$. 

Lastly, if (c) holds true, then $V^{\bot_{1}}=\mathcal{T}\subseteq V^{\bot_{>0}}$.
\end{proof}
\begin{rem}
Let $\mathcal{D}$ be a extriangulated category with negative first
extension and $(\mathcal{T},\mathcal{F})\in\stors(\mathcal{D})$.
One can check that, if $\mathcal{T}$ satisfies that every $\s$-conflation
$\suc[D][T][D']$ with $T\in\mathcal{T}$ satisfies that $D'\in\mathcal{T}$,
then $(\mathcal{T},\mathcal{F})$ satisfies condition (a) from Proposition
\ref{pdT1}. Note that Proposition \ref{lemaA} provides a family
of non-abelian extriangulated categories where this property holds. 
\end{rem}

\begin{cor}
\label{cor:apptilt}Let $\mathcal{D}$ be a small extriangulated category
with negative first extension and $V\in\mathcal{D}$. If $\mathcal{D}$
has enough projectives and enough injectives, $V$ is extended tilting
and $\Gen(V)=V^{\bot_{1}}$, then $\Add(V)$ is a tilting class of
projective dimension $\leq1$ in the sense of \cite{ZZ}.
\end{cor}

\begin{proof}
Let $\mathcal{V}:=\Add(V)$. Before proceeding with the proof, we
observe the following facts. In \cite{ZZ}, the higher extension groups
are defined using syzygies and cosyzygies as
\begin{equation}
\mathbb{E}^{n}(X,Y):=\mathbb{E}(\Omega^{n-1}X,Y)\cong\mathbb{E}(X,\text{\ensuremath{\mho}}^{n-1}Y),\label{eq:syz}
\end{equation}
where $\Omega^{n-1}X$ is the $(n-1)$-th syzygy of $X$ and $\text{\ensuremath{\mho}}^{n-1}Y$
is the $(n-1)$-th cosyzygy of $Y$ (see \cite[Prop. 5.2]{LN} for
more details and unexplained notation). One can prove that such approach
is equivalent to ours (see \cite[Sec. 2.10]{BGLS}). Note that, by
using the isomorphism in (\ref{eq:syz}), we have that $\mathcal{V}\subseteq\Gen(V)\subseteq V^{\bot_{1}}$,
$V^{\bot_{1}}=\mathcal{V}^{\bot_{1}}$ , $V^{\bot_{>0}}=\mathcal{V}^{\bot_{>0}}$,
and $\pd(\mathcal{V})\leq1$ by \cite[Prop 3.1.(b)]{AMP}. Lastly,
for a class of objects $\mathcal{X}\subseteq\mathcal{D}$, denote
as $\Gen_{0}(\mathcal{\mathcal{X}})$ the class of objects $D\in\mathcal{D}$
admitting an $\s$-deflation 
\[
\suc[D_{0}][T_{0}][D]
\]
with $T_{0}\in\mathcal{V}$; and observe that $\Gen(V)\subseteq\Gen_{0}(\mathcal{V})$. 

It is clear that $\mathcal{V}$ is closed under direct summands. Hence,
to prove that $\mathcal{V}$ is a tilting class in the sense of \cite{ZZ},
it is enough to show that every object in $\mathcal{V}^{\bot_{>0}}$
admits a $\mathcal{V}$-precover and that $\Gen_{0}(\mathcal{V})=\mathcal{V}^{\bot_{>0}}$
by \cite[Thm. 1]{ZZ}. For this, observe that 
\[
\mathcal{V}^{\bot_{_{1}}}=V^{\bot_{1}}=V^{\bot_{>0}}=\mathcal{\mathcal{V}}^{\bot_{>0}}
\]
by Proposition \ref{pdT1} and Remark \ref{rem:conesorto}(b). Now,
since $\mathcal{V}^{\bot_{>0}}=V^{\bot_{1}}=\Gen(V)=\Pres(V)$, for
every $X\in\mathcal{V}^{\bot_{>0}}-\{0\}$ there is a deflation $\suc[X'][V_{0}][X][][f]$
with $V_{0}\cong\coprod_{\alpha}V$ and $X'\in\Gen(V)\subseteq V^{\bot_{1}}$.
This implies that $f$ is an $\Add(V)$-precover. Moreover, since
$\pd(\mathcal{V})\leq1$, it follows from \cite[Lem. 12]{ZZ} that
$\Gen_{0}(\mathcal{V}^{\bot_{>0}})=\mathcal{V}^{\bot_{>0}}$. And
thus, since $\mathcal{V}^{\bot_{>0}}=V^{\bot_{1}}=\Gen(V)$, we have
that 
\[
\mathcal{V}^{\bot_{>0}}=\Gen(V)\subseteq\Gen_{0}(\mathcal{V})\subseteq\Gen_{0}(\mathcal{V}^{\bot_{>0}})=\mathcal{V}^{\bot_{>0}}.
\]
Therefore, it follows from \cite[Thm. 1]{ZZ} that $\mathcal{V}$
is a tilting class of projective dimension $\leq1$. 
\end{proof}

\subsection{Extended tilting objects are $\mathbb{E}$-universal}

In this section we will show that extended tilting objects in extriangulated
categories are $\mathbb{E}$-universal. Note that the notion of an
$\mathbb{E}$-universal object given below is inspired from the notion
of an $\mathbf{Ext}$-universal object for abelian categories (see
\cite[Def. 5.6]{PSV2} or \cite{AP} for more details). 
\begin{defn}
Let $\D$ be an extriangulated category. An object $V$ in $\D$ is
\textbf{$\mathbf{\E}$-universal} if, for any $D\in\D$, there is
an $\E$-triangle in $\D$ of the form: 
\[
\varepsilon:\:\suc[D][A][\coprod_{\alpha}V]
\]
for some set $\alpha\neq\emptyset$, such that the canonical map $\text{Hom}_{\D}(V,\coprod_{\alpha}V)\to\E^{1}(V,D)$
is surjective. In such case, $\varepsilon$ is called a \textbf{universal
$\mathbf{\E}$-triangle} of $V$ by $D$. 
\end{defn}

We can adapt the proof of \cite[Lem. 6.6]{PSV2} to our context as
follows. 
\begin{prop}
\label{et-univ} Let $\D$ be an extriangulated category with negative
first extension. Then, any extended tilting object $V\in\D$ is $\E$-universal. 
\end{prop}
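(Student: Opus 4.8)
The plan is to mimic the abelian-category argument of [PSV2, Lem. 6.6], replacing modules by the $\mathbb{E}$-theoretic language available in an extriangulated category with negative first extension. Fix $D\in\D$. Since by (T3) the class $\Gen(V)$ is cogenerating, there is an $\s$-conflation $D\to S\to D'$ with $S\in\Gen(V)$. In turn, since $S\in\Gen(V)$, there is an $\s$-conflation $A_0\to\coprod_{\alpha}V\to S$ with $\alpha\neq\emptyset$; splicing these two via (ET4)$^{\mathrm{op}}$ (the dual of [NP, Def. 2.12 (ET4)], as used already in the proof of Proposition~\ref{pdT1}), I would obtain a diagram whose middle row is an $\s$-conflation $D\to A\to \coprod_{\alpha}V$, call it $\varepsilon$, together with a morphism $A\to S$ making the relevant square commute. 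This $\varepsilon$ is the candidate universal $\mathbb{E}$-triangle; it remains to prove that $\Mor_{\D}(V,\coprod_{\alpha}V)\to\mathbb{E}^{1}(V,D)$ is surjective.

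For that surjectivity, I would apply the long exact sequence of Proposition~\ref{suc-1-ex} to the conflation $\varepsilon: D\to A\to\coprod_\alpha V$ with $W=V$, giving
\[
\Hom[\D][V][\coprod_\alpha V]\xrightarrow{\varepsilon\cdot-}\mathbb{E}^{1}(V,D)\xrightarrow{\mathbb{E}^{1}(V,\,-)}\mathbb{E}^{1}(V,A).
\]
So it suffices to show $\mathbb{E}^{1}(V,A)=0$, i.e. $A\in V^{\bot_{1}}$. By (T2) we have $V^{\bot_{1}}=\Gen(V)$, so the task reduces to showing $A\in\Gen(V)$. Here I would use the first conflation $A_0\to\coprod_\alpha V\to S$: since $\Gen(V)=\Pres(V)$ by (T2), and $S\in\Gen(V)$, the object $S$ is in fact $V$-presented, which one can leverage, together with the conflation $D\to A\to\coprod_\alpha V$ and closure properties of $\Gen(V)$ (it is a torsion class by (T1), hence closed under cones and extensions by Proposition~\ref{lem:cerraduras stors}(a)), to place $A$ inside $\Gen(V)$. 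Concretely: $A$ sits in a conflation with outer terms $D$ and $\coprod_\alpha V$; pulling back along the map $A\to S$ and using that $\coprod_\alpha V$ surjects onto $S$ with $V$-generated kernel, one realizes $A$ as a cone of a map between $V$-generated objects, hence $A\in\Gen(V)$.

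The \textbf{main obstacle} is bookkeeping the octahedral/$(ET4)$ manipulations cleanly: one must produce the middle conflation $D\to A\to\coprod_\alpha V$ and simultaneously keep enough of the connecting morphisms to argue $A\in\Gen(V)$. In the abelian case this is a diagram chase with kernels and cokernels; in the extriangulated setting I must instead argue purely via the defining axioms and the long exact sequences of Proposition~\ref{suc-1-ex}, together with the torsion-pair closure properties of Proposition~\ref{lem:cerraduras stors}. A secondary point to handle with care is the nonemptiness of the index set $\alpha$: if $D=0$ one may take any singleton $\alpha$ with the split conflation $0\to V\to V$; if $D\neq 0$ then $S\neq 0$ lies in $\Gen(V)=\Pres(V)$, which forces a genuine (nonzero) epimorphism-like map $\coprod_\alpha V\to S$ with $\alpha\neq\emptyset$, so $\varepsilon$ indeed has the required shape. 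Once $A\in\Gen(V)=V^{\bot_1}$ is established, the surjectivity is immediate from the displayed exact sequence, completing the proof.
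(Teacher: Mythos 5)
Your endgame is the same as the paper's: produce an $\mathfrak{s}$-conflation $\varepsilon:\;D\to A\to\coprod_{\alpha}V$ whose middle term lies in $V^{\bot_{1}}=\operatorname{Gen}(V)$, and then read off surjectivity of $\mathrm{Hom}_{\mathcal{D}}(V,\coprod_{\alpha}V)\to\mathbb{E}^{1}(V,D)$ from the long exact sequence. The gap is in the construction of $\varepsilon$. Splicing $D\to S\to D'$ with a presentation $A_{0}\to\coprod_{\alpha}V\to S$ does \emph{not} yield a conflation of the shape $D\to A\to\coprod_{\alpha}V$: the object $A$ produced by $\mathrm{(ET4)^{op}}$ (the ``pullback'' of $D\to S$ along $\coprod_{\alpha}V\to S$) sits in conflations $A_{0}\to A\to D$ and $A\to\coprod_{\alpha}V\to D'$, so $D$ appears as a \emph{cone} of $A$ and $\coprod_{\alpha}V$ as a cone of... rather, $A$ is a cocone, not the middle term you want. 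Worse, this $A$ cannot lie in $\operatorname{Gen}(V)$ in general: $\operatorname{Gen}(V)$ is a torsion class, hence closed under cones by Proposition \ref{lem:cerraduras stors}(a), so $A\in\operatorname{Gen}(V)$ together with the conflation $A_{0}\to A\to D$ would force $D\in\operatorname{Gen}(V)$ for arbitrary $D$. So both the shape of $\varepsilon$ and the membership $A\in V^{\bot_{1}}$ fail for the object you construct.

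The missing idea is that one must present not $S$ (nor $D'$) but the \emph{torsion part of the cokernel} $D'$. Concretely, the paper first takes the conflation $T'\to D'\to F$ given by the $s$-torsion pair $(\operatorname{Gen}(V),V^{\bot_{0}})$ and uses \cite[Rk. 2.22]{NP} to replace $D\to T\to D'$ by a conflation $D\to Z\to T'$, where $Z$ also fits in $Z\to T\to F$; applying $\mathrm{Hom}_{\mathcal{D}}(V,-)$ to the latter and using $\mathrm{Hom}_{\mathcal{D}}(V,F)=0$ and $\mathbb{E}^{1}(V,T)=0$ shows $Z\in V^{\bot_{1}}=\operatorname{Gen}(V)$. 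Only then does one present $T'\in\operatorname{Pres}(V)$ by a conflation $T''\to\coprod_{\alpha}V\to T'$ and pull $D\to Z\to T'$ back along $\coprod_{\alpha}V\to T'$, obtaining $D\to D''\to\coprod_{\alpha}V$ with $D''$ an extension of $Z$ by $T''$, both in $\operatorname{Gen}(V)$; closure under extensions then gives $D''\in V^{\bot_{1}}$ and your final long-exact-sequence step applies verbatim. Without the detour through the torsion decomposition of $D'$ (which is where hypothesis (T1) and the vanishing $\mathrm{Hom}_{\mathcal{D}}(V,V^{\bot_{0}})=0$ actually enter), I do not see how to complete your argument.
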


\begin{proof}
Let $V\in\D$ be an extended tilting object and $D\in\D$. Since $\mathcal{T}:=\text{Gen}(V)$
is a cogenerating class in $\D$, there is an $\s$-conflation $\suc[D][T][D']$
in $\D$ with $T\in\T$. Now, from the $\s$-conflation (see Remark
\ref{adj-stor}) $\delta_{D'}:T'\to D'\to F$ associated to the $s$-torsion
pair $\mathbf{t}(V):=(\T,\F)=(\text{Gen}(V),V^{\bot_{0}})$ together
with \cite[Rk. 2.22]{NP}, we get the following commutative diagram
in $\D$ 
\[
\xymatrix{D\ar[r]\ar@{=}[d] & Z\ar[r]\ar[d] & T'\ar[d]\\
D\ar[r] & T\ar[r]\ar[d] & D'\ar[d]\\
\, & F\ar@{=}[r] & F,
}
\]
where $T'\in\T,$ $F\in\F$ and all the rows and columns are $\E$-triangles.
Applying the functor $\text{Hom}_{\D}(V,-)$ to the left column of
the above diagram, we obtain the exact sequence: 
\[
0=\text{Hom}_{\C}(V,F)\to\E^{1}(V,Z)\to\E^{1}(V,T)=0
\]
where the last equality hold, since $T\in\T\subseteq V^{\perp_{1}}$.
Thus, $Z\in V^{\perp_{1}}$. Now, let $T'\neq0.$ Then, from the equality
$\T=\text{Pres}(V),$ \cite[Prop. 3.15]{NP} together with the first
row in the above diagram, we deduce the following commutative diagram
in $\D$: 
\[
\xymatrix{\, & T''\ar@{=}[r]\ar[d] & T''\ar[d]\\
D\ar[r]\ar@{=}[d] & D''\ar[r]\ar[d] & \coprod_{\alpha}V\ar[d]\\
D\ar[r] & Z\ar[r] & T'
}
\]
for some set $\alpha\neq\emptyset$ and $T''\in\T.$ In case $T'=0,$
we can use the same diagram by replacing the $s$-conflation $T''\to\coprod_{\alpha}V\to T'$
with $V\to V\to0.$ In particular, from this diagram, we get that
$D''\in V^{\bot_{1}}$ since $V^{\bot_{1}}$ is closed under extensions
in $\D$. Now, applying the functor $\text{Hom}_{\D}(V,-)$ to the
upper row in the last diagram, we obtain the exact sequence: 
\[
\text{Hom}_{\D}(V,\coprod_{\alpha}V)\to\E^{1}(V,D)\to\E^{1}(V,D'')=0
\]
where the last equality is due to the fact that $D''\in V^{\bot_{1}}$.
Therefore, such $\E$-triangle is a universal $\E$-triangle of $V$
by $D$ as desired. 
\end{proof}

\subsection{Extended tilting objects in restricted hearts}

In this section we will show that an extended tilting object in a
restricted heart is nothing more than a quasi-tilting object in a
standard heart. Let us recall the notion of quasi-tilting object from
\cite[Def. 6.1]{PSV2}. 
\begin{defn}
\label{quasi-tilting} Let $\D$ be an extriangulated category with
negative first extension. We say that $V\in\D$ is a\textbf{ quasi-tilting
object} if the following statements hold true. 
\begin{itemize}
\item[(T1)] $\mathbf{t}(V):=(\text{Gen}(V),V^{\bot_{0}})\in\stors\,\D.$ 
\item[(QT)] $\Gen(V)=\text{Pres}(V)=\overline{\Gen}_{\mathcal{D}}(V)\cap V^{\bot_{1}}$,
where $\overline{\Gen}_{\mathcal{D}}(V)$ is the class of objects
$X\in\mathcal{D}$ that admit an $\s$-conflation $\suc[X][V_{0}][X']$
in $\mathcal{D}$ with $V_{0}\in\Gen(V)$. 
\end{itemize}
\end{defn}

\begin{lem}
\label{lem:torsion quasitilt}Let $\mathcal{D}$ be an extriangulated
category with negative first extension and $V\in\mathcal{D}$. If
$(\mathcal{T},\mathcal{F})$ is an $s$-torsion pair with $\mathcal{T}=\Gen(V)$,
then $\mathcal{F}=V^{\bot_{0}}$. 
\end{lem}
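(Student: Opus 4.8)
The plan is to reduce the statement to the identity $\mathcal{F}=\mathcal{T}^{\bot_{0}}$, which is part of Proposition \ref{prop:clases ort en stors}, and then show $\mathcal{T}^{\bot_{0}}=V^{\bot_{0}}$.

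The inclusion $\mathcal{F}\subseteq V^{\bot_{0}}$ is immediate once we note that $V\in\mathcal{T}$. Indeed $\mathcal{T}=\Gen(V)$ and $V$ is $V$-generated: for $V\neq0$ this is witnessed by the split $\mathfrak{s}$-conflation $0\to V\xrightarrow{1_{V}}V$ (the case $V=0$ being trivial, since then $\mathcal{T}=\{0\}$ and $V^{\bot_{0}}=\mathcal{D}=\mathcal{F}$). Hence any $D\in\mathcal{F}=\mathcal{T}^{\bot_{0}}$ satisfies $\mathcal{D}(V,D)=0$, i.e. $D\in V^{\bot_{0}}$.

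For the reverse inclusion I would take $D\in V^{\bot_{0}}$ and use the torsion decomposition $\delta_{D}\colon T_{D}\xrightarrow{a}D\xrightarrow{b}F_{D}$ afforded by the $s$-torsion pair $(\mathcal{T},\mathcal{F})$ (Remark \ref{adj-stor}), with $T_{D}\in\mathcal{T}$ and $F_{D}\in\mathcal{F}$. The goal is to prove $T_{D}=0$: once this is known, $\delta_{D}$ lies in $\mathbb{E}(F_{D},0)=0$, so the conflation $0\to D\xrightarrow{b}F_{D}$ splits and $D\cong F_{D}\in\mathcal{F}$. To get $T_{D}=0$, apply $\mathcal{D}(V,-)$ to $\delta_{D}$; the long exact sequence coming from the negative first extension of $\mathcal{D}$ contains the exact piece
\[
\mathbb{E}^{-1}(V,F_{D})\xrightarrow{\delta_{D}^{-1}}\mathcal{D}(V,T_{D})\xrightarrow{\mathcal{D}(V,a)}\mathcal{D}(V,D).
\]
Here $\mathcal{D}(V,D)=0$ because $D\in V^{\bot_{0}}$, and $\mathbb{E}^{-1}(V,F_{D})=0$ because $V\in\mathcal{T}$, $F_{D}\in\mathcal{F}$ and $(\mathcal{T},\mathcal{F})$ is an $s$-torsion pair (Definition \ref{def-neg-ext}); exactness then forces $\mathcal{D}(V,T_{D})=0$.

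The step I expect to be the crux is concluding $T_{D}=0$ from $T_{D}\in\Gen(V)$ together with $\mathcal{D}(V,T_{D})=0$. If $T_{D}\neq0$ then, by definition of $\Gen(V)$, there is an $\mathfrak{s}$-conflation $A\to\coprod_{\alpha}V\xrightarrow{f}T_{D}$ with $f\neq0$. Precomposing $f$ with each canonical inclusion $\iota_{\beta}\colon V\to\coprod_{\alpha}V$ gives an element of $\mathcal{D}(V,T_{D})=0$, so $f\iota_{\beta}=0$ for every $\beta$, and since the $\iota_{\beta}$ are jointly epic this yields $f=0$, a contradiction. Hence $T_{D}=0$, so $D\in\mathcal{F}$ and $V^{\bot_{0}}\subseteq\mathcal{F}$, finishing the proof. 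Note that the $s$-torsion (rather than merely torsion) hypothesis enters precisely once, in the vanishing $\mathbb{E}^{-1}(V,F_{D})=0$ above; everything else would go through for an arbitrary torsion pair.
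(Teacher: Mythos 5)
Your proposal is correct and follows essentially the same route as the paper: both reduce to showing $T_{D}=0$ for the torsion part of an object of $V^{\bot_{0}}$, using the negative-first-extension exact sequence to get $\mathbb{E}^{-1}(-,F_{D})=0\Rightarrow\mathcal{D}(-,T_{D})=0$ and then the definition of $\Gen(V)$ to force the presenting map to vanish. The only cosmetic difference is that the paper applies $\mathrm{Hom}_{\mathcal{D}}(\coprod_{\alpha}V,-)$ directly, whereas you apply $\mathrm{Hom}_{\mathcal{D}}(V,-)$ and then invoke the universal property of the coproduct; this is the same argument.
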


\begin{proof}
Since $V\in\mathcal{T}$, we have that $\mathcal{F}\subseteq V^{\bot_{0}}$.
For the opposite contention, consider $X\in V^{\bot_{0}}$ and the
canonical $\s$-conflation $\eta:\:\suc[T][X][F][u]$ with $T\in\mathcal{T}$
and $F\in\mathcal{F}$. Observe that, since $\mathcal{T}=\Gen(V)$,
$T=0$ or there is an $\s$-conflation $\suc[A][V_{0}][T][][f]$ with
$f\neq0$ and $V_{0}=\coprod_{\alpha}V$. But, the exact sequence
\[
\mathbb{E}^{-1}(V_{0},F)\overset{\eta^{-1}}{\rightarrow}\Hom[\mathcal{D}][V_{0}][T]\rightarrow\Hom[\mathcal{D}][V_{0}][X]
\]
implies that $f\in\Ker[{(\Hom[\mathcal{D}][V_{0}][u])=\im[\eta^{-1}]=0}]$
because $V_{0}\in\mathcal{T}$ and $F\in\mathcal{F}$. Therefore,
$f=0$ and thus $X\in\mathcal{F}$. 
\end{proof}
\begin{lem}
\label{lem:key-1-1} Let $\D$ be a triangulated category, $\t_{1}\leq\t_{2}\leq\t_{3}$
in $\stors\,\mathcal{D},$ $V\in\mathcal{H}_{[\t_{1},\t_{2}]}$ and
$I$ be a set. Then, the coproduct $\coprod_{I}^{\mathcal{H}_{[\t_{1},\t_{2}]}}V$
of $I$-copies of $V$ in $\mathcal{H}_{[\t_{1},\t_{2}]}$ exists
if, and only if, the coproduct $\coprod_{I}^{\mathcal{H}_{[\t_{1},\t_{3}]}}V$
of $I$-copies of $V$ in $\mathcal{H}_{[\t_{1},\t_{3}]}$ exists.
In such case, we have that both coproducts are isomorphic. 
\end{lem}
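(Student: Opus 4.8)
The plan is to exploit the $s$-torsion pair $(\mathcal{H}_{[\t_{1},\t_{2}]},\mathcal{H}_{[\t_{2},\t_{3}]})$ in $\mathcal{H}_{[\t_{1},\t_{3}]}$ provided by Lemma \ref{lem:stors de extensiones de corazones}(c), together with the associated torsion and torsion-free functors from Remark \ref{adj-stor}. First I would recall that, by Remark \ref{adj-stor}, the inclusion $\iota:\mathcal{H}_{[\t_{1},\t_{2}]}\hookrightarrow\mathcal{H}_{[\t_{1},\t_{3}]}$ has a left adjoint, namely the torsion functor $\mathbf{c}:\mathcal{H}_{[\t_{1},\t_{3}]}\to\mathcal{H}_{[\t_{1},\t_{2}]}$ attached to this $s$-torsion pair. (Here one uses that $\mathcal{H}_{[\t_{1},\t_{2}]}$ is the torsion \emph{class} and that torsion-free objects are right orthogonal, so the torsion functor is left adjoint to the inclusion of the torsion class.) A left adjoint preserves all colimits that exist; in particular, if $\coprod_{I}^{\mathcal{H}_{[\t_{1},\t_{3}]}}V$ exists in $\mathcal{H}_{[\t_{1},\t_{3}]}$, then its image $\mathbf{c}\!\left(\coprod_{I}^{\mathcal{H}_{[\t_{1},\t_{3}]}}V\right)$ is a coproduct of $I$ copies of $\mathbf{c}(V)$ in $\mathcal{H}_{[\t_{1},\t_{2}]}$. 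Since $V\in\mathcal{H}_{[\t_{1},\t_{2}]}$ and the torsion functor restricts to the identity on the torsion class, $\mathbf{c}(\iota V)\cong V$, so this gives $\coprod_{I}^{\mathcal{H}_{[\t_{1},\t_{2}]}}V$.

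For the converse direction, suppose $P:=\coprod_{I}^{\mathcal{H}_{[\t_{1},\t_{2}]}}V$ exists in $\mathcal{H}_{[\t_{1},\t_{2}]}$; I claim $\iota P$ is a coproduct of $I$ copies of $V$ in $\mathcal{H}_{[\t_{1},\t_{3}]}$. Let $W\in\mathcal{H}_{[\t_{1},\t_{3}]}$ be arbitrary and apply the $s$-torsion pair to $W$, getting an $\mathfrak{s}$-conflation $T_{W}\to W\to F_{W}$ with $T_{W}\in\mathcal{H}_{[\t_{1},\t_{2}]}$ and $F_{W}\in\mathcal{H}_{[\t_{2},\t_{3}]}$. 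Since $V\in\mathcal{H}_{[\t_{1},\t_{2}]}$ and $\mathcal{D}(\mathcal{H}_{[\t_{1},\t_{2}]},\mathcal{H}_{[\t_{2},\t_{3}]})=0$ while also $\mathbb{E}^{-1}(\mathcal{H}_{[\t_{1},\t_{2}]},\mathcal{H}_{[\t_{2},\t_{3}]})=0$ (by the $s$-torsion axiom), the long exact sequence of Definition \ref{def-neg-ext} applied to this conflation shows that $\mathcal{D}(V,T_{W})\xrightarrow{\sim}\mathcal{D}(V,W)$ is an isomorphism, and likewise $\mathcal{D}(P,T_{W})\xrightarrow{\sim}\mathcal{D}(P,W)$. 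Therefore
\[
\mathcal{D}_{\mathcal{H}_{[\t_{1},\t_{3}]}}(\iota P,W)\cong\mathcal{D}_{\mathcal{H}_{[\t_{1},\t_{2}]}}(P,T_{W})\cong\prod_{I}\mathcal{D}_{\mathcal{H}_{[\t_{1},\t_{2}]}}(V,T_{W})\cong\prod_{I}\mathcal{D}_{\mathcal{H}_{[\t_{1},\t_{3}]}}(V,W),
\]
and one checks this isomorphism is induced by the canonical maps $V\to P\to\iota P$, which identifies $\iota P$ with $\coprod_{I}^{\mathcal{H}_{[\t_{1},\t_{3}]}}V$. Combining the two directions, whenever either coproduct exists both do and they are (canonically) isomorphic, which also yields the last sentence of the statement.

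The main obstacle I anticipate is the bookkeeping around adjointness in the extriangulated setting: one must be careful that the torsion functor of Remark \ref{adj-stor} really is a genuine left adjoint to the inclusion (Remark \ref{adj-stor} phrases this as the torsion-free functor being left adjoint; here I instead need the torsion functor to be left adjoint to $\iota$, which is the dual statement, valid because $\mathcal{H}_{[\t_{1},\t_{2}]}$ sits as a \emph{torsion} class), and that "preserves colimits" applies verbatim. A safer route, which I would actually write out, avoids invoking colimit-preservation as a black box and instead argues directly via the natural isomorphism $\mathcal{D}(V,-)\cong\mathcal{D}(V,\mathbf{c}(-))$ on the hom-sets as displayed above; this makes both implications completely elementary once the vanishing of $\mathcal{D}$ and $\mathbb{E}^{-1}$ between the two hearts is in hand, and that vanishing is immediate from the $s$-torsion pair in Lemma \ref{lem:stors de extensiones de corazones}(c). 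The compatibility check that the abstract hom-set isomorphism is the canonical one is the only slightly fiddly point, but it is routine.
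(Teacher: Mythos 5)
Your second implication (existence of $\coprod_{I}^{\mathcal{H}_{[\t_{1},\t_{2}]}}V$ implies existence of $\coprod_{I}^{\mathcal{H}_{[\t_{1},\t_{3}]}}V$, via the hom-set computation with the canonical conflation $T_{W}\to W\to F_{W}$) is correct and is essentially the paper's argument for that direction. The gap is in the other implication. By Remark \ref{adj-stor}, the torsion functor $\mathbf{c}$ attached to the $s$-torsion pair $(\mathcal{H}_{[\t_{1},\t_{2}]},\mathcal{H}_{[\t_{2},\t_{3}]})$ is the \emph{right} adjoint of the inclusion $\iota$, not the left adjoint: the adjunction isomorphism is $\mathcal{D}(\iota T,X)\cong\mathcal{D}(T,\mathbf{c}(X))$ for $T$ torsion, i.e.\ exactly the isomorphism $\mathcal{D}(V,-)\cong\mathcal{D}(V,\mathbf{c}(-))$ you display. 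Torsion classes are coreflective and torsion-free classes are reflective; your parenthetical claim that ``the torsion functor is left adjoint to the inclusion of the torsion class'' has the adjunction backwards. Consequently $\mathbf{c}$ is a right adjoint and there is no reason for it to preserve coproducts, so the assertion that $\mathbf{c}\bigl(\coprod_{I}^{\mathcal{H}_{[\t_{1},\t_{3}]}}V\bigr)$ is a coproduct of $I$ copies of $V$ in $\mathcal{H}_{[\t_{1},\t_{2}]}$ is unsupported: the adjunction controls maps from torsion objects \emph{into} $\mathbf{c}(X)$, whereas the universal property of a coproduct concerns maps \emph{out} of it. Your ``safer route'' does not repair this, since $\mathcal{D}(V,-)\cong\mathcal{D}(V,\mathbf{c}(-))$ only yields that $\iota$ preserves coproducts, which is again the other direction.

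The missing idea is to show that the coproduct $Q:=\coprod_{I}^{\mathcal{H}_{[\t_{1},\t_{3}]}}V$, when it exists, already \emph{lies} in $\mathcal{H}_{[\t_{1},\t_{2}]}$, after which it is trivially a coproduct there as well. This is what the paper does: take the canonical conflation $T\to Q\xrightarrow{f}F$ with $T\in\mathcal{H}_{[\t_{1},\t_{2}]}$ and $F\in\mathcal{H}_{[\t_{2},\t_{3}]}$; each composite $f\circ\mu_{i}$ with a canonical inclusion $\mu_{i}:V\to Q$ vanishes because $\mathcal{D}(V,F)=0$, so $f=0$ by the universal property of $Q$; hence the corresponding triangle in $\D$ splits, giving $T\simeq\Sigma^{-1}F\amalg Q$, and therefore $Q\in\mathcal{H}_{[\t_{1},\t_{2}]}$ since this class is closed under direct summands. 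Note that this step genuinely uses the triangulated structure of $\D$ (a triangle whose third map is zero splits), which your argument never invokes.
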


\begin{proof}
By Lemma \ref{lem:stors de extensiones de corazones} (c), we have
that $(\mathcal{H}_{[\t_{1},\t_{2}]},\mathcal{H}_{[\t_{2},\t_{3}]})\in\stors\,\mathcal{H}_{[\t_{1},\t_{3}]}$.
Moreover, by Lemma \ref{lem:extensions of hearts} and Remark \ref{adj-stor},
we get that the inclusion functor $\iota:\mathcal{H}_{[\t_{1},\t_{2}]}\hookrightarrow\mathcal{H}_{[\t_{1},\t_{3}]}$
is a left adjoint of the functor $\t_{2}{|_{\mathcal{\mathcal{H}}_{[\t_{1},\t_{3}]}}}:\mathcal{H}_{[\t_{1},\t_{3}]}\to\mathcal{H}_{[\t_{1},\t_{2}]}$.
Using such adjoint, we get that $\coprod_{I}^{\mathcal{H}_{[\t_{1},\t_{2}]}}V$
is a coproduct of $I$-copies of $V$ in $\mathcal{H}_{[\t_{1},\t_{3}]}$.
On the other hand, suppose that there exists $\coprod_{I}^{\mathcal{H}_{[\t_{1},\t_{3}]}}V$,
the coproduct of $I$-copies of $V$ in $\mathcal{H}_{[\t_{1},\t_{3}]}$.
Using once again the Lemma \ref{lem:extensions of hearts}, we have
an $\s$-conflation $\eta:\:\suc[T][\coprod_{I}^{\mathcal{H}_{[\t_{1},\t_{3}]}}V][F][][f]$,
where $T\in\mathcal{H}_{[\t_{1},\t_{2}]}$ and $F\in\mathcal{H}_{[\t_{2},\t_{3}]}$.
Note that $f\circ\mu_{i}=0$ for all $i\in I$, where $\mu_{i}:V\hookrightarrow\coprod_{I}^{\mathcal{H}_{[\t_{1},\t_{3}]}}V$
is the canonical inclusion. Hence, by the universal property of coproducts,
we have that $f=0$. Therefore, we have the distinguished triangle
$\Sigma^{-1}F\to T\to\coprod_{I}^{\mathcal{H}_{[\t_{1},\t_{3}]}}V\xrightarrow{f}F,$
with $f=0,$ in the triangulated category $\D,$ and thus $T\simeq\Sigma^{-1}F\amalg\coprod_{I}^{\mathcal{H}_{[\t_{1},\t_{3}]}}V.$
Hence $\coprod_{I}^{\mathcal{H}_{[\t_{1},\t_{3}]}}V\in\mathcal{H}_{[\t_{1},\t_{2}]},$
and thus, the existence of the coproduct $\coprod_{I}^{\mathcal{H}_{[\t_{1},\t_{2}]}}V$
together with the isomorphism $\coprod_{I}^{\mathcal{H}_{[\t_{1},\t_{2}]}}V\simeq\coprod_{I}^{\mathcal{H}_{[\t_{1},\t_{3}]}}V$
is clear. 
\end{proof}
\begin{thm}
\label{thm:restrict tilt} Let $\D$ be a triangulated category and
$\t_{1}\leq\t_{2}$ in $\stors\,\mathcal{D}$ be such that $\t_{2}\leq\Sigma^{-1}\t_{1}$.
If $V\in\mathcal{H}_{[\t_{1},\t_{2}]}$ satisfies that $\overline{\Gen}_{\mathcal{H}_{[\t_{1},\Sigma^{-1}\t_{1}]}}(V)\subseteq\mathcal{H}_{[\t_{1},\t_{2}]}$,
then the following statements are equivalent. 
\begin{enumerate}
\item $V$ is an extended tilting object in $\Hcal_{[\t_{1},\t_{2}]}.$ 
\item $V$ is a quasi-tilting object in $\mathcal{H}_{[\t_{1},\Sigma^{-1}\t_{1}]}$
and $\overline{\Gen}_{\mathcal{H}_{[\t_{1},\Sigma^{-1}\t_{1}]}}(V)=\mathcal{H}_{[\t_{1},\t_{2}]}$. 
\end{enumerate}
Moreover, if one of the above statements holds true, then 
\[
V_{\mathcal{H}_{[\t_{1},\Sigma^{-1}\t_{1}]}}^{\bot_{0}}=V_{\mathcal{H}_{[\t_{1},\t_{2}]}}^{\bot_{0}}\star\mathcal{H}_{[\t_{2},\Sigma^{-1}\t_{1}]}.
\]
\end{thm}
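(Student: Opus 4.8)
The plan is to reduce the statement to the abelian setting of $\mathcal{H}_{[\t_1,\Sigma^{-1}\t_1]}$, where the relationship between $\Gen(V)$, $V^{\bot_0}$ and $\Pres(V)$ is well understood, and then transport the orthogonality computations back to $\mathcal{H}_{[\t_1,\t_2]}$ and $\mathcal{H}_{[\t_1,\Sigma^{-1}\t_1]}$ via the $s$-torsion pair machinery. Throughout I write $\A:=\mathcal{H}_{[\t_1,\Sigma^{-1}\t_1]}$ (an abelian category, by the Example following Remark \ref{rem:s-torsion-implies-t-struc}), $\Hcal:=\mathcal{H}_{[\t_1,\t_2]}$, and I set $\t_3:=\Sigma^{-1}\t_1$ so that $\t_1\leq\t_2\leq\t_3$ and, by Lemma \ref{lem:stors de extensiones de corazones}(c), $(\Hcal,\mathcal{H}_{[\t_2,\t_3]})\in\stors\,\A$.

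\textbf{Step 1: the key reduction for $\Gen$ and coproducts.} First I would record that, under the hypothesis $\overline{\Gen}_{\A}(V)\subseteq\Hcal$, the classes $\Gen_{\Hcal}(V)$ and $\Gen_{\A}(V)$ coincide and similarly for $\Pres$. This rests on Lemma \ref{lem:key-1-1} (coproducts of copies of $V$ agree in $\Hcal$ and in $\A$) together with Lemma \ref{lemaA}(a): given an $\s$-conflation $\suc[A][\coprod_\alpha V][X]$ in $\A$ with $\coprod_\alpha V\in\Hcal$, normality of $[\t_2,\t_3]$ forces $X\in\Hcal$, so the $\s$-conflation already lives in $\Hcal$; conversely any $V$-generating $\s$-conflation in $\Hcal$ is one in $\A$ by Lemma \ref{lem:extensions of hearts}. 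The same argument, using $\overline{\Gen}_{\A}(V)\subseteq\Hcal$ to control the middle-term-over-$V$ step, identifies $\Pres_{\Hcal}(V)=\Pres_{\A}(V)$. I would also note $V^{\bot_1}$ computed in $\Hcal$ versus in $\A$: since $\mathbb{E}^1_{\Hcal}=\mathbb{E}^1_{\A}$ restricted (both come from $\D(-,\Sigma-)$ and $\Hcal\star$-closure), $V^{\bot_1}_{\Hcal}=V^{\bot_1}_{\A}\cap\Hcal$.

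\textbf{Step 2: (a) $\Rightarrow$ (b).} Assume $V$ is extended tilting in $\Hcal$. By Step 1, $\Gen_{\A}(V)=\Gen_{\Hcal}(V)=\Pres_{\Hcal}(V)=\Pres_{\A}(V)$, and this class equals $V^{\bot_1}_{\Hcal}$. To get the quasi-tilting identity $\Gen_{\A}(V)=\overline{\Gen}_{\A}(V)\cap V^{\bot_1}_{\A}$, the inclusion $\subseteq$ is formal (using (T3) in $\Hcal$ to produce the required $\s$-conflation $\suc[X][V_0][X']$ with $V_0\in\Gen(V)$, which gives $X\in\overline{\Gen}_{\A}(V)$, and $\Gen_{\Hcal}(V)=V^{\bot_1}_{\Hcal}\subseteq V^{\bot_1}_{\A}$). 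For $\supseteq$: take $X\in\overline{\Gen}_{\A}(V)\cap V^{\bot_1}_{\A}$; by hypothesis $X\in\Hcal$, hence $X\in V^{\bot_1}_{\Hcal}=\Gen_{\Hcal}(V)=\Gen_{\A}(V)$. Finally, (T1) for $V$ in $\A$: one has the candidate $s$-torsion pair $(\Gen_{\A}(V),V^{\bot_0}_{\A})$; using Lemma \ref{lem:torsion quasitilt} it suffices to produce $\textit{some}$ $s$-torsion pair in $\A$ whose torsion class is $\Gen_{\A}(V)=\Gen_{\Hcal}(V)$. This is exactly Lemma \ref{lem:stors de extensiones de corazones}(a) applied to the $s$-torsion pair $\t(V)=(\Gen_{\Hcal}(V),V^{\bot_0}_{\Hcal})$ in $\Hcal$: it extends to $(\Gen_{\Hcal}(V),\, V^{\bot_0}_{\Hcal}\star\mathcal{H}_{[\t_2,\t_3]})\in\stors\,\A$. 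For the equality $\overline{\Gen}_{\A}(V)=\Hcal$: $\subseteq$ is the hypothesis, and $\supseteq$ holds because every object of $\Hcal$ is the first term of its canonical $s$-conflation with respect to the torsion pair just constructed, whose middle term lies in $\Gen_{\A}(V)$ (wait—one needs the middle term in $\Gen(V)$; instead use that (T3) in $\Hcal$ gives for any $X\in\Hcal$ an $\s$-conflation $\suc[X][G][X']$ with $G\in\Gen_{\Hcal}(V)$, so $X\in\overline{\Gen}_{\A}(V)$).

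\textbf{Step 3: (b) $\Rightarrow$ (a) and the final formula.} Conversely, assume (b). By Step 1 the classes $\Gen$, $\Pres$ of $V$ agree in $\A$ and in $\Hcal$, and $\Gen_{\Hcal}(V)=\Gen_{\A}(V)=\overline{\Gen}_{\A}(V)\cap V^{\bot_1}_{\A}$. Since $\overline{\Gen}_{\A}(V)=\Hcal$, the right side is $\Hcal\cap V^{\bot_1}_{\A}=V^{\bot_1}_{\Hcal}$, giving (T2) in $\Hcal$. Property (T1) in $\Hcal$, i.e. $(\Gen_{\Hcal}(V),V^{\bot_0}_{\Hcal})\in\stors\,\Hcal$: the quasi-tilting hypothesis gives $(\Gen_{\A}(V),V^{\bot_0}_{\A})\in\stors\,\A$ with $\Gen_{\A}(V)\subseteq\Hcal=\mathcal{H}_{[\t_1,\t_2]}$; now Lemma \ref{lem:stors de extensiones de corazones}(d) applies with $\t_3=\Sigma^{-1}\t_1$, yielding that $\F':=V^{\bot_0}_{\A}\cap\Hcal\in\stors\,\Hcal$ paired with $\Gen_{\A}(V)$, and $V^{\bot_0}_{\A}=\F'\star\mathcal{H}_{[\t_2,\t_3]}$. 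By Lemma \ref{lem:torsion quasitilt} in $\Hcal$, $\F'=V^{\bot_0}_{\Hcal}$, which is precisely (T1) in $\Hcal$ and, simultaneously, the displayed identity $V^{\bot_0}_{\A}=V^{\bot_0}_{\Hcal}\star\mathcal{H}_{[\t_2,\Sigma^{-1}\t_1]}$. Property (T3) in $\Hcal$, i.e. $\Gen_{\Hcal}(V)$ cogenerates $\Hcal$: given $X\in\Hcal=\overline{\Gen}_{\A}(V)$ there is an $\s$-conflation $\suc[X][V_0][X']$ in $\A$ with $V_0\in\Gen_{\A}(V)\subseteq\Hcal$; by Lemma \ref{lemaA}(b) (normality of $[\t_1,\t_2]$, with $V_0\in\Hcal\subseteq\mathcal{H}_{[\t_1,\t_2]}$ — here one checks the relevant interval) the term $X'$ lands in $\Hcal$, so the $\s$-conflation is one in $\Hcal$ and exhibits cogeneration. (For the final formula one also observes it has already been proved inside Step 3, being the conclusion of Lemma \ref{lem:stors de extensiones de corazones}(d) with $\F'=V^{\bot_0}_{\Hcal}$; since the hypotheses of (a) and (b) are equivalent, it holds whenever either does.)

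\textbf{Main obstacle.} The delicate point is Step 1 — the bookkeeping that $\Gen$ and $\Pres$ of $V$ are genuinely \emph{insensitive} to whether they are computed in $\Hcal$ or in the larger abelian $\A$. This requires combining three inputs carefully: Lemma \ref{lem:key-1-1} to match coproducts, Lemma \ref{lemaA}(a) (resp. (b)) to force cokernel (resp. kernel) terms of $V$-presentations back into $\Hcal$, and the standing hypothesis $\overline{\Gen}_{\A}(V)\subseteq\Hcal$ to keep the \emph{middle} term of the second presenting $\s$-conflation inside $\Hcal$. Once this closure is in place, the rest is a fairly mechanical translation between (T1)--(T3) and (QT) using Lemmas \ref{lem:stors de extensiones de corazones}, \ref{lem:torsion quasitilt}, and the $\star$-decompositions of Remark \ref{rem:cor}; the final displayed equality falls out for free as the $\star$-decomposition of $V^{\bot_0}_{\A}$ provided by Lemma \ref{lem:stors de extensiones de corazones}(d).
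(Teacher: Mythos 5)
Your proposal is correct and follows the paper's proof closely in overall structure --- the same opening identification of $\Gen$ and $\Pres$ across the two hearts via Lemma \ref{lem:key-1-1} and Proposition \ref{lemaA}, and the same use of Lemma \ref{lem:stors de extensiones de corazones}(a),(d) together with Lemma \ref{lem:torsion quasitilt} to transfer (T1) and to extract the $\star$-decomposition of $V^{\bot_{0}}$ --- but it diverges genuinely, and advantageously, at one point. For the hard inclusion $\overline{\Gen}_{\mathcal{H}_{[\t_{1},\Sigma^{-1}\t_{1}]}}(V)\cap V^{\bot_{1}}\subseteq\Gen(V)$ in (a)$\Rightarrow$(b), the paper runs a two-stage Octahedral-Axiom argument (decomposing $X'$ by the torsion pair, showing $M\cong G$, lifting along a $V$-presentation of $T$, and splitting the resulting conflation). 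You instead observe that the standing hypothesis $\overline{\Gen}_{\mathcal{H}_{[\t_{1},\Sigma^{-1}\t_{1}]}}(V)\subseteq\mathcal{H}_{[\t_{1},\t_{2}]}$ already places $X$ in $\mathcal{H}_{[\t_{1},\t_{2}]}$, where $\mathbb{E}^{1}(V,X)$ computed in either heart is the same group $\mathcal{D}(V,\Sigma X)$, so (T2) finishes in one line. This is legitimate --- the paper itself invokes the identity $V_{\mathcal{H}_{[\t_{1},\Sigma^{-1}\t_{1}]}}^{\bot_{1}}\cap\mathcal{H}_{[\t_{1},\t_{2}]}=V_{\mathcal{H}_{[\t_{1},\t_{2}]}}^{\bot_{1}}$ in the converse direction --- and it makes the published octahedral argument superfluous under the stated hypotheses. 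Two small caveats: in your Step 3 the cogeneration of $\Gen(V)$ should cite Proposition \ref{lemaA}(a) (normality of $[\t_{2},\Sigma^{-1}\t_{1}]$, middle term in $\mathcal{H}_{[\t_{1},\t_{2}]}$), not part (b); and in Step 1 your justification that $\Gen_{\mathcal{H}_{[\t_{1},\Sigma^{-1}\t_{1}]}}(V)\subseteq\Gen_{\mathcal{H}_{[\t_{1},\t_{2}]}}(V)$ controls the middle and end terms of the presenting conflation but says nothing about its first term, which a priori lies only in $\mathcal{H}_{[\t_{1},\Sigma^{-1}\t_{1}]}$; the paper is equally terse on this point, so this is a shared ellipsis rather than a defect of your argument relative to the published one.
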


\begin{proof}
Let $\t_{1}=(\mathcal{T}_{1},\mathcal{F}_{1})$ and $\t_{2}=(\mathcal{T}_{2},\mathcal{F}_{2})$.
Observe that $\mathcal{T}:=\Gen_{\mathcal{H}_{[\t_{1},\t_{2}]}}(V)=\Gen_{\mathcal{H}_{[\t_{1},\Sigma^{-1}\t_{1}]}}(V)$
and $\Pres_{\mathcal{H}_{[\t_{1},\t_{2}]}}(V)=\Pres_{\mathcal{H}_{[\t_{1},\Sigma^{-1}\t_{1}]}}(V)$
by Lemma \ref{lem:key-1-1} and Proposition \ref{lemaA}. For simplicity,
we denote $\overline{\Gen}(V):=\overline{\Gen}_{\mathcal{H}_{[\t_{1},\Sigma^{-1}\t_{1}]}}(V)$.

$(b)\Rightarrow(a)$ Since $V$ is quasi-tilting in $\mathcal{H}_{[\t_{1},\Sigma^{-1}\t_{1}]}$,
we have the $s$-torsion pair $(\mathcal{T},V_{\mathcal{H}_{[\t_{1},\Sigma^{-1}\t_{1}]}}^{\bot_{0}})\in\stors\,\mathcal{H}_{[\t_{1},\Sigma^{-1}\t_{1}]}$.
Hence, by Lemma \ref{lem:stors de extensiones de corazones}(d) and
Lemma \ref{lem:torsion quasitilt}, it follows that $(\mathcal{T},V_{\mathcal{H}_{[\t_{1},\t_{2}]}}^{\bot_{0}})\in\stors\,\Hcal_{[\t_{1},\t_{2}]}$
and $V_{\mathcal{H}_{[\t_{1},\Sigma^{-1}\t_{1}]}}^{\bot_{0}}=V_{\mathcal{H}_{[\t_{1},\t_{2}]}}^{\bot_{0}}\star\mathcal{H}_{[\t_{2},\Sigma^{-1}\t_{1}]}$.
We have proved that $V$ satisfies (T1) in $\mathcal{H}_{[\t_{1},\t_{2}]}$.
To prove (T2), it is enough to show that $\mathcal{T}=V_{\mathcal{H}_{[\t_{1},\t_{2}]}}^{\bot_{1}}$.
For this, observe that 
\[
\mathcal{T}=V_{\mathcal{H}_{[\t_{1},\Sigma^{-1}\t_{1}]}}^{\bot_{1}}\cap\overline{\Gen}(V)=V_{\mathcal{H}_{[\t_{1},\Sigma^{-1}\t_{1}]}}^{\bot_{1}}\cap\mathcal{H}_{[\t_{1},\t_{2}]}=V_{\mathcal{H}_{[\t_{1},\t_{2}]}}^{\bot_{1}}.
\]
Lastly, to prove (T3), we observe that $\Gen(V)$ is a cogenerating
class in $\mathcal{H}_{[\t_{1},\t_{2}]}$ because $\overline{\Gen}(V)=\mathcal{H}_{[\t_{1},\t_{2}]}$
in $\mathcal{H}_{[\t_{1},\Sigma^{-1}\t_{1}]}$ and $\mathcal{T}_{2}$
is closed under cones.

$(a)\Rightarrow(b)$ We have $(\mathcal{T},V_{\mathcal{H}_{[\t_{1},\t_{2}]}}^{\bot_{0}})\in\stors\,\mathcal{H}_{[\t_{1},\t_{2}]}$.
Thus, by Lemma \ref{lem:stors de extensiones de corazones}(a) and
Lemma \ref{lem:torsion quasitilt}, it follows that $(\mathcal{T},V_{\mathcal{H}_{[\t_{1},\Sigma^{-1}\t_{1}]}}^{\bot_{0}})\in\stors\,\mathcal{H}_{[\t_{1},\Sigma^{-1}\t_{1}]}$
and that $V_{\mathcal{H}_{[\t_{1},\Sigma^{-1}\t_{1}]}}^{\bot_{0}}=V_{\mathcal{H}_{[\t_{1},\t_{2}]}}^{\bot_{0}}\star\mathcal{H}_{[\t_{2},\Sigma^{-1}\t_{1}]}$.
Therefore, $V$ satisfies (T1) in $\mathcal{H}_{[\t_{1},\Sigma^{-1}\t_{1}]}$.
\\
 Let us prove that $\mathcal{H}_{[\t_{1},\t_{2}]}=\overline{\Gen}(V)$.
Since $V$ is extended tilting in $\mathcal{H}_{[\t_{1},\t_{2}]}$,
we have that $\Gen(V)$ is a cogenerating class in $\mathcal{H}_{[\t_{1},\t_{2}]}$.
Hence, $\mathcal{H}_{[\t_{1},\t_{2}]}\subseteq\overline{\Gen}(V)$.
Therefore, we have proved the desired equality since the opposite
contention is an hypothesis. \\
 Lastly, we proceed to prove that $\overline{\Gen}(V)\cap V_{\mathcal{H}_{[\t_{1},\Sigma^{-1}\t_{1}]}}^{\bot_{1}}=V_{\mathcal{H}_{[\t_{1},\t_{2}]}}^{\bot_{1}}=\mathcal{T}$.
For this, we firstly note that $\mathcal{T}=V_{\mathcal{H}_{[\t_{1},\t_{2}]}}^{\bot_{1}}$
by Lemmas \ref{lem:tiltexact} and \ref{nimpex} since $[\t_{1},\t_{2}]$
is normal. And thus, since $\mathcal{T}\subseteq\mathcal{H}_{[\t_{1},\t_{2}]}=\overline{\Gen}(V)$,
we have that 
\[
V_{\mathcal{H}_{[\t_{1},\t_{2}]}}^{\bot_{1}}=\mathcal{T}\cap V_{\mathcal{H}_{[\t_{1},\Sigma^{-1}\t_{1}]}}^{\bot_{1}}\subseteq\overline{\Gen}(V)\cap V_{\mathcal{H}_{[\t_{1},\Sigma^{-1}\t_{1}]}}^{\bot_{1}}=\mathcal{H}_{[\t_{1},\t_{2}]}\cap V_{\mathcal{H}_{[\t_{1},\Sigma^{-1}\t_{1}]}}^{\bot_{1}}=V_{\mathcal{H}_{[\t_{1},\t_{2}]}}^{\bot_{1}}.
\]
\end{proof}

\subsection{Extended tilting objects in extended hearts }

Let $\D$ be a triangulated category and $\t_{1}\leq\t_{2}\leq\t_{3}$
in $\stors\,\mathcal{D}.$ The aim of this section is to study extended
tilting objects in $\mathcal{H}_{[\t_{1},\t_{3}]}$. We will see that,
under certain conditions, an object $V$ in $\mathcal{H}_{[\t_{1},\t_{3}]}$
is extended tilting if, and only if, it is a projective generator
in $\mathcal{H}_{[\t_{1},\t_{2}]}$. In particular, one could use
this result to determine when the heart of a $t$-structure has a
projective generator. Recall that this is of great interest for several
reasons (see for example \cite{CW,PS2}). In particular, hearts with
a projective generator are related to the theory of silting modules,
see for example \cite{AMV,ABP}. 
\begin{prop}
\label{prop:key-1-1} For a triangulated category $\D$ and $\t_{1}\leq\t_{2}\leq\t_{3}$
in $\stors\,\mathcal{D},$ the following statements hold true for
an extended tilting object $V\in\mathcal{H}_{[\t_{1},\t_{3}]}$. 
\begin{enumerate}
\item Let $\t_{1}\leq\Sigma\t_{2}$. Then, $V\in\mathcal{H}_{[\t_{1},\t_{2}]}$. 
\item Let $\t_{1}\leq\Sigma\t_{2}$. If $[\t_{2},\t_{3}]$ is normal, then
$\mathcal{H}_{[\t_{1},\t_{2}]}$ is a cogenerating class in $\mathcal{H}_{[\t_{1},\t_{3}]}$
and $\Gen_{\mathcal{H}_{[\t_{1},\t_{3}]}}(V)\subseteq\mathcal{H}_{[\t_{1},\t_{2}]}.$ 
\item There is $\t'_{2}\in\stors\,\mathcal{D}$ such that $\mathcal{H}_{[\t_{1},\t'_{2}]}=\Gen_{\mathcal{H}_{[\t_{1},\t_{3}]}}(V)$.
In this case, 
\begin{enumerate}
\item $V$ is a projective generator in $\mathcal{H}_{[\t_{1},\t'_{2}]};$ 
\item if $W$ is an extended tilting in $\mathcal{H}_{[\t_{1},\t_{3}]}$
such that $\Gen_{\mathcal{H}_{[\t_{1},\t_{3}]}}(V)=\Gen_{\mathcal{H}_{[\t_{1},\t_{3}]}}(W)$,
then $\Add(V)=\Add(W)$. 
\end{enumerate}
\item If $\t_{1}\leq\Sigma\t_{2}$ and $\t_{3}=\Sigma^{-1}\t_{2}$, then:
\begin{enumerate}
\item $\mathcal{H}_{[\t_{1},\t_{2}]}=\Gen_{\mathcal{H}_{[\t_{1},\t_{3}]}}(V)=V_{\mathcal{H}_{[\t_{1},\t_{3}]}}^{\bot_{1}}$
and $V$ is a projective generator in $\mathcal{H}_{[\t_{1},\t_{2}]};$ 
\item $\mathcal{H}_{[\t_{1},\Sigma\t_{2}]}=0$, $\mathcal{H}_{[\t_{1},\t_{3}]}=\mathcal{H}_{[\Sigma\t_{2},\Sigma^{-1}\t_{2}]}$,
and $\t_{1}=\Sigma\t_{2}$; 
\item if $W$ is an extended tilting in $\mathcal{H}_{[\t_{1},\t_{3}]}$,
then $\Add(V)=\Add(W)$. 
\end{enumerate}
\end{enumerate}
\end{prop}

\begin{proof}
Set $\mathsf{C}:=\mathcal{H}_{[\t_{1},\t_{3}]}$ and $\mathsf{H}:=\mathcal{H}_{[\t_{1},\t_{2}]}$,
and let $\mathbf{t}=(\mathcal{T},\mathcal{F}):=(\text{Gen}_{\Csf}(V),V_{\Csf}^{\bot_{0}})$
be the associated extended tilting $s$-torsion pair in $\Csf$. Note
that $\Sigma\,\t\leq\t$ for any $\t\in\stors\,\mathcal{D}$, and
that $[\t_{2},\t_{3}]$ is normal if $\t_{3}=\Sigma^{-1}\t_{2}$.

(a) From Lemma \ref{lem:extensions of hearts}, we get an $\s$-conflation
$\suc[A][V][B][][f]$ in $\Csf$ with $A\in\Hsf$ and $B\in\mathcal{H}_{[\t_{2},\t_{3}]}$.
Suppose that $f\neq0$. Then $B\in\text{Gen}_{\Csf}(V)=\mathcal{T}$.
Now, since $B\in\mathcal{H}_{[\t_{2},\t_{3}]}$, we have that $\Sigma B\in\Sigma\mathcal{H}_{[\t_{2},\t_{3}]}=\mathcal{H}_{[\Sigma\t_{2},\Sigma\t_{3}]}\subseteq\Csf$.
Thus, we can consider the canonical $\s$-conflation $\eta:\;\suc[T][\Sigma B][F]$
with respect to the $s$-torsion pair $\mathbf{t},$ where $T\in\mathcal{T}$
and $F\in\mathcal{F}$. By applying the functor $\text{Hom}_{\Csf}(V,-)$
to $\eta,$ we obtain the following exact sequence 
\[
\E^{-1}(V,F)\to\text{Hom}_{\Csf}(V,T)\to\text{Hom}_{\Csf}(V,\Sigma B).
\]
Observe that $\text{Hom}_{\Csf}(V,T)=0$ and $\Sigma B\in\mathcal{F}$
since $\E^{-1}(V,F)=0$ and 
\[
\text{Hom}_{\Csf}(V,\Sigma B)=\text{Hom}_{\mathcal{D}}(V,\Sigma B)=\E_{\Csf}^{1}(V,B)=0,
\]
where the last equality is due to the fact that $B\in\text{Gen}_{\Csf}(V)=V^{\perp_{1}}$
$B\in\text{Gen}_{\Csf}(V)\subseteq V^{\perp_{1}}$. Now, from the
fact that $\Sigma B\in\mathcal{F}$, we obtain 
\[
0=\E^{-1}(V,\Sigma B)=\text{Hom}_{\D}(V,B)=\text{Hom}_{\Csf}(V,B).
\]
But this is a contradiction since $0\neq f\in\text{Hom}_{\Csf}(V,B)$.
Hence we conclude that $f=0.$ Thus, we get a distinguished triangle
$\Sigma^{-1}B\to A\to V\xrightarrow{f}B$ in $\D,$ with $f=0,$ and
then $A=\Sigma^{-1}B\amalg V.$ Therefore $V\in\Hsf$ since $A\in\Hsf.$ 

(b) It is enough to show that $\mathcal{T}=\text{Gen}_{\Csf}(V)\subseteq\Hsf$
since $\text{Gen}_{\Csf}(V)$ is a cogenerating class in $\Csf.$
For this, observe that $V\in\Hsf$ by (a). And thus, by using Lemma
\ref{lem:key-1-1} together with Proposition \ref{lemaA}(a), we can
conclude that $\mathcal{T}=\text{Gen}_{\Csf}(V)\subseteq\Hsf$.

(c) The existence of $\t'_{2}$ follows from Theorem \ref{thm:proceso aet}. 

(c1) $V$ is a projective object of $\Hsf$ since $\T\subseteq V^{\perp_{1}}$.
Let us show that $V$ is a generator of $\mathcal{H}_{[\t_{1},\t'_{2}]}$.
For this, we take a non-zero object $X\in\mathcal{H}_{[\t_{1},\t'_{2}]}$,
and recall that $\mathcal{H}_{[\t_{1},\t'_{2}]}=\mathcal{T}=\text{Gen}_{\Csf}(V)=\text{Pres}_{\Csf}(V)$.
Thus, there is an $\s$-conflation 
\[
\theta:\;\suc[Y][\coprod_{\alpha}{}^{\Csf}\,V][X],
\]
in $\Csf,$ for some set $\alpha\neq\emptyset$ and $Y\in\text{Gen}_{\Csf}(V)=\mathcal{H}_{[\t_{1},\t'_{2}]}$.
From Lemma \ref{lem:key-1-1}, we know that $\coprod{}_{\alpha}^{\Csf}\,V\simeq\coprod{}_{\alpha}^{\Hsf}\,V.$
Hence by the $\s$-conflation $\theta,$ it follows that $X\in\Gen_{\Hsf}(V).$
Therefore $\text{Gen}_{\Hsf}(V)=\mathcal{H}_{[\t_{1},\t'_{2}]}$ as
desired.

(c2) Let us prove that $W\in\Add(V)$. By Theorem \ref{thm:proceso aet},
we have that $\text{Pres}_{\mathcal{H}_{[\t_{1},\t_{3}]}}(V)=\mathcal{H}_{[\t_{1},\t'_{2}]}=\text{Pres}_{\mathcal{H}_{[\t_{1},\t_{3}]}}(W)$.
Hence, there is an $\s$-conflation $\varepsilon:\;\suc[H][\coprod_{\beta}V][W]$
in $\mathcal{H}_{[\t_{1},\t'_{2}]}$. Note that $\varepsilon$ splits
since $\mathcal{H}_{[\t_{1},\t_{2}]}\subseteq W^{\bot_{1}}$, and
thus $W\in\Add(V).$ Similarly, we have that $V\in\Add(W)$ and therefore
$\Add(V)=\Add(W).$

(d1) Firstly, note that we have that $\Hsf=\mathcal{H}_{[\t_{1},\t_{2}]}$
is a cogenerating class in $\Csf=\mathcal{H}_{[\t_{1},\t_{3}]}$ and
$\Gen_{\Csf}(V)\subseteq\Hsf$ by (b). 

Let us prove that $\Hsf\subseteq\Gen_{\Csf}(V)$. For this, consider
$H\in\Hsf$. On the one hand, if $H$ admits a non-zero morphism $f:V\rightarrow H$,
then there is an $\s$-conflation $\suc[X][V][H][][f]$ in $\mathcal{D}$.
Note that $X\in\Sigma^{-1}\Hsf\star\Hsf\subseteq\Csf$. And thus,
$H\in\Gen_{\Csf}(V)$. On the other hand, if $H\in V^{\bot_{0}}$,
then $H=0$. Indeed, by the previous arguments, we have that $V\amalg H\in\Gen_{\Csf}(V)$.
And thus, $H\in\Gen_{\Csf}(V)$ by Proposition \ref{lem:cerraduras stors}(a).
Therefore, $H=0$ since $H\in V_{\Csf}^{\bot_{0}}\cap\Gen_{\Csf}(V)$. 

Let us prove that $\Hsf=V_{\Csf}^{\bot_{1}}$. For this, consider
$Z\in V_{\Csf}^{\bot_{1}}$. Since $\Hsf$ is a cogenerating class
in $\Csf$, there is an $\s$-conflation $\suc[Z][H_{1}][H_{2}]$
in $\Csf$ with $H_{1}\in\Hsf$ and $H_{2}\in\Csf$. Note that $H_{2}\in\Hsf$
by Proposition \ref{lemaA}(a). Then, since $\Hsf=\Gen_{\Csf}(V)=\Pres_{\Csf}(V)$,
there is an $\s$-conflation $\suc[H_{3}][\coprod_{\alpha}V][H_{2}]$
with $H_{3}\in\Hsf$ (for $H_{2}=0$, consider the $\s$-conflation
$\suc[V][V][0][1]$). Then, from \cite[Prop. 3.15]{NP}, we deduce
the following commutative diagram in $\Csf$: 
\[
\xymatrix{\, & H_{3}\ar@{=}[r]\ar[d] & H_{3}\ar[d]\\
Z\ar[r]\ar@{=}[d] & W\ar[r]\ar[d] & \coprod_{\alpha}V\ar[d]\\
Z\ar[r] & H_{1}\ar[r] & H_{2}
}
\]
From the middle column, we can deduce that $W\in\Hsf$. Moreover,
the middle row splits since $Z\in V^{\bot_{1}}$ and 
\[
\mathbb{E}_{\Csf}(\coprod_{\alpha}V,Z)=\Hom[\mathcal{D}][\coprod_{\alpha}V][\Sigma Z]\cong\Hom[\mathcal{D}][V][\Sigma Z]^{\alpha}\cong\mathbb{E}_{\Csf}(V,Z)^{\alpha}=0.
\]
Therefore, $Z\in\Hsf$. 

Lastly, since $\Hsf=\Gen_{\Csf}(V)$, it follows from the previous
item that $V$ is a projective generator of $\Hsf$.

(d2) Consider $X\in\mathcal{H}_{[\Sigma^{-1}\t_{1},\t_{2}]}$. Observe
that $X,\Sigma X\in\mathcal{H}_{[\t_{1},\t_{2}]}=\Hsf$. Then, since
$V$ is projective in $\Hsf$, we have 
\[
0=\mathbb{E}_{\Hsf}(V,X)=\Hom[\mathcal{D}][V][\Sigma X]=\Hom[\Hsf][V][\Sigma X]=\Hom[\Csf][V][\Sigma X].
\]
Therefore, $\Sigma X=0$ since $\Sigma X\in\Hsf=\Gen_{\Csf}(V)$.
And thus, $X=0$. We have proved that $\mathcal{H}_{[\Sigma^{-1}\t_{1},\t_{2}]}=0$.
And thus, it follows from Theorem \ref{thm:proceso aet} that $\mathcal{H}_{[\t_{1},\t_{3}]}=\mathcal{H}_{[\Sigma\t_{2},\Sigma^{-1}\t_{2}]}$
and $\t_{1}=\Sigma\t_{2}$.

(d3) It follows from (d1) that $\mathcal{H}_{[\t_{1},\t_{2}]}=\Gen_{\mathcal{H}_{[\t_{1},\t_{3}]}}(W)$.
And thus, by the previous item, we have that $\Add(V)=\Add(W)$. 
\end{proof}
\begin{thm}
\label{teo:aplication-main-1} Let $\D$ be a triangulated category
and $\t_{1}\leq\t_{2}\leq\t_{3}$ in $\stors\,\mathcal{D},$ be such
that: $\t_{1}\leq\Sigma\t_{2}$ and $\t_{3}=\Sigma^{-1}\t_{2}$. Then,
for $V\in\mathcal{H}_{[\t_{1},\t_{3}]},$ the following statements
are equivalent. 
\begin{enumerate}
\item $V$ is an extended tilting object in $\Hcal_{[\t_{1},\t_{3}]}$.
\item $V$ is an extended tilting object in $\Hcal_{[\t_{1},\t_{3}]}$ such
that $\Gen_{\Hcal_{[\t_{1},\t_{3}]}}(V)=V_{\Hcal_{[\t_{1},\t_{3}]}}^{\bot_{1}}$. 
\item $\mathcal{H}_{[\t_{1},\t_{2}]}$ is a cogenerating class in $\mathcal{H}_{[\t_{1},\t_{3}]}$
and $V$ is a projective generator in $\mathcal{H}_{[\t_{1},\t_{2}]}.$ 
\end{enumerate}
Moreover, in case any of these statements is satisfied, we have that
$\t_{1}=\Sigma\t_{2}$. 

\end{thm}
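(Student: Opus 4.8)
The plan is to prove the two implications separately, exploiting the work already done in Proposition \ref{prop:key-1-1}, whose hypotheses ($\t_{1}\leq\Sigma\t_{2}$ and $\t_{3}=\Sigma^{-1}\t_{2}$) are exactly those of the theorem. For the implication $(a)\Rightarrow(b)$ there is almost nothing left to do: if $V$ is extended tilting in $\mathcal{H}_{[\t_{1},\t_{3}]}$, then part (a) of Proposition \ref{prop:key-1-1} already gives that $\mathcal{H}_{[\t_{1},\t_{2}]}$ is a cogenerating class in $\mathcal{H}_{[\t_{1},\t_{3}]}$, and part (b1) gives that $V$ is a projective generator in $\mathcal{H}_{[\t_{1},\t_{2}]}$. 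So the only real content lies in the reverse implication.

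For $(b)\Rightarrow(a)$, the plan is to verify the three axioms (T1), (T2), (T3) of Definition \ref{ext-tilting} for $V$ inside $\Csf:=\mathcal{H}_{[\t_{1},\t_{3}]}$, writing $\Hsf:=\mathcal{H}_{[\t_{1},\t_{2}]}$. First I would note that since $V$ is a projective generator in $\Hsf$, we have $\Gen_{\Hsf}(V)=\Hsf$ and $V^{\bot_{1}}\cap\Hsf=\Hsf$ (projectivity), and moreover every object of $\Hsf$ is $V$-presented in $\Hsf$ because one can iterate the generating $\s$-conflation and use projectivity to split off kernels (so $\Pres_{\Hsf}(V)=\Hsf$ as well). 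Using Lemma \ref{lem:key-1-1}, coproducts of copies of $V$ computed in $\Hsf$ agree with those in $\Csf$, so $\Gen_{\Csf}(V)=\Gen_{\Hsf}(V)=\Hsf$ and likewise $\Pres_{\Csf}(V)=\Hsf$; this already yields the first half of (T2). For (T1), I would invoke Lemma \ref{lem:torsion quasitilt}: it suffices to produce \emph{some} $s$-torsion pair in $\Csf$ whose torsion class is $\Gen_{\Csf}(V)=\Hsf$, and Lemma \ref{lem:stors de extensiones de corazones}(c) provides exactly $(\mathcal{H}_{[\t_{1},\t_{2}]},\mathcal{H}_{[\t_{2},\t_{3}]})\in\stors\,\Csf$; hence $\mathbf{t}(V)=(\Gen_{\Csf}(V),V^{\bot_{0}})=(\Hsf,\mathcal{H}_{[\t_{2},\t_{3}]})\in\stors\,\Csf$. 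For (T3), the hypothesis that $\Hsf=\Gen_{\Csf}(V)$ is cogenerating in $\Csf$ is precisely what is assumed in (b).

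The remaining — and main — point is the orthogonality condition $\Gen_{\Csf}(V)=V^{\bot_{1}}$ in (T2), i.e. that $V^{\bot_{1}}\subseteq\Hsf$ (the inclusion $\Hsf\subseteq V^{\bot_{1}}$ follows from projectivity of $V$ in $\Hsf$ together with the fact that, for $H\in\Hsf$, $\E^{1}_{\Csf}(V,H)=\Hom[\mathcal{D}][V][\Sigma H]$ and $\Sigma H\in\mathcal{H}_{[\Sigma\t_{1},\Sigma\t_{2}]}$, reducing the Ext computation to one inside $\Hsf$). To see $V^{\bot_{1}}\subseteq\Hsf$, take $X\in\Csf$ with $\E^{1}_{\Csf}(V,X)=0$, and use Lemma \ref{lem:extensions of hearts} to get an $\s$-conflation $\suc[T_{2}][X][F_{2}]$ with $T_{2}\in\Hsf$ and $F_{2}\in\mathcal{H}_{[\t_{2},\t_{3}]}=\mathcal{H}_{[\t_{2},\Sigma^{-1}\t_{2}]}$; then $\Sigma^{-1}F_{2}\in\mathcal{H}_{[\Sigma^{-1}\t_{2},\t_{3}]}\subseteq\Csf$ and $\Hom[\mathcal{D}][V][F_{2}]\subseteq\Hom[\mathcal{D}][\Hsf][\mathcal{H}_{[\t_{2},\t_{3}]}]=0$, so applying $\Hom[\mathcal{D}][V][-]$ to the conflation shows $\E^{1}_{\Csf}(V,T_{2})$ surjects onto $\E^{1}_{\Csf}(V,F_{2})=\Hom[\mathcal{D}][V][\Sigma F_{2}]$; combined with $\E^{1}_{\Csf}(V,X)=0$ and the long exact sequence one forces $F_{2}=0$ after identifying $\Sigma^{-1}F_{2}$ as lying in $V^{\bot_{1}}=\Hsf\cap(\text{torsion-free part})$, exactly as in the proof of Proposition \ref{prop:key-1-1}(b1). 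Thus $X\cong T_{2}\in\Hsf$. I expect this last step — carefully chasing the shift identifications to show the torsion-free part $F_{2}$ vanishes — to be the only delicate part, though it is essentially a repackaging of the argument already carried out in Proposition \ref{prop:key-1-1}.
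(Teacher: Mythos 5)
Your overall architecture is right and most of the routine parts are fine: the implication (a)$\Rightarrow$(b) is indeed immediate from Proposition \ref{prop:key-1-1}; your derivation of (T1) via Lemma \ref{lem:torsion quasitilt} applied to the $s$-torsion pair $(\mathcal{H}_{[\t_{1},\t_{2}]},\mathcal{H}_{[\t_{2},\t_{3}]})$ of Lemma \ref{lem:stors de extensiones de corazones}(c) is even a little slicker than the paper's direct computation of $V^{\bot_{0}}$; and the identifications $\Gen_{\Csf}(V)=\Pres_{\Csf}(V)=\Hsf$ go through, although you should cite Proposition \ref{lemaA}(a) in addition to Lemma \ref{lem:key-1-1} to see that a quotient taken in $\Csf$ of $\coprod_{\alpha}V\in\Hsf$ lands back in $\Hsf$.

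The genuine gap is in the one step you yourself flagged as delicate, namely $V^{\bot_{1}}\subseteq\Hsf$, and your sketch of it does not work. First, from the triangle $T_{2}\to X\to F_{2}\to\Sigma T_{2}$ the long exact sequence reads $\E^{1}(V,T_{2})\to\E^{1}(V,X)\to\E^{1}(V,F_{2})\to\mathrm{Hom}_{\mathcal{D}}(V,\Sigma^{2}T_{2})$; there is no surjection of $\E^{1}(V,T_{2})$ onto $\E^{1}(V,F_{2})$, and $\E^{1}(V,X)=0$ only says that $\E^{1}(V,F_{2})$ injects into $\mathrm{Hom}_{\mathcal{D}}(V,\Sigma^{2}T_{2})$, an ambient second extension group which the hypotheses do not control (the condition $\t_{1}\leq\Sigma\t_{2}$ does not force $\Sigma^{2}\mathcal{F}_{1}\subseteq\mathcal{F}_{2}$), so you cannot conclude $\E^{1}(V,F_{2})=0$, let alone $F_{2}=0$. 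Second, the shift identification is off: for $F_{2}\in\mathcal{H}_{[\t_{2},\t_{3}]}$ one has $\Sigma^{-1}F_{2}\in\mathcal{H}_{[\t_{3},\Sigma^{-1}\t_{3}]}$, which is \emph{not} contained in $\Csf=\mathcal{H}_{[\t_{1},\t_{3}]}$ (that would require $\Sigma^{-1}\mathcal{T}_{3}\subseteq\mathcal{T}_{3}$); so the analogy with Proposition \ref{prop:key-1-1}(b1) --- where the torsion-free part lies in $\Hsf$ and its desuspension therefore stays inside $\Csf$ --- breaks down, and in any case that argument uses $V^{\perp_{1}}=\mathcal{T}$, which is precisely what you are trying to prove. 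The missing idea is the paper's: for $C\in V^{\bot_{1}}$ use the cogenerating hypothesis to produce a conflation $C\to H\to C'$ with $H\in\Hsf$, deduce $C'\in\Hsf$ from Proposition \ref{lemaA}(a), cover $C'$ by $\coprod_{\alpha}V$ using that $V$ generates $\Hsf$, and form the induced diagram; the resulting conflation $C\to E\to\coprod_{\alpha}V$ splits because $\mathrm{Hom}_{\mathcal{D}}(\coprod_{\alpha}V,\Sigma C)\cong\prod_{\alpha}\E^{1}(V,C)=0$, so $C$ is a direct summand of $E\in\Hsf$ and hence lies in $\Hsf$.
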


\begin{proof}
Set the notation $\Csf:=\mathcal{H}_{[\t_{1},\t_{3}]}$ and $\Hsf:=\mathcal{H}_{[\t_{1},\t_{2}]}$.
We also recall that $\t:=(\Hsf,\mathcal{H}_{[\t_{2},\t_{3}]})$ is
an $s$-torsion pair in $\Csf,$ see Lemma \ref{lem:stors de extensiones de corazones}.
Note that $[\t_{2},\t_{3}]$ is normal.

The implication (b) $\Rightarrow$ (c) and the fact that $\t_{1}=\Sigma\t_{2}$
can be obtained from Proposition \ref{prop:key-1-1}. 

Let us show that (c) implies (a). Indeed, let $V$ be a projective
generator in $\Hsf$ which is a cogenerating class in $\Csf.$ Thus
$\Hsf=\Gen_{\Hsf}(V)$ and by Lemma \ref{lem:key-1-1} and Proposition
\ref{lemaA}(a), it follows that $\Hsf=\Gen{}_{\Hsf}(V)=\Gen{}_{\Csf}(V).$
Now, using that $\text{Pres}_{\Hsf}(V)=\text{Gen}_{\Hsf}(V)=\Hsf$
and $\text{Pres}_{\Hsf}(V)\subseteq\text{Pres}_{\Csf}(V)$, we get
that $\text{Pres}_{\Csf}(V)=\text{Gen}_{\Csf}(V)=\Hsf.$\\
 We assert now that $\mathcal{H}_{[\t_{2},\t_{3}]}=V^{\bot_{0}}.$
Indeed, the inclusion $\mathcal{H}_{[\t_{2},\t_{3}]}\subseteq V^{\bot_{0}}$
follows since $V\in\Hsf$ and $\t\in\stors\,\Csf$. Now, let $Z\in V^{\bot_{0}}.$
Consider the $\s$-conflation $\delta_{Z}:\;\suc[H][Z][H']$ in $\Csf$
with $H\in\Hsf$ and $H'\in\mathcal{H}_{[\t_{2},\t_{3}]}$ given by
$\t$. Applying the functor $(V,-):=\text{Hom}_{\D}(V,-)$ in $\delta_{Z},$
we get the exact sequence 
\[
(V,\Sigma^{-1}H')\rightarrow(V,H)\rightarrow(V,Z)
\]
where: $(V,Z)=0$ since $Z\in V^{\bot_{0}}$; and $(V,\Sigma^{-1}H')=0$
since\textbf{ }$V\in\Hsf$, $\Sigma^{-1}H'\in\mathcal{H}_{[\t_{3},\Sigma^{-1}\t_{3}]}\subseteq\mathcal{H}_{[\t_{2},\Sigma^{-1}\t_{3}]}$
and $(\Hsf,\mathcal{H}_{[\t_{2},\Sigma^{-1}\t_{3}]})\in\stors\,\mathcal{H}_{[\t_{1},\Sigma^{-1}\t_{3}]}$
(see Lemma \ref{lem:stors de extensiones de corazones}(c)). Hence
$0=\text{Hom}_{\D}(V,H)=\text{Hom}_{\Hsf}(V,H)$ and thus $H=0$ since
$V$ is a generator in $\Hsf$. This shows that $V^{\bot_{0}}=\mathcal{H}_{[\t_{2},\t_{3}]}$.
Therefore, $(\text{Gen}_{\Csf}(V),V^{\bot_{0}})=(\Hsf,\mathcal{H}_{[\t_{2},\t_{3}]})$
which is an $s$-torsion pair in $\Csf.$ It remains to prove that
$\Gen_{\Csf}(V)\subseteq V^{\bot_{1}}$, but this follows from the
fact that $V$ is a projective object in $\Hsf$ and $\Gen_{\Csf}(V)=\Hsf$. 

Lastly, implication (a) $\Rightarrow$ (b) follows from Proposition
\ref{prop:key-1-1}(d1). 
\end{proof}
Consider Examples \ref{exa:quasi-abelian} and \ref{exa:categorias abelianas asociadas a quasiabeliana}.
Let $\mathcal{E}$ be a quasi-abelian category. We know that there
is a triangulated category $\mathcal{D}$ equipped with $\t_{r},\t_{\ell}\in\stors(\mathcal{D})$
such that $\Sigma\t_{r}\leq\t_{\ell}\leq\t_{r}$ and $\mathcal{E}\cong\mathcal{H}_{[\Sigma\t_{r},\t_{\ell}]}$.
As an application of our results, we can give conditions for $\mathcal{E}$
to be abelian. 
\begin{cor}
Let $\mathcal{E}$ be a quasi-abelian category. Then, the following
statements are equivalent for an object $V\in\mathcal{H}_{[\Sigma\t_{\ell},\Sigma^{-1}\t_{r}]}$. 
\begin{enumerate}
\item $V$ is an extended tilting object in $\mathcal{H}_{[\Sigma\t_{\ell},\Sigma^{-1}\t_{r}]}$. 
\item $\mathcal{H}_{[\Sigma\t_{\ell},\t_{r}]}$ is a cogenerating class
in $\mathcal{H}_{[\Sigma\t_{\ell},\Sigma^{-1}\t_{r}]}$ and $V$ is
a projective generator in $\mathcal{H}_{[\Sigma\t_{\ell},\t_{r}]}$. 
\end{enumerate}
Moreover, in case any of these statements is satisfied, we have that
$\mathcal{E}$ is an abelian category.
\end{cor}

\begin{proof}
Consider $\t_{1}:=\Sigma\t_{\ell}$, $\t_{2}:=\t_{r}$ and $\t_{3}:=\Sigma^{-1}\t_{r}$.
Observe that $\t_{1}\leq\Sigma\t_{2}$ and that $\t_{3}=\Sigma^{-1}\t_{2}$.
Hence, by Theorem \ref{teo:aplication-main-1}, the assertion follows. 
\end{proof}
\begin{cor}
\label{apl1-mainT} For a triangulated category $\D,$ and a $t$-structure
$\u=(\U,\W)$ in $\D$ with heart $\Hcal$ and extended heart $\C,$
the following statements hold true. 
\begin{enumerate}
\item $(\Hcal,\Sigma^{-1}\Hcal)\in\stors\,\C$ and the extriangulated category
$\C$ is not abelian if $\Hcal\neq0.$ 
\item For any $V\in\C,$ the following statements are equivalent.
\begin{enumerate}
\item $V$ is an extended tilting object in $\C$. 
\item $V$ is an extended tilting object in $\C$ such that $\Gen_{\mathcal{C}}(V)=V_{\mathcal{C}}^{\bot_{1}}$. 
\item $\Hcal$ is a cogenerating class in $\C$ and $V$ is a projective
generator in $\Hcal.$ 
\end{enumerate}
\item $\Hcal$ is a cogenerating class in $\C$ if, and only if, $\C=\Sigma^{-1}\Hcal*\Hcal.$ 
\end{enumerate}
\end{cor}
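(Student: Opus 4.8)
The plan is to deduce all three items from the general machinery already built in the excerpt, applied to the chain of $s$-torsion pairs $\t_1 := \u_1 = (\Sigma\U,\W)$, $\t_2 := \u_2 = (\U,\Sigma^{-1}\W)$ and $\t_3 := \u_3 = (\Sigma^{-1}\U,\Sigma^{-2}\W)$ from Definition \ref{cor-extendido}, for which $\Hcal_{[\u_1,\u_2]} = \Hcal$, $\Hcal_{[\u_2,\u_3]} = \Sigma^{-1}\Hcal$ and $\Hcal_{[\u_1,\u_3]} = \C$. The key observation is that this chain satisfies the hypotheses of Theorem \ref{teo:aplication-main-1}: indeed $\u_1 \leq \Sigma\u_2$ always holds (it is the relation $\Sigma\u_2 = (\Sigma\U,\Sigma^{-1}\W)$, wait — one must check $\Sigma\U \subseteq \Sigma\U$, so $\u_1 \leq \Sigma\u_2$ reads $\Sigma\U \subseteq \Sigma\U$, which is trivial), and $\u_3 = \Sigma^{-1}\u_2$ by direct inspection of the definitions. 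So Theorem \ref{teo:aplication-main-1} and Proposition \ref{prop:key-1-1} apply verbatim with this substitution.

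First I would prove (a): the statement $(\Hcal,\Sigma^{-1}\Hcal)\in\stors\,\C$ is exactly Lemma \ref{lem:stors de extensiones de corazones}(c) applied to $\u_1\leq\u_2\leq\u_3$ (equivalently, it is the torsion pair $(\Hcal,\Sigma^{-1}\Hcal)$ recorded in Proposition \ref{basic-p-C}(b)), and the non-abelianness of $\C$ when $\Hcal\neq 0$ is precisely Proposition \ref{basic-p-C}(d). Next, for (b), the equivalence of ``$V$ is extended tilting in $\C$'' and ``$\Hcal$ is cogenerating in $\C$ and $V$ is a projective generator in $\Hcal$'' is a direct instance of Theorem \ref{teo:aplication-main-1} under the substitution above, so nothing new is needed beyond checking the hypotheses $\t_1\leq\Sigma\t_2$ and $\t_3=\Sigma^{-1}\t_2$, which I will verify explicitly in terms of $\U,\W,\Sigma$.

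The only part requiring a genuine argument is (c). The ``only if'' direction is immediate: if $\Hcal$ is cogenerating in $\C$ then by Corollary \ref{cor-extcorn}(a) (or Lemma \ref{lem:extensions of hearts}) we have $\C = \Hcal_{[\u_1,\u_3]} = \Hcal_{[\u_1,\u_2]}\star\Hcal_{[\u_2,\u_3]} = \Hcal\star\Sigma^{-1}\Hcal$ always, so I must instead show that cogenerating forces the \emph{reversed} factorization $\C = \Sigma^{-1}\Hcal\star\Hcal$; for this, given $C\in\C$, take the cogenerating $\s$-conflation $C\to H\to C'$ with $H\in\Hcal$, note $C'\in\Hcal$ by Proposition \ref{lemaA}(a) (using that $[\u_2,\u_3]$ is normal), and then the distinguished triangle $\Sigma^{-1}C'\to C\to H\to C'$ in $\D$ exhibits $C$ as an extension of $\Sigma^{-1}C'\in\Sigma^{-1}\Hcal$ by $H\in\Hcal$ — but one should double-check the placement, since the triangle gives $C\in\Sigma^{-1}\Hcal\star\Hcal$ after rotating, i.e. from $\Sigma^{-1}H\to \Sigma^{-1}C'\to C\to H$ we read off the $\s$-conflation $\Sigma^{-1}C'\to C\to H$ with $\Sigma^{-1}C'\in\Sigma^{-1}\Hcal$ and $H\in\Hcal$. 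Conversely, if $\C = \Sigma^{-1}\Hcal\star\Hcal$, then for any $C\in\C$ there is an $\s$-conflation $\Sigma^{-1}H'\to C\to H$ with $H',H\in\Hcal$; rotating in $\D$ gives a triangle $C\to H\to H'$, so $C\to H$ is a morphism into an object of $\Hcal$ whose cone $H'$ lies in $\C$ — one checks this is an $\s$-conflation in $\C$ (it is, since $H'\in\Hcal\subseteq\C$ and $\C$ is closed under extensions, or more directly since $\Sigma^{-1}H' \to C \to H$ is already an $\s$-conflation in $\C$), witnessing that $\Hcal$ is cogenerating. I expect the main (modest) obstacle to be bookkeeping the shifts correctly — making sure that rotating the distinguished triangle lands the cone/cocone in $\Hcal$ versus $\Sigma^{-1}\Hcal$ on the correct side, and that the rotated sequence is genuinely an $\s$-conflation in the extriangulated category $\C$ rather than merely a triangle in $\D$; this is handled by Lemma \ref{lem:extensions of hearts} together with the fact that $\C$ is closed under extensions in $\D$.
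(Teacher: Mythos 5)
Your proposal is correct and follows essentially the same route as the paper: items (a) and (b) are obtained by specializing Lemma \ref{lem:stors de extensiones de corazones}(c), Proposition \ref{basic-p-C}(d) and Theorem \ref{teo:aplication-main-1} to the chain $\u_1\leq\u_2\leq\u_3$, and item (c) rests on Proposition \ref{lemaA}(a) plus rotation of triangles, exactly as in the paper (your shift bookkeeping is right; the only slip is the harmless miswriting of $\Sigma\u_2$, whose torsion class $\Sigma\U$ is all that matters for the comparison).
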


\begin{proof}
Consider $\u_{1}:=(\Sigma\U,\W),$ $\u_{2}:=(\U,\Sigma^{-1}\W)$ and
$\u_{3}:=(\Sigma^{-1}\U,\Sigma^{-2}\W).$ Then we have that $\u_{1}\leq\u_{2}\leq\u_{3}$
in $\stors\,\D.$ Thus, the item (a) follows from Lemma \ref{lem:stors de extensiones de corazones}
(c) and Proposition \ref{basic-p-C} (d); and the item (b) can be
obtained from Theorem \ref{teo:aplication-main-1}. \
 Let us show (c). Indeed, it is clear that $\Hcal$ is a cogenerating
class in $\C$, whenever $\C=\Sigma^{-1}\Hcal*\Hcal.$ Assume now
that $\Hcal$ is a cogenerating class in $\C.$ Then, by Proposition
\ref{lemaA} (a), we get that $\C=\Sigma^{-1}\Hcal*\Hcal.$ 
\end{proof}
In what follows we will see some situations where Theorem \ref{teo:aplication-main-1}
can be applied. 

Consider the triangulated category $\D(\mathcal{A})$ which is the
unbounded derived category of an abelian category $\mathcal{A}$.
For a torsion pair $\t=(\T,\F)$ in $\mathcal{A},$ we have (see \cite[Prop. 2.1]{HRS})
the Happel-Reiten-Smal{\o} $t$-structure $\u(\t):=(\U_{\t},\W_{t})$
in $\D(\mathcal{A}),$ where 
\begin{alignat*}{1}
\U_{\t} & :=\{X\in\D(\mathcal{A})\;:\;H^{i}(X)=0\;\text{ for }i>0,\;H^{0}(X)\in\T\}\text{ and }\\
\W_{\t} & :=\{X\in\D(\mathcal{A})\;:\;H^{i}(X)=0\;\text{ for }i<-1,\;H^{-1}(X)\in\F\}.
\end{alignat*}
 Let $\mathcal{H}_{\t}$ be the heart of the Happel-Reiten-Smal{\o} 
$t$-structure $\u(\t)=(\mathcal{U}_{\mathbf{t}},\mathcal{W}_{\mathbf{t}})$.
It is well-known that there is a triangulated functor $G:\mathcal{D}^{b}(\mathcal{H}_{\t})\rightarrow\mathcal{D}^{b}(\mathcal{A})$
whose restriction on $\mathcal{H}_{\t}$ coincides with the natural
inclusion. In \cite[Thm. A]{CHZ}, it is proved that $G$ is an equivalence
if and only if every $A\in\mathcal{A}$ admits an exact sequence 
\[
0\rightarrow F_{0}\rightarrow F_{1}\rightarrow A\rightarrow T_{0}\rightarrow T_{1}\rightarrow0
\]
with $F_{0},F_{1}\in\mathcal{F}$ and $T_{0},T_{1}\in\mathcal{T}$
such that the corresponding class in $\mathbb{E}_{\mathcal{A}}^{3}(T_{1},F_{0})$
vanishes. Observe that the existence of such exact sequences implies
that 
\begin{equation}
\mathcal{A}\subseteq\mathcal{F}\star\Sigma\mathcal{F}\star\Sigma^{-1}\mathcal{T}\star\mathcal{T}\label{eq:chen}
\end{equation}
 in $\mathcal{D}(\mathcal{A})$. In the following corollary we will
show a situation where this condition implies that $\C_{\t}=\Sigma^{-1}\Hcal_{\t}*\Hcal_{\t}$
(see Corollary \ref{apl1-mainT}(c)). 
\begin{rem}
\label{remap}Note that the condition in (\ref{eq:chen}) also appears
in many contexts as the following ones (see \cite{CHZ} for further
details). 
\begin{enumerate}
\item $\mathcal{A}=\mathbf{Fac}(\mathcal{F})\star\mathcal{T}$, where $\mathbf{Fac}(\mathcal{F})$
is the class of objects $A$ in $\mathcal{A}$ admitting a short exact
sequence $0\rightarrow\suc[A'][F][A]\rightarrow0$ with $F\in\mathcal{F}$.
This context includes the case when $\mathcal{F}$ is a generating
class of $\mathcal{A}$.
\item $\mathcal{A}=\mathcal{F}\star\mathbf{Sub}(\mathcal{T})$, where $\mathbf{Sub}(\mathcal{T})$
is the class of objects $A$ in $\mathcal{A}$ admitting a short exact
sequence $0\rightarrow\suc[A][T][A']\rightarrow0$ with $T\in\mathcal{T}$.
This context includes the case when $\mathcal{T}$ is a cogenerating
class of $\mathcal{A}$.
\item $\mathcal{A}=\mathcal{F}\star\mathcal{T}$, which includes the case
when $(\mathcal{T},\mathcal{F})$ is a \textbf{splitting} torsion
pair (i.e. $\mathbb{E}(\mathcal{F},\mathcal{T})=0$). 
\end{enumerate}
\end{rem}

\begin{cor}
\label{cor:remap}\label{cor:main}\label{prop:key-2-1}Let $\mathcal{A}$
be a abelian category and $\mathbf{t}=(\mathcal{T},\mathcal{F})$
be a torsion pair in $\mathcal{A}$. Suppose that one of the following
conditions holds:
\begin{enumerate}
\item [(i)] $\Hom[\mathcal{D}(\mathcal{A})][\mathcal{F}][\Sigma^{2}\mathcal{F}]=0\text{, }\Hom[\mathcal{D}(\mathcal{A})][\mathcal{T}][\Sigma^{2}\mathcal{T}]=0\text{, }\Hom[\mathcal{D}(\mathcal{A})][\mathcal{T}][\Sigma^{3}\mathcal{F}]=0$,
and $\mathcal{A}\subseteq\mathcal{F}\star\Sigma\mathcal{F}\star\Sigma^{-1}\mathcal{T}\star\mathcal{T}$
in $\mathcal{D}(\mathcal{A})$;
\item [(ii)] $\mathcal{A}$ is hereditary and $(\mathcal{T},\mathcal{F})$
is splitting; 
\item [(iii)] $\mathcal{A}$ has enough injectives, $\Hom[\mathcal{D}(\mathcal{A})][\mathcal{T}][\Sigma^{3}\mathcal{F}]=0$,
and $\mathcal{T}$ is a cogenerating class in $\mathcal{A}$; 
\item [(iii)'] $\mathcal{A}$ has enough projectives, $\Hom[\mathcal{D}(\mathcal{A})][\mathcal{T}][\Sigma^{3}\mathcal{F}]=0$,
and $\mathcal{F}$ is a generating class in $\mathcal{A}$.
\end{enumerate}
Then, the heart $\Hcal_{\mathbf{t}}$ and the extended heart $\C_{\mathbf{t}}$
of the Happel-Reiten-Smal{\o} $t$-structure $\u(\t)=(\mathcal{U}_{\mathbf{t}},\mathcal{W}_{\mathbf{t}})$
satisfy that $\C_{\t}=\Sigma^{-1}\Hcal_{\t}*\Hcal_{\t}$ and, for
any $V\in\C_{\t},$ the following statements are equivalent:
\begin{enumerate}
\item $V$ is an extended tilting object in $\C_{\t}$; 
\item $V$ is a projective generator in $\Hcal_{\t}.$ 
\end{enumerate}
\end{cor}

\begin{proof}
By Corollary \ref{apl1-mainT}, it is enough to show that $\C_{\t}\subseteq\Sigma^{-1}\Hcal_{\t}*\Hcal_{\t}.$ 

Assume that (i) holds. The condition $\Hom[\mathcal{D}(\mathcal{A})][\mathcal{F}][\Sigma^{2}\mathcal{F}]=0$
implies that every $\s$-conflation $\suc[\Sigma F][X][F']$ with
$F,F'\in\mathcal{F}$ splits. And thus, $\Sigma\mathcal{F}\star\mathcal{F}\subseteq\mathcal{F}\star\Sigma\mathcal{F}$.
Similarly, conditions $\Hom[\mathcal{D}(\mathcal{A})][\mathcal{T}][\Sigma^{2}\mathcal{T}]=0$
and $\Hom[\mathcal{D}(\mathcal{A})][\mathcal{T}][\Sigma^{3}\mathcal{F}]=0$
imply that $\Sigma\mathcal{T}\star\mathcal{T}\subseteq\mathcal{T}\star\Sigma\mathcal{T}$
and that $\Sigma\mathcal{F}\star\Sigma^{-1}\mathcal{T}\subseteq\Sigma^{-1}\mathcal{T}\star\Sigma\mathcal{F}$,
respectively. Therefore, since $\mathcal{H}_{\t}=\Sigma\mathcal{F}\star\mathcal{T}$,
we have that 
\begin{alignat*}{1}
\mathcal{C}_{\t} & =\mathcal{H}_{\t}\star\Sigma^{-1}\mathcal{H}_{\t}\\
 & =\Sigma\mathcal{F}\star\mathcal{T}\star\mathcal{F}\star\Sigma^{-1}\mathcal{T}\\
 & =\Sigma\mathcal{F}\star\mathcal{A}\star\Sigma^{-1}\mathcal{T}\\
 & \subseteq\Sigma\mathcal{F}\star(\mathcal{F}\star\Sigma\mathcal{F}\star\Sigma^{-1}\mathcal{T}\star\mathcal{T})\star\Sigma^{-1}\mathcal{T}\\
 & =(\Sigma\mathcal{F}\star\mathcal{F})\star\Sigma\mathcal{F}\star\Sigma^{-1}\mathcal{T}\star(\mathcal{T}\star\Sigma^{-1}\mathcal{T})\\
 & \subseteq\mathcal{F}\star(\Sigma\mathcal{F}\star\Sigma\mathcal{F})\star(\Sigma^{-1}\mathcal{T}\star\Sigma^{-1}\mathcal{T})\star\mathcal{T}\\
 & \subseteq\mathcal{F}\star(\Sigma\mathcal{F}\star\Sigma^{-1}\mathcal{T})\star\mathcal{T}\\
 & \subseteq\mathcal{F}\star\Sigma^{-1}\mathcal{T}\star\Sigma\mathcal{F}\star\mathcal{T}\\
 & =\Sigma^{-1}\mathcal{H}_{\t}\star\mathcal{H}_{\t}.
\end{alignat*}

Assume that (ii) holds. Observe that $\mathcal{A}\subseteq\mathcal{F}\star\Sigma\mathcal{F}\star\Sigma^{-1}\mathcal{T}\star\mathcal{T}$
by Remark \ref{remap}(c). Moreover, since $\mathcal{A}$ is hereditary,
it follows that $\mathbf{Ext}_{\mathcal{A}}^{n}(-,-)=0$ for all $n>1$.
Hence, condition (i) holds. And thus, $\C_{\t}\subseteq\Sigma^{-1}\Hcal_{\t}*\Hcal_{\t}.$

Assumme that (iii) holds. On the one hand, $\mathcal{T}$ cogenerating
implies that $\Inj[\mathcal{A}]\subseteq\mathcal{T}$. And thus, since
$\mathcal{A}$ has enough injectives and $\mathcal{T}$ is closed
under quotients, we have that $\mathcal{A}\subseteq\Sigma^{-1}\mathcal{T}\star\Inj[\mathcal{A}]\subseteq\Sigma^{-1}\mathcal{T}\star\mathcal{T}$
in $\mathcal{D}(\mathcal{A})$. Here, is important to note that $\Inj[\mathcal{A}]\star\Sigma^{-1}\mathcal{T}\subseteq\Sigma^{-1}\mathcal{T}\star\Inj[\mathcal{A}]$
because 
\[
\Hom[\mathcal{D}(\mathcal{A})][\Sigma^{-1}\mathcal{T}][{\Sigma\Inj[\mathcal{A}]}]\cong\Hom[\mathcal{D}(\mathcal{A})][\mathcal{T}][\Sigma^{2}\Inj[\mathcal{A}]]\cong\mathbf{Ext}_{\mathcal{A}}^{2}(\mathcal{T},\Inj[\mathcal{A}])=0.
\]
On the other hand, the condition $\Hom[\mathcal{D}(\mathcal{A})][\mathcal{T}][\Sigma^{3}\mathcal{F}]=0$
implies that $\Sigma\mathcal{F}\star\Sigma^{-1}\mathcal{T}\subseteq\Sigma^{-1}\mathcal{T}\star\Sigma\mathcal{F}$.
Therefore, since $\mathcal{H}_{\t}=\Sigma\mathcal{F}\star\mathcal{T}$,
we have that 
\begin{alignat*}{1}
\mathcal{C}_{\t} & =\mathcal{H}_{\t}\star\Sigma^{-1}\mathcal{H}_{\t}\\
 & =\Sigma\mathcal{F}\star\mathcal{T}\star\mathcal{F}\star\Sigma^{-1}\mathcal{T}\\
 & =\Sigma\mathcal{F}\star\mathcal{A}\star\Sigma^{-1}\mathcal{T}\\
 & \subseteq\Sigma\mathcal{F}\star(\Sigma^{-1}\mathcal{T}\star\Inj[\mathcal{A}])\star\Sigma^{-1}\mathcal{T}\\
 & =(\Sigma\mathcal{F}\star\Sigma^{-1}\mathcal{T})\star(\Inj[\mathcal{A}]\star\Sigma^{-1}\mathcal{T})\\
 & \subseteq(\Sigma\mathcal{F}\star\Sigma^{-1}\mathcal{T})\star(\Sigma^{-1}\mathcal{T}\star\Inj[\mathcal{A}])\\
 & =\Sigma\mathcal{F}\star\Sigma^{-1}\mathcal{T}\star\Inj[\mathcal{A}]\\
 & \subseteq(\Sigma\mathcal{F}\star\Sigma^{-1}\mathcal{T})\star\mathcal{T}\\
 & \subseteq(\Sigma^{-1}\mathcal{T}\star\Sigma\mathcal{F})\star\mathcal{T}\\
 & \subseteq\Sigma^{-1}\mathcal{H}_{\t}\star\mathcal{H}_{\t}.
\end{alignat*}

Laslty, if (iii)' holds, then it follows from dual arguments as above
that $\C_{\t}\subseteq\Sigma^{-1}\Hcal_{\t}*\Hcal_{\t}$. 
\end{proof}
\begin{example}
\label{ejemplo} Let $K$ be a field and $R:=K\times K$. Observe
that every torsion pair in $\text{Mod-}R$ splits since $R$ is semi-simple.
Furthermore, $R$ is also a non trivial hereditary ring. Note that
$I:=K\times0$ is a finitely generated idempotent two-sided ideal
of $R$. Consider the torsion pair $\mathbf{t}:=(\text{Mod-}R/I,R/I^{\bot_{0}})$
in $\text{Mod-}R,$ and let $\Hcal_{\mathbf{t}}$ be the heart and
$\C_{\t}$ be the extended heart of the Happel-Reiten-Smal{\o} $t$-structure
$\u(\t).$ Then, by \cite[Ex. 5.8]{PS2}, we know that $\Hcal_{\mathbf{t}}$
has a projective generator $V.$ Thus, from Corollary \ref{prop:key-2-1}(ii),
we conclude that $V$ is an extended tilting object in the extriangulated
category $\C_{\mathbf{t}}$ which is not abelian. 
\end{example}

\begin{example}
\label{exa:ult}Let $\mathcal{A}$ be the abelian category obtained
as the HRS-tilt of $\Mod[\mathbb{Z}]$ associated to the torsion pair
$(\mathcal{T},\mathcal{F})$, where $\mathcal{T}$ is the class of
torsion abelian groups (i.e. $\mathcal{A}:=\Sigma\mathcal{F}\star\mathcal{T}$
in $\mathcal{D}(\mathbb{Z})$). It was proved in \cite[Cor. 4.3(b)]{CGM}
that $\mathcal{A}$ is a non-hereditary Grothendieck category and
that $\t:=(\Sigma\mathcal{F},\mathcal{T})$ is a  torsion pair with
$\Sigma\mathcal{F}$ cogenerating in $\mathcal{A}$. Moreover, if
$S$ denotes the shift functor of the derived category $\mathcal{D}(\mathcal{A})$,
we have that $\Hom[\mathcal{D}(\mathcal{A})][\Sigma\mathcal{\mathcal{F}}][S^{3}\mathcal{T}]=0$.
Indeed, every $M\in\Mod[\mathbb{Z}]$ admits a short exact sequence
$0\rightarrow\suc[F'][F][M]\rightarrow0$ with $F,F'\in\mathcal{F}$.
Then, by \cite[Thm. 3.4]{CHZ}, it follows that $\Ext[X][2][\mathcal{A}][Y]\cong\Hom[\mathcal{D}^{b}(\mathbb{Z})][X][\Sigma^{2}Y]$
for any $X,Y\in\mathcal{A}$. Hence, by \cite[Rem. 3.1.17]{BBD},
there is a monomorphism $\Hom[\mathcal{D}(\mathcal{A})][\Sigma\mathcal{\mathcal{F}}][S^{3}\mathcal{T}]\rightarrow\Hom[\mathcal{D}(\mathbb{Z})][\Sigma\mathcal{\mathcal{F}}][\Sigma^{3}\mathcal{T}].$
And thus, since $\Hom[\mathcal{D}(\mathbb{Z})][\Sigma\mathcal{\mathcal{F}}][\Sigma^{3}\mathcal{T}]\cong\Hom[\mathcal{D}(\mathbb{Z})][\mathcal{\mathcal{F}}][\Sigma^{2}\mathcal{T}]=0$,
we have that $\Hom[\mathcal{D}(\mathcal{A})][\Sigma\mathcal{\mathcal{F}}][S^{3}\mathcal{T}]=0$.
Therefore, it follows from Corollary \ref{cor:main}(iii), that $\Sigma\mathbb{Z}$
is an extended tilting object of the extended heart $\mathcal{C}_{\t}$
since it is a projective generator of $\mathcal{H}_{\t}$. 
\end{example}

\section*{Acknowledgments }

The first named author was supported by a postdoctoral fellowship
EPM(1) 2024 from SECIHTI. The first and second named authors were
supported by the Project PAPIIT-IN100124 Universidad Nacional Aut{\'o}noma
de M{\'e}xico. The third named author was supported by ANID+FONDECYT/REGULAR+1240253. 

{\footnotesize{}{}\vskip3mm Alejandro Argud{\'i}n-Monroy}\\
 {\footnotesize{}{} Instituto de Matem{\'a}ticas}\\
 {\footnotesize{}{} Universidad Nacional Aut{\'o}noma de M{\'e}xico,}\\
 {\footnotesize{}{} Circuito Exterior, Ciudad Universitaria,}\\
 {\footnotesize{}{} CDMX 04510, M{\'E}XICO.}\\
 {\footnotesize{}{} }\texttt{\footnotesize{}{}argudin@ciencias.unam.mx}{\footnotesize\par}

\noindent {\footnotesize{}{}\vskip3mm \noindent  Octavio Mendoza
Hern{\'a}ndez}\\
 {\footnotesize{}{} Instituto de Matem{\'a}ticas}\\
 {\footnotesize{}{} Universidad Nacional Aut{\'o}noma de M{\'e}xico,}\\
 {\footnotesize{}{} Circuito Exterior, Ciudad Universitaria,}\\
 {\footnotesize{}{} CDMX 04510, M{\'E}XICO.}\\
 {\footnotesize{}{} }\texttt{\footnotesize{}{}omendoza@matem.unam.mx}{\footnotesize\par}

\noindent {\footnotesize{}{}\vskip3mm \noindent  Carlos E. Parra}\\
 {\footnotesize{}{} Instituto de Ciencias F{\'i}sicas y Matem{\'a}ticas}\\
 {\footnotesize{}{} Edificio Emilio Pugin, Campus Isla Teja}\\
 {\footnotesize{}{} Universidad Austral de Chile}\\
 {\footnotesize{}{} 5090000 Valdivia, CHILE}\\
 {\footnotesize{}{} }\texttt{\footnotesize{}{}carlos.parra@uach.cl}{\footnotesize\par}
\end{document}